%% file: paper.tex
\documentclass{article}
\usepackage{fancyhdr}
\usepackage[american]{babel}

\usepackage{amssymb, amsmath, amsthm, amsfonts, enumerate}
\usepackage{amsmath,setspace,scalefnt}
\usepackage[dvipsnames,svgnames,table]{xcolor}
\usepackage{graphicx,tikz,caption,subcaption}
\usetikzlibrary{patterns}
\usetikzlibrary{shapes}
\usepackage{fullpage}
\usepackage{hyperref}
\usepackage{enumitem,verbatim}%,itemize
\usepackage{array,booktabs,longtable}
\usepackage{multicol}
\usepackage{float}
\usepackage{aliascnt}
\usepackage{cleveref}
\usepackage[normalem]{ulem}
\usepackage{tikz-cd}
\usepackage{xparse}
\usepackage{authblk}

\usepackage[margin=2.5cm]{geometry}

\AtBeginDocument{%
  \addtolength{\abovedisplayskip}{-1pt}%
  \addtolength{\belowdisplayskip}{-1pt}%
  \addtolength{\belowdisplayshortskip}{-1pt}%
}

\definecolor{gray}{rgb}{0.25, 0.25, 0.25}

\newtheorem{theorem}{Theorem}[section]
\newaliascnt{lemma}{theorem}
\newtheorem{lemma}[lemma]{Lemma}
\aliascntresetthe{lemma}
\newaliascnt{cor}{theorem}

\aliascntresetthe{cor}
\newaliascnt{conj}{theorem}
\newtheorem{conj}[conj]{Conjecture}
\aliascntresetthe{conj}
\newaliascnt{question}{theorem}

\aliascntresetthe{question}
\newtheorem*{theorem*}{Theorem}

\theoremstyle{definition}

\DeclareTextCompositeCommand{\v}{OT1}{l}{l\nobreak\hspace{-.1em}'} %for the name of Dan Kral

\newcommand{\eps}{\varepsilon}
\newcommand{\pc}{p_{\mathrm c}}
\newcommand{\W}{\mathcal W}

\newcommand{\dd}{\,d}
\newcommand{\EE}{\mathbb E}
\newcommand{\RR}{\mathbb R}
\newcommand{\1}{\mathbf 1}
\newcommand{\cut}{\square}
\newcommand{\dist}{\operatorname{dist}}

\newcommand{\sm}{s_{\mathrm M}}

\newcounter{propcounter}

\theoremstyle{plain}
\newaliascnt{claim}{theorem}

\aliascntresetthe{claim}

\newaliascnt{op}{theorem}

\aliascntresetthe{op}
\newaliascnt{prob}{theorem}

\aliascntresetthe{prob}
\newaliascnt{prop}{theorem}

\aliascntresetthe{prop}
\newaliascnt{proposition}{theorem}

\aliascntresetthe{proposition}
\newaliascnt{corollary}{theorem}
\newtheorem{corollary}[corollary]{Corollary}
\aliascntresetthe{corollary}
\theoremstyle{definition}
\newaliascnt{rmk}{theorem}

\aliascntresetthe{rmk}
\newaliascnt{remark}{theorem}
\newtheorem{remark}[remark]{Remark}
\aliascntresetthe{remark}
\theoremstyle{definition}
\newaliascnt{intp}{theorem}

\aliascntresetthe{intp}
\theoremstyle{definition}
\newaliascnt{kn}{theorem}

\aliascntresetthe{kn}
\theoremstyle{definition}
\newaliascnt{defn}{theorem}

\aliascntresetthe{defn}
\theoremstyle{definition}
\newaliascnt{expl}{theorem}

\aliascntresetthe{expl}
\theoremstyle{definition}
\newaliascnt{obs}{theorem}

\aliascntresetthe{obs}
\crefname{theorem}{Theorem}{Theorems}
\Crefname{theorem}{Theorem}{Theorems}
\crefname{lemma}{Lemma}{Lemmas}
\Crefname{lemma}{Lemma}{Lemmas}
\crefname{cor}{Corollary}{Corollaries}
\Crefname{cor}{Corollary}{Corollaries}
\crefname{corollary}{Corollary}{Corollaries}
\Crefname{corollary}{Corollary}{Corollaries}
\crefname{conj}{Conjecture}{Conjectures}
\Crefname{conj}{Conjecture}{Conjectures}
\crefname{question}{Question}{Questions}
\Crefname{question}{Question}{Questions}
\crefname{claim}{Claim}{Claims}
\Crefname{claim}{Claim}{Claims}
\crefname{op}{Open Question}{Open Questions}
\Crefname{op}{Open Question}{Open Questions}
\crefname{prob}{Problem}{Problems}
\Crefname{prob}{Problem}{Problems}
\crefname{prop}{Proposition}{Propositions}
\Crefname{prop}{Proposition}{Propositions}
\crefname{proposition}{Proposition}{Propositions}
\Crefname{proposition}{Proposition}{Propositions}
\crefname{rmk}{Remark}{Remarks}
\Crefname{rmk}{Remark}{Remarks}
\crefname{remark}{Remark}{Remarks}
\Crefname{remark}{Remark}{Remarks}
\crefname{intp}{Interpretation}{Interpretations}
\Crefname{intp}{Interpretation}{Interpretations}
\crefname{kn}{Known}{Knowns}
\Crefname{kn}{Known}{Knowns}
\crefname{defn}{Definition}{Definitions}
\Crefname{defn}{Definition}{Definitions}
\crefname{expl}{Example}{Examples}
\Crefname{expl}{Example}{Examples}
\crefname{obs}{Observation}{Observations}
\Crefname{obs}{Observation}{Observations}
\crefname{subclaim}{Subclaim}{Subclaims}
\Crefname{subclaim}{Subclaim}{Subclaims}

\NewDocumentEnvironment{shl}{}
  {%
    \begingroup
    \color{blue}%
    \bfseries
    \boldmath
    [SL:\space\ignorespaces
  }
  {%
    \unskip]\endgroup\ignorespacesafterend
  }

\title{Bipodal optimizers in the upper-tail variational problem for regular subgraph densities}

\date{}

\author[1,2]{Sangho Lim}
\author[3]{Seonghyuk Im}
\author[1,2]{Taeyoung Kim}
\author[4]{Kyeongsik Nam}
\author[1,2]{Hongseok Yang}
\affil[1]{School of Computing, KAIST, Daejeon, Korea}
\affil[2]{School of Computational Sciences, Korea Institute for Advanced Study (KIAS), Seoul, Korea}
\affil[3]{Center for AI and Natural Sciences, Korea Institute for Advanced Study (KIAS), Seoul, Korea}
\affil[4]{Department of Mathematical Sciences, KAIST, Daejeon, Korea}
\affil[ ]{E-mail addresses: \texttt{lim.sang@kaist.ac.kr},
\texttt{seonghyuk@kias.re.kr},
\texttt{taeyoungkim21@kaist.ac.kr},
\texttt{ksnam@kaist.ac.kr},
\texttt{hongseokyang@kias.re.kr}}

\begin{document}

\maketitle
\begin{abstract}
Let $H$ be a fixed $d$-regular graph with $d\ge2$, and let $t(H,\cdot)$
denote its homomorphism density.  We study the upper-tail event
$t(H,G(n,p))\ge r^{|E(H)|}$ for fixed $0<p<r<1$ in a dense
Erd\H{o}s--R\'enyi random graph $G(n,p)$.
Near the Lubetzky--Zhao replica-symmetric phase boundary and away from the
exceptional target density $(d-1)/d$, we prove that the optimizer of the
Chatterjee--Varadhan variational problem on the symmetry-breaking side is
bipodal (two-block) and unique up to relabeling.
Its block parameters depend analytically on $(p,r)$.
To treat the exceptional boundary point, where the nonexceptional theory
degenerates, we construct an analytic curve approaching that point from the
symmetry-breaking side along which the unique optimizers are nonconstant
rank-one bipodal graphons.
In both settings, we derive asymptotic expansions of the edge-density deficit
and the rate function that governs the exponential decay of the upper-tail probability.
Moreover, the conditioned random graph converges in cut distance
to the corresponding bipodal optimizer as $n\to\infty$.
As $(p,r)$ approaches the phase boundary from the symmetry-breaking side,
the optimizers converge to their constant limits through two distinct mechanisms.
For each fixed nonexceptional target density, one block shrinks to zero measure,
giving convergence in $L^1$ but not in $L^\infty$.
Along the exceptional curve, both blocks remain macroscopic: their sizes
tend to $1/2$ and all three block densities tend to $(d-1)/d$, yielding
convergence in $L^\infty$.
\end{abstract}

\input{sections/intro}
\input{sections/preliminaries}
\input{sections/lz_boundary}
\input{sections/nonexceptional}

\input{sections/singular}

\input{sections/lean}
\input{sections/conclusion}

\paragraph{AI disclosure.}
AI assistance in this work involved GPT Pro 5.5 and 5.6,
GPT Sol 5.6, GPT Astra 6.0, Claude Opus 4.8 and 5.0, and Claude Fable 5.0 and 5.1.
The authors independently reviewed the text and verified every proof,
and retained responsibility for the final writing and organization of the paper.

We used the neural graphon representation of Kim, Baek, Lee, and
Yang~\cite{KimBaekLeeYangGraphons}, together with numerical optimization
algorithms, to explore optimizer structure, including bipodality near the
Lubetzky--Zhao boundary and the rank-one family near the exceptional endpoint.
The authors proposed these experiments, and AI assisted with code implementation and debugging.

For the nonexceptional results in
\Cref{sec:lz-boundary,sec:nonexceptional-endpoint},
the main idea of applying the theorems of Kenyon, Radin, Ren, and
Sadun~\cite{kenyon2014entropy} to establish bipodality in the variational
problem originated with the authors.
Near the exceptional endpoint in \Cref{sec:singular-endpoint},
the authors also proposed constructing the rank-one bipodal family,
motivated by the numerical experiments.
These insights were developed into the proof arguments through human--AI collaboration.

The Lean formalization was produced entirely by Claude Opus 4.8 and 5.0
and Claude Fable 5.0 and 5.1 under the authors' direction.
The authors checked the Lean definitions and theorem statements against
those in the paper and reviewed the external inputs assumed as axioms.
They also verified that the project built without errors.

\paragraph{Acknowledgments.}
We would like to thank Jineon Baek,  Bartlomiej Kielak, Daniel Kr\'a\v{l}, and Joonkyung Lee for helpful discussions.
S. Im was supported by a KIAS individual Grant (AP109501) at Korea Institute for Advanced Study. 
S. Lim, T. Kim, and H. Yang were supported by the National Research Foundation of Korea (NRF)
grant funded by the Korean Government (MSIT) (No.~RS-2023-00279680).

\bibliographystyle{plain}
\bibliography{cites}

\clearpage
\appendix

\input{sections/appendix_preliminaries}
\input{sections/appendix_lz_boundary}
\input{sections/appendix_nonexceptional_endpoint}
\input{sections/appendix_singular_endpoint}

\end{document}

%% file: sections/intro.tex
\section{Introduction}\label{sec:introduction}
The Erd\H{o}s--R\'enyi graph $G(n,p)$ is one of the standard models of random graphs, defined as a graph on $n$ vertices in which each edge is present independently with probability $p$.
For a fixed graph $H$, let $X_H$ be the number of injective homomorphisms from
$H$ to $G(n,p)$.  Thus, writing
$(n)_k=n(n-1)\cdots(n-k+1)$,
\[
  \mathbb{E}[X_H]=(n)_{|V(H)|}p^{|E(H)|}.
\]
Our primary interest is the following two questions raised by Chatterjee and Varadhan~\cite{ChatterjeeVaradhanLDP} in the context of the upper-tail large deviation problem: (1) what is the probability that $X_H$ is at least $(1+\delta)\mathbb{E}[X_H]$ for some fixed $\delta>0$?; (2) what is the typical structure of $G(n,p)$ conditioned on this event?

This upper-tail problem and its variants have been studied extensively; see, for example,
\cite{BhattacharyaDembo2021,ChatterjeeMissingLog,DeMarcoKahnTriangles,
ganguly2024upper,Janson2004uppertail,JansonRucinskiInfamous,Raz2020Cycles,Sileikis2020Stars,Vu2001}.
However, even when $H$ is a triangle, the problem is difficult and not
completely understood.  Existing results take different forms in the sparse
and dense regimes.  For the sparse results below, $\delta>0$ is fixed while
$n\to\infty$ and $p=p(n)\to0$.  Regarding the first question,
Kim and Vu~\cite{KimVu2004} showed that the probability that
$X_{K_3}\ge(1+\delta)\mathbb{E}[X_{K_3}]$ lies between
$\exp(-O_\delta(n^2p^2 \log(1/p)))$ and $\exp(-\Omega_\delta(n^2p^2))$ whenever $p\ge\log n/n$.
Chatterjee~\cite{ChatterjeeMissingLog} and, independently, DeMarco and
Kahn~\cite{DeMarcoKahnTriangles} determined the correct order when $p \ge C_\delta \log n/n$ by showing that 
\[
  \mathbb{P}\bigl(X_{K_3}\ge(1+\delta)\mathbb{E}[X_{K_3}]\bigr)
  =\exp\bigl(-\Theta_\delta(n^2p^2\log(1/p))\bigr).
\]
These results determine the order of the exponent,
but not its leading constant.  Chatterjee
and Dembo~\cite{ChatterjeeDemboNonlinear} subsequently proved a nonlinear
large-deviation principle which, when $p$ does not decay too quickly, reduces
the upper-tail problem to a weighted-graph relative-entropy variational
problem.
Lubetzky and Zhao~\cite{LubetzkyZhaoSparse} solved this variational problem for
triangles and, by combining their solution with the Chatterjee--Dembo
principle, obtained, for $p\ge n^{-1/42}\log n$ with $p\to0$,
\[
  \mathbb{P}\bigl(X_{K_3}\ge(1+\delta)\mathbb{E}[X_{K_3}]\bigr)
  =
  \exp\left[
    -\left(
      \min\left\{\frac{\delta^{2/3}}2,\frac{\delta}{3}\right\}
      +o_\delta(1)
    \right)n^2p^2\log(1/p)
  \right].
\]
This determines the exact leading constant.  It applies, however, over a
narrower range of $p$, since $(\log n)/n=o(n^{-1/42}\log n)$.
More generally, upper-tail bounds and asymptotics have been established
for broader classes of graphs \(H\) and across a range of sparse regimes;
see \cite{Augeri2020,BasakBasu2023,BhattacharyaGangulyLubetzkyZhao,
CookDembo2020,CookDemboPham2024,DeMarcoKahnCliques,Eldan2018,HarelMoussetSamotij2022}.
For the second question,  Cook and Dembo~\cite{CookDembo2024Structure}
showed that, in suitable sparse regimes, graphs conditioned on upper-tail
events typically resemble an Erd\H{o}s--R\'enyi graph with a planted clique
and/or a planted complete bipartite graph.

In this paper, we primarily consider the dense regime, in which $p$ is a fixed constant.
In this regime, it is convenient to frame the upper-tail event using
homomorphism densities.  For a graph $G$ on $n$ vertices, define
\[
  t(H,G):=\frac{\hom(H,G)}{n^{|V(H)|}},
\]
where $\hom(H,G)$ is the number of graph homomorphisms from $H$ to $G$.
Since $H$ is fixed, the number of non-injective homomorphisms is
$O_H(n^{|V(H)|-1})$, and hence, for $G=G(n,p)$,
\[
  t(H,G(n,p))=\frac{X_H}{n^{|V(H)|}}+O_H(n^{-1}).
\]
Thus, the homomorphism density $t(H,G(n,p))$ differs from the normalized
injective count $X_H/n^{|V(H)|}$ by only $O_H(n^{-1})$.  We therefore
use the following homomorphism-density version of the upper-tail event.  For
$p<r<1$, write
\[
  \mathcal E_{H,p,r}:=
  \{t(H,G(n,p))\ge r^{|E(H)|}\}.
\]
The count event in the first paragraph corresponds asymptotically to the
choice $r^{|E(H)|}=(1+\delta)p^{|E(H)|}$. In this paper,
a central role is played by the relative-entropy function
\[
  J_p(z) = z \log \frac{z}{p} + (1-z) \log \frac{1-z}{1-p}.
\]
Throughout the paper, $\log$ denotes the natural logarithm and we use the
convention $0\log0=0$.  The quantity $J_p(z)$ is the large-deviation cost per edge of
raising the edge density from $p$ to $z$.  In this notation, Lubetzky and
Zhao~\cite{lubetzky2015large} showed that if the point $(r^2, J_p(r))$ lies on
the convex minorant of $x \mapsto J_p(x^{1/2})$, then
\[
  \mathbb{P}(\mathcal E_{K_3,p,r})
  =\exp(-J_p(r)n^2/2+o_{p,r}(n^2)),
\]
and they extended this result to general regular graphs $H$.

Conditioned on $\mathcal E_{H,p,r}$, the typical structure of $G(n,p)$ can take one of two forms: either it looks like an Erd\H{o}s--R\'enyi graph with a higher edge density (called the \emph{replica-symmetric phase}), or its edges are clustered (called the \emph{symmetry-breaking phase}).
Chatterjee and Varadhan~\cite{ChatterjeeVaradhanLDP} showed that, depending on the parameters, both structures can occur.
Lubetzky and Zhao~\cite{lubetzky2015large} determined exactly when the replica-symmetric phase occurs and when the symmetry-breaking phase occurs in terms of the function $J_p(z)$ defined above for every $d$-regular graph $H$.
\begin{theorem}[Lubetzky--Zhao criterion, informal version~\cite{lubetzky2015large}]\label{thm:lz-criterion-informal}
  Let $H$ be a $d$-regular graph with $d \geq 2$, and let $0<p<r<1$. If the point $(r^d, J_p(r))$ lies on the convex minorant of the function $x\mapsto J_p(x^{1/d})$, then the typical structure of $G(n,p)$ conditioned on $\mathcal E_{H,p,r}$ is like an Erd\H{o}s--R\'enyi graph with edge density $r$, i.e., $(p, r)$ is in the replica-symmetric phase. Otherwise, the typical structure of $G(n,p)$ conditioned on $\mathcal E_{H,p,r}$ is clustered, i.e., $(p, r)$ is in the symmetry-breaking phase.
\end{theorem}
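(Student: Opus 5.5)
The plan has two parts: reduce to the Chatterjee--Varadhan variational problem, then solve it with a generalized Hölder inequality for regular graphs. By the dense large-deviation principle of Chatterjee and Varadhan, $-\frac{2}{n^2}\log\mathbb{P}(\mathcal E_{H,p,r})$ converges to
\[
\phi_p(H,r):=\inf\Bigl\{\textstyle\iint J_p(W(x,y))\,dx\,dy \;:\; W\in\W,\ t(H,W)\ge r^{|E(H)|}\Bigr\}.
\]
Moreover, conditioned on $\mathcal E_{H,p,r}$, $G(n,p)$ concentrates in cut distance near the set of minimizers. So the statement reduces to one claim: the constant graphon $W\equiv r$ is the unique minimizer exactly when $(r^d,J_p(r))$ lies on the convex minorant $\widehat\psi$ of $\psi(x)=J_p(x^{1/d})$. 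Otherwise every minimizer is nonconstant, which is the ``clustered'' structure.

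For the replica-symmetric direction I would use the generalized Hölder inequality for $d$-regular $H$ (Finner's inequality, in the form used by Lubetzky--Zhao). Writing $m=|E(H)|$, it gives
\[
t(H,W)\le \|W\|_d^{m}=\Bigl(\textstyle\iint W^d\Bigr)^{m/d}.
\]
Hence any feasible $W$ satisfies $\iint W^d\ge r^d$. Since $J_p(W)=\psi(W^d)\ge\widehat\psi(W^d)$ pointwise, Jensen's inequality gives
\[
\iint J_p(W)\ge \widehat\psi\Bigl(\iint W^d\Bigr)\ge\widehat\psi(r^d)=J_p(r),
\]
using that $\widehat\psi$ is nondecreasing on $[p^d,1]$. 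That monotonicity holds because $\psi$ is minimized at $p^d$ and hence so is its convex minorant. The constant graphon attains $J_p(r)$, so it is optimal. For uniqueness I would trace the equality cases: equality in Jensen forces $W^d$ to take values where $\widehat\psi$ is affine, and equality in Hölder forces, for connected regular $H$, a structure that combined with the first condition pins $W\equiv r$. At boundary cases where $(r^d,J_p(r))$ is an endpoint of a linear piece of $\widehat\psi$, I would argue that the linear piece cannot be exploited without also violating Hölder equality.

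For the symmetry-breaking direction, suppose $\widehat\psi(r^d)<\psi(r^d)$. Then $r^d$ lies in an open interval where $\widehat\psi$ is the chord between $a<r^d<b$. I would build a two-value graphon, for example a planted clique or anti-clique of measure $\theta$ with density $b^{1/d}$ on a block and $a^{1/d}$ elsewhere, or a bipartite-type construction. Then I would check that $t(H,\cdot)\ge r^m$ while the cost stays strictly below $J_p(r)$ for suitable parameters. This step is the main obstacle: Hölder is not tight for such block graphons, so matching the chord cost requires the specific constructions of Lubetzky--Zhao. As a fallback, I would perturb the constant graphon locally and show the second variation has a negative direction, which suffices to show the constant is not optimal. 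Finally, I would transfer the conclusions to the random-graph statements through the conditional concentration result of Chatterjee--Varadhan.
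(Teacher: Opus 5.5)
The reduction to the Chatterjee--Varadhan problem and the replica-symmetric argument are the Lubetzky--Zhao argument. That argument applies generalized H\"older, then Jensen to $\widehat\psi$, where your $\psi$ is the paper's $\varphi_{p,d}$. The paper does not reprove this statement: it imports it, and its Lean development takes \Cref{thm:lz-criterion} as an axiom. The same mechanism does reappear in \Cref{lem:scalar-quadratic-bound}.

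The genuine gap is the symmetry-breaking direction, where neither of your routes covers the whole region. A planted dense block is, as you note, far from equality in the generalized H\"older inequality, so nothing forces it to beat $J_p(r)$. The fallback fails for a definite reason. Take $\mu$ to be the multiplier of the constant graphon. Along the linearized constraint $\int U=0$, the second variation of $I_p-\mu\,t(H,\cdot)$ at $W\equiv r$ is
\[
  \frac{2}{r}\,h_{p,d}(r)\,\|g\|_2^2+J_p''(r)\,\|U'\|_2^2,
  \qquad g:=T_U1,\quad U'(x,y):=U(x,y)-g(x)-g(y).
\]
So an $L^\infty$-small perturbation helps only when $h_{p,d}(r)<0$, i.e.\ when $r^d$ lies in the concave window $(u_-(p)^d,u_+(p)^d)$ of \Cref{lem:convexity-defect}. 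By \Cref{lem:contact-points}, the symmetry-breaking set is the strictly larger interval $x_a(p)<r^d<x_b(p)$. In the bands $x_a(p)<r^d\le u_-(p)^d$ and $u_+(p)^d\le r^d<x_b(p)$, this second variation is nonnegative even though $W\equiv r$ is not optimal.

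The missing idea is a perturbation that is small in measure but not in amplitude. It should be used to beat $J_p(r)$ at first order, not to realize the chord; this is presumably the right reading of your ``bipartite-type'' option. Let $L$ be the tangent line to $\psi$ at $r^d$. Since $\psi$ is $C^1$ on $(0,1)$, the point $(r^d,J_p(r))$ lies off the convex minorant exactly when $J_p(s)<L(s^d)$ for some $s\in(0,1)$. Take such an $s$ and the graphon $V_{r,c,q}$ from the proof of \Cref{lem:bipodal-quadratic-bound}, with your $s$ in place of $\sm(r)$ and general $p$. That is, use a block of measure $c$, cross density $s$, and bulk density $q$ chosen so that $t(H,V_{r,c,q})=r^m$. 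Since $H$ is $d$-regular, each vertex of $H$ mapped into the block sees exactly $d$ edges of density $s$. This gives $q-r=-\frac{2(s^d-r^d)}{dr^{d-1}}c+O(c^2)$ and
\[
  I_p(V_{r,c,q})-J_p(r)=2c\bigl[J_p(s)-L(s^d)\bigr]+O(c^2)<0
\]
for small $c>0$. At $p=\pc(r)$ with $s=\sm(r)$ the bracket vanishes, which is why that lemma yields only an $O(\delta^2)$ bound.

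Separately, your uniqueness step needs no equality case of H\"older. Since $\widehat\psi$ vanishes only at $p^d$, it is strictly increasing on $[p^d,1]$, so equality forces $\iint W^d=r^d$. Equality in Jensen and in $\psi\ge\widehat\psi$ then gives $\psi(W^d)=L(W^d)$ a.e. The contact set of $L$ with $\psi$ is $\{r^d\}$, or $\{r^d,\sm(r)^d\}$ on the boundary, and a variable supported there with mean $r^d$ is constant.
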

See \Cref{thm:lz-criterion} for the formal statement.
The result of Lubetzky and Zhao also yields that the probability of $\mathcal E_{H,p,r}$ is $\exp(-J_p(r)n^2/2 + o_{H,p,r}(n^2))$ when $(p, r)$ is in the replica-symmetric phase.
However, in the symmetry-breaking phase, neither the manner in which the
edges cluster nor, in general, the leading exponential decay rate of
$\mathbb P(\mathcal E_{H,p,r})$ is known.
Lubetzky and Zhao~\cite[Section~7]{lubetzky2015large} raised the question of
whether the conditioned random graph always has a bipodal structure in this
phase.  Informally, this means that the vertex set of $G(n,p)$ can be partitioned
into two parts such that the edges within each part and between the two parts
behave like Erd\H{o}s--R\'enyi graphs, with possibly different edge densities.  Numerical experiments using the
neural graphon representation of Kim, Baek, Lee, and
Yang~\cite{KimBaekLeeYangGraphons} provide further evidence for an
affirmative answer to this question; see \Cref{fig:neural-bipodal-patterns}.

Our main contribution is a step toward resolving this bipodality question
and has two complementary parts.  Away from the exceptional density
$r_*=(d-1)/d$, we prove bipodality and uniqueness throughout a nontrivial
neighborhood of the phase boundary on the symmetry-breaking side.  At $r_*$,
where this nondegenerate local theory breaks down, we instead find a family of
balanced bipodal optimizers along an analytic curve approaching the phase boundary.
The Lubetzky--Zhao phase boundary can be written as $p=\pc(r)$,
as we establish in \Cref{thm:scalar-lz-boundary}.
For the nonexceptional case, we express the expansions
in terms of the log-odds distance to this boundary,
\[
  \lambda(p, r) = \log \frac{1-p}{p} - \log \frac{1-\pc(r)}{\pc(r)},
\]
which vanishes exactly on the boundary and is positive on the symmetry-breaking side $p<\pc(r)$.
The exceptional case uses a separate parameter $h$, whose meaning is explained later.
\begin{theorem}[Informal main results]\label{thm:main-results-informal}
  Let $H$ be a $d$-regular graph with $d \geq 2$, and let $\pc(r)$ be the Lubetzky--Zhao boundary curve.
  Set $r_*=(d-1)/d$.
  \begin{enumerate}[label=(\alph*)]
    \item \label{thm:main-results-informal-a} Let $r_0\in(0,1)\setminus\{r_*\}$.  There exists an open
    neighborhood $U$ of $(\pc(r_0),r_0)$ such that, for every
    $(p,r)\in U$ with $p<\pc(r)$, the typical structure of $G(n,p)$
    conditioned on $\mathcal E_{H,p,r}$ is described by a unique nonconstant bipodal graphon,
    up to relabeling.  Its block sizes and edge densities are analytic in $(p,r)$.
    For fixed $r$, the smaller block size tends to zero and the edge density
    within the larger block tends to $r$ as $p\uparrow\pc(r)$.

    Furthermore, for $(p,r)\in U$ with $p<\pc(r)$,
    \[
      \mathbb P(\mathcal E_{H,p,r})
      =\exp\left[-\left(J_p(r)-\frac{\lambda(p,r)^2}{2A_H(r)}
        +O_{H,U}(\lambda(p,r)^3)\right)\frac{n^2}{2}+o_{H,p,r}(n^2)\right],
    \]
    where $A_H:(0,1)\setminus\{r_*\}\to(0,\infty)$ is analytic.
    The $O_{H,U}$-term is uniform for $(p,r)\in U$ as $\lambda(p,r)\downarrow0$,
    while the $o_{H,p,r}$-term is taken as $n\to\infty$ with $H,p,r$ fixed.

    \item \label{thm:main-results-informal-b} There exist $h_0>0$ and an analytic curve
    $h\mapsto(p_h,r_h)$, defined for $0<h<h_0$, of
    symmetry-breaking parameters with $p_h<\pc(r_h)$ approaching the exceptional boundary point:
    \[
      (p_h,r_h)\longrightarrow(\pc(r_*),r_*)
      \qquad(h\downarrow0).
    \]
    Along this curve, the typical structure of $G(n,p_h)$ conditioned on
    $\mathcal E_{H,p_h,r_h}$ is described by a unique nonconstant bipodal graphon
    $W_h$, up to relabeling.  Its block sizes and edge densities are analytic
    in $h$ and tend to $1/2$ and $r_*$, respectively, as $h\downarrow0$.

    Furthermore, for every fixed $h\in(0,h_0)$,
    \[
      \mathbb P(\mathcal E_{H,p_h,r_h})
      =\exp\left[-\left(J_{p_h}(r_h)-\frac{d^3}{3}h^4
        +O_d(h^6)\right)\frac{n^2}{2}+o_{H,h}(n^2)\right].
    \]
    The $O_d$-term is taken as $h\downarrow0$, while the $o_{H,h}$-term
    is taken as $n\to\infty$ with $H,h$ fixed.
  \end{enumerate}
\end{theorem}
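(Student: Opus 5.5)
The plan is to reduce everything to the Chatterjee--Varadhan variational problem
\[
  \phi(p,r)=\min\Bigl\{\tfrac12\textstyle\iint J_p(W)\,:\,t(H,W)\ge r^{|E(H)|}\Bigr\}.
\]
By the Chatterjee--Varadhan large deviation principle, $\mathbb P(\mathcal E_{H,p,r})=\exp(-\phi(p,r)n^2+o(n^2))$. When the minimizer set is a single graphon up to measure-preserving relabeling, the conditioned graph converges in cut distance to that minimizer. So both parts reduce to three tasks: (i) identify the optimizer, (ii) show it is unique, and (iii) expand $2\phi$.

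For part \ref{thm:main-results-informal-a}, I first use the formal Lubetzky--Zhao criterion (\Cref{thm:lz-criterion}) and the scalar description of the boundary $p=\pc(r)$ from \Cref{thm:scalar-lz-boundary}. Near the boundary and on the symmetry-breaking side, any minimizer must be close in $L^1$ to the constant $r$. This follows from compactness and lower semicontinuity, since at the boundary the constant is the unique optimizer.

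Next, following the idea credited to Kenyon--Radin--Ren--Sadun, I write the Euler--Lagrange equation
\[
  J_p'(W(x,y))=\mu\,\partial_W t(H,W)(x,y).
\]
For $d$-regular $H$, the right-hand side is built from the degree-like functions of $W$. Inverting the strictly increasing map $J_p'$ expresses $W$ as a function of finitely many marginal quantities, so $W$ is multipodal. A perturbative argument near the constant then bounds the number of podes by two.

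I then parametrize bipodal graphons by $(a,w_{11},w_{12},w_{22})$ and reduce to a finite-dimensional constrained problem. At $\lambda=0$ this problem has a branch bifurcating from $a=0$. The implicit function theorem, applied after dividing out the trivial factor $a$, gives an analytic branch in $(p,r)$ with $a\downarrow0$. The nondegeneracy needed here is exactly a nonvanishing coefficient $A_H(r)$, and I expect this coefficient to vanish at $r=r_*$, where the quadratic tangency of $J_p(x^{1/d})$ to its convex minorant degenerates. Expanding the objective along this branch should give
\[
  J_p(r)-\frac{\lambda^2}{2A_H(r)}+O(\lambda^3).
\]
Uniqueness would follow from a second-order condition together with the local reduction.

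For part \ref{thm:main-results-informal-b}, I abandon the $a\to0$ ansatz and try balanced rank-one bipodal graphons. These have block sizes $\tfrac12\pm O(h)$ and densities $r_*+\alpha_i h$ with $w_{12}^2=w_{11}w_{22}$, and $h$ is the block-deviation amplitude. I would solve the Euler--Lagrange system for $(p_h,r_h)$ as analytic functions of $h$, choosing the curve so that the system is nondegenerate. A Taylor expansion to fourth order should then produce the $-\tfrac{d^3}{3}h^4$ correction.

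The main obstacle is global uniqueness: I must rule out all other minimizers, including non-bipodal ones, uniformly near the boundary, and for \ref{thm:main-results-informal-b} also near a degenerate point. I would first try a quantitative stability estimate, namely that any near-minimizer is $L^1$-close to the constant. I would combine this with a spectral or Hessian bound for the Euler--Lagrange operator, which forces such minimizers into the low-dimensional bipodal family, where the implicit function theorem gives uniqueness.
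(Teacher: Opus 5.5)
Your overall frame (reduce to the Chatterjee--Varadhan problem, identify the optimizer, prove it is unique, expand the value) matches the paper, and so does your balanced rank-one ansatz for part (b). The mechanisms you propose for bipodality and for uniqueness, however, would fail.

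\textbf{Part (a).} The Euler--Lagrange step is wrong. For a general $d$-regular $H$, the first variation of $t(H,\cdot)$ at $(x,y)$ is a sum over edges $e$ of the density of $H\setminus e$ with the endpoints of $e$ pinned at $x,y$. For $K_3$ it is $3\int W(x,z)W(z,y)\,dz$. This is a genuinely two-variable kernel, not a function of degree-like marginals, so inverting $J_p'$ does not make $W$ multipodal. That reduction is valid when $W$ is already rank-one, which is exactly how the paper uses it in part (b) (\Cref{lem:stationary-rank-one-bipodality}).

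A perturbation about the constant cannot bound the number of podes either. The optimizer is only $L^1$-close to $r$: on a pode of measure $O(\lambda)$ the cross density tends to $\zeta_d(r)\ne r$, which is an order-one deviation.

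The paper never proves bipodality from scratch. Its route is:
\begin{enumerate}[label=(\roman*)]
\item Show the constraint is active and, via the boundary-convergence lemma, that $e(W)\in(r-\rho,r)$.
\item Fix the edge density $\eps$ and use $I_p(W)=-2s(W)-\log(1-p)+e(W)\log\frac{1-p}{p}$ to turn the problem into entropy maximization at fixed $(e(W),t(H,W))$. There the actual KRR--S theorem gives a unique bipodal maximizer for small excess $r^m-\eps^m>0$.
\item Construct a two-sided analytic continuation of the KRR--S parameter map through $\vartheta=0$, using $\partial_z^2\psi_d(\eps,\zeta_d(\eps))<0$ and strict concavity in $q_{11}$. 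This reduces everything to the single variable $\delta=r-\eps$.
\end{enumerate}

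You also give no reason why $A_H(r)\ne0$, and a second-order condition only yields local uniqueness. The paper obtains $A_H(r)>0$ from $C\delta^2\le G_r(\delta)\le C'\delta^2$. The lower bound combines the uniform quadratic separation (M5) of the supporting line at its two contacts $r^d,\sm(r)^d$ with generalized H\"older, $\int W^d\ge t(H,W)^{d/m}$. The upper bound comes from an explicit bipodal graphon with cross density $\sm(r)$. Global uniqueness then follows from KRR--S uniqueness at each $\eps$ together with strict convexity of $\delta\mapsto G(r,\delta)-\lambda(p,r)\delta$.

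\textbf{Part (b).} Graphon stationarity alone determines $u_h,p_h,\gamma_h$ but leaves the block proportion, and hence $r_h$, free. You need the separate block-proportion condition \eqref{eq:block-stationarity}, whose solution $\alpha_h$ is a ratio of two differences that both vanish to exact order $h^4$.

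More seriously, a spectral or Hessian bound cannot deliver global uniqueness, for three reasons. It gives at most local uniqueness in a strong topology. Its coercivity degenerates as $h\downarrow0$, because $\Gamma_d$ vanishes to fourth order at $r_*$. And competitors, even after localization, are only $O(h)$-close to $r_*$ in $L^4$, need not be rank-one, and are not confined by the Euler--Lagrange equation to any finite-dimensional family.

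The ingredients you are missing are:
\begin{enumerate}[label=(\roman*)]
\item The quartic localization $\|W-r_*\|_4=O(h)$, obtained from $\Gamma_d(z)\ge c_d|z-r_*|^4$, feasibility, and the cost comparison.
\item The rank-one extraction $W=f\otimes f+E$ with $T_E(f^{d-1})=0$, obtained from a maximizer of $\langle g,T_Wg\rangle$ on the unit sphere of $L^{d/(d-1)}$. This removes all terms linear in $E$ and gives $|t(H,W)-q(f)^v|\le C\|E\|_2^3$.
\item A Lagrangian comparison with multiplier $\mu_h$ over the \emph{distribution} of values of $f$. Here the first variation $\Psi_h$ has double zeros at $s_h,t_h$, satisfies $\Psi_h\ge aQ_h^2$ near $u_*$ and $\Psi_h\ge b_\rho$ away from it, and is supplemented by a kernel bound and by strong convexity of $J_{p_h}$ to control $E$.
\end{enumerate}
Without an argument of this kind covering arbitrary competitors, your plan establishes neither the optimality nor the uniqueness of $W_h$.
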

The two parts describe qualitatively different mechanisms by which a
nonconstant bipodal optimizer approaches a constant graphon at the phase
boundary.  Away from $r_*$, one pode shrinks to measure zero while the other
occupies almost all of $[0,1]$ and its within-pode density tends to $r$.  The
densities involving the shrinking pode need not tend to $r$, but the regions
on which they occur have vanishing measure.  The optimizer therefore converges
to $W\equiv r$ in $L^1$, and hence in cut distance, but not in
$L^\infty$.  At $r_*$, neither pode disappears: their sizes tend to $1/2$, while
both within-pode densities and the cross density tend to $r_*$.  Thus, the
exceptional optimizer converges in $L^\infty$ by homogenization of two
macroscopic podes, rather than by the disappearance of a small pode.
The precise variational statements underlying parts~(a) and~(b) are
\Cref{thm:nonexceptional-optimizers,thm:endpoint-optimizers}, respectively.

\subsection{Chatterjee--Varadhan variational problem}
We now recast the preceding random-graph questions as a variational problem on
graphons.
A \emph{graphon} is a symmetric measurable function $W:[0,1]^2\to[0,1]$.
We write $\W_0$ for the space of all graphons.
Graphons are limit objects for dense graphs; see~\cite{LovaszGraphLimits}.
For a graphon $W$, define the \emph{edge density}, \emph{homomorphism density}, and \emph{relative entropy} by 
\begin{equation}\label{eq:graphon-quantities}
  \begin{aligned}
  e(W) & =\int_{[0,1]^2}W(x,y)\,dx\,dy,
  \\
  t(H,W) & =\int_{[0,1]^{|V(H)|}}\prod_{ij\in E(H)}W(x_i,x_j)\prod_{i\in V(H)}dx_i,
  \\
  I_p(W) & =\int_{[0,1]^2}J_p(W(x,y))\,dx\,dy.
  \end{aligned}
\end{equation}
For $0<p<r<1$ and a graph $H$, define the Chatterjee--Varadhan variational problem~\cite{ChatterjeeVaradhanLDP} by
\begin{equation}\label{eq:graphon-variational-problem}
    \inf_{W\in\W_0} I_p(W)\quad \text{subject to } t(H,W)\ge r^{|E(H)|}.
\end{equation}
We denote by $\Phi_H(p,r)$ the optimal value of the above variational problem
and by $\W_H(p,r)$ its set of minimizers.  A graphon is \emph{feasible} for
this problem if it satisfies the constraint
$t(H,W)\ge r^{|E(H)|}$; the collection of all such graphons is the feasible
set.  This set is nonempty because the constant graphon $W\equiv r$ is
feasible.  Standard compactness, continuity, and lower-semicontinuity results
for graphons imply that the infimum is attained, so $\W_H(p,r)$ is nonempty~\cite[Theorem~2.7]{lubetzky2015large}.
The cut distance $\delta_\square$ measures the difference between two graphons
after allowing measure-preserving relabelings; its formal definition, together
with the graphon-topological facts used above, is given in
\Cref{sec:graphons}.  A graph $G$ can be viewed as its associated step graphon
$W_G$, in which case $t(H,G)=t(H,W_G)$.
The following theorem makes precise the connection between the variational
problem, the upper-tail probability, and the conditioned random graph.
\begin{theorem}[Chatterjee--Varadhan large-deviation principle~\cite{ChatterjeeVaradhanLDP}]
\label{thm:graphon-large-deviations}
  For every $0<p<r<1$, we have 
  \[
    \lim_{n \to \infty}\frac{1}{\binom{n}{2}}
    \log \mathbb{P}\bigl(t(H,G(n,p))\ge r^{|E(H)|}\bigr)
    =-\Phi_H(p,r).
  \]
  Furthermore, for any $\eps>0$, there exists $C=C(\eps, p, r, H)>0$ such that, for all sufficiently large $n$,
  \[
    \mathbb{P}\left(
      \min_{W \in \W_H(p,r)}\delta_\square(W_{G(n,p)},W)>\eps
      \ \middle|\ t(H,G(n,p))\ge r^{|E(H)|}
    \right)
    \le e^{-Cn^2}.
  \]
\end{theorem}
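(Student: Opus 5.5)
The statement is the Chatterjee--Varadhan large-deviation principle. I would prove it by deriving it from the large-deviation principle for $G(n,p)$ in the space of graphons modulo relabeling, $(\widetilde{\W}_0,\delta_\square)$, at speed $\binom n2$ with good rate function $I_p$. The two inputs are compactness of $\widetilde{\W}_0$ and continuity of $W\mapsto t(H,W)$ in $\delta_\square$ (counting lemma). With these, the LDP follows in two standard steps. First, for each graphon $W$, estimate the probability that $G(n,p)$ lies in a small cut-ball around $W$: approximate $W$ by a step graphon via the weak regularity lemma and compare with independent block-wise binomial edge counts, using Cramér's theorem for each block pair. This gives matching local upper and lower bounds of the form $\exp(-\binom n2(I_p(W)\pm\eps))$. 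Second, cover the compact space by finitely many such balls. The number of relevant relabelings and regularity partitions is $e^{o(n^2)}$, so the union bound yields the upper bound, and the local estimate yields the lower bound. Lower semicontinuity of $I_p$ in $\delta_\square$ is needed here and comes from convexity of $J_p$ together with averaging over partitions.

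Next I apply the LDP to the closed set $F=\{W:t(H,W)\ge r^{|E(H)|}\}$, which is closed by the counting lemma. The upper bound gives $\limsup \binom n2^{-1}\log\mathbb P\le -\inf_F I_p=-\Phi_H(p,r)$. For the lower bound I need $\inf_{F^\circ}I_p=\inf_F I_p$. Take any feasible $W$ and $\eta>0$ and set $W_\eta=(1-\eta)W+\eta$. Then $t(H,W_\eta)>t(H,W)\ge r^{|E(H)|}$ strictly, unless $W\equiv1$, a case that can be treated directly; this holds because $t(H,\cdot)$ is strictly increasing under pointwise increase on a set where the product is positive. Hence $W_\eta$ lies in the interior of $F$. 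Moreover $I_p(W_\eta)\to I_p(W)$ as $\eta\to0$, by continuity of $J_p$ on $[0,1]$ and bounded convergence. This gives the matching lower bound, and hence the limit $-\Phi_H(p,r)$.

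For the conditional statement, let $K_\eps=\{W\in F:\min_{W'\in\W_H(p,r)}\delta_\square(W,W')\ge\eps\}$. This set is closed, hence compact. Since $\W_H(p,r)$ is nonempty and is exactly the set of minimizers, lower semicontinuity and compactness give $\inf_{K_\eps}I_p\ge\Phi_H(p,r)+2c$ for some $c>0$. The LDP upper bound on $K_\eps$ then gives a probability of at most $\exp(-\binom n2(\Phi_H+c))$. Dividing by the lower bound $\exp(-\binom n2(\Phi_H+c/2))$ for the conditioning event, valid for large $n$, gives the claimed $e^{-Cn^2}$ bound.

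I expect the main obstacle to be the graphon-level LDP itself, in particular the uniform local upper bound, which requires controlling entropy over cut-balls via the regularity lemma. Since this is the content of Chatterjee--Varadhan's theorem, in the paper I would cite it rather than reprove it.
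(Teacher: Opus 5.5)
Your proposal is correct and follows the standard route by which Chatterjee and Varadhan obtain this statement from their large-deviation principle on $(\widetilde{\W}_0,\delta_\square)$: closedness of $\{t(H,\cdot)\ge r^{|E(H)|}\}$, interior approximation by $(1-\eta)W+\eta$, and a compactness/lower-semicontinuity gap for the conditional bound. The paper gives no proof and simply cites \cite{ChatterjeeVaradhanLDP}, as you propose to do.

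One small fix concerns the strict inequality $t(H,W_\eta)>t(H,W)$. It should be justified by $W_\eta\ge\eta>0$ together with $W_\eta>W$ wherever $W<1$, rather than by positivity of the product of $W$. For a $\{0,1\}$-valued feasible $W$, the entire gain comes from points where that product vanishes. Also, the case $W\equiv1$ needs no separate treatment, since $t(H,1)=1>r^{|E(H)|}$ already places it in the interior.
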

The first conclusion gives the leading exponential decay rate of the
upper-tail probability, while the second says that, conditional on the
upper-tail event, the graphon of a typical $G(n,p)$ is close in cut distance
to some optimizer in $\W_H(p,r)$, with probability tending exponentially fast
to one.  In variational terms, the Lubetzky--Zhao criterion stated informally in
\Cref{thm:lz-criterion-informal} takes the following form.
\begin{theorem}[Lubetzky--Zhao criterion~\cite{lubetzky2015large}]\label{thm:lz-criterion}
    Let $H$ be a $d$-regular graph with $d\ge2$ and let $0<p<r<1$.
    If the point $(r^d, J_p(r))$ lies on the convex minorant of the function $x\mapsto J_p(x^{1/d})$, then the unique minimizer of~\eqref{eq:graphon-variational-problem} is the constant graphon $W\equiv r$. Otherwise, the constant graphon is not contained in the set of minimizers $\W_H(p, r)$.
\end{theorem}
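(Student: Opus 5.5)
The plan is to prove the two halves of \Cref{thm:lz-criterion} separately. The main tools are the generalized H\"older inequality for regular graphs and a local perturbation argument.

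\emph{Replica-symmetric half.} Assume $(r^d,J_p(r))$ lies on the convex minorant $\hat\psi$ of $\psi(x)=J_p(x^{1/d})$. For a feasible $W$, I will use the Finner/H\"older-type inequality for $d$-regular $H$:
\[
t(H,W)\le \Bigl(\int W^d\Bigr)^{|E(H)|/d}
\]
(the ``$d$-norm'' bound of Lubetzky--Zhao). This gives $\int W^d\ge r^d$. Applying Jensen to the convex function $\hat\psi$ then yields
\[
I_p(W)=\int \psi(W^d)\ge\int\hat\psi(W^d)\ge\hat\psi\Bigl(\int W^d\Bigr)\ge\hat\psi(r^d)=J_p(r)=I_p(r).
\]
The step $\hat\psi(\int W^d)\ge\hat\psi(r^d)$ uses that $\hat\psi$ is nondecreasing on $[p^d,1]$. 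This holds because $\psi$ is minimized at $p^d$ with value $0$, so $\hat\psi$ is also minimized there. If $\int W^d<p^d$, one first reduces to the case $W\ge p$ by replacing $W$ with $\max(W,p)$, which does not decrease $t$ and does not increase $I_p$.

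For uniqueness I will trace the equality cases. Equality $I_p(W)=J_p(r)$ forces three things: $\int W^d=r^d$ (or strict monotonicity of $\hat\psi$ near $r^d$, which holds since $r>p$), $\psi(W^d)=\hat\psi(W^d)$ a.e., and equality in the H\"older step. Equality in the generalized H\"older inequality for a connected $d$-regular $H$ forces $W$ to be essentially regular/constant on the relevant support. Combined with equality in Jensen (all values of $W^d$ lying on a single affine piece of $\hat\psi$ containing $r^d$), I expect this to force $W\equiv r$. The delicate case is when $r^d$ is an endpoint of a linear segment of $\hat\psi$; there Jensen alone permits mixtures, and the H\"older equality case must be what rules them out.

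\emph{Symmetry-breaking half.} Assume $(r^d,J_p(r))$ lies strictly above $\hat\psi(r^d)$. Then there are $a<r^d<b$ and a weight $\theta$ with $\theta a+(1-\theta)b=r^d$ and $\theta\psi(a)+(1-\theta)\psi(b)<\psi(r^d)$. I will exhibit a feasible graphon with $I_p<J_p(r)$. The natural candidate is the Lubetzky--Zhao ``clique/anti-clique'' planted construction. Take a block of measure $\epsilon$ where $W=1$ (or a high value), with $W=q$ elsewhere for a suitably reduced $q$, and compute $t(H,W)$ to first order in $\epsilon$. If that does not close the gap, use a hub construction instead: $W$ large on $[0,\epsilon]\times[0,1]$ and symmetric, plus $q$ elsewhere. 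The hub graphon exploits the $d$-th power structure, which is exactly where $x\mapsto J_p(x^{1/d})$ enters.

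The main obstacle is matching the convex-minorant geometry exactly, rather than just asymptotically, by an explicit feasible competitor. My plan is to take a two-valued graphon realizing the chord: $W^d\in\{a,b\}$ on sets arranged so that $t(H,W)\ge(\int W^d)^{|E(H)|/d}$ approximately holds. This is achieved by making $W$ constant on a large block and using a vanishing-measure hub to absorb the excess. If the chord endpoints prevent the hub trick from reaching exactly $r^d$, I will instead perturb $W\equiv r$ along a two-block direction. I would then show that the second variation of the Lagrangian is negative precisely when $\psi''$ lies below the chord slope, i.e.\ off the minorant. That gives $W\equiv r\notin\W_H(p,r)$.
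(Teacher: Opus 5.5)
The paper does not prove this statement. It quotes it from Lubetzky--Zhao, and it is one of the axioms of the Lean development, so your proposal has to stand on its own. I write $\psi=\varphi_{p,d}$, $m=|E(H)|$, $v=|V(H)|$.

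\emph{Replica-symmetric half.} Your chain $I_p(W)=\int\psi(W^d)\ge\int\hat\psi(W^d)\ge\hat\psi(\int W^d)\ge\hat\psi(r^d)=J_p(r)$ is the standard argument. The paper uses the same supporting-line form of it in \Cref{lem:scalar-quadratic-bound,prop:graphon-quadratic-bound}. The $\max(W,p)$ detour is unnecessary, since \eqref{eq:generalized-holder} already gives $\int W^d\ge r^d>p^d$.

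Your uniqueness sketch, however, rests on a false claim. Equality in \eqref{eq:generalized-holder} does not force $W$ to be constant: every rank-one $W=f\otimes f$ has $t(H,W)=q(f)^v=(\int W^d)^{m/d}$, and the $W_h$ of \Cref{thm:singular-endpoint} are examples.

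You do not need that claim, and the endpoint case is not delicate:
\begin{itemize}
\item $\hat\psi$ is convex, vanishes only at $p^d$, and is therefore strictly increasing on $[p^d,1]$. Equality in your chain thus forces $\int W^d=r^d$.
\item Equality in Jensen, together with $\psi(W^d)=\hat\psi(W^d)$ a.e., puts $W^d$ a.e.\ in the contact set of the tangent line at $r^d$ with $\psi$.
\item By \Cref{lem:convexity-defect,lem:contact-points}, that contact set is $\{r^d\}$, or a pair $\{x_a(p),x_b(p)\}$ containing $r^d$.
\item A variable supported on such a set with mean $r^d$ equals $r^d$ a.s., so $W\equiv r$.
\end{itemize}

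\emph{Symmetry-breaking half: this is the genuine gap.} No valid competitor is produced.
\begin{itemize}
\item A hub of vanishing measure carries a vanishing fraction of the edge mass, so it cannot realize a chord with fixed weights.
\item The requirement $t(H,W)\ge(\int W^d)^{m/d}$ is the reverse of \eqref{eq:generalized-holder}.
\item The second-variation fallback fails. On $\{\int U=0\}$, with $\mu m r^{m-1}=J_p'(r)$, the second variation of $I_p-\mu t(H,\cdot)$ at $W\equiv r$ is $J_p''(r)\|U\|_2^2-\frac{2(d-1)J_p'(r)}{r}\|T_U\1\|_2^2$. Since $\|T_U\1\|_2^2\le\frac12\|U\|_2^2$, this is at least $\frac{h_{p,d}(r)}{r}\|U\|_2^2$.
\item So it is positive definite whenever $\psi''(r^d)>0$. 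That includes $r^d\in(x_a(p),u_-(p)^d)\cup(u_+(p)^d,x_b(p))$, where the point is off the minorant. No $L^\infty$-small perturbation can detect symmetry breaking there.
\end{itemize}

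\emph{What is missing.} You need a perturbation that is small in measure but not in amplitude, plus a simpler reading of the criterion.
\begin{itemize}
\item Let $\ell$ be the tangent line to $\psi$ at the interior point $r^d$. If $\ell\le\psi$ on $[0,1]$, then $\ell\le\hat\psi$, so $\hat\psi(r^d)=\psi(r^d)$. Hence, off the minorant, some $b\in[0,1]$ has $\psi(b^d)<\ell(b^d)$.
\item Let $V_c$ be bipodal with a pode of size $c$, cross density $b$, any fixed density inside the small pode, and large-pode density $q(c)$. Choose $q(c)$ by the implicit function theorem (note $\partial_q t=mr^{m-1}\ne0$) so that $t(H,V_c)=r^m$.
\item The computation of \Cref{lem:bipodal-quadratic-bound}, now imposing only the $H$-density constraint, gives $q(c)=r-\frac{2(b^d-r^d)}{dr^{d-1}}c+O(c^2)$.
\item It follows that
\[
  I_p(V_c)=J_p(r)+2c\{\psi(b^d)-\ell(b^d)\}+O(c^2)<J_p(r)
\]
for small $c>0$, so $W\equiv r\notin\W_H(p,r)$.
\end{itemize}
No chord has to be matched exactly. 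This is the same vanishing-pode mechanism that underlies \Cref{thm:nonexceptional-optimizers}.
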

A graphon $W$ is called \emph{bipodal} if there is a measurable set $A\subseteq[0,1]$ such that, writing $B=[0,1]\setminus A$, the graphon $W$ is almost everywhere equal to a constant on each of the three sets $A\times A$, $(A\times B)\cup(B\times A)$, and $B\times B$; see \Cref{sec:graphons}.
Thus, in the graphon formulation, the question of Lubetzky and Zhao asks
whether every minimizer of~\eqref{eq:graphon-variational-problem} in the
symmetry-breaking phase is bipodal.  The following theorem gives a local
affirmative answer away from the exceptional density, being the variational version of
Theorem~\hyperref[thm:main-results-informal-a]{\ref*{thm:main-results-informal}~\ref*{thm:main-results-informal-a}}.

\begin{theorem}\label{thm:nonexceptional-optimizers}
    Let $H$ be a $d$-regular graph with $d\ge2$.
    If $r_0 \in (0, 1)\setminus\{(d-1)/d\}$, then there exists
    an open neighborhood $U$ of $(\pc(r_0), r_0)$ such that for
    any $(p, r)\in U$ with $p<\pc(r)$, the minimizer of~\eqref{eq:graphon-variational-problem}
    is unique up to relabeling, nonconstant, and bipodal.
    For every fixed $r$ with $(\pc(r),r)\in U$, the size of its smaller
    block tends to zero as $p\uparrow\pc(r)$.
    Furthermore, the unique optimizer $W_{p, r}$ satisfies
      \[
    e(W_{p,r})=r-\frac{\lambda(p,r)}{A_H(r)}+O_{H,U}(\lambda(p,r)^2),
    \qquad
    \Phi_H(p,r)=J_p(r)-\frac{\lambda(p,r)^2}{2A_H(r)}
      +O_{H,U}(\lambda(p,r)^3),
  \]
    where $A_H:(0,1) \setminus\{(d-1)/d\} \to (0, \infty)$ is an analytic function.
\end{theorem}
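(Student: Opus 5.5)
The plan is to reduce the problem to a finite-dimensional one near the boundary, following the strategy in the AI disclosure: import the Kenyon--Radin--Ren--Sadun multipodality/bipodality theorems~\cite{kenyon2014entropy} for constrained entropy optimization.

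\textbf{Step 1: reduce to a two-constraint problem.} By a generalized H\"older inequality for $d$-regular $H$, $t(H,W)\le t(K_2\text{-degree structure})$. More usefully, in the Lubetzky--Zhao analysis the optimizers near the boundary are controlled through the degree function. Concretely, I would show that for $(p,r)$ near $(\pc(r_0),r_0)$ every optimizer $W$ is close in cut distance (indeed in $L^1$) to the constant $r_0$. This follows from the upper semicontinuity of $\W_H(p,r)$ in $(p,r)$ and the uniqueness of the constant optimizer on the boundary (\Cref{thm:lz-criterion}). Any optimizer then satisfies Euler--Lagrange equations. In those equations $J_p'(W(x,y))$ equals a multiplier times $\partial t(H,W)/\partial W(x,y)$, and the latter is a sum over edges of $H$ of two-point marginals.

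Near the constant graphon, these marginals should be nearly determined by degree-type quantities. I would use this to show that $W(x,y)=\Gamma(f(x),f(y))$ for an explicit analytic $\Gamma$ and a scalar profile $f$. This places us in the framework of~\cite{kenyon2014entropy}: entropy maximization with constraints on the edge and $H$ densities near the Erd\H{o}s--R\'enyi curve. Their theorem that optimizers in a neighborhood of the boundary are bipodal and unique would then transfer. Equivalently, I would first fix $e(W)=e$ and minimize $I_p$ subject to $t(H,W)\ge r^{|E(H)|}$. Since $I_p(W)=I_{1/2}(W)+(\text{linear in }e)$ after the log-odds rewriting, this becomes exactly the KRRS problem. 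It then remains to optimize the one-dimensional parameter $e$.

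\textbf{Step 2: reduce to block parameters.} Once bipodality is known, write $W$ by its block size $c$ and densities $(a,b,q)$. This gives an analytic four-variable problem whose objective and constraint can be written explicitly via $t(H,\cdot)$ on block graphons, a polynomial in $c,a,b,q$. Near the boundary, the limiting configuration is $c\to0$ with the large-block density tending to $r$. I would rescale, introducing $c$ and using $\lambda$ as a parameter, and apply the analytic implicit function theorem to the first-order conditions. The nondegeneracy of the Jacobian at $\lambda=0$ is precisely where $r_0\neq (d-1)/d$ enters. I expect the relevant second derivative to be $A_H(r)$, which is computed from the curvature of $J_p(x^{1/d})$ at the tangency point defining $\pc$ (\Cref{thm:scalar-lz-boundary}). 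It degenerates to zero exactly at $r_*$. This yields analytic block parameters, uniqueness of the critical branch with $c>0$, and $c\to0$ as $\lambda\downarrow0$.

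\textbf{Step 3: expansions.} Substitute the implicit solution. By the envelope theorem, $\partial_\lambda\Phi$ equals the conjugate variable, namely the edge-density deficit. A second-order Taylor expansion then gives $e(W_{p,r})=r-\lambda/A_H+O(\lambda^2)$ and $\Phi=J_p(r)-\lambda^2/(2A_H)+O(\lambda^3)$, with constants uniform on $U$ by analyticity and compactness.

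\textbf{Main obstacle.} I expect Step 1 to be hardest: verifying that the hypotheses of the KRRS theorems (proximity to the ER curve, and applicability to general regular $H$ rather than just edges and triangles) hold uniformly. Also hard is excluding optimizers that are not close to constant, which requires quantitative stability of the Lubetzky--Zhao argument. I would first try the compactness argument combined with a second-variation estimate at $W\equiv r$.
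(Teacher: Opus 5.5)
\textbf{There is a genuine gap: nothing in your plan proves $A_H(r)>0$, and everything downstream depends on it.} Your outer structure does match the paper's. You fix the edge density, use \eqref{eq:relative-entropy-identity} to turn the fixed-density problem into the KRR--S entropy problem, and then optimize one function of the deficit $\delta=r-e(W)$. Your compactness argument is \Cref{lem:boundary-convergence}. The Euler--Lagrange detour toward $W=\Gamma(f(x),f(y))$ is unjustified for general $H$ and not needed. Note also that the optimizer stays $O(1)$ away from $r$ on its vanishing pode, since $b\to\sm(r)\neq r$, so no linearization about the constant graphon can capture it.

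Positivity of $A_H$ is load-bearing in three places:
\begin{itemize}
\item the implicit function theorem for $\delta_*$ has Jacobian $\partial_\delta^2G(r_0,0)=A_H(r_0)$, and you need its sign, not merely nondegeneracy;
\item uniqueness needs strict convexity of $\delta\mapsto G(r,\delta)-\lambda(p,r)\delta$;
\item the expansions divide by $A_H$.
\end{itemize}
Your heuristic for $A_H$ is also off. It is not a curvature of $x\mapsto J_p(x^{1/d})$ at a tangency point. At first order a deficit costs $2c\,g_r(b)$ with $\delta\approx 2cD_d(r,b)$. This vanishes at $b=\sm(r)$ precisely because the supporting line has a second contact. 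So the $\delta^2$-coefficient collects genuinely second-order data of the bipodal parameters: the small-pode density $a$, invisible at first order, and the $m$-dependent two-vertex terms of $t(H,\cdot)$. Its sign cannot be read off from any closed form. The paper does not show that $A_H$ degenerates at $r_*$; it leaves that behavior open.

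\textbf{The missing idea is the sandwich $C_{d,I}\delta^2\le G_r(\delta)\le C_{H,I}\delta^2$.}
\begin{itemize}
\item The lower bound (\Cref{lem:scalar-quadratic-bound,prop:graphon-quadratic-bound}) applies condition (M5) of \Cref{thm:scalar-lz-boundary} to $X=W(U,V)$. That condition is the uniform quadratic separation of the supporting line from its two contacts $r^d$ and $\sm(r)^d$. The argument uses $\EE X=r-\delta$ and $\EE X^d\ge r^d$ from generalized H\"older. This is where $r\neq r_*$ is used for the sign: at $r_*$ the contacts merge and the gap becomes quartic.
\item The upper bound (\Cref{lem:bipodal-quadratic-bound}) is an explicit bipodal competitor with cross density $\sm(r)$.
\end{itemize}
Together with analyticity of $G(r,\cdot)$ through $\delta=0$, this forces $\partial_\delta G(r,0)=0$ and $A_H(r)=\lim_{\delta\downarrow0}2G_r(\delta)/\delta^2>0$ (\Cref{thm:positive-second-variation}).

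\textbf{Your Step~2 also glosses over that analyticity.} At $c=0$ the densities $a,b$ are invisible. The $a$- and $b$-derivatives of objective and constraints vanish to orders $c^2$ and $c$. Unless you divide out these powers, the Jacobian of the first-order system in $(c,a,b,q)$ is singular at $\lambda=0$. The paper does this division when constructing the two-sided continuation of the KRR--S map (\Cref{thm:krrs-analytic-extension}). Nondegeneracy there comes from $S_0''<0$ and $\partial_z^2\psi_d(\eps,\zeta_d(\eps))<0$ for $\eps\neq r_*$.

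\textbf{Two entry conditions for KRR--S are missing.} To land in the regime $0<r^m-e(W)^m<\Delta$ you need the active constraint (\Cref{lem:active-constraint}), since KRR--S fixes $t(H,W)$ exactly. You also need the strict deficit $e(W)<r$ (\Cref{lem:edge-density-deficit}). Cut-closeness to $r_0$ gives neither. With these pieces in place, your Step~3 envelope argument goes through as in the paper.
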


To explain why the exceptional density requires a separate argument,
consider $H=K_3$, for which $d=2$ and the exceptional density is $1/2$.
Fix $r\ne1/2$, and let $c(p,r)$ denote the size of the smaller
block.  As $p\uparrow\pc(r)$ from the symmetry-breaking side,
\Cref{rmk:bipodal-parameter-expansions} gives the expansion
\[
  c(p,r)
  =\frac{r}{(2r-1)^2A_{K_3}(r)}\lambda(p,r)
   +O_r\bigl(\lambda(p,r)^2\bigr).
\]
Here the $O_r(\lambda(p,r)^2)$ term is a remainder whose absolute value is
at most $C_r\lambda(p,r)^2$ for $p$ sufficiently close to $\pc(r)$.
For this fixed $r$, the coefficient of $\lambda(p,r)$ is finite and
$\lambda(p,r)\to0$, so the expansion implies $c(p,r)\to0$: the smaller
block disappears.
The difficulty arises when $r$ also tends to $1/2$ as the phase boundary is
approached.  The factor $(2r-1)^2$ in the denominator then tends to zero,
and our estimates for $A_{K_3}(r)$ do not determine whether the full
coefficient of $\lambda(p,r)$ remains bounded.
The error constant $C_r$ and the required closeness of $p$ to $\pc(r)$ may
also depend on $r$.  Thus, knowing that $\lambda(p,r)\to0$ is no longer
enough to conclude from this expansion that $c(p,r)\to0$.
The preceding analysis therefore does not settle whether a block still
disappears along approaches to the exceptional boundary point.

The following result gives the variational version of
Theorem~\hyperref[thm:main-results-informal-b]{\ref*{thm:main-results-informal}~\ref*{thm:main-results-informal-b}},
in terms of asymptotically balanced rank-one graphons.  Here, a graphon $W$ is called
\emph{rank-one} if $W(x,y)=f(x)f(y)$ almost everywhere for some measurable
$f:[0,1]\to[0,1]$.  The result still supports an affirmative answer to the
bipodality question by producing symmetry-breaking points arbitrarily close to the boundary
whose optimizers are bipodal, but with different limiting behavior.
Here we use $h$, half the difference between the two values of the rank-one
factor $f$, rather than $\lambda$ as the expansion parameter.
Unlike $A_H(r)$ above, the leading coefficients in the expansions below
depend only on $d$, although the admissible range $0<h<h_0$ may depend on $H$.
\begin{theorem}\label{thm:endpoint-optimizers}
Let $H$ be a $d$-regular graph with $d\ge2$.  There exist $h_0>0$, an
analytic curve $h\mapsto(p_h,r_h)$ for $0<h<h_0$, and nonconstant rank-one
bipodal graphons $W_h$ such that $p_h<\pc(r_h)$ and
\[
  (p_h,r_h)\longrightarrow
  \left(\pc\left(\frac{d-1}{d}\right),\frac{d-1}{d}\right)
  \qquad
  (h\downarrow0).
\]
The graphon $W_h$ satisfies $t(H,W_h)=r_h^{|E(H)|}$
and is the unique minimizer of \eqref{eq:graphon-variational-problem}, up to
relabeling, at $(p_h,r_h)$.  As $h\downarrow0$,
the sizes of its two blocks both converge to $1/2$, while its two within-block densities
and its cross density all tend to $(d-1)/d$.  Hence, $W_h$
converges in $L^\infty$ to the constant graphon $W\equiv(d-1)/d$.
Moreover,
\[
  e(W_h)=r_h-(d-1)h^2+O_d(h^4),
  \qquad
  \Phi_H(p_h,r_h)=J_{p_h}(r_h)-\frac{d^3}{3}h^4+O_d(h^6).
\]
\end{theorem}

\subsection{Proof strategy}\label{sec:proof-strategy}
The proofs of \Cref{thm:nonexceptional-optimizers,thm:endpoint-optimizers}
use the supporting-line geometry underlying the Lubetzky--Zhao boundary,
but identify the optimizers using different strategies.
Let $r_*=(d-1)/d$ be the exceptional density.

\begin{enumerate}[label=(\roman*),leftmargin=2.2em]
\item \emph{Nonexceptional case (\Cref{thm:nonexceptional-optimizers}).}
For $r\ne r_*$, we first prove a quadratic lower bound for the gap between
the scalar function $x\mapsto J_{\pc(r)}(x^{1/d})$ and its supporting line
at $x=r^d$.  This line also touches the curve at $x=\sm(r)^d$, with
$\sm(r)\ne r$.  Then, we show that the gap is at least a
positive constant times the squared distance from $x$ to the nearer of
these two contact points.  The same positive constant works for every $r$
in a fixed closed interval contained in $(0,1)$ that does not contain
$r_*$.  We exclude $r_*$ because there the two contacts merge and the gap
vanishes to fourth order, so a quadratic lower bound no longer holds.
See \Cref{sec:lz-boundary}.

We next reduce the choice of an optimizing graphon to the choice of its
edge density.  We show that every optimizer
has exactly the required $H$-density $r^{|E(H)|}$, and that its edge density
approaches $r$ from below as $p\uparrow\pc(r)$, so that we can constrain its
edge density at $r-\delta$, where $\delta>0$, the edge-density deficit, is sufficiently small.
By using the relative entropy identity \eqref{eq:relative-entropy-identity},
for each fixed $p$, $r$, and $\delta$, minimizing $I_p$ among graphons with the
two constraints $e(W)=r-\delta$ and $t(H,W)=r^{|E(H)|}$ is equivalent to maximizing
graphon entropy in \eqref{eq:graphon-entropy}.
The theorem of Kenyon, Radin, Ren, and Sadun \mbox{(KRR--S)}~\cite{kenyon2014entropy}
shows that maximizing graphon entropy under these two constraints
yields a unique bipodal graphon, up to relabeling, for sufficiently small $\delta>0$.
Thus, each $\delta$ selects one bipodal candidate for the original relative
entropy minimization problem.  Write $B_\delta$ for this candidate.
See \Cref{sec:local-reduction}.

It remains to prove that the minimizing deficit $\delta$ is unique and to
derive the expansions of the corresponding edge density and relative entropy.
For fixed $r$, we first compute the extra cost of $B_\delta$ at $p=\pc(r)$,
compared to the constant graphon $W\equiv r$,
since changing $p$ adds only an explicit linear correction in $\delta$, as shown below.
The constant graphon is optimal there, so the comparison quantifies
the cost of lowering the edge density while keeping the same $H$-density.
For sufficiently small $\delta>0$, the scalar gap bound and generalized
H\"older's inequality give a quadratic lower bound on this extra cost.
We also obtain a quadratic upper bound by constructing a graphon with
the same densities and an extra cost of order $\delta^2$; $B_\delta$ has no
greater cost because it minimizes relative entropy under these constraints.
The analytic bipodal parametrization then gives the leading Taylor term
$\frac12A_H(r)\delta^2$, where $A_H(r)>0$ is the second derivative of the
extra cost at $\delta=0$ and is analytic in $r$.
See \Cref{sec:nonexceptional-quadratic-growth}.

We now shift this boundary calculation to $p<\pc(r)$, where the constant graphon is no longer optimal.
The relative entropy identity \eqref{eq:relative-entropy-identity} gives, for $p<\pc(r)$,
\[
  I_p(B_\delta)-J_p(r)
  =\bigl[I_{\pc(r)}(B_\delta)-J_{\pc(r)}(r)\bigr]
   -\lambda(p,r)\delta.
\]
The bracketed term is the excess cost at the boundary, analyzed in the
previous paragraph; the term $-\lambda(p,r)\delta$ is the correction
caused by changing $p$.  Inserting the previous boundary calculation gives
\[
  I_p(B_\delta)-J_p(r)
  =-\lambda(p,r)\delta+\frac12A_H(r)\delta^2+O_{H,r}(\delta^3)
  \qquad(\delta\downarrow0,\ r\text{ fixed}).
\]
The linear term favors a reduction in edge density, while the quadratic
term penalizes it.  Balancing the two gives a minimizing value of $\delta$
with leading term $\lambda(p,r)/A_H(r)$.  Strict convexity makes this
minimizing value unique, and hence also makes the graphon optimizer unique
up to relabeling.  The analytic implicit function theorem shows that the
minimizing $\delta$ depends analytically on $(p,r)$.  Evaluating the
parameters of $B_\delta$ at the minimizing value of $\delta$ determines
the optimizing graphon; the parameters also depend analytically on $(p,r)$
due to the KRR--S theorem.  Evaluating $I_p(B_\delta)$ at the same value
gives the expansion of the minimum relative entropy $\Phi_H(p,r)$.
See \Cref{sec:nonexceptional-proof}.

\item \emph{Exceptional case (\Cref{thm:endpoint-optimizers}).}
At $r=r_*$, the curve $x\mapsto J_{\pc(r_*)}(x^{1/d})$ has a supporting
line at $x=r_*^d$, and the gap between the curve and this line vanishes
to fourth order there.  Since the KRR--S theorem does not cover this density,
we develop a separate argument in \Cref{sec:singular-endpoint}.

We first seek a family of candidate graphons of the product form $W(x,y)=f(x)f(y)$
for the original relative entropy minimization problem.
Here $f$ assigns a weight to each vertex, and the edge probability is the
product of the two vertex weights.  This form simplifies the $H$-density:
since $H$ is $d$-regular,
\[
  t(H,W)=\left(\int_0^1 f(x)^d\,dx\right)^{|V(H)|}.
\]
Thus, the $H$-density constraint reduces to a condition on a single
integral of $f$.  We use the product form to construct candidate graphons;
we then prove their optimality by comparison with arbitrary graphons
satisfying the required $H$-density constraint.

To determine the candidate graphons, we impose necessary conditions for a
constrained minimum: the derivative of the relative entropy must vanish
under differentiable perturbations that keep the $H$-density fixed.
These are the \emph{stationarity conditions}.  For nonconstant $W$ of
this product form with $f$ bounded away from $0$ and $1$, the resulting equations
force $f$ to take exactly two values, thus $W$ is bipodal.  We also impose
stationarity under changes of block size.  We use $h>0$ to denote half the difference
between the two values of $f$.  The stationarity conditions for the
graphon values and for changes in block size together determine the two
values of $f$, the block sizes, and $p$ as analytic functions of $h$
near $0$.  We define $r$ by $t(H,W)=r^{|E(H)|}$ and parametrize the resulting
family by $h \mapsto (f_h,W_h,p_h,r_h)$; see \Cref{sec:rank-one-stationary-family}.

By construction, the block sizes tend to $1/2$ and the parameter pair
$(p_h,r_h)$ approaches $(\pc(r_*),r_*)$ as $h\downarrow0$.
We then show that $W_h$ has lower relative entropy, by a quantity of order
$h^4$, than the constant graphon $W\equiv r_h$ with the same $H$-density.
See \Cref{sec:constant-graphon-comparison}.

It remains to show that $W_h$ is the unique optimizer among all graphons.
Take any graphon $W$ satisfying the density constraint
$t(H,W)\ge r_h^{|E(H)|}$ and having relative entropy
$I_{p_h}(W)\le I_{p_h}(W_h)$.  We must prove that $W$ agrees with $W_h$ up to
relabeling.  The fourth-order supporting gap described above provides
the first step: together with the density constraint and this relative entropy
comparison, it gives
\begin{equation}\label{eq:intro-quartic-localization}
  \int_{[0,1]^2}|W(x,y)-r_*|^4\,dx\,dy\le C_d h^4.
\end{equation}
Thus, every such competitor is within $O_d(h)$ of the constant graphon
$r_*$ in the $L^4$-norm.  To compare such $W$ with $W_h$ more precisely, we write
$W(x,y)=f(x)f(y)+E(x,y)$, separating a product part from a remainder $E$.
We choose $f$ so that the difference between the $H$-densities of $W$
and its product part is at most a cubic quantity in the $L^2$-norm of $E$:
\begin{equation}\label{eq:intro-density-error}
  \left|t(H,W)-\left(\int_0^1 f(x)^d\,dx\right)^{|V(H)|}\right|
  \le C_H\|E\|_2^3.
\end{equation}
For this choice of $f$, the localization bound
\eqref{eq:intro-quartic-localization} gives
$\|f-\sqrt{r_*}\|_4=O_d(h)$ and $\|E\|_2=O_d(h)$.
These bounds provide the smallness needed to control the higher-order
terms in the following entropy comparison.
See \Cref{sec:localization-rank-one}.

We then establish a lower bound for the relative entropy difference
$I_{p_h}(W)-I_{p_h}(W_h)$ that accounts both for the remainder $E$
and for deviations of $f$ from the two distinct values
$s_h$ and $t_h$ taken by $f_h$.  Combining this bound with the density
constraint and the cubic bound
\eqref{eq:intro-density-error} shows that, for sufficiently
small $h$, $I_{p_h}(W)\ge I_{p_h}(W_h)$, with equality if and only if
\[
  E(x,y)=0 \quad \text{a.e.},\qquad
  f(x)\in\{s_h,t_h\} \quad \text{a.e.},\qquad
  \int_0^1 f(x)^d\,dx=\int_0^1 f_h(x)^d\,dx.
\]
Our assumption $I_{p_h}(W)\le I_{p_h}(W_h)$ therefore forces all three
conditions.  The first two say that $W$ itself has the product form,
with the same two factor values as $W_h$.  The last condition fixes the block sizes.
Thus, $W$ agrees with $W_h$ up to relabeling.  This proves optimality
and uniqueness along the constructed curve.
Since $\Phi_H(p_h,r_h)=I_{p_h}(W_h)$, the earlier cost expansion for $W_h$
now gives the expansion of the minimum relative entropy.
See \Cref{sec:auxiliary-lagrangian,sec:graphon-comparison,sec:singular-proof}.
\end{enumerate}

\paragraph{Organization.}
\Cref{sec:preliminaries} records the graphon preliminaries and the
entropy-maximization results.  \Cref{sec:lz-boundary} analyzes the Lubetzky--Zhao
phase boundary, and \Cref{sec:nonexceptional-endpoint,sec:singular-endpoint}
prove the nonexceptional and exceptional results, respectively.
\Cref{sec:lean-formalization} describes our Lean 4 formalization,
including its scope and assumptions.
\Cref{sec:concluding-remarks} discusses further directions.
Technical arguments deferred from the main text
appear in Appendices~\ref{app:krrs-analytic-extension}, \ref{app:lz-scalar-geometry},
\ref{app:nonexceptional-proofs}, and
\ref{app:endpoint-calculations}, which contain, respectively, the derivation of the
two-sided KRR--S parametrization, the scalar convex-analysis proofs,
the reduction proofs and bipodal parameter expansions for nonexceptional endpoints,
and the construction and parameter calculations for the singular-endpoint family.

%% file: sections/preliminaries.tex
\section{Preliminaries}\label{sec:preliminaries}

This section reviews the graphon framework and the background results used
later.  We recall the entropy-maximization results of Kenyon, Radin, Ren,
and Sadun~\cite{kenyon2014entropy} and state the two-sided extension of their
parametrization, proved in Appendix~\ref{app:krrs-analytic-extension}.

Subscripts on $O$, $\Omega$, and $\Theta$ indicate the dependence of
implicit constants; a set subscript indicates uniformity over that set.
Subscripts on $o$ indicate fixed parameters, with no uniformity asserted
unless stated.

\subsection{Graphons}\label{sec:graphons}

A graphon is a symmetric measurable function
$W:[0,1]^2\to[0,1]$.  We view $W$ as specifying a random graph model with
vertex labels in $[0,1]$, where $W(x,y)$ gives the probability of an edge
between vertices labeled $x$ and $y$.  We denote the space of all graphons
by $\W_0$.

The \emph{cut norm} of a symmetric measurable
$F:[0,1]^2\to\RR$ is
\[
  \|F\|_\cut
  =\sup_{S,T}\left|\int_{S\times T}F(x,y)\dd x\dd y\right|,
\]
the supremum being over measurable $S,T\subseteq[0,1]$, and the \emph{cut
distance} between two graphons is
\[
  \delta_\cut(U,W)=\inf_\sigma\|U-W^\sigma\|_\cut,
  \qquad
  W^\sigma(x,y):=W(\sigma x,\sigma y),
\]
where $\sigma$ ranges over measure-preserving a.e. bijections of $[0,1]$.
Two graphons $U$ and $W$ are \emph{weakly isomorphic} if
$\delta_\cut(U,W)=0$.  We denote the quotient of $\W_0$ by this equivalence
relation by $\widetilde{\W}_0$.  The cut distance induces a metric on this
quotient, and the resulting metric space $(\widetilde{\W}_0,\delta_\cut)$
is compact.  When discussing uniqueness or compactness, we work in
$\widetilde{\W}_0$: saying that a graphon is \emph{unique up to relabeling} means it is unique in this
quotient, i.e., graphons differing by a measure-preserving
relabeling of $[0,1]$ are identified.  Every homomorphism density
$t(H,\cdot)$ is $\delta_\cut$-continuous, and $I_p$ is lower semicontinuous
in the cut metric.  See~\cite{LovaszGraphLimits} for these facts.

For a finite graph $G$, define its associated step graphon $W_G$ by
partitioning $[0,1]$ into $|V(G)|$ intervals $I_i$ of equal length,
indexed by the vertices of $G$, and setting $W_G=1$ on $I_i\times I_j$
when $ij\in E(G)$ and $W_G=0$ otherwise.
We then write $\delta_\cut(G,W)=\delta_\cut(W_G,W)$.
This is independent of the vertex ordering, which changes $W_G$ only
by a measure-preserving relabeling.

We say that a graphon $W$ is \emph{bipodal} if there is a measurable set
$A\subset[0,1]$ such that, writing $A^c=[0,1]\setminus A$, the graphon $W$
is almost everywhere equal to a constant on each of
\[
  A\times A,
  \qquad
  (A\times A^c)\cup(A^c\times A),
  \qquad
  A^c\times A^c.
\]
Equivalently, after relabeling, a bipodal graphon is described by parameters
$(q_{11},q_{12},q_{22},c)$, where $c=|A|$ denotes the Lebesgue measure, or block
size, of $A$.  The parameters $q_{11}$ and $q_{22}$ are the edge densities
within $A$ and $A^c$, respectively, and $q_{12}$ is the edge density between
the two blocks.  Following the terminology in the entropy-maximization
literature, we also call the two blocks the two \emph{podes} of the graphon.

Recall the edge density $e(W)$, homomorphism density $t(H,W)$, and relative
entropy $I_p(W)$ defined in \eqref{eq:graphon-quantities}.
We write $W\equiv z$ for the constant graphon with value $z$.
For a finite $d$-regular graph $H$, generalized H\"older's inequality gives
the following bound for every graphon $W$ (see \cite{lubetzky2015large} for a proof):
\begin{equation}\label{eq:generalized-holder}
  t(H,W)\le
  \left(\int_{[0,1]^2}W(x,y)^d\,dx\,dy\right)^{|E(H)|/d}.
\end{equation}

\subsection{Entropy maximization problem}
The entropy-maximization problem reviewed here is closely related to the
Chatterjee--Varadhan variational problem~\eqref{eq:graphon-variational-problem}.
The connection between such constrained entropy problems and large
deviations for the uniform random graph $G(n,M)$ was established by Dembo
and Lubetzky~\cite{DemboLubetzky2018}.  For related work on constrained
graphon entropy maximization, see
\cite{KenyonRadinRenSadun2017,KenyonRadinRenSadun2017b,NeemanRadinSadun2023}.
Define the entropy density and the graphon entropy by
\begin{equation}\label{eq:graphon-entropy}
  S_0(z):=-\frac12\bigl[z\log z+(1-z)\log(1-z)\bigr], \qquad
  s(W)=\int_{[0,1]^2}S_0(W(x,y))\,dx\,dy.
\end{equation}
For fixed $p$ and fixed edge density, minimizing $I_p$ is equivalent to
maximizing $s$, since
\begin{equation}\label{eq:relative-entropy-identity}
  I_p(W)=-2s(W)-\log(1-p)+e(W)\log\frac{1-p}{p}.
\end{equation}

Fix a $d$-regular graph $H$ with $d\ge2$ and write
$m:=|E(H)|$.  For $0<\eps<1$ and $\eps^m<\tau<1$, consider the
entropy-maximization problem with edge density $\eps$ and $H$-density $\tau$:
\[
  \sup\{s(W):e(W)=\eps,\ t(H,W)=\tau\}.
\]
The constant graphon $W\equiv\eps$ has $H$-density $\eps^m$.
Accordingly, define
\[
  \vartheta:=\tau-\eps^m
\]
which is the excess $H$-density above the constant-graphon value.

For small positive excess, Kenyon, Radin, Ren,
and Sadun~\cite[Theorem~1.1]{kenyon2014entropy} proved
that for every $\eps\ne(d-1)/d$ there is a threshold
$\tau_0(\eps)>\eps^m$ such that, whenever
$\eps^m<\tau<\tau_0(\eps)$, this problem has a unique maximizer up to
relabeling, and the maximizer is bipodal.  Its four bipodal parameters are
real-analytic in $(\eps,\tau)$ throughout this region, which
is therefore open.  Consequently, for every fixed
$\eps_0\ne(d-1)/d$, one can choose a neighborhood $U$ of $\eps_0$ and a
single $\Delta>0$ such that the KRR--S conclusions hold for every $\eps\in U$
whenever $0<\vartheta<\Delta$.

In addition to establishing this analytic parametrization for
$\vartheta>0$, KRR--S determined its one-sided behavior as
$\vartheta\downarrow0$.  To state their result for the limiting cross density
$q_{12}$, define
\[
  \psi_d(\eps,z):=
  \frac{2\bigl[S_0(z)-S_0(\eps)-S_0'(\eps)(z-\eps)\bigr]}
       {z^d-\eps^d-d\eps^{d-1}(z-\eps)},
\]
where the removable singularity at $z=\eps$ is filled in continuously.  For
each $\eps\in(0,1)$, the function $z\mapsto\psi_d(\eps,z)$ has a unique
maximizer on $(0,1)$, denoted by $\zeta_d(\eps)$; see
\cite[Theorem~3.3 and Section~4]{kenyon2014entropy}.  With this notation, for
each fixed $\eps\ne(d-1)/d$, their result gives
\[
  q_{22}\longrightarrow\eps,
  \qquad
  q_{12}\longrightarrow\zeta_d(\eps),
  \qquad
  c=O_{H,\eps}(\vartheta)
  \qquad(\vartheta\downarrow0).
\]
\Cref{thm:krrs-analytic-extension} below also implies that $\zeta_d$ is real-analytic on
$(0,1)\setminus\{(d-1)/d\}$.

Because the four parameters depend on $(\eps,\tau)$, we call the function
\[
  (\eps,\tau)\longmapsto(q_{11},q_{12},q_{22},c)
\]
the KRR--S parameter map.  Equivalently, using
$\tau=\eps^m+\vartheta$, we view it as a function of
$(\eps,\vartheta)$.  Our later analysis differentiates its components and
uses their Taylor expansions at the boundary $\vartheta=0$.  For this
purpose, the parameter map must be defined on an open neighborhood of the
boundary, whereas KRR--S define it only for $\vartheta>0$.

The following theorem serves two purposes.  It records the KRR--S
conclusions for $\vartheta>0$ in the form used throughout the rest of the
paper, so that later arguments can refer directly to this theorem.  It also
adds the two-sided real-analytic continuation of the parameter map through
$\vartheta=0$ that those arguments require.
\begin{theorem}[Two-sided KRR--S extension for regular graphs]\label{thm:krrs-analytic-extension}
Fix an integer $d\ge2$.  Let $H$ be a $d$-regular graph with $m=|E(H)|\ge2$.
Fix an edge density
\[
  \eps_0\in(0,1)\setminus\left\{\frac{d-1}{d}\right\}.
\]
Then, there are an open neighborhood
$U\subseteq(0,1)\setminus\{(d-1)/d\}$ of $\eps_0$ and a constant $\Delta>0$
with the following property.  For every $\eps\in U$ and every $\tau$ such that
$0<\vartheta:=\tau-\eps^m<\Delta$, the entropy maximizer among graphons satisfying
\[
  e(W)=\eps,
  \qquad
  t(H,W)=\tau
\]
is unique up to relabeling and is bipodal.  After relabeling so that the
smaller block is the first block, the corresponding bipodal parameters define a map
\[
  (\eps,\vartheta)\longmapsto
  \bigl(q_{11}(\eps,\vartheta),q_{12}(\eps,\vartheta),
  q_{22}(\eps,\vartheta),c(\eps,\vartheta)\bigr)
\]
on $U\times(0,\Delta)$ that extends real-analytically
to $U\times(-\Delta,\Delta)$.  On $U\times\{0\}$, the extension satisfies
\[
  q_{11}(\eps,0)\in(0,1),
  \qquad
  q_{12}(\eps,0)=\zeta_d(\eps)\ne\eps,
  \qquad
  q_{22}(\eps,0)=\eps,
  \qquad
  c(\eps,0)=0.
\]
Moreover,
\[
  c(\eps,\vartheta)=O_{H,U}(\vartheta)
  \qquad(\vartheta\downarrow0),
\]
uniformly for $\eps\in U$.
\end{theorem}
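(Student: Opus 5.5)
The plan is to take the KRR--S conclusions for $\vartheta>0$ as given (uniqueness, bipodality, analyticity on the open region, and the one-sided limits $q_{22}\to\eps$, $q_{12}\to\zeta_d(\eps)$, $c=O(\vartheta)$). The only new content is the analytic continuation through $\vartheta=0$. To obtain it, I will reparametrize the bipodal family by the block size $c$ instead of $\vartheta$, and then invert.

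\textbf{Step 1: the bipodal family in terms of $c$.} For a bipodal graphon with parameters $(q_{11},q_{12},q_{22},c)$, both
\[
e=c^2q_{11}+2c(1-c)q_{12}+(1-c)^2q_{22}
\]
and $t(H,W)$ are polynomials in the four parameters. I write $t(H,W)$ as a sum over vertex $2$-colorings of $H$ with weights $c^{|A|}(1-c)^{|V|-|A|}$.

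The KRR--S maximizer is a constrained critical point of $s$. The Lagrange conditions for $(q_{11},q_{12},q_{22},c)$, with multipliers $\alpha,\beta$ for the two constraints, form an analytic system. However, this system degenerates at $c=0$, because the $q_{11}$ and $q_{12}$ equations carry factors of $c^2$ and $c$.

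I will rescale by dividing each equation by its natural power of $c$. The $q_{11}$ equation scales like $c^2$ (its leading term in $t$ comes from colorings with two adjacent vertices in $A$). The $q_{12}$ equation scales like $c$ (one vertex in $A$). The $c$ equation is divided by appropriate powers as well. The result is an analytic system $F(q_{11},q_{12},q_{22},\alpha,\beta;\eps,c)=0$ that makes sense at $c=0$.

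At $c=0$ the system should reduce to the following:
\begin{itemize}
\item $q_{22}=\eps$;
\item the $q_{12}$ equation, $S_0'(q_{12})-S_0'(\eps)=\tfrac{\beta}{2}\cdot d(q_{12}^{d-1}-\eps^{d-1})$ after normalization, together with the $c$-stationarity equation, which together say that $q_{12}$ is a critical point of $\psi_d(\eps,\cdot)$ with $\beta/2$ equal to the maximal value;
\item an equation determining $q_{11}\in(0,1)$ via $S_0'(q_{11})=S_0'(\eps)+\beta$-type terms.
\end{itemize}
This recovers the KRR--S limit $q_{12}=\zeta_d(\eps)$, and $\zeta_d(\eps)\neq\eps$ for $\eps\neq(d-1)/d$.

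\textbf{Step 2: the implicit function theorem.} I need nondegeneracy of the Jacobian of $F$ at the $c=0$ solution. This reduces to two facts:
\begin{itemize}
\item $\partial_z^2\psi_d(\eps,\zeta_d(\eps))<0$, i.e.\ the maximizer $\zeta_d(\eps)$ is nondegenerate;
\item strict concavity of $S_0$, which handles the diagonal $q_{11}$ and $q_{22}$ entries.
\end{itemize}
Nondegeneracy of $\zeta_d$ is exactly what KRR--S establish for $\eps\ne(d-1)/d$ (Section~4 of \cite{kenyon2014entropy}). At $\eps=(d-1)/d$ the maximizer collides with $z=\eps$ and the construction fails, which is why that point is excluded.

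The analytic IFT then gives parameters analytic in $(\eps,c)$ near $(\eps_0,0)$, and in particular shows that $\zeta_d$ is analytic.

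\textbf{Step 3: inversion from $c$ to $\vartheta$.} I expand
\[
\vartheta(\eps,c)=t(H,W)-e(W)^m=\kappa(\eps)\,c+O(c^2),
\]
and must show $\kappa(\eps_0)\neq0$. Substituting $e=\eps$, the linear term in $c$ should be proportional to $\zeta_d^{\,d}-\eps^d-d\eps^{d-1}(\zeta_d-\eps)$ (with a factor $m$). This is nonzero because $\zeta_d\neq\eps$ and $z\mapsto z^d$ is strictly convex.

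The analytic inverse function theorem then gives $c=c(\eps,\vartheta)$ analytic on $U\times(-\Delta,\Delta)$ with $c(\eps,0)=0$. Composing yields the analytic extension, and $c=O_{H,U}(\vartheta)$ follows from the inverse being analytic after shrinking $U$ to a compact neighborhood.

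\textbf{Step 4: identification with the KRR--S maximizer.} For $\vartheta>0$ small, the continued family must coincide with the KRR--S maximizer. KRR--S give a unique maximizer, bipodal, with $(q_{12},q_{22},c)\to(\zeta_d,\eps,0)$. Rescaling its Lagrange conditions shows it solves $F=0$ in a small neighborhood of the $c=0$ solution. Uniqueness in the IFT then identifies it with the continued family. It also confirms that the smaller block is the first one, since $c<1/2$.

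\textbf{Main obstacle.} I expect the hardest part to be the rescaling in Step 1: choosing the right powers of $c$ so that the limiting system is exactly the $\psi_d$ optimality condition and the Jacobian is invertible. The $c$ equation mixes contributions of different orders, and the multiplier $\beta$ enters through both $t$-derivatives. I would first try this with $H=K_3$ to fix the normalizations before treating general $d$-regular $H$.
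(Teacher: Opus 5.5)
Your route differs from the paper's, and its core (Steps 1--3) is sound.

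\textbf{Comparison with the paper.} The paper introduces no multipliers. It eliminates $q_{22}$ and $c$ through the two constraints, writing $q_{22}=Q(\eps,q_{11},q_{12},c)$ and then $c=C(\eps,q_{11},q_{12},\vartheta)$. It expresses the constrained entropy as $S_0(\eps)+\vartheta L(\eps,q_{11},q_{12},\vartheta)$ and applies the implicit function theorem to $(\vartheta^{-1}\partial_{q_{11}}L,\,\partial_{q_{12}}L)=0$ directly in $(\eps,\vartheta)$. This requires second-order expansions in $c$, to see that $q_{11}$ enters $L$ at order $\vartheta$ through a strictly concave coefficient. You instead solve the Lagrange system at fixed block size and invert $c\mapsto\vartheta$ afterwards, which is more mechanical.

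\textbf{Your rescaling works.} Only the $q_{11}$ and $q_{12}$ equations need dividing (by $c^2$ and $c$); the $c$-equation needs none. Put $\mathcal N_\eps(z)=2[S_0(z)-S_0(\eps)-S_0'(\eps)(z-\eps)]$ and $\mathcal D_\eps(z)=z^d-\eps^d-d\eps^{d-1}(z-\eps)$. At $c=0$ the $q_{22}$ equation gives $\alpha=S_0'(\eps)-\beta m\eps^{m-1}$. The $c$-equation then reads $\mathcal N_\eps(q_{12})=\frac{2m}{d}\beta\eps^{m-d}\mathcal D_\eps(q_{12})$. The rescaled $q_{12}$ equation minus the $q_{22}$ equation reads $\mathcal N_\eps'(q_{12})=\frac{2m}{d}\beta\eps^{m-d}\mathcal D_\eps'(q_{12})$. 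So $q_{12}$ is a critical point of $\psi_d(\eps,\cdot)$.

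The Jacobian in $(q_{11},q_{12},q_{22},\alpha,\beta)$ is block triangular. Its determinant is a nonzero multiple of $S_0''(q_{11})\,\partial_z^2\psi_d(\eps,\zeta_d(\eps))\,\mathcal D_\eps(\zeta_d(\eps))^2$. Also $\kappa(\eps)=\frac{2m}{d}\eps^{m-d}\mathcal D_\eps(\zeta_d(\eps))$. So invertibility rests on the same facts as in the paper.

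\textbf{Cost of using multipliers.} In Step 4 you must check constraint qualification; it holds, since the $(q_{22},c)$-minor of the constraint gradients tends to $\kappa(\eps)\ne0$. You must also show that $\alpha$, $\beta$ and $q_{11}$ converge along the KRR--S branch, not just $(q_{12},q_{22},c)$. This follows from the rescaled equations. The $q_{22}$ and $q_{12}$ equations form a linear system for $(\alpha,\beta)$ whose determinant tends to $m\eps^{m-d}(\zeta_d(\eps)^{d-1}-\eps^{d-1})\ne0$. The $q_{11}$ equation then pins down $S_0'(q_{11})$.

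\textbf{Gap: uniformity in $\eps$ in Step 4.} The KRR--S limits as $\vartheta\downarrow0$ hold only for each fixed $\eps$. So your argument places the maximizer in the implicit-function neighborhood only for $0<\vartheta<\vartheta_1(\eps)$, with no lower bound on $\vartheta_1$ over $U$. Likewise, KRR--S give a threshold $\tau_0(\eps)$ separately for each $\eps$. Neither yields the single $\Delta$ the theorem asserts, either for the uniqueness and bipodality claim or for the identification. Two further steps are needed.
\begin{enumerate}
\item The region where the KRR--S parameter map is real-analytic is open, and its fiber over each $\eps$ is the interval $(0,\tau_0(\eps)-\eps^m)$. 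Hence it contains a rectangle $U\times(0,\vartheta^\sharp)$.
\item Your argument identifies the KRR--S map with your continuation only on a nonempty open set near some $(\eps_0,\vartheta_1)$. The identity theorem for real-analytic maps on the connected rectangle $U\times(0,\Delta)$ then extends the equality to the whole rectangle.
\end{enumerate}
This is exactly the content of the paper's Step 4.

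\textbf{Nondegeneracy of $\zeta_d$.} Do not cite KRR--S for $\partial_z^2\psi_d(\eps_0,\zeta_d(\eps_0))<0$. The input recorded from their Theorem~3.3 is only uniqueness of the maximizer, monotonicity, and the involution property with fixed point $(d-1)/d$; the paper proves nondegeneracy itself. The argument is short. At an interior maximum, $\psi''=0$ would force $\psi'''=0$. Differentiating $\mathcal N_{\eps_0}=\psi_d(\eps_0,\cdot)\mathcal D_{\eps_0}$ then gives $\mathcal N''=\psi\mathcal D''$ and $\mathcal N'''=\psi\mathcal D'''$ at $z_0=\zeta_d(\eps_0)$. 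Insert $\mathcal N''=-1/(z(1-z))$, $\mathcal N'''=(1-2z)/(z^2(1-z)^2)$, $\mathcal D''=d(d-1)z^{d-2}$ and $\mathcal D'''=d(d-1)(d-2)z^{d-3}$. This forces $z_0=(d-1)/d$, and the involution property then gives $\eps_0=(d-1)/d$, a contradiction.
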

At $\vartheta=0$, the first pode has size zero, and hence the bipodal graphon
reduces to the constant graphon $W\equiv\eps$.  Consequently, the values of
$q_{11}$ and $q_{12}$ on the null first pode are not determined by the graphon
itself; their values at $\vartheta=0$ are the distinguished ones selected by
analytic continuation from $\vartheta>0$.  Likewise, for $\vartheta<0$, the
extended parameter functions are only analytic continuations and do not
describe entropy maximizers.
The proof of \Cref{thm:krrs-analytic-extension} uses two applications of the analytic implicit
function theorem, using $d$-regular restrictions of Theorems~1.1 and~3.3 of KRR--S~\cite{kenyon2014entropy};
see \Cref{app:krrs-analytic-extension}.

%% file: sections/lz_boundary.tex
\section{The Lubetzky--Zhao boundary}\label{sec:lz-boundary}
Fix $d\ge2$, and write
\[
  r_*:=\frac{d-1}{d},
  \qquad
  p_*:=\frac{d-1}{(d-1)+\exp(d/(d-1))}.
\]
For $p\in(0,1)$, define
\[
  \varphi_{p,d}(x)=J_p(x^{1/d})\qquad (0\le x\le1).
\]
Recall that the result of Lubetzky and Zhao~\cite{lubetzky2015large} states
that when $H$ is $d$-regular, the constant graphon $W\equiv r$ is the unique minimizer
of~\eqref{eq:graphon-variational-problem} if and only if the point
$(r^d, J_p(r))$ lies on the convex minorant\footnote{For a continuous
function $f$ on a compact interval $I$, the convex minorant is the largest
convex function $g:I\to\mathbb R$ such that $g(x)\le f(x)$ for every
$x\in I$.} of $\varphi_{p,d}$.
In this section, we analyze in detail the behavior of the function $\varphi_{p,d}$ and of its convex minorant; this is the key to the proof of \Cref{thm:nonexceptional-optimizers}.
The main objective of this section is to prove the following.
\begin{theorem}[Scalar Lubetzky--Zhao boundary]\label{thm:scalar-lz-boundary}
    There exist analytic functions
    \[
        \pc,\sm:(0,1)\setminus\{r_*\}\to(0,1)
    \]
    such that the following holds for every $r \in (0,1)\setminus\{r_*\}$, where $x_r = r^d$ and $x_s = \sm(r)^d$.
    \begin{enumerate}[label=(M\arabic*)]
        \item $0<\pc(r)<\min\{r,p_*\}$ and $\sm(r)\neq r$.
        \item For every $p\in(0,1)$, the point $(r^d, J_p(r))$ lies on the convex minorant of $\varphi_{p,d}$ if and only if $p\ge \pc(r)$. Thus, $\pc(r)$ defines the Lubetzky--Zhao boundary.
        \item The tangent line   \[
            \ell_r(x)=\varphi_{\pc(r),d}(x_r)+\varphi_{\pc(r),d}'(x_r)(x-x_r)
          \]
        to the graph of $\varphi_{\pc(r),d}$ at $x_r$ lies below that graph, i.e.\ $\ell_r(x)\le\varphi_{\pc(r),d}(x)$ for all $x\in[0,1]$.
        \item Equality in (M3) holds exactly when $x\in\{x_r,x_s\}$.
        \item For every compact $K \subseteq (0,1)\setminus\{r_*\}$, there is $\gamma_K>0$ such that, for all $r\in K$ and all $x\in[0,1]$,
        \begin{equation}\label{eq:contact-quadratic-separation}
          \varphi_{\pc(r),d}(x)-\ell_r(x) \ge \gamma_K\,\dist(x,\{r^d,\sm(r)^d\})^2.
        \end{equation}
    \end{enumerate} 
    Furthermore, $\pc$ and $\sm$ extend continuously to the exceptional
    point by setting
    \[
      \pc(r_*)=p_*,
      \qquad
      \sm(r_*)=r_*.
    \]
    With this continuous extension, condition (M2) holds for every $r\in(0,1)$.
\end{theorem}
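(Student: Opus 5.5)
The plan is to reduce everything to the geometry of $\varphi_{p,d}$, and to use the fact that changing $p$ only adds an affine function of $x^{1/d}$. Write $u=x^{1/d}$ and
\[
  J_p(u)=J_{1/2}(u)+u\,L(p)+\text{const},
  \qquad L(p)=\log\frac{1-p}{p}-\log 1 .
\]
Then the family $\varphi_{p,d}(x)=g(x)+L(p)x^{1/d}+c_p$ is a one-parameter perturbation of $g(x):=J_{1/2}(x^{1/d})$ by the concave function $x^{1/d}$.

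\textbf{Step 1: convexity structure.} I would first show that $\varphi_{p,d}$ is convex, then concave, then convex on $[0,1]$. Equivalently, the sign of $\varphi''$ changes at most twice. Computing in $u$,
\[
  \varphi''(x)\propto J_p''(u)\,u-(d-1)J_p'(u)
  = \frac{1}{1-u}-(d-1)\Bigl[\log\frac{u}{1-u}+L(p)\Bigr].
\]
The derivative of the right-hand side in $u$ is $\frac{1}{(1-u)^2}-\frac{d-1}{u(1-u)}$, which vanishes only at $u=r_*$. So $\varphi''$, viewed as a function of $u$, has a unique minimum at $u=r_*$. Hence the concave region is an interval around $r_*$ or empty. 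It is empty exactly when $L(p)\le L(p_*)$, i.e.\ $p\ge p_*$; indeed plugging in $u=r_*$ gives $d-(d-1)\log(d-1)-(d-1)L=0$, which is the formula for $p_*$.

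\textbf{Step 2: double tangents define $\pc$ and $\sm$.} For $p<p_*$, the standard convex–concave–convex picture gives a unique bitangent line, touching at points $a(p)<b(p)$. These satisfy $a<r_*^d<b$ and $a\downarrow r_*^d$, $b\uparrow r_*^d$ as $p\uparrow p_*$. The convex minorant coincides with $\varphi$ exactly off $(a(p),b(p))$.

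I claim $a$ is strictly increasing and $b$ strictly decreasing in $p$. The reason is that increasing $p$ subtracts a positive multiple of the concave function $x^{1/d}$, since $L$ decreases. The bitangent is determined by the equations
\[
  \varphi'(a)=\varphi'(b)=\frac{\varphi(b)-\varphi(a)}{b-a},
\]
and I would obtain monotonicity and analyticity from the analytic implicit function theorem. The needed Jacobian is nondegenerate because $\varphi''(a),\varphi''(b)>0$: strict convexity at the contact points holds off the inflection points.

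Then, for $r^d<r_*^d$, I define $\pc(r)$ by $a(\pc(r))=r^d$ and set $\sm(r)^d=b(\pc(r))$; the case $r>r_*$ is symmetric with $b$. This gives (M2): $(r^d,J_p(r))$ lies on the minorant iff $r^d\notin(a(p),b(p))$, which by monotonicity holds iff $p\ge\pc(r)$.

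(M3) and (M4) follow because the bitangent is the supporting line and touches only at $a$ and $b$. The bounds in (M1) follow directly: $\sm\ne r$ since $a<b$, and $\pc<p_*$ since $p_*$ is exactly where the concave region disappears. For $\pc(r)<r$, note that at $p=r$ the point $x=r^d$ is the global minimum of $\varphi_{r,d}$, where $\varphi=0$, so it lies on the minorant; the inequality is then strict by openness. Continuity at $r_*$ follows from $a,b\to r_*^d$ as $p\to p_*$.

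\textbf{Step 3: quadratic separation (M5).} The gap $D_r(x)=\varphi_{\pc(r),d}(x)-\ell_r(x)$ is nonnegative, vanishes only at $x_r$ and $x_s$, and has $D_r''>0$ at both zeros. Near each contact point, Taylor expansion with $D''\ge c>0$ gives $D_r\ge\frac c4(x-x_0)^2$ on a neighborhood. Away from these neighborhoods, continuity and positivity give $D_r\ge\eta>0$, which dominates $\dist^2\le1$ up to a constant.

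Uniformity over a compact $K\not\ni r_*$ comes from joint continuity of $D_r$ and $D_r''$ in $(r,x)$, together with the fact that $x_r$ and $x_s$ stay separated and stay away from the inflection points. Near $x=0$, where $\varphi'$ blows up because of the $x^{1/d}$ term, the gap grows only faster, so the bound still holds there.

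\textbf{Main obstacle.} The step I expect to be delicate is Step 2's implicit-function setup. I need to check that the Jacobian of the bitangent system is invertible, that $a(p)$ is strictly monotone with nonzero derivative (so that $\pc=a^{-1}$ is analytic), and that the endpoints $x=0,1$ never become contact points. For the last point, I would use $\varphi'(0^+)=-\infty$ and $\varphi'(1^-)=+\infty$.
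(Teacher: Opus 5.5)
Your architecture matches the paper's: the convexity defect $h_{p,d}$ with its unique minimum at $r_*$, the two contacts $a(p)<b(p)$ of the affine face of the convex minorant, $\pc$ defined by inverting $a$ (resp.\ $b$), and a local Taylor bound plus compactness for (M5). However, several load-bearing steps are only asserted, and two of them are genuine holes.

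\emph{Range of the contact maps.} You never show that $a$ maps $(0,p_*)$ onto $(0,r_*^d)$ and $b$ maps it onto $(r_*^d,1)$. You discuss only $p\uparrow p_*$, and you assert that limit without proof; the arrows should also read $a\uparrow r_*^d$, $b\downarrow r_*^d$. If $\inf_p a(p)$ were positive, the equation $a(\pc(r))=r^d$ would have no solution for small $r$. Indeed, no $\pc(r)\in(0,1)$ could then satisfy (M2), because the point would lie on the minorant for every $p$. The paper closes this with its contact-point limit lemma. As $p\downarrow0$, $L(p)\to+\infty$, so $h_{p,d}(z)\to-\infty$ for each fixed $z$. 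This forces $u_-(p)\to0$ and $u_+(p)\to1$, and then $a<u_-^d$, $b>u_+^d$ give the limits. As $p\uparrow p_*$, it passes to the limit in the equal-slope equation, with monotonicity keeping the limits inside $(0,1)$. Strict monotonicity of $\varphi_{p_*,d}'$ then gives $A=B=r_*^d$.

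\emph{Strictness of $\pc(r)<r$.} The phrase ``strict by openness'' does not prove it. For $r\ge p_*$ the bound is immediate from $\pc<p_*$. For $r<p_*$, your global-minimum observation only yields $r^d\le a(r)$, i.e.\ $\pc(r)\le r$. You must exclude $r^d=a(r)$. In that case the common tangent would have slope $\varphi_{r,d}'(r^d)=0$ and height $0$. The other contact would then be a second zero of $J_r$, i.e.\ equal to $r^d$, which is impossible. Strict monotonicity of $a$ then gives $\pc(r)<r$.

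\emph{Nonzero derivatives and patching.} The item you flag as the main obstacle is exactly what the paper computes, and your $x^{1/d}$ viewpoint makes it short. Set $\psi(x)=x^{1/d}$ and note $L'(p)=-1/(p(1-p))$. The linearized bitangent system gives
$\varphi''(a)(b-a)\,a'=L'(p)\bigl[\psi(b)-\psi(a)-\psi'(a)(b-a)\bigr]>0$
and
$\varphi''(b)\,b'=L'(p)\bigl[\frac{\psi(b)-\psi(a)}{b-a}-\psi'(b)\bigr]<0$,
both by strict concavity of $\psi$. To patch the local IFT solutions into one analytic curve, you also need uniqueness of the tangent pair within the two convex regions. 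The paper gets this by showing that the chord defect along the equal-slope curve is strictly decreasing. That is more than the ``standard picture'' you invoke.
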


The curvature threshold and the qualitative common-tangent description used
below already appear in the scalar analysis of Lubetzky and
Zhao~\cite[Appendix~A]{lubetzky2015large}.  We record the details in a form
adapted to the later local argument.  In particular, we establish analytic
dependence of the contact points, their endpoint limits, and the uniform
quadratic separation in (M5).  To keep the main line of the paper visible,
the proofs of the following three supporting lemmas are collected in
Appendix~\ref{app:lz-scalar-geometry}.

For $p\in(0,1)$ set
\[
  h_{p,d}(z):=zJ_p''(z)-(d-1)J_p'(z)
  \qquad (0<z<1).
\]

\begin{lemma}[Convexity defect]\label{lem:convexity-defect}
For every $p\in(0,1)$,
\[
  h_{p,d}'(z)=\frac{d(z-r_*)}{z(1-z)^2}
  \qquad\text{and}\qquad
  h_{p,d}(r_*)=d-(d-1)\log\frac{(d-1)(1-p)}{p}.
\]
Thus, $h_{p,d}$ is strictly decreasing on $(0,r_*)$, strictly increasing on
$(r_*,1)$, and has its unique minimum at $r_*$.  Moreover, for
$x\in(0,1)$ and $z=x^{1/d}$,
\[
  \varphi_{p,d}''(x)=\frac{h_{p,d}(z)}{d^2z^{2d-1}},
\]
so $h_{p,d}(z)$ and $\varphi_{p,d}''(x)$ have the same sign.  If $p\ge p_*$,
then $h_{p,d}\ge0$ and $\varphi_{p,d}$ is convex on $[0,1]$.  If $p<p_*$,
then $h_{p,d}$ has exactly two zeros
\[
  u_-(p)<r_*<u_+(p),
\]
is positive on $(0,u_-(p))\cup(u_+(p),1)$, and is negative on
$(u_-(p),u_+(p))$.  Accordingly, $\varphi_{p,d}$ is strictly convex on
$(0,u_-(p)^d)$ and $(u_+(p)^d,1)$ and strictly concave on
$(u_-(p)^d,u_+(p)^d)$.
\end{lemma}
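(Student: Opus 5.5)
This is a direct computation. We differentiate $J_p$, form $h_{p,d}$, and read off its monotonicity. Then we relate $\varphi''_{p,d}$ to $h_{p,d}$ by the chain rule, and finish with a sign analysis at the minimum $r_*$.

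\emph{Computing $h_{p,d}$ and $h_{p,d}'$.} We have
\[
J_p'(z)=\log\frac{z}{1-z}+\log\frac{1-p}{p},\qquad J_p''(z)=\frac{1}{z(1-z)},\qquad J_p'''(z)=\frac{2z-1}{z^2(1-z)^2}.
\]
Hence $h_{p,d}(z)=\frac{1}{1-z}-(d-1)J_p'(z)$. Differentiating,
\[
h_{p,d}'(z)=\frac{1}{(1-z)^2}-\frac{d-1}{z(1-z)}=\frac{z-(d-1)(1-z)}{z(1-z)^2}=\frac{dz-(d-1)}{z(1-z)^2},
\]
which equals the claimed $d(z-r_*)/(z(1-z)^2)$. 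At $z=r_*$ we have $1/(1-r_*)=d$ and $r_*/(1-r_*)=d-1$. This gives $h_{p,d}(r_*)=d-(d-1)\log\frac{(d-1)(1-p)}{p}$. The sign of $h'_{p,d}$ shows that $h_{p,d}$ is strictly decreasing on $(0,r_*)$ and strictly increasing on $(r_*,1)$, with its unique minimum at $r_*$.

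\emph{Second derivative of $\varphi_{p,d}$.} Put $z=x^{1/d}$, so $dz/dx=z^{1-d}/d$. The chain rule gives $\varphi'(x)=J_p'(z)z^{1-d}/d$. Differentiating once more,
\[
\varphi''(x)=\frac{z^{1-d}}{d}\Bigl(J_p''(z)z^{1-d}+(1-d)z^{-d}J_p'(z)\Bigr)\frac1d=\frac{zJ_p''(z)-(d-1)J_p'(z)}{d^2z^{2d-1}}.
\]
This is the stated formula, and it shows that $h_{p,d}(z)$ and $\varphi''_{p,d}(x)$ have the same sign.

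\emph{Sign analysis.} The minimum value $h_{p,d}(r_*)$ is strictly decreasing in $\log\frac{1-p}{p}$, hence strictly increasing in $p$. It vanishes exactly when $\log\frac{(d-1)(1-p)}{p}=\frac{d}{d-1}$, that is, $\frac{1-p}{p}=\frac{e^{d/(d-1)}}{d-1}$, which is $p=p_*$.

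If $p\ge p_*$, then $h_{p,d}\ge h_{p,d}(r_*)\ge0$. So $\varphi''\ge0$ on $(0,1)$, and since $\varphi$ is continuous on $[0,1]$, it is convex there.

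If $p<p_*$, then $h_{p,d}(r_*)<0$. We need the boundary behavior of $h_{p,d}$ at both ends:
\begin{itemize}
\item As $z\to0^+$, $J_p'(z)\to-\infty$ while $zJ_p''\to1$, so $h_{p,d}\to+\infty$.
\item As $z\to1^-$, $\frac{1}{1-z}$ dominates $(d-1)\log\frac{1}{1-z}$, so $h_{p,d}\to+\infty$.
\end{itemize}
Strict monotonicity on each side of $r_*$ together with continuity then gives exactly one zero $u_-(p)\in(0,r_*)$ and exactly one zero $u_+(p)\in(r_*,1)$. It also gives the claimed signs: $h_{p,d}>0$ on $(0,u_-)\cup(u_+,1)$ and $h_{p,d}<0$ on $(u_-,u_+)$.

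\emph{Strict convexity and concavity.} Since $x\mapsto x^{1/d}$ is increasing, these sign statements transfer to $\varphi''_{p,d}$ on the corresponding $x$-intervals $(0,u_-^d)$, $(u_-^d,u_+^d)$ and $(u_+^d,1)$. A strictly signed second derivative gives strict convexity or concavity on each interval.

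The only delicate points are the endpoint limits of $h_{p,d}$ and the convexity on the closed interval $[0,1]$. The latter follows from continuity of $\varphi_{p,d}$ at the endpoints.
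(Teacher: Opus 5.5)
Your proof is correct and follows essentially the same route as the paper: compute $h_{p,d}'$ and $h_{p,d}(r_*)$ directly, derive $\varphi_{p,d}''$ by the chain rule, use monotonicity of $h_{p,d}(r_*)$ in $p$ to locate $p_*$, and then apply the endpoint limits of $h_{p,d}(z)=1/(1-z)-(d-1)J_p'(z)$ together with strict monotonicity and the intermediate value theorem. The only differences are cosmetic: you differentiate this simplified form of $h_{p,d}$ directly, so the $J_p'''$ you list is never used, and you get monotonicity in $p$ from that of $\log\frac{1-p}{p}$ rather than by computing a $p$-derivative.
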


When $p<p_*$, the graph of $\varphi_{p,d}$ has one concave region between two strictly convex regions.
The convex minorant of $\varphi_{p,d}$ consists of the two convex pieces and an affine face that covers the concave region.  
Moreover, at the two endpoints of this affine face, the line must be tangent to the graph, so that it joins smoothly onto the remaining convex pieces and stays below the graph everywhere.
The following lemma proves this formally. 

\begin{lemma}[Contact points]\label{lem:contact-points}
For every $p\in(0,p_*)$, there are unique points, called the \emph{contact points},
\[
  0<x_a(p)<u_-(p)^d<u_+(p)^d<x_b(p)<1,
\]
where $u_-(p)<r_*<u_+(p)$ are the two zeros of $h_{p,d}$, such that
\begin{align}
  \varphi_{p,d}'(x_a(p))&=\varphi_{p,d}'(x_b(p)),\label{eq:contact-equal-slopes}\\
  \varphi_{p,d}(x_b(p))-\varphi_{p,d}(x_a(p))
  &=\varphi_{p,d}'(x_a(p))(x_b(p)-x_a(p)).\label{eq:contact-chord-identity}
\end{align}
The convex minorant of $\varphi_{p,d}$ agrees with $\varphi_{p,d}$ outside
$[x_a(p),x_b(p)]$; on $[x_a(p),x_b(p)]$, it is the straight line segment joining
the two contact points, and it lies strictly below $\varphi_{p,d}$ on the open
interval $(x_a(p),x_b(p))$.  The maps $p\mapsto x_a(p),x_b(p)$ are analytic on
$(0,p_*)$, with $dx_a/dp>0$ and $dx_b/dp<0$.
\end{lemma}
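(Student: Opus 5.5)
The plan is to apply the standard common-tangent argument to the shape of $\varphi:=\varphi_{p,d}$ given by \Cref{lem:convexity-defect}: strictly convex on $(0,u_-^d)$, strictly concave on $(u_-^d,u_+^d)$, and strictly convex on $(u_+^d,1)$. Work in slope coordinates. Since $\varphi'$ is strictly increasing on each convex piece, for a slope $\sigma$ let $a(\sigma)\in(0,u_-^d)$ and $b(\sigma)\in(u_+^d,1)$ be the points with $\varphi'=\sigma$, whenever they exist. Define the intercept difference
\[
  g(\sigma)=\bigl[\varphi(b)-\sigma b\bigr]-\bigl[\varphi(a)-\sigma a\bigr].
\]
By the envelope identity, $g'(\sigma)=a(\sigma)-b(\sigma)<0$, so $g$ is strictly decreasing. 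A common tangent, i.e.\ a solution of \eqref{eq:contact-equal-slopes}--\eqref{eq:contact-chord-identity}, is exactly a zero of $g$.

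First, the range of slopes. From the explicit formula, $\varphi'(x)=J_p'(z)/(dz^{d-1})$ with $z=x^{1/d}$. As $x\downarrow0$ we have $J_p'(z)\to\log\frac{1-p}{p}$. If $p<1/2$ this tends to a positive value and $\varphi'\to+\infty$; if $p\ge 1/2$ the endpoint behaviour differs, but $p_*<1/2$ should hold (check: $(d-1)<e^{d/(d-1)}$). As $x\uparrow1$, $\varphi'\to+\infty$. On the concave interval $\varphi'$ decreases from a local maximum $M=\varphi'(u_-^d)$ to a local minimum $m=\varphi'(u_+^d)$.

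Restrict to slopes $\sigma\in[m,M]$; on this range both $a(\sigma)$ and $b(\sigma)$ exist. This needs $\varphi'(0^+)<m$, which may fail. If so, use the intersection of the available ranges, and at the endpoints use the tangent-line comparison: at $\sigma=M$ the point $a=u_-^d$ is the inflection point, so the tangent there lies above $\varphi$ just to its right. I claim $g(M)<0$ and $g(m)>0$. Indeed, at $\sigma=m$ the tangent at the inflection point $u_+^d$ lies above $\varphi$ on the concave side; comparing the supporting lines shows the tangent at $a(m)$ lies strictly below the concave stretch, and hence below $\varphi(b)$. This endpoint sign check, together with the existence of $a,b$ over the whole needed slope range (including the small-$x$ behaviour of $\varphi'$), is the step I expect to be the main obstacle. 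I would handle it by writing $\varphi(b)-\ell_a(b)=\int_a^b(\sigma(t)-\sigma)\,dt$, where $\sigma(t)=\varphi'(t)$, and signing the integrand.

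By the intermediate value theorem and strict monotonicity, $g$ has a unique zero $\sigma_0$; set $x_a=a(\sigma_0)$ and $x_b=b(\sigma_0)$. Uniqueness among all pairs follows because any common tangent line must touch the graph at points where $\varphi''\ge0$, which excludes the concave interval.

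Next, the supporting property. Let $\ell$ be the common tangent. Then $\varphi-\ell$ is convex on each convex piece with a critical zero, hence nonnegative there. On $[x_a,x_b]$, the function $(\varphi-\ell)'=\varphi'-\sigma_0$ is positive right after $x_a$, then turns negative, then returns to $0$ at $x_b$. Because the function vanishes at both ends, it is strictly positive in between. Consequently the convex minorant equals $\max(\varphi$ on the outer pieces, $\ell$ on $[x_a,x_b])$. This function is convex, lies below $\varphi$, and agrees with $\varphi$ at every point where $\varphi$ is supported, so it is the largest such convex function.

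Finally, analyticity and monotonicity. The map $F(p,x_a,x_b)$, given by the left sides minus right sides of \eqref{eq:contact-equal-slopes}--\eqref{eq:contact-chord-identity}, is analytic in $(p,x)$ because $J_p$ is. Its Jacobian in $(x_a,x_b)$ has determinant proportional to $\varphi''(x_a)\varphi''(x_b)(x_b-x_a)\neq0$, since both contact points lie in the open convex regions. The analytic implicit function theorem then gives analyticity in $p$. For the signs, note that $\partial_p\varphi(x)=-\frac{z}{p}+\frac{1-z}{1-p}$ is affine decreasing in $z=x^{1/d}$, so it is concave in $x$. Differentiating the system and using Cramer's rule then yields $dx_a/dp>0$ and $dx_b/dp<0$; I would verify the sign via the strict concavity of $z\mapsto$ this perturbation in the $x$-variable.
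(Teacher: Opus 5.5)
Your overall route is viable and differs from the paper's. You get existence from an intermediate-value argument for the intercept difference $g(\sigma)$, and then assemble the convex minorant from the resulting common tangent. The paper works in the other direction. It extracts the affine face directly from the convex minorant $\widehat\varphi$, using the two-point representation, the component of $\{\widehat\varphi<\varphi\}$ containing the concave region, and exclusion of the endpoints. It then proves uniqueness with $D(a)=\varphi(b(a))-\varphi(a)-\varphi'(a)(b(a)-a)$. This $D$ is your $g$ reparametrized by the left contact and obeys the same identity $D'(a)=-\varphi''(a)(b(a)-a)$. Your version avoids the representation theorem for convex minorants, at the price of an explicit endpoint sign check. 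Uniqueness in the lemma's sense (pairs with $x_a<u_-^d$ and $x_b>u_+^d$) follows at once from strict monotonicity of $g$, since any such pair has common slope in $(m,M)$. Your supporting-line remark is not needed.

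However, two steps are wrong as written.

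\emph{The limit of $\varphi'$ at $0$.} You dropped the $\log z$ term: $J_p'(z)=\log\frac{z(1-p)}{(1-z)p}\to-\infty$ as $z\downarrow0$. So $\varphi'(0^+)=-\infty$ for every $p$. A limit of $+\infty$ is impossible anyway, because $\varphi'$ increases on the left convex piece. This is exactly the step you flagged as the main obstacle, and your fallback of using the intersection of the available slope ranges would not work. If $a(m)$ failed to exist, you would have no sign for $g$ at the lower end, and the affine face could attach at the endpoint $x=0$. The paper excludes precisely that case using $\varphi'(0^+)=-\infty$. Your side claim $p_*<1/2$ is also false for $d\ge5$, though it becomes moot.

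With the correct limit, $\varphi'$ maps $(0,u_-^d]$ onto $(-\infty,M]$ and $[u_+^d,1)$ onto $[m,\infty)$. Hence $a(\sigma)$ and $b(\sigma)$ exist for every $\sigma\in[m,M]$. Define them on intervals closed at the inflection points, since you use $a(M)=u_-^d$ and $b(m)=u_+^d$. Your integral formula then gives $g(m)=\int_{a(m)}^{u_+^d}(\varphi'(t)-m)\,dt>0$ and $g(M)=\int_{u_-^d}^{b(M)}(\varphi'(t)-M)\,dt<0$.

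\emph{The sign of the $p$-derivatives.} The function $\psi(x):=\partial_p\varphi_{p,d}(x)=(p-x^{1/d})/(p(1-p))$ is strictly \emph{convex} in $x$, not concave, because $x^{1/d}$ is concave. With concavity, Cramer's rule would give the opposite signs. Using convexity, the second row of the differentiated system gives $\varphi''(x_a)(x_b-x_a)\frac{dx_a}{dp}=\psi(x_b)-\psi(x_a)-\psi'(x_a)(x_b-x_a)>0$. The first row then gives $\varphi''(x_b)\frac{dx_b}{dp}=\frac{\psi(x_b)-\psi(x_a)}{x_b-x_a}-\psi'(x_b)<0$. This is the paper's computation in abstract form.
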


\begin{lemma}[Contact-point limits]\label{lem:contact-point-limits}
As $p\uparrow p_*$, both
$u_a(p):=x_a(p)^{1/d}$ and $u_b(p):=x_b(p)^{1/d}$ converge to $r_*$.  As
$p\downarrow0$, $u_a(p)\downarrow0$ and $u_b(p)\uparrow1$.
\end{lemma}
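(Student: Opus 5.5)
The plan is to prove both limit statements from the defining equations \eqref{eq:contact-equal-slopes}--\eqref{eq:contact-chord-identity} together with the ordering $x_a(p)<u_-(p)^d<u_+(p)^d<x_b(p)$ of \Cref{lem:contact-points}. By that lemma, $u_a$ is increasing and $u_b$ is decreasing in $p$, so both one-sided limits exist at each end. Write $\alpha=\lim_{p\uparrow p_*}u_a(p)$ and $\beta=\lim_{p\uparrow p_*}u_b(p)$, and note $\alpha\le r_*\le\beta$.

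For $p\uparrow p_*$, the first step is to show that $u_\pm(p)\to r_*$. By \Cref{lem:convexity-defect}, $h_{p,d}(r_*)\uparrow 0$ as $p\uparrow p_*$. Since $h_{p,d}$ is strictly monotone on each side of $r_*$ with derivative that does not depend on $p$, its zeros collapse to $r_*$.

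This alone does not force $\alpha=\beta=r_*$, so the main step is to rule out $\alpha<\beta$. Suppose $\alpha<\beta$. Passing to the limit in the two tangency equations (continuity of $\varphi_{p,d}$ and $\varphi'_{p,d}$ in $(p,x)$ on compacts of $(0,1)$, where I must first exclude $\alpha=0$ or $\beta=1$) gives a line tangent to $\varphi_{p_*,d}$ at both $\alpha^d$ and $\beta^d$. Moreover, for every $p<p_*$ the line lies below the graph, so in the limit the line lies below $\varphi_{p_*,d}$. But $\varphi_{p_*,d}$ is convex and strictly convex away from the single point $r_*^d$, because $h_{p_*,d}>0$ except at $r_*$. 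A convex function that agrees with a supporting line at two distinct points is affine between them, which is impossible here. Hence $\alpha=\beta$, and the ordering then forces the common value to be $r_*$.

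To exclude the endpoint values $\alpha=0$ and $\beta=1$, I use the behavior $\varphi'_{p,d}(x)\to-\infty$ as $x\downarrow0$ and $\varphi'_{p,d}(x)\to+\infty$ as $x\uparrow1$. These come from $J_p'(z)\sim\log z$ near $0$ combined with the chain-rule factor $x^{1/d-1}/d$, and from $\log(1-z)$ blowing up near $1$. Near $p_*$ the common slope stays bounded, since $x_a(p)\ge x_a(p_0)>0$ and $x_b(p)\le x_b(p_0)<1$ by monotonicity. So no degeneration occurs.

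For $p\downarrow0$, write $\alpha_0=\lim u_a$ and $\beta_0=\lim u_b$, and suppose $\alpha_0>0$. Then $x_a$ stays bounded away from $0$, because it decreases to a positive limit as $p\downarrow 0$. I will exploit that $J_p(z)=z\log(1/p)+O(1)$ uniformly in $z$ as $p\to0$, so that
\[
\varphi_{p,d}(x)=x^{1/d}\log(1/p)+O(1),
\]
which is a strictly concave function of $x$ scaled by a large factor. Concretely, I write $\varphi_{p,d}=\log(1/p)\,g+R_p$, where $g(x)=x^{1/d}$ and $R_p(x)=z\log z+(1-z)\log\frac{1-z}{1-p}$ with $z=x^{1/d}$ is uniformly bounded. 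The line through $(x_a,\varphi(x_a))$ and $(x_b,\varphi(x_b))$ must lie below $\varphi$ on $[0,1]$, in particular at $x=0$ and $x=1$. Dividing by $\log(1/p)$ and letting $p\to0$, the chord of the strictly concave function $g$ over an interval $[x_a,x_b]\subseteq[\alpha_0^d,1]$ with $\alpha_0^d>0$ would have to lie below $g$ at $0$ up to $o(1)$. But strict concavity of $g$ puts the extension of that chord above $g$ at $0$ by a definite margin, which is a contradiction. The same argument applied at $x=1$ shows $\beta_0=1$. The uniform gap needs care because $x_b-x_a$ could also shrink; to handle this I take subsequential limits of $x_a$ and $x_b$ and use that the tangent slope also converges after scaling.

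I expect the main obstacle to be the $p\downarrow0$ case: justifying the uniform asymptotics, including the boundary behavior of $R_p$ near $z=1$, where $\log(1-z)$ appears with coefficient $1-z$ and is therefore bounded.
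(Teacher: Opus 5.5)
Your proof is correct. The $p\uparrow p_*$ half is essentially the paper's argument; the $p\downarrow0$ half takes a genuinely different and heavier route.

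\textbf{The case $p\uparrow p_*$.} The paper also takes monotone limits $A,B$ of $x_a,x_b$ and keeps them in $(0,1)$ by monotonicity alone. This makes your slope blow-up discussion redundant, since you already invoke $x_a(p)\ge x_a(p_0)>0$ and $x_b(p)\le x_b(p_0)<1$. The paper then passes to the limit only in the equal-slope equation and concludes $A=B$ from the strict monotonicity of $\varphi_{p_*,d}'$. You also pass the chord identity and the below-the-graph property to the limit, which reaches the same contradiction with a bit more work. The collapse $u_\pm(p)\to r_*$ is not needed for this half: $\alpha\le r_*\le\beta$ already follows from $u_a<u_-<r_*<u_+<u_b$.

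\textbf{The case $p\downarrow0$.} Here the paper never touches the supporting line. For each fixed $z\in(0,1)$,
$h_{p,d}(z)=\frac{1}{1-z}-(d-1)\log\frac{z(1-p)}{(1-z)p}\to-\infty$. Monotonicity of $h_{p,d}$ on each side of $r_*$ then forces $u_-(p)\to0$ and $u_+(p)\to1$. The sandwich $0<u_a<u_-$ and $u_+<u_b<1$ from \Cref{lem:contact-points}, which you already quoted, finishes the proof in two lines.

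Your approach instead writes $\varphi_{p,d}=\log(1/p)\,x^{1/d}+R_p(x^{1/d})$ with $R_p$ uniformly bounded. It then shows by strict concavity of $x^{1/d}$ that the extrapolated common tangent would overshoot $\varphi_{p,d}$ at $x=0$ or at $x=1$. This is valid. To justify that the line lies below $\varphi_{p,d}$ on all of $[0,1]$, note that it supports the convex minorant, which is affine on $[x_a,x_b]$. Your route gives a more geometric picture of the dominant concave $\log(1/p)$-term forcing the affine face to swallow the interval. The cost is uniform error control that the sandwich argument avoids entirely.

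Your worry that $x_b-x_a$ might shrink is unfounded. Since $x_a$ increases and $x_b$ decreases in $p$, we have $x_b(p)-x_a(p)\ge x_b(p_0)-x_a(p_0)>0$ for $p<p_0$. The extrapolation error is therefore directly $O(1/\log(1/p))$, with no subsequence argument needed.
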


We finally prove Theorem~\ref{thm:scalar-lz-boundary}.

\begin{proof}[Proof of Theorem~\ref{thm:scalar-lz-boundary}]
\emph{Construction and analyticity.}
For $p\in(0,p_*)$, set
\[
  u_a(p):=x_a(p)^{1/d},
  \qquad
  u_b(p):=x_b(p)^{1/d}.
\]
By Lemmas~\ref{lem:contact-points} and
\ref{lem:contact-point-limits}, $p\mapsto u_a(p)$ is an analytic
increasing bijection from $(0,p_*)$ to $(0,r_*)$, and $p\mapsto u_b(p)$ is
an analytic decreasing bijection from $(0,p_*)$ to $(r_*,1)$.  For
$r\in(0,r_*)$, define $\pc(r)$ as the inverse of $p\mapsto u_a(p)$ and put
\[
  \sm(r)=u_b(\pc(r)).
\]
For $r\in(r_*,1)$, define $\pc(r)$ as the inverse of
$p\mapsto u_b(p)$ and put
\[
  \sm(r)=u_a(\pc(r)).
\]
The derivatives of $u_a$ and $u_b$ are nonzero by
Lemma~\ref{lem:contact-points}, so the analytic inverse function theorem
shows that these inverse functions are locally analytic.  These local
analytic inverses can be globally extended: on any overlap, they agree with the unique
set-theoretic inverse of the same one-to-one map.  Hence, $\pc$ is analytic
on each component of $(0,1)\setminus\{r_*\}$.  The function $\sm$ is analytic
because $\sm(r)=u_b(\pc(r))$ on $(0,r_*)$ and
$\sm(r)=u_a(\pc(r))$ on $(r_*,1)$.

The detailed verifications of (M1), (M3), and (M4) are deferred to
Appendix~\ref{app:lz-boundary-details}.  The geometric reason for
(M3)--(M4) is that, at $p=\pc(r)$, Lemma~\ref{lem:contact-points} identifies
the affine face of the convex minorant with the common tangent at the two
contact points $r^d$ and $\sm(r)^d$, which lies below the graph and touches it
exactly at those two points.

\emph{Proof of (M5).}
It suffices to treat the case in which $K$ is contained in one component of
$(0,1)\setminus\{r_*\}$; for a general compact set, apply the argument on the
two components separately and take the smaller constant.  Fix such a compact
interval $K$.  For $r\in K$, write
\[
  g_r(x)=\varphi_{\pc(r),d}(x)-\ell_r(x).
\]

We first choose, uniformly in $r$, small intervals around the two contact
points.  Put $p_r=\pc(r)$, and let $u_-(p_r)<r_*<u_+(p_r)$ be the two zeros of
$h_{p_r,d}$.  By Lemma~\ref{lem:contact-points}, the two contact points, in
increasing order, satisfy
\[
  x_a(p_r)<u_-(p_r)^d<u_+(p_r)^d<x_b(p_r).
\]
The contact points vary continuously with $r$ by
Lemma~\ref{lem:contact-points}.  The zeros $u_-(p)$ and $u_+(p)$ also vary
continuously in $p$.  To see this, apply the implicit function theorem to
$h_{p,d}(z)=0$, noting that
\[
  \partial_z h_{p,d}(z)=\frac{d(z-r_*)}{z(1-z)^2}
\]
is nonzero at both zeros.  Since $p_r=\pc(r)$ is continuous, all the quantities
above vary continuously with $r$.  Hence, the positive gaps
\[
  x_a(p_r),\quad
  u_-(p_r)^d-x_a(p_r),\quad
  x_b(p_r)-u_+(p_r)^d,\quad
  1-x_b(p_r),\quad
  x_b(p_r)-x_a(p_r)
\]
have a common positive lower bound on $K$.  We may therefore choose $\eta>0$
such that, for every $r\in K$, the two closed intervals of radius $\eta$ around
$x_a(p_r)$ and $x_b(p_r)$ are disjoint and lie inside the two strictly convex
outer regions.

On the compact set
\[
  E_K=\{(r,x)\in K\times[0,1]:
  \dist(x,\{x_a(p_r),x_b(p_r)\})\le\eta\},
\]
the continuous function $(r,x)\mapsto\varphi_{\pc(r),d}''(x)$ is positive.
Let
\[
  m_K:=\min_{(r,x)\in E_K}\varphi_{\pc(r),d}''(x)>0.
\]

We now apply Taylor's theorem near each contact point.  Let
$c\in\{x_a(p_r),x_b(p_r)\}$, and suppose that $x$ lies in the interval of radius
$\eta$ around $c$.  The function $g_r$ satisfies $g_r(c)=0$ because $\ell_r$
agrees with the graph at both contact points.  It also satisfies $g_r'(c)=0$:
at the contact $r^d$, this is the definition of $\ell_r$ as a tangent line, and
at the other contact, it follows from the equal-slope condition in
Lemma~\ref{lem:contact-points}.  Since $\ell_r$ is affine,
$g_r''=\varphi_{\pc(r),d}''$.  Taylor's theorem gives, for some point $\xi$
between $c$ and $x$,
\[
  g_r(x)=\frac12 g_r''(\xi)(x-c)^2
  =\frac12\varphi_{\pc(r),d}''(\xi)(x-c)^2.
\]
The segment between $c$ and $x$ stays in the same $\eta$-interval, so
$\varphi_{\pc(r),d}''(\xi)\ge m_K$.  Since the two intervals are disjoint, $c$
is the closest contact point to $x$.  Using
\[
  \{x_a(p_r),x_b(p_r)\}=\{r^d,\sm(r)^d\},
\]
we get
\[
  g_r(x)\ge \frac{m_K}{2}\,
  \dist(x,\{r^d,\sm(r)^d\})^2
\]
inside the two contact neighborhoods.

It remains to handle the complement of these neighborhoods.  On the compact set
\[
  R_K:=\{(r,x): r\in K,\ x\in[0,1],\
  \dist(x,\{r^d,\sm(r)^d\})\ge\eta\},
\]
the continuous function $g_r(x)$ is strictly positive by (M4).  Hence
\[
  b_K:=\min_{(r,x)\in R_K}g_r(x)>0.
\]
Since the distance in $[0,1]$ is at most $1$, this also gives
\[
  g_r(x)\ge b_K\,\dist(x,\{r^d,\sm(r)^d\})^2
\]
on $R_K$.  Taking
\[
  \gamma_K=\min\{m_K/2,b_K\}
\]
gives (M5).

\emph{Proof of (M2) away from $r_*$.}
Fix $r\in(0,1)\setminus\{r_*\}$.
If $p\ge p_*$, then
$\varphi_{p,d}$ is convex by Lemma~\ref{lem:convexity-defect}, so every point
of its graph, in particular $(r^d,J_p(r))$, lies on its convex minorant.
Assume now that $p<p_*$.  If $r<r_*$, then $r=u_a(\pc(r))$.  If
$\pc(r)\le p<p_*$, then $u_a(p)\ge u_a(\pc(r))=r$, so
$r^d\le x_a(p)$.  Hence, $r^d$ lies outside the affine part of the convex
minorant, on the left side where the convex minorant agrees with
$\varphi_{p,d}$.  Thus, $(r^d,J_p(r))$ lies on the convex minorant.  If
$p<\pc(r)$, then $u_a(p)<r<r_*<u_b(p)$, so $r^d$ lies strictly between the two
contact points $x_a(p)$ and $x_b(p)$.  On this open interval the graph of
$\varphi_{p,d}$ lies strictly above the affine part of the convex minorant.
Thus, $(r^d,J_p(r))$ does not lie on the convex minorant.

If $r>r_*$, then $r=u_b(\pc(r))$.  If $\pc(r)\le p<p_*$, then
$u_b(p)\le u_b(\pc(r))=r$, so $r^d\ge x_b(p)$ and the convex minorant agrees with
$\varphi_{p,d}$ at $r^d$.  If $p<\pc(r)$, then
$u_a(p)<r_*<r<u_b(p)$, so $r^d$ lies strictly between the two contact points and
again the graph lies strictly above the affine part of the convex minorant.
This proves the stated dichotomy for all $p\in(0,1)$, and hence $p=\pc(r)$ is
the Lubetzky--Zhao boundary on $(0,1)\setminus\{r_*\}$.

\emph{Continuous extension and (M2) at $r_*$.}
By Lemma~\ref{lem:contact-point-limits},
\[
  u_a(p)\to r_*,
  \qquad
  u_b(p)\to r_*
  \qquad (p\uparrow p_*).
\]
Therefore the two inverse functions both satisfy $\pc(r)\to p_*$ as
$r\to r_*$ from their respective sides.  Moreover,
$\sm(r)=u_b(\pc(r))\to r_*$ as $r\uparrow r_*$, and
$\sm(r)=u_a(\pc(r))\to r_*$ as $r\downarrow r_*$.  Thus, defining
\[
  \pc(r_*)=p_*,
  \qquad
  \sm(r_*)=r_*
\]
gives continuous functions on $(0,1)$.

It remains only to check (M2) at the exceptional value $r=r_*$.  If
$p\ge p_*$, then $\varphi_{p,d}$ is convex by
Lemma~\ref{lem:convexity-defect}, so $(r_*^d,J_p(r_*))$ lies on its convex
minorant.  If $p<p_*$, then by Lemma~\ref{lem:contact-points},
\[
  x_a(p)<u_-^d<r_*^d<u_+^d<x_b(p).
\]
Hence, $r_*^d$ lies strictly inside the affine part of the convex minorant of
$\varphi_{p,d}$, where the graph is strictly above the minorant.  Thus,
$(r_*^d,J_p(r_*))$ does not lie on the convex minorant.  This proves (M2) at
$r=r_*$ and completes the proof of the continuous extension.
\end{proof}

%% file: sections/nonexceptional.tex
\section{Optimizers near nonexceptional endpoints}
\label{sec:nonexceptional-endpoint}

In this section, we prove \Cref{thm:nonexceptional-optimizers} in a more precise form.
We first use \Cref{sec:local-reduction} to reduce the graphon variational
problem on the symmetry-breaking side to a one-variable problem in the edge
density.  We then establish the quadratic growth of the reduced objective
at the phase boundary in \Cref{sec:nonexceptional-quadratic-growth}.
Finally, in \Cref{sec:nonexceptional-proof}, we minimize the reduced
objective on the symmetry-breaking side to complete the proof.
First-order refinements for the individual bipodal parameters are derived in
\Cref{app:bipodal-parameter-expansions}.

Recall the log-odds displacement
\[
  \lambda(p,r)=
  \log\frac{1-p}{p}-\log\frac{1-\pc(r)}{\pc(r)}.
\]
Thus, $\lambda(\pc(r),r)=0$, and $\lambda(p,r)>0$ is equivalent to
$p<\pc(r)$.  The following theorem is the main result of this section.
The remainder of this section is devoted to the proof of this theorem.

\begin{theorem}[Optimizers near nonexceptional endpoints]\label{thm:nonexceptional-endpoint}
Let $H$ be a fixed $d$-regular graph with $d\ge2$, and let
$r_0\in(0,1)\setminus\{r_*\}$.
There exist $\rho,\eta>0$ such that
\[
  U:=\{(p,r): |p-\pc(r)|<\eta,\ |r-r_0|<\rho\}
\]
is an open neighborhood of $(\pc(r_0),r_0)$ on which, with $p < \pc(r)$,
the minimizer of~\eqref{eq:graphon-variational-problem}
is unique up to relabeling, nonconstant, and bipodal.
Let $W_{p,r}$ denote the unique optimizer.
Write $c(p,r)\in(0,1/2)$ for the size of its smaller block,
$q_{11}(p,r)$ and $q_{22}(p,r)$ for the edge densities within the
smaller and larger blocks, respectively, and $q_{12}(p,r)$ for the
cross-block edge density.  These functions are real-analytic on $U\cap\{(p,r):p<\pc(r)\}$.
For fixed $r$ with $|r-r_0|<\rho$, the smaller block vanishes:
\[
  c(p,r)\to0 \qquad (p \uparrow\pc(r)).
\]
Moreover, the edge density and minimum relative entropy satisfy
\begin{equation}\label{eq:optimizer-expansion}
  e(W_{p,r})=r-\frac{\lambda(p,r)}{A_H(r)}+O_{H,U}(\lambda(p,r)^2),
  \qquad
  \Phi_H(p,r)=J_p(r)-\frac{\lambda(p,r)^2}{2A_H(r)}+O_{H,U}(\lambda(p,r)^3),
\end{equation}
where $A_H$ is positive and real-analytic, and the remainders are uniform
for $(p,r)\in U$ with $p<\pc(r)$.
\end{theorem}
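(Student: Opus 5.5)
The proof follows the three-stage outline of \Cref{sec:proof-strategy}(i): reduce the problem to a one-parameter family indexed by the edge-density deficit, establish quadratic growth at the boundary, and then minimize on the symmetry-breaking side.

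\emph{Step 1: localization and reduction.}
Fix $(p,r)$ with $p<\pc(r)$ close to the boundary, and let $W$ be any optimizer.

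First, $t(H,W)=r^{|E(H)|}$ exactly. If the constraint were slack, we could move $W$ slightly toward $p$, which strictly decreases $I_p$ by convexity of $J_p$ and preserves feasibility.

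Second, by \eqref{eq:generalized-holder}, $\int W^d\ge r^d$. Write $\ell_r$ for the supporting line of (M3) at $p_c=\pc(r)$. Jensen's inequality applied to $\varphi_{p_c,d}\ge\ell_r$, pointwise with $x=W^d$, gives
\[
I_{p_c}(W)\ge \ell_r\Bigl(\int W^d\Bigr)\ge J_{p_c}(r),
\]
using that $\ell_r$ is increasing, which holds since $r>p_c$.

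Third, combine this with \eqref{eq:relative-entropy-identity}:
\[
I_p(W)=I_{p_c}(W)-\lambda\,(r-e(W))+\text{const}.
\]
Comparing with $W\equiv r$ shows that the excess $I_{p_c}(W)-J_{p_c}(r)$ is at most $\lambda\,\delta$, where $\delta:=r-e(W)$. The quadratic separation (M5) then forces $\int\dist(W^d,\{r^d,\sm(r)^d\})^2$ to be small. Hence $W$ is $L^1$-close to a two-valued profile, so $\delta\to0$ as $\lambda\downarrow0$. We also get $\delta>0$, because $W\equiv r$ is not optimal by \Cref{thm:lz-criterion}.

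At this point, the constraints $e(W)=r-\delta$ and $t(H,W)=r^m$ give $\vartheta=r^m-(r-\delta)^m\approx m r^{m-1}\delta$, which is small and positive. Since $I_p$ is a function of $s$ at fixed edge density, \Cref{thm:krrs-analytic-extension} with $\eps=r-\delta$ near $r_0$ shows that $W$ must be the unique bipodal entropy maximizer $B_\delta$. The optimization therefore reduces to minimizing
\[
F(\delta;p,r):=I_p(B_\delta)
\]
over small $\delta>0$.

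\emph{Step 2: quadratic growth at $p=\pc(r)$.}
Set $G(\delta,r):=I_{\pc(r)}(B_\delta)-J_{\pc(r)}(r)$. The KRR--S parameters are real-analytic in $(\eps,\vartheta)$ through $\vartheta=0$, and at $\delta=0$ the graphon $B_0$ is the constant $r$. Hence $G$ is analytic in $(\delta,r)$ on a two-sided neighborhood, and $G(0,r)=0$.

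We have $G\ge0$ for $\delta>0$, since the constant graphon is optimal at the boundary. The analytic continuation can take either sign for $\delta<0$, so to conclude $\partial_\delta G(0,r)=0$ we argue via the Step 1 bound instead: it gives $G(\delta)\le C\delta^2$, using a test graphon. Concretely, take a small pode of size $\sim\delta$ carrying density $\sm(r)$ cross-profile, with the bulk density adjusted to keep $t(H,\cdot)=r^m$. Since $B_\delta$ maximizes entropy under the same constraints, $G(\delta)\le C\delta^2$. For the lower bound, (M5) combined with Hölder gives $G(\delta)\ge c\,\delta^2$, because lowering the edge density by $\delta$ while keeping $\int W^d\ge r^d$ costs a squared distance. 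Therefore
\[
G=\tfrac12A_H(r)\delta^2+O(\delta^3)
\]
with $A_H(r):=\partial_\delta^2G(0,r)>0$ analytic in $r$, and the remainder is uniform on compacts by analyticity.

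\emph{Step 3: minimization.}
By \eqref{eq:relative-entropy-identity},
\[
F(\delta)-J_p(r)=G(\delta,r)-\lambda\delta .
\]
Since $\partial_\delta^2G>0$ near $0$, the function $F$ is strictly convex in $\delta$ on $[0,\delta_0]$. Its critical point solves $\partial_\delta G(\delta,r)=\lambda$. By the analytic implicit function theorem, applied to
\[
\partial_\delta G(\delta,r)/\delta=\tfrac12A_H(r)+\dots=\lambda/\delta
\]
or directly because $\partial_\delta^2G(0,r)\neq0$, this gives an analytic solution $\delta^*(\lambda,r)=\lambda/A_H+O(\lambda^2)$.

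Step 1 confines all optimizers to $\delta\le\delta_0$, so strict convexity yields uniqueness. Substituting $\delta^*$ gives
\[
\Phi_H(p,r)=J_p(r)-\frac{\lambda^2}{2A_H}+O(\lambda^3),
\qquad
e(W_{p,r})=r-\delta^*.
\]
The bipodal parameters are the KRR--S map evaluated at an analytic argument, so they are analytic. Finally, $c=O(\vartheta)=O(\delta^*)\to0$. Shrinking $\rho,\eta$ makes all constants uniform on $U$.

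\emph{Main obstacle.} The hardest part is Step 1: showing, uniformly on $U$, that every optimizer has deficit $\delta$ small enough to lie in the KRR--S window $\vartheta<\Delta$. The difficulty is that (M5) only controls $W$ up to the two-valued profile, so it does not by itself bound $\delta$. I would first try to bound $\delta$ via the edge-density side. Set $\mu:=|\{W^d\approx\sm^d\}|$. The excess is at least $\lambda$-independent positive multiples of $\mu$, because a positive-measure set near $\sm$ yields a Hölder strictness and the Jensen step loses a term of order $\mu$. Combined with the upper bound $\lambda\delta\lesssim\lambda$, this gives $\mu\lesssim\lambda$, and hence $\delta\lesssim\lambda$.
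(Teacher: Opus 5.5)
Your Steps 2 and 3 match the paper's argument: the two-sided KRR--S extension of $G$, the sandwich $C_{d,I}\delta^2\le G_r(\delta)\le C_{H,I}\delta^2$ that forces $\partial_\delta G(r,0)=0$ and $A_H>0$, and the implicit function theorem applied to $\partial_\delta G=\lambda$.

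\textbf{The gap is in the localization of Step 1.} You correctly identify this as the crux but do not close it. The sentence ``$W$ is $L^1$-close to a two-valued profile, so $\delta\to0$'' is unjustified as written, and the repair you propose under \emph{Main obstacle} is false. The excess $I_{\pc(r)}(W)-J_{\pc(r)}(r)$ is not bounded below by a multiple of the mass $\mu$ near $\sm(r)$. The fixed-density optimizers $B_{r-\delta,r^m}$ are a counterexample. They put mass $2c(1-c)\asymp\delta$ at cross density $q_{12}\approx\zeta_d(r)=\sm(r)$, yet their excess is $G_r(\delta)\asymp\delta^2$. The test graphons $V_\delta$ of \Cref{lem:bipodal-quadratic-bound} behave the same way. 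For such bipodal graphons with small pode $c$, $t(H,W)$ and $(\int W^d)^{m/d}$ agree to first order in $c$, so the slack in the generalized H\"older inequality is $O(c^2)$, not $O(c)$. There is therefore no ``H\"older strictness of order $\mu$'', and the chain $\mu\lesssim\lambda\Rightarrow\delta\lesssim\lambda$ never starts.

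\textbf{The missing idea is the one that makes your original Step 1 sentence true, and you already invoke it in Step 2.} Let $X=W(U,V)$, $s=\sm(r)$, and let $Y$ be the nearest point of $\{r,s\}$ to $X$.
\begin{itemize}
\item The identity $Y^d-r^d=k(Y-r)$, with $k=(s^d-r^d)/(s-r)>0$, converts $\EE X^d\ge r^d$ and $\EE X=r-\delta$ into $\EE|X-Y|\ge\frac{k}{k+d}\delta$.
\item The quadratic supporting gap then gives $I_{\pc(r)}(W)-J_{\pc(r)}(r)\ge C_{d,I}\delta^2$ for \emph{every} feasible $W$ with $e(W)=r-\delta$. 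The proof of \Cref{lem:scalar-quadratic-bound} never uses $\delta<\delta_0$.
\item For an optimizer you already have $I_{\pc(r)}(W)-J_{\pc(r)}(r)\le\lambda\delta$, and $\delta>0$ by \Cref{lem:edge-density-deficit}.
\item Together these give $\delta\le\lambda/C_{d,I}$ uniformly in $r\in I$. Shrinking $\eta$ then places every optimizer in the KRR--S window.
\end{itemize}

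With this repair your proof is correct, and at this point it genuinely differs from the paper's. The paper localizes only qualitatively, by contradiction, through \Cref{lem:boundary-convergence}, which feeds \Cref{lem:fixed-density-bipodality}. That argument uses compactness of $(\widetilde{\W}_0,\delta_\cut)$, lower semicontinuity of $I_p$, and the uniqueness half of the Lubetzky--Zhao criterion at the boundary point. Your repaired Step 1 instead gives the a priori quantitative bound $\delta=O(\lambda)$ from scalar convexity alone, with no appeal to graphon compactness.
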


\input{sections/nonexceptional_local_reduction}
\input{sections/nonexceptional_quadratic_growth}
\input{sections/nonexceptional_proof}

%% file: sections/nonexceptional_local_reduction.tex
\subsection{The local reduction}\label{sec:local-reduction}

Let $H$ be a $d$-regular graph with $d\ge2$ and write $m=|E(H)|$, so that
the homomorphism density constraint reads $t(H,W)\ge r^m$.
We obtain the reduction uniformly in a sufficiently small neighborhood of
each fixed $r_0\in(0,1)\setminus\{r_*\}$.
\Cref{lem:active-constraint,lem:edge-density-deficit} show that optimizers
saturate the $H$-density constraint and have edge density below $r$ on the
symmetry-breaking side.  The proof of boundary convergence in
\Cref{lem:boundary-convergence} is deferred to \Cref{app:local-reduction}.

\begin{lemma}[Active constraint]\label{lem:active-constraint}
Let $0<p<r<1$.  Every optimizer of \eqref{eq:graphon-variational-problem} satisfies
$t(H,W)=r^m$.
\end{lemma}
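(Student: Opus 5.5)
We argue by contradiction, using a perturbation that lowers the relative entropy while keeping the $H$-density at least $r^m$. Suppose $W$ is an optimizer with $t(H,W)>r^m$. The idea is to move $W$ slightly toward the constant graphon $p$, which strictly decreases $I_p$. The natural family is the convex combination
\[
  W_\theta:=(1-\theta)W+\theta p,\qquad \theta\in[0,1].
\]
Each $W_\theta$ is a graphon, since it is symmetric, measurable and $[0,1]$-valued.

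First, we show that $I_p(W_\theta)<I_p(W)$ for every $\theta\in(0,1]$, unless $W\equiv p$ almost everywhere. The function $J_p$ is convex with $J_p(p)=0$, so pointwise
\[
  J_p(W_\theta)\le(1-\theta)J_p(W)+\theta J_p(p)=(1-\theta)J_p(W).
\]
Integrating gives $I_p(W_\theta)\le(1-\theta)I_p(W)$. This is strictly smaller than $I_p(W)$ whenever $I_p(W)>0$. The case $I_p(W)=0$ means $W=p$ almost everywhere, and then $t(H,W)=p^m<r^m$, so $W$ would be infeasible. Hence $I_p(W)>0$ for every feasible $W$.

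Second, we control the constraint. The map $\theta\mapsto t(H,W_\theta)$ is a polynomial in $\theta$, being a finite multilinear expansion of the integral. It is therefore continuous, and at $\theta=0$ it equals $t(H,W)>r^m$. So for all sufficiently small $\theta>0$ we still have $t(H,W_\theta)\ge r^m$. Such a $W_\theta$ is feasible and has strictly smaller cost than $W$, contradicting optimality. Therefore every optimizer satisfies $t(H,W)=r^m$.

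The only step needing care is the continuity of $t(H,W_\theta)$ in $\theta$. We will justify it by expanding the product $\prod_{ij\in E(H)}\bigl((1-\theta)W(x_i,x_j)+\theta p\bigr)$ into finitely many bounded terms. Alternatively, $\|W_\theta-W\|_\infty\le\theta$ and the standard counting-lemma bound give $|t(H,W_\theta)-t(H,W)|\le m\theta$. The argument uses neither $d$-regularity nor $r>p$ beyond the infeasibility of $W\equiv p$.
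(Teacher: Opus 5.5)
Your proof is correct and is essentially the paper's argument: the same convex interpolation between $W$ and the constant graphon $p$, combined with convexity of $J_p$ and $J_p(p)=0$. The only difference is minor. The paper uses the intermediate value theorem to pick $\theta_0$ with $t(H,W_{\theta_0})=r^m$ exactly, then extracts $W=p$ a.e.\ from the equality case. You instead take $\theta$ small to stay feasible and get strict decrease directly from $I_p(W_\theta)\le(1-\theta)I_p(W)$ together with $I_p(W)>0$.
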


\begin{proof}
Let $W$ be an optimizer, and suppose for contradiction that $t(H,W)>r^m$.
For $0\le\theta\le1$, define
\[
  W_\theta=p+\theta(W-p).
\]
Then, $W_0\equiv p$ has $t(H,W_0)=p^m<r^m$, while $W_1=W$ has
$t(H,W)>r^m$.  By continuity of $t(H,\cdot)$, there exists
$0<\theta_0<1$ with $t(H,W_{\theta_0})=r^m$.  Since $J_p$ is strictly convex
and uniquely minimized at $p$ with $J_p(p) = 0$, we have, for every
$(x,y)\in[0,1]^2$,
\begin{align*}
  J_p(p+\theta_0(W(x,y)-p))
  & {}
  \le
  (1-\theta_0)J_p(p)+\theta_0J_p(W(x,y))
  =
  \theta_0J_p(W(x,y))
  {}
  \le
  J_p(W(x,y)),
\end{align*}
with strict inequality unless $W(x,y)=p$.  Integrating gives
\[
   I_p(W_{\theta_0}) \le I_p(W).
\]
On the other hand, $W_{\theta_0}$ is feasible, so the optimality of $W$ gives
$I_p(W_{\theta_0})\ge I_p(W)$.  Hence, $I_p(W_{\theta_0})=I_p(W)$.
Since the pointwise difference is nonnegative, the equality of the integrals forces
\[
  J_p(p+\theta_0(W(x,y)-p))=J_p(W(x,y))
\]
for almost every $(x,y)$, and therefore $W(x,y)=p$ almost everywhere.  This
would imply $t(H,W)=p^m<r^m$, contradicting $t(H,W)>r^m$.  Thus,  every
optimizer satisfies $t(H,W)=r^m$.
\end{proof}

\begin{lemma}[Strict edge-density deficit]\label{lem:edge-density-deficit}
Let $r\in(0,1)\setminus\{r_*\}$ and $0<p<\pc(r)$.
If $W$ is an optimizer of \eqref{eq:graphon-variational-problem}, then $e(W)<r$.
\end{lemma}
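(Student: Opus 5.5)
The plan is to compare the optimizer at $p$ with the boundary problem at $q:=\pc(r)$. Two facts drive it: the relative entropy changes by an affine function of the edge density when $p$ changes, and the constant graphon is optimal at the boundary. The argument is by contradiction, assuming $e(W)\ge r$.

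\emph{Step 1: affine shift in $p$.} For $p,q\in(0,1)$ and $z\in[0,1]$, a direct computation gives
\[
  J_p(z)=J_q(z)+z\Bigl(\log\tfrac{1-p}{p}-\log\tfrac{1-q}{q}\Bigr)+\log\tfrac{1-q}{1-p}.
\]
Integrating over $[0,1]^2$ and subtracting the same identity at $z=r$, we get, for every graphon $W$,
\[
  I_p(W)-J_p(r)=\bigl[I_q(W)-J_q(r)\bigr]+\lambda(p,r)\,\bigl(e(W)-r\bigr),
\]
where $\lambda(p,r)=\log\frac{1-p}{p}-\log\frac{1-q}{q}$. Since $p<q$, we have $\lambda(p,r)>0$. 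This is the same identity used in the proof strategy, derived from \eqref{eq:relative-entropy-identity} or checked directly.

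\emph{Step 2: boundary optimality of the constant graphon.} By (M1) of \Cref{thm:scalar-lz-boundary}, $0<q<r$. By (M2), the point $(r^d,J_q(r))$ lies on the convex minorant of $\varphi_{q,d}$. \Cref{thm:lz-criterion} then says that $W\equiv r$ is the unique minimizer at parameter $q$, so $\Phi_H(q,r)=J_q(r)$. Hence every graphon feasible for the constraint $t(H,W)\ge r^m$ satisfies $I_q(W)\ge J_q(r)$.

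\emph{Step 3: contradiction.} Let $W$ be an optimizer at $(p,r)$. It is feasible, and by \Cref{lem:active-constraint} it satisfies $t(H,W)=r^m$. Suppose $e(W)\ge r$. Steps 1 and 2 give $I_p(W)\ge J_p(r)$, because the bracketed term and the $\lambda$-term are both nonnegative. The constant graphon $W\equiv r$ is feasible at $(p,r)$, so $\Phi_H(p,r)\le J_p(r)$, which forces $I_p(W)=J_p(r)$. Then $W\equiv r$ also attains the minimum, so it lies in $\W_H(p,r)$. But $p<\pc(r)$, so by (M2) the point $(r^d,J_p(r))$ is not on the convex minorant, and \Cref{thm:lz-criterion} excludes the constant graphon from $\W_H(p,r)$. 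This contradiction gives $e(W)<r$.

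The only delicate point is Step 2: it needs the boundary value $q=\pc(r)$ to be admissible in \Cref{thm:lz-criterion}, that is $q<r$ from (M1), and it needs (M2) to hold with the non-strict inequality $p\ge\pc(r)$ at $p=\pc(r)$ itself. Both are supplied by \Cref{thm:scalar-lz-boundary}. The rest is the linear algebra of Step 1.
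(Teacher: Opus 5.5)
Your proof is correct, but it obtains the key inequality $I_p(W)\ge J_p(r)$ (under the assumption $e(W)\ge r$) by a different mechanism than the paper. Like you, the paper first notes that $W\equiv r$ is feasible but not optimal at $(p,r)$, so $I_p(W)<J_p(r)$. It then applies Jensen's inequality $I_p(W)\ge J_p(e(W))$ and uses that $J_p$ is increasing on $[r,1]$ because $p<r$. That argument invokes the Lubetzky--Zhao criterion only at $(p,r)$ and does not use the $H$-density constraint in the lower bound. It would therefore work at any pair with $p<r$ where the constant graphon fails to be optimal, independently of where the boundary lies.

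Your route instead combines the affine-in-$p$ identity with the optimality of $W\equiv r$ at the boundary parameter $\pc(r)$. It therefore needs (M1) and (M2) at $p=\pc(r)$ itself, a second application of the criterion, and the feasibility of $W$. In exchange it is quantitative. For every feasible $W$ you get $I_p(W)-J_p(r)\ge\lambda(p,r)\bigl(e(W)-r\bigr)$, hence
\[
  r-e(W)\ge\frac{J_p(r)-\Phi_H(p,r)}{\lambda(p,r)}
\]
for every optimizer. It is also exactly the decomposition $I_{p,r}(r-\delta)-J_p(r)=G(r,\delta)-\lambda(p,r)\delta$, with nonnegative boundary excess, that the paper later uses in the proof of \Cref{thm:nonexceptional-endpoint}. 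Finally, the appeal to \Cref{lem:active-constraint} in your Step~3 is harmless but unnecessary: feasibility of $W$ alone suffices.
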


\begin{proof}
By condition (M2) of \Cref{thm:scalar-lz-boundary}, for $p<\pc(r)$, the scalar point $(r^d,J_p(r))$ is not on the convex minorant of
$x\mapsto J_p(x^{1/d})$. \Cref{thm:lz-criterion} therefore shows that $W\equiv r$ is not a
minimizer. Since $W\equiv r$ is feasible, every optimizer satisfies $I_p(W)<J_p(r)$.
By Jensen's inequality,
$
  I_p(W)\ge J_p(e(W)).
$
Since $p<\pc(r)<r$, the function $J_p$ is strictly increasing on $[r,1]$.
If $e(W)\ge r$, it is a contradiction since
\[
  I_p(W)\ge J_p(e(W))\ge J_p(r).
  \qedhere
\]
\end{proof}

\begin{lemma}[Boundary convergence of optimizers]\label{lem:boundary-convergence}
Let $r_n\in(0,1)\setminus\{r_*\}$ and $p_n\in(0,\pc(r_n))$ satisfy
\[
  r_n\to r_0\in(0,1)\setminus\{r_*\},
  \qquad
  \pc(r_n)-p_n\to0.
\]
For each $n$, let $W_n\in\W_H(p_n,r_n)$.  Then, writing $r_0$ also for the constant
graphon with value $r_0$,
\[
  \delta_\square(W_n,r_0)\to0,
  \qquad
  e(W_n)\to r_0.
\]
\end{lemma}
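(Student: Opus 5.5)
We argue by compactness in $(\widetilde{\W}_0,\delta_\cut)$, together with the Lubetzky--Zhao criterion at the limiting parameters. It suffices to show that every subsequence of $(W_n)$ has a further subsequence converging in cut distance to the constant graphon $r_0$. Since $(\widetilde{\W}_0,\delta_\cut)$ is compact, we may pass to a subsequence with $\delta_\cut(W_n,W_\infty)\to0$ for some graphon $W_\infty$. It then remains to identify $W_\infty$ as $r_0$.

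\textbf{Step 1: the limit is feasible.} By \Cref{lem:active-constraint}, $t(H,W_n)=r_n^m$. Homomorphism densities are $\delta_\cut$-continuous, so $t(H,W_\infty)=r_0^m$. Hence $W_\infty$ is feasible for the problem at parameters $(p_0,r_0)$, where $p_0:=\pc(r_0)$. Note that $p_n\to p_0$, because $\pc$ is continuous by \Cref{thm:scalar-lz-boundary} and $\pc(r_n)-p_n\to0$.

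\textbf{Step 2: the limit has cost at most $J_{p_0}(r_0)$.} The constant graphon $W\equiv r_n$ is feasible at $(p_n,r_n)$, so $I_{p_n}(W_n)\le J_{p_n}(r_n)\to J_{p_0}(r_0)$. We want to pass from $I_{p_n}$ to $I_{p_0}$. The identity \eqref{eq:relative-entropy-identity} gives
\[
I_{p_0}(W_n)=I_{p_n}(W_n)+\log\frac{1-p_n}{1-p_0}+e(W_n)\Bigl(\log\frac{1-p_0}{p_0}-\log\frac{1-p_n}{p_n}\Bigr).
\]
Since $e(W_n)\in[0,1]$ and $p_n\to p_0$, the correction terms tend to zero, so $|I_{p_0}(W_n)-I_{p_n}(W_n)|\to0$. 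The functional $I_{p_0}$ is lower semicontinuous in the cut metric, which gives
\[
I_{p_0}(W_\infty)\le\liminf_n I_{p_0}(W_n)\le J_{p_0}(r_0).
\]

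\textbf{Step 3: identification of the limit.} By (M2) of \Cref{thm:scalar-lz-boundary}, $p_0=\pc(r_0)$ lies on the replica-symmetric side, since $p\ge\pc(r)$ is allowed there. So \Cref{thm:lz-criterion} says that $W\equiv r_0$ is the unique minimizer at $(p_0,r_0)$, with value $J_{p_0}(r_0)$. A feasible $W_\infty$ with cost at most this value must be a minimizer, hence weakly isomorphic to $r_0$. Therefore $\delta_\cut(W_n,r_0)\to0$ along the subsequence, and by the subsequence principle along the whole sequence.

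\textbf{Step 4: convergence of edge densities.} Edge density is $t(K_2,\cdot)$, so it is cut-continuous, and $e(W_n)\to e(r_0)=r_0$.

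The step that needs the most care is Step 3. There we must make sure that the uniqueness assertion of \Cref{thm:lz-criterion} applies exactly on the boundary $p=\pc(r_0)$. This is guaranteed because (M2) includes the endpoint $p\ge\pc(r)$. We must also make sure uniqueness is understood up to weak isomorphism; this holds because a constant graphon is determined by its class in $\widetilde{\W}_0$. A secondary check is that the uniform comparison of $I_{p_n}$ with $I_{p_0}$ in Step 2 holds even though $W_n$ may take the values $0$ or $1$. It does, since the identity involves only $e(W_n)$, $s(W_n)$, and finite logarithms of $p_n$ and $p_0\in(0,1)$.
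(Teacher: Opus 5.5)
Your proposal is correct and follows essentially the same route as the paper: compactness of $(\widetilde{\W}_0,\delta_\cut)$, feasibility of the subsequential limit, the affine identity \eqref{eq:relative-entropy-identity} plus lower semicontinuity to get $I_{p_0}(W_\infty)\le J_{p_0}(r_0)$, and then (M2) with the Lubetzky--Zhao uniqueness at $p_0=\pc(r_0)$. The differences are cosmetic. You compare $I_{p_n}$ with $I_{p_0}$ directly rather than passing through $q_n=\pc(r_n)$, and you use the active-constraint equality where the paper only needs the inequality $t(H,W_n)\ge r_n^m$.
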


For $0\le\eps\le1$ and $0\le\tau\le1$, define the fixed-$(e,t(H,\cdot))$ entropy
envelope
\begin{equation}\label{eq:fixed-density-entropy}
  S_H(\eps,\tau)=
  \sup\{s(W):e(W)=\eps,\ t(H,W)=\tau\}.
\end{equation}
We use the convention that the supremum is $-\infty$ if the constraint set is
empty.  When the maximizer is bipodal and unique up to relabeling, denote it
by $B_{\eps,\tau}$; its subscripts specify the edge and $H$-densities.
For $p,r\in(0,1)$, define the reduced objective
\begin{equation}\label{eq:reduced-objective}
  I_{p,r}(\eps)=
  -2S_H(\eps,r^m)
  -\log(1-p)+\eps\log\frac{1-p}{p}.
\end{equation}
By \eqref{eq:relative-entropy-identity}, $I_{p,r}(\eps)$ is the
infimum of $I_p$ among graphons with edge density $\eps$ and $H$-density
$r^m$.  In particular, every graphon $W$ with these densities satisfies
\[
  I_p(W)\ge I_{p,r}(\eps),
\]
with equality exactly when $W$ maximizes entropy under these two constraints.

Every optimizer of the original problem satisfies $t(H,W)=r^m$ by
\Cref{lem:active-constraint}.  We can therefore first minimize the cost at
each fixed edge density $\eps$, then minimize over $\eps$.
Near the phase boundary on the symmetry-breaking side, the next lemma
places every optimizer's edge density in $(r-\rho,r)$, uniformly for $r$
near $r_0$.  For each $\eps$ in this interval,
\Cref{thm:krrs-analytic-extension} identifies the entropy maximizer at
densities $(\eps,r^m)$ as a unique bipodal graphon, up to relabeling.
Thus, only the choice of $\eps$ remains: \Cref{cor:scalar-reduction} shows
that minimizing $I_{p,r}$ on $\eps \in (r-\rho,r)$ is equivalent to the
original problem, with a one-to-one correspondence between scalar
minimizers and graphon optimizers modulo relabeling.
Its proof is deferred to \Cref{app:local-reduction}.

\begin{lemma}[Uniform localization and fixed-density bipodality]\label{lem:fixed-density-bipodality}
Fix $r_0\in(0,1)\setminus\{r_*\}$.
There exist $\rho_0>0$ and an open interval $I\subset(0,1)\setminus\{r_*\}$ containing
$r_0$ such that, for every $r \in I$ and $\eps\in(r-\rho_0,r)$,
the entropy maximizer with edge density
$\eps$ and $H$-density $r^m$ is bipodal and unique up to relabeling.
Thus $B_{\eps,r^m}$ is well defined up to relabeling, and for every $p\in(0,1)$,
\begin{equation}\label{eq:reduced-objective-attainment}
  s(B_{\eps,r^m})=S_H(\eps,r^m),
  \qquad
  I_{p,r}(\eps)=I_p(B_{\eps,r^m}).
\end{equation}
Furthermore, 
for every $0<\rho<\rho_0$, one can choose $0<\eta<\inf_{r\in I}\pc(r)$ so that,
if $r\in I$ and $\pc(r)-\eta<p<\pc(r)$, then every optimizer $W$ of
\eqref{eq:graphon-variational-problem} has edge density
\[
  e(W)\in(r-\rho,r).
\]
\end{lemma}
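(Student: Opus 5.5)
The proof has two parts: first the fixed-density bipodality from \Cref{thm:krrs-analytic-extension}, then the localization of optimizer edge densities from \Cref{lem:boundary-convergence} and \Cref{lem:edge-density-deficit}.

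\emph{Part 1: fixed-density bipodality.} Apply \Cref{thm:krrs-analytic-extension} with $\eps_0=r_0$. This gives a neighborhood $U_0$ of $r_0$ and $\Delta>0$ such that the maximizer at densities $(\eps,\tau)$ is unique up to relabeling and bipodal whenever $\eps\in U_0$ and $0<\tau-\eps^m<\Delta$. Take $\tau=r^m$ and note that $\vartheta=r^m-\eps^m$.
\begin{itemize}
\item Choose $\rho_0>0$ and an open interval $I\ni r_0$ with $\overline I\subset(0,1)\setminus\{r_*\}$.
\item Shrink them so that, for $r\in I$ and $\eps\in(r-\rho_0,r)$, we have $\eps\in U_0$.
\item Then $\eps<r$ gives $\vartheta>0$, and $r^m-\eps^m\le m\rho_0<\Delta$ because $\eps,r\le1$, so we need $\rho_0<\Delta/m$.
\end{itemize}
This yields uniqueness and bipodality of $B_{\eps,r^m}$. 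Since $B_{\eps,r^m}$ attains the supremum, $s(B_{\eps,r^m})=S_H(\eps,r^m)$. The identity $I_{p,r}(\eps)=I_p(B_{\eps,r^m})$ then follows from \eqref{eq:relative-entropy-identity} and \eqref{eq:reduced-objective} with $e(B)=\eps$. Here we use that the supremum in the KRR--S statement is attained, which the theorem includes.

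\emph{Part 2: localization of optimizer edge densities.} Fix $\rho<\rho_0$. The upper bound $e(W)<r$ is \Cref{lem:edge-density-deficit}; it applies because $I$ avoids $r_*$ and $p<\pc(r)$. For the lower bound we argue by contradiction and compactness. Shrink $I$ so that $\overline I$ is a compact subset of $(0,1)\setminus\{r_*\}$. Then $\inf_{\overline I}\pc>0$ by continuity and (M1), so $\eta$ can be taken positive and below that infimum.

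Suppose no $\eta$ works. Then there are $\eta_n\downarrow0$, $r_n\in I$, $p_n\in(\pc(r_n)-\eta_n,\pc(r_n))$ and optimizers $W_n$ with $e(W_n)\le r_n-\rho$.
\begin{itemize}
\item Pass to a subsequence with $r_n\to r_\infty\in\overline I$, so $r_\infty\ne r_*$.
\item Then $\pc(r_n)-p_n\to0$, and \Cref{lem:boundary-convergence} gives $e(W_n)\to r_\infty$.
\item But $e(W_n)\le r_n-\rho\to r_\infty-\rho$, a contradiction.
\end{itemize}
The only subtlety is making sure $r_\infty$ stays away from $r_*$ and inside the domain where \Cref{lem:boundary-convergence} applies. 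Closing $I$ inside $(0,1)\setminus\{r_*\}$ handles this, since the lemma only needs the limit $r_0\ne r_*$.

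\emph{Main obstacle.} The hard step is not here but in \Cref{lem:boundary-convergence}, which we take as given. Its own proof would use the lower semicontinuity of $I_p$ and compactness of $\widetilde{\W}_0$. Any subsequential limit $W$ of the $W_n$ is feasible at $(\pc(r_\infty),r_\infty)$ with $I_{\pc(r_\infty)}(W)\le J_{\pc(r_\infty)}(r_\infty)$. The uniqueness in \Cref{thm:lz-criterion} at $p=\pc(r_\infty)$, which is on the convex minorant by (M2), then forces $W\equiv r_\infty$. Convergence of $e(W_n)$ follows from cut-distance continuity of $e$.
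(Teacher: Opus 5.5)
Your proposal is correct and follows the paper's proof essentially step for step. You make one application of \Cref{thm:krrs-analytic-extension} at $\eps_0=r_0$, choose $\overline I$ avoiding $r_*$, and take $m\rho_0<\Delta$ via the mean value bound; then you run the contradiction argument combining \Cref{lem:edge-density-deficit}, compactness of $\overline I$, and \Cref{lem:boundary-convergence}, and your explicit remark that $\inf_{\overline I}\pc>0$ by continuity and (M1) fills in a point the paper leaves implicit.
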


\begin{proof}
Apply \Cref{thm:krrs-analytic-extension} once at $\eps_0=r_0$, obtaining
an open neighborhood $V$ and a width $\Delta>0$ for the analytic parameter
map.  Choose an open interval $I$ containing $r_0$ whose compact closure
lies in $V\cap((0,1)\setminus\{r_*\})$.  Choose $\rho_0>0$ so small that
$[r-\rho_0,r]\subset V\cap(0,1)$ for every $r\in\overline I$ and
$m\rho_0<\Delta$.
For $r\in I$ and $r-\rho_0<\eps<r$, the mean value theorem gives
\[
  0<r^m-\eps^m\le m(r-\eps)<\Delta.
\]
Thus, for every $r\in I$ and $\eps\in(r-\rho_0,r)$,
\Cref{thm:krrs-analytic-extension} with $\tau=r^m$ supplies $B_{\eps,r^m}$
that satisfies $e(B_{\eps,r^m})=\eps$, $t(H,B_{\eps,r^m})=r^m$,
maximizes entropy among graphons with these densities,
and is unique up to relabeling.
Thus the definition \eqref{eq:fixed-density-entropy} gives $s(B_{\eps,r^m})=S_H(\eps,r^m)$,
and substituting $B_{\eps,r^m}$ into \eqref{eq:relative-entropy-identity} gives,
for every $p\in(0,1)$,
\[
  I_p(B_{\eps,r^m})=-2S_H(\eps,r^m)-\log(1-p)+\eps\log\frac{1-p}{p}=I_{p,r}(\eps).
\]

Fix $0<\rho<\rho_0$.  We claim that, for some
$0<\eta<\inf_{r\in I}\pc(r)$, every optimizer
$W$ of \eqref{eq:graphon-variational-problem} with
$r\in I$ and $\pc(r)-\eta<p<\pc(r)$ satisfies
\begin{equation}\label{eq:edge-density-localization}
  e(W)\in(r-\rho,r).
\end{equation}
Otherwise, taking $\eta_n\to0$, there
are $r_n\in I$, $0<p_n<\pc(r_n)$ with $\pc(r_n)-p_n\to0$, and optimizers
$W_n$ violating \eqref{eq:edge-density-localization}.
By \Cref{lem:edge-density-deficit}, this means $e(W_n)\le r_n-\rho$.
Passing to a subsequence, compactness of $\overline I$ gives
$r_n\to\bar r$ for some $\bar r\in\overline I$.
Then \Cref{lem:boundary-convergence} gives $e(W_n)\to\bar r$,
contradicting $e(W_n)\le r_n-\rho$.
\end{proof}

\begin{corollary}[Uniform one-dimensional reduction]\label{cor:scalar-reduction}
Let $I$ and $\rho_0$ be as in \Cref{lem:fixed-density-bipodality}.
For each $0<\rho<\rho_0$, choose $\eta>0$ as in that lemma.
For every $r\in I$ and $p\in(\pc(r)-\eta,\pc(r))$, the variational problem
defining $\Phi_H(p,r)$ in \eqref{eq:graphon-variational-problem} reduces to the
one-dimensional problem of minimizing $I_{p,r}$:
\begin{equation}\label{eq:edge-density-minimization}
  \Phi_H(p,r)=
  \min_{\eps\in(r-\rho,r)}I_{p,r}(\eps).
\end{equation}
Furthermore, for such $p,r$, any optimizer $W^*$ attaining $\Phi_H(p,r)$
is $B_{\eps^*,r^m}$ up to relabeling, where $\eps^*=e(W^*)$ is a minimizer of
$I_{p,r}$ over $(r-\rho,r)$.  Conversely, if
$\eps\in(r-\rho,r)$ minimizes $I_{p,r}$ over $(r-\rho,r)$, then
$B_{\eps,r^m}$ is an optimizer of \eqref{eq:graphon-variational-problem}.  In
particular, the correspondence $W^*\mapsto e(W^*)$ is a bijection between the
set of optimizers modulo relabeling and the set of minimizers of
$I_{p,r}$ on $(r-\rho,r)$.
\end{corollary}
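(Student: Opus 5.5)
The plan is to combine three earlier results: the active-constraint \Cref{lem:active-constraint}, the edge-density localization in \Cref{lem:fixed-density-bipodality}, and the attainment identity \eqref{eq:reduced-objective-attainment}. Fix $0<\rho<\rho_0$, the corresponding $\eta$, and $r\in I$, $p\in(\pc(r)-\eta,\pc(r))$. The first step is a lower bound. Let $W^*$ be any optimizer, which exists by the compactness remark following \eqref{eq:graphon-variational-problem}. By \Cref{lem:active-constraint} we have $t(H,W^*)=r^m$, and \Cref{lem:fixed-density-bipodality} gives $\eps^*:=e(W^*)\in(r-\rho,r)$. By the definition \eqref{eq:reduced-objective} and the identity \eqref{eq:relative-entropy-identity},
\[
\Phi_H(p,r)=I_p(W^*)\ge I_{p,r}(\eps^*)\ge\inf_{\eps\in(r-\rho,r)}I_{p,r}(\eps).
\]

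The second step is the matching upper bound. For every $\eps\in(r-\rho,r)$, the graphon $B_{\eps,r^m}$ is feasible, since $t(H,B_{\eps,r^m})=r^m$. By \eqref{eq:reduced-objective-attainment} this gives $I_{p,r}(\eps)=I_p(B_{\eps,r^m})\ge\Phi_H(p,r)$. Hence $\Phi_H(p,r)=\inf_{(r-\rho,r)}I_{p,r}$, and this infimum is attained at $\eps^*$, because the chain in the first step then forces $I_{p,r}(\eps^*)=\Phi_H(p,r)$. This proves \eqref{eq:edge-density-minimization} with a genuine minimum.

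The third step identifies optimizers. From the equality $I_p(W^*)=I_{p,r}(\eps^*)$ and the remark after \eqref{eq:reduced-objective}, $W^*$ maximizes entropy $s$ under $e=\eps^*$ and $t(H,\cdot)=r^m$. The uniqueness part of \Cref{lem:fixed-density-bipodality} then gives $W^*=B_{\eps^*,r^m}$ up to relabeling. Conversely, if $\eps$ minimizes $I_{p,r}$ on $(r-\rho,r)$, then $I_p(B_{\eps,r^m})=I_{p,r}(\eps)=\Phi_H(p,r)$ and $B_{\eps,r^m}$ is feasible, so it is an optimizer.

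For the bijection, the map $W^*\mapsto e(W^*)$ is well defined on relabeling classes and lands in the set of minimizers. It is injective because $W^*$ is determined by $e(W^*)$ through $B_{e(W^*),r^m}$. It is surjective by the converse just shown, since $e(B_{\eps,r^m})=\eps$.

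I expect no real obstacle. The only subtle point is the existence of an optimizer to start the chain, which is the cited attainment result. This existence is also what guarantees that the infimum over the open interval is attained, so the statement can be written as a minimum.
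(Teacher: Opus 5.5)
Your proposal is correct and follows the paper's proof essentially step for step. It sandwiches $\Phi_H(p,r)$ via $I_{p,r}(\eps^*)\le I_p(W^*)=\Phi_H(p,r)\le I_p(B_{\eps,r^m})=I_{p,r}(\eps)$, identifies $W^*$ with $B_{\eps^*,r^m}$ through uniqueness of the entropy maximizer, gets the converse from feasibility of $B_{\eps,r^m}$, and obtains the bijection from edge density; the differences are only cosmetic (you prove the minimum formula before identifying $W^*$, and you invoke uniqueness through \Cref{lem:fixed-density-bipodality} rather than directly through \Cref{thm:krrs-analytic-extension}).
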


%% file: sections/nonexceptional_quadratic_growth.tex
\subsection{The quadratic cost of an edge-density deficit}\label{sec:nonexceptional-quadratic-growth}

At the phase boundary, we show that the reduced objective
$I_{\pc(r),r}(\eps)$ increases quadratically from the constant-graphon
cost as the edge density $\eps$ decreases from $r$, while the $H$-density
remains fixed at $r^m$.  We express this in terms of the edge-density deficit
\[
  \delta:=r-\eps.
\]
Note that by \Cref{lem:edge-density-deficit}, $\delta>0$ for every optimizer on the
symmetry-breaking side.
Fix $r_0\in(0,1)\setminus\{r_*\}$ and let $I$ be the open interval containing $r_0$
supplied by \Cref{lem:fixed-density-bipodality}, with compact closure in
$(0,1)\setminus\{r_*\}$.  We may shrink $I$ around $r_0$ below; all estimates
are uniform for $r\in I$.
At the boundary value $p=\pc(r)$, define the boundary excess
\begin{equation}\label{eq:boundary-excess}
  G_r(\delta)=I_{\pc(r),r}(r-\delta)-J_{\pc(r)}(r)
  \qquad (0<\delta<\rho_0),
\end{equation}
with $\rho_0$ as in \Cref{lem:fixed-density-bipodality}; that lemma ensures
that $I_{\pc(r),r}(r-\delta)$ is finite throughout this range.  The value
at $\delta=0$ belongs instead to the analytic extension $G(r,\delta)$
constructed in \Cref{thm:positive-second-variation}; it is not part of the domain of $G_r$.
Thus, $G_r(\delta)$ measures this additional cost relative to the constant
graphon.  More precisely, we will prove that
\[
  G_r(\delta)=\frac12A_H(r)\delta^2+O_{H,I}(\delta^3)
  \qquad (\delta\downarrow0),
  \qquad A_H(r)>0.
\]

For $r\in I$, recall that $\sm(r)$ is the second contact density associated
with the boundary point $(\pc(r),r)$.  Let $\ell_r$ be the supporting line to
the graph of $x\mapsto J_{\pc(r)}(x^{1/d})$ at $x=r^d$:
\[
  \ell_r(x)=J_{\pc(r)}(r)
  +\frac{J_{\pc(r)}'(r)}{d r^{d-1}}(x-r^d)
  \qquad (0 \le x \le 1).
\]
Define the \emph{supporting cost gap} by
\begin{equation}\label{eq:supporting-cost-gap}
  g_r(z)=J_{\pc(r)}(z)-\ell_r(z^d)
  \qquad (0\le z\le1).
\end{equation}
Since $\ell_r$ is a supporting line, $g_r\ge0$.  We claim that there is
$\gamma_{d,I}>0$ such that, uniformly for $r\in I$,
\begin{equation}\label{eq:supporting-gap-quadratic-bound}
  g_r(z)\ge \gamma_{d,I}\,\dist(z,\{r,\sm(r)\})^2.
\end{equation}
Apply \eqref{eq:contact-quadratic-separation} on $\overline I$.
The contact densities are uniformly bounded below by some $\eta_{d,I}>0$,
and for $z\in[0,1]$ and $v\in\{r,\sm(r)\}$,
\[
  |z^d-v^d|
  =
  |z-v|\,(z^{d-1}+z^{d-2}v+\cdots+v^{d-1})
  \ge
  \eta_{d,I}^{d-1}|z-v|.
\]
Taking the minimum over the two contact densities proves
\eqref{eq:supporting-gap-quadratic-bound}.

\begin{lemma}[Scalar quadratic lower bound]\label{lem:scalar-quadratic-bound}
There exist constants $C_{d,I}>0$ and $\delta_0>0$ such that, for every
$r\in I$, the following holds.  If $0<\delta<\delta_0$ and $X$ is a random variable with
values in $[0,1]$ satisfying
$\EE X=r-\delta$ and
$\EE X^d\ge r^d$,
then
\begin{equation}\label{eq:scalar-quadratic-bound}
  \EE J_{\pc(r)}(X)\ge J_{\pc(r)}(r)+C_{d,I}\delta^2.
\end{equation}
\end{lemma}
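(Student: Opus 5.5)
The plan is to integrate the supporting-gap inequality \eqref{eq:supporting-gap-quadratic-bound} against the law of $X$, and then use the two moment conditions to show that $X$ cannot concentrate on the two contact points. Write $p=\pc(r)$ and $\beta_r:=J_p'(r)/(dr^{d-1})$ for the slope of $\ell_r$. Since $g_r(z)=J_p(z)-\ell_r(z^d)$, taking expectations gives
\[
  \EE J_p(X)=J_p(r)+\beta_r\bigl(\EE X^d-r^d\bigr)+\EE g_r(X).
\]
The slope $\beta_r$ is positive, because $r>p$ forces $J_p'(r)>0$. Hence the middle term is nonnegative by the hypothesis $\EE X^d\ge r^d$, and
\[
  \EE J_p(X)-J_p(r)\ge \EE g_r(X)\ge \gamma_{d,I}\,\EE\,\dist(X,\{r,\sm(r)\})^2 .
\]
It therefore suffices to show that $D:=\EE\,\dist(X,\{r,\sm(r)\})^2\ge c\,\delta^2$ for a constant $c>0$ uniform in $r\in I$.

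To bound $D$ from below, let $Y$ be the nearest contact point to $X$, so $Y\in\{r,\sm(r)\}$. Set $q:=\mathbb P(Y=\sm(r))$. We have $\EE|X-Y|\le D^{1/2}$, and since $|x^d-y^d|\le d|x-y|$ on $[0,1]$, also $|\EE X^d-\EE Y^d|\le dD^{1/2}$. Write $s=\sm(r)$. The two moment conditions then give
\[
  \bigl|(r-\delta)-(r+q(s-r))\bigr|\le D^{1/2},\qquad
  r^d+q(s^d-r^d)\ge r^d-dD^{1/2}.
\]
The first says $|q(s-r)+\delta|\le D^{1/2}$. The second says $q(s^d-r^d)\ge -dD^{1/2}$. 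The key observation is that $s-r$ and $s^d-r^d$ have the same sign. If $s>r$, the first inequality forces $q(s-r)\le -\delta+D^{1/2}$, while $q\ge0$ forces $q(s-r)\ge0$; together these give $D^{1/2}\ge\delta$. If $s<r$, the first inequality gives $q|s-r|\ge\delta-D^{1/2}$. The second gives $q|s^d-r^d|\le dD^{1/2}$, and since $|s^d-r^d|\ge \eta_{d,I}^{d-1}|s-r|$ by the estimate used for \eqref{eq:supporting-gap-quadratic-bound}, this becomes $q|s-r|\le d\eta_{d,I}^{-(d-1)}D^{1/2}$. Combining, $\delta\le (1+d\eta_{d,I}^{-(d-1)})D^{1/2}$. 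In both cases $D\ge c_{d,I}\delta^2$, which yields \eqref{eq:scalar-quadratic-bound} with $C_{d,I}=\gamma_{d,I}c_{d,I}$.

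The main point to get right is uniformity in $r\in I$. Only three quantities enter: the gap constant $\gamma_{d,I}$, the lower bound $\eta_{d,I}$ on the contact densities, and positivity of $\beta_r$. All three are uniform on $\overline I$ by (M5), by compactness, and by $\pc(r)<r$ from (M1). The constraint $\delta<\delta_0$ is only needed to keep $r-\delta\in(0,1)$, so that such an $X$ can exist. The one step needing care is the case $s<r$, where a mass $q$ placed at $\sm(r)$ could lower the mean for free. There one must use that this mass also lowers $\EE X^d$ proportionally, and this is exactly where the separation $\sm(r)\ne r$ and the positivity of $\beta_r$ are used.
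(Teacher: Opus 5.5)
Your proof is correct and follows the paper's argument: the same supporting-line decomposition with positive slope, the same nearest-point projection $Y$ onto $\{r,\sm(r)\}$, the $d$-Lipschitz bound for $z\mapsto z^d$, and Cauchy--Schwarz. The only difference is cosmetic. The paper avoids your case split on the sign of $\sm(r)-r$ by writing $Y^d-r^d=k(Y-r)$ with the positive secant slope $k=(s^d-r^d)/(s-r)\ge\eta_{d,I}^{d-1}$, which gives $\EE|X-Y|\ge\frac{k}{k+d}\delta$ in one line.
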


\begin{proof}
The slope of $\ell_r$ is positive, because $\pc(r)<r$.  Hence
\[
  \EE\ell_r(X^d)
  =   \ell_r(\EE X^d)
  \ge \ell_r(r^d)
  =   J_{\pc(r)}(r).
\]
Using \eqref{eq:supporting-cost-gap},
\[
  \EE g_r(X)
  =   \bigl(\EE J_{\pc(r)}(X)-\EE \ell_r(X^d)\bigr)
  \le \bigl(\EE J_{\pc(r)}(X)-J_{\pc(r)}(r)\bigr).
\]
It remains to show $\EE g_r(X)\ge C_{d,I}\delta^2$.
Set $s:=\sm(r)$ and
\[
  k:=\frac{s^d-r^d}{s-r}
  =s^{d-1}+s^{d-2}r+\cdots+r^{d-1}
  \ge\eta_{d,I}^{d-1}>0.
\]
Let $Y$ be the measurable nearest-point projection of $X$ onto $\{r,s\}$,
with ties assigned to $r$.  Since $Y\in\{r,s\}$, we have
$Y^d-r^d=k(Y-r)$.  Using $\EE X=r-\delta$ and the fact that
$z\mapsto z^d$ is $d$-Lipschitz on $[0,1]$, we obtain
\begin{align*}
  0\le\EE X^d-r^d
  &\le\EE Y^d-r^d+d\,\EE|X-Y|\\
  &=k\bigl(\EE(Y-X)-\delta\bigr)+d\,\EE|X-Y|\\
  &\le(k+d)\EE|X-Y|-k\delta.
\end{align*}
Hence
\[
  \EE|X-Y|\ge\frac{k}{k+d}\delta
  \ge\frac{\eta_{d,I}^{d-1}}{d+\eta_{d,I}^{d-1}}\delta.
\]
Since $|X-Y|=\dist(X,\{r,s\})$, the bound
\eqref{eq:supporting-gap-quadratic-bound} and Cauchy--Schwarz give
\[
  \EE g_r(X)\ge\gamma_{d,I}\,\EE|X-Y|^2
  \ge\gamma_{d,I}\bigl(\EE|X-Y|\bigr)^2
  \ge C_{d,I}\delta^2,
\]
where $C_{d,I}:=\gamma_{d,I}\bigl(\eta_{d,I}^{d-1}/(d+\eta_{d,I}^{d-1})\bigr)^2>0$.
\end{proof}

\begin{corollary}[Quadratic lower bound for graphons]\label{prop:graphon-quadratic-bound}
For every $r\in I$ and $0<\delta<\delta_0$, if $e(W)=r-\delta$ and
$t(H,W)\ge r^m$, then
\begin{equation}\label{eq:graphon-quadratic-bound}
  I_{\pc(r)}(W)
  \ge
  J_{\pc(r)}(r)+C_{d,I}\delta^2,
\end{equation}
with $C_{d,I}$ and $\delta_0$ as in \Cref{lem:scalar-quadratic-bound}.
\end{corollary}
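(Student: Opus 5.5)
The plan is to apply \Cref{lem:scalar-quadratic-bound} to the random variable obtained by evaluating $W$ at a uniformly random point of $[0,1]^2$. Let $(U,V)$ be uniform on $[0,1]^2$ and set $X:=W(U,V)$. Then $X$ takes values in $[0,1]$, and
\[
  \EE X=e(W)=r-\delta,
  \qquad
  \EE J_{\pc(r)}(X)=I_{\pc(r)}(W).
\]
So \eqref{eq:graphon-quadratic-bound} is exactly \eqref{eq:scalar-quadratic-bound} for this $X$. The only hypothesis left to verify is $\EE X^d\ge r^d$.

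This moment condition is the one genuinely graph-theoretic step. I will use the generalized H\"older inequality \eqref{eq:generalized-holder}:
\[
  r^m\le t(H,W)\le\Bigl(\int_{[0,1]^2}W^d\Bigr)^{m/d}=(\EE X^d)^{m/d}.
\]
Both sides are nonnegative and $m/d>0$, so raising to the power $d/m$ preserves the inequality. This gives $\EE X^d\ge r^d$.

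With that in hand, \Cref{lem:scalar-quadratic-bound} applies for every $r\in I$ and $0<\delta<\delta_0$, with the same constants $C_{d,I}$ and $\delta_0$, and yields the claim. I expect no real obstacle here. The only points to check are measurability of $X$ (immediate, since $W$ is measurable) and that the exponent manipulation is monotone, which holds because all quantities lie in $[0,1]$.
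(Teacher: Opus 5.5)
Your proposal is correct and is essentially the paper's proof: set $X=W(U,V)$ for $(U,V)$ uniform on $[0,1]^2$, use the generalized H\"older inequality \eqref{eq:generalized-holder} together with feasibility to get $\EE X^d\ge t(H,W)^{d/m}\ge r^d$, and then apply \Cref{lem:scalar-quadratic-bound}.
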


\begin{proof}
Let $X=W(U,V)$, where $(U,V)$ is uniformly distributed on $[0,1]^2$.  Then
$\EE X=e(W)=r-\delta$.  By the generalized H\"older inequality
\eqref{eq:generalized-holder} and feasibility,
\[
  \EE X^d=\int W^d\ge t(H,W)^{d/m} \ge r^d.
\]
By \Cref{lem:scalar-quadratic-bound}, we conclude the proof.
\end{proof}

We now prove that the preceding quadratic lower bound has
the correct order.  For small $\delta>0$, we construct a bipodal graphon with
edge density $r-\delta$, $H$-density $r^m$, and $I_{\pc(r)}$-value only
$O_{H,I}(\delta^2)$ above that of the constant graphon.  The construction starts
from the constant graphon $r$, makes a small block, uses the second contact density
$s=\sm(r)$ on the cross edges, and then adjusts the large-block density to
restore the two constraints.

\begin{lemma}[Quadratic upper bound]\label{lem:bipodal-quadratic-bound}
After shrinking $I$ around $r_0$, there exist constants $C_{H,I}>0$ and
$\delta_0>0$ such that, for every $r\in I$ and $0<\delta<\delta_0$, there is a bipodal graphon
$V_\delta$ satisfying $e(V_\delta)=r-\delta$, $t(H,V_\delta)=r^m$,
and
\begin{equation}\label{eq:bipodal-quadratic-bound}
  I_{\pc(r)}(V_\delta)
  \le
  J_{\pc(r)}(r)+C_{H,I}\delta^2.
\end{equation}
\end{lemma}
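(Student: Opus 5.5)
The plan is an explicit bipodal construction whose two free parameters are fixed by the two constraints through the implicit function theorem; a first-order cancellation coming from the supporting line $\ell_r$ then shows that the cost is quadratic in $\delta$.

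\emph{The ansatz.} Fix $r\in I$ and put $s=\sm(r)$. Consider the bipodal graphon $V$ with small block of size $c$ and densities
\[
  q_{11}=s,\qquad q_{12}=s,\qquad q_{22}=r+a,
\]
so the only free parameters are $(c,a)$ near $(0,0)$. By (M1), $s\ne r$, and both $r$ and $s$ lie in $(0,1)$ uniformly on $\overline I$. Hence all densities stay in $(0,1)$ for small $(c,a)$.

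The edge density is the polynomial $e=c^2s+2c(1-c)s+(1-c)^2(r+a)$. The $H$-density is obtained by summing over $2$-colorings of $V(H)$. Its first-order expansion in $c$ is
\[
  t=(1-c)^{|V(H)|}(r+a)^m+|V(H)|\,c\,(1-c)^{|V(H)|-1}s^d(r+a)^{m-d}+O(c^2).
\]
This uses $d$-regularity: a single vertex in the small block sees exactly $d$ cross edges. Both $e$ and $t$ are polynomials in $(c,a)$, with coefficients analytic in $r$.

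\emph{Solving the constraints.} Consider the system
\[
  e(V)=r-\delta,\qquad t(H,V)=r^m
\]
in the unknowns $(c,a)$, with parameters $(r,\delta)$. At $(c,a,\delta)=(0,0,0)$ the Jacobian with respect to $(c,a)$ has rows
\[
  \bigl(2(s-r),\ 1\bigr)\quad\text{and}\quad \bigl(|V(H)|\,r^{m-d}(s^d-r^d),\ m r^{m-1}\bigr).
\]
Using $2m=d|V(H)|$ and $k=(s^d-r^d)/(s-r)$, its determinant is
\[
  |V(H)|\,r^{m-d}(s-r)\bigl(d r^{d-1}-k\bigr).
\]
This is nonzero, because $s\ne r$ and strict convexity of $z\mapsto z^d$ (recall $d\ge2$) forces the chord slope $k$ to differ from the tangent slope $dr^{d-1}$. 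By continuity on $\overline I$, the determinant stays bounded away from $0$ after shrinking $I$.

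The analytic implicit function theorem then gives $(c(\delta),a(\delta))$ analytic near $\delta=0$, with $c,a=O(\delta)$ uniformly in $r\in I$. One still has to check that $c'(0)>0$, so that $c>0$ for $\delta>0$. Solving the linearized system gives $c'(0)$ as $-1$ over the determinant, up to positive factors, so its sign is that of $(s-r)(k-dr^{d-1})$. By convexity of $z\mapsto z^d$, $k>dr^{d-1}$ when $s>r$ and $k<dr^{d-1}$ when $s<r$, so this sign is positive in both cases. I expect this sign verification to be the only delicate bookkeeping step. Set $V_\delta$ to be the graphon with these parameters.

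\emph{Vanishing of the linear cost term.} Write $F(\delta)=I_{\pc(r)}(V_\delta)-J_{\pc(r)}(r)$; it is analytic with $F(0)=0$. Abbreviate $J=J_{\pc(r)}$. Differentiating at $0$,
\[
  F'(0)=2c'(0)\bigl(J(s)-J(r)\bigr)+a'(0)J'(r).
\]
By (M3)--(M4), $J(s)-J(r)=\ell_r(s^d)-\ell_r(r^d)=\dfrac{J'(r)}{dr^{d-1}}\,k(s-r)$. The linearized $H$-constraint gives
\[
  a'(0)=-\frac{|V(H)|\,c'(0)\,k(s-r)}{m\,r^{d-1}}=-\frac{2c'(0)\,k(s-r)}{d\,r^{d-1}}.
\]
Substituting both, the two terms of $F'(0)$ cancel exactly.

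Hence $F(\delta)=O(\delta^2)$. The constant is uniform in $r\in I$ because the solutions and $F$ are jointly analytic in $(r,\delta)$ on a neighborhood of the compact set $\overline I\times\{0\}$ (shrinking $I$ if needed). This gives \eqref{eq:bipodal-quadratic-bound}.
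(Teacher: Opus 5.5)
Your proposal is correct and follows essentially the paper's argument. The paper uses the same bipodal ansatz with cross density $\sm(r)$ and solves for (block size, large-block density) by the analytic implicit function theorem, with the same Jacobian (their $-2mr^{m-1}D_d(r,s)$ equals your $|V(H)|\,r^{m-d}(s-r)(dr^{d-1}-k)$). It then obtains the same first-order cancellation from $\ell_r$ touching the curve at both $r^d$ and $\sm(r)^d$.

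The only difference is cosmetic. You take the small-block density to be $s$ rather than the paper's fixed value $1/2$, which is immaterial because that block enters only at order $c^2$.
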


\begin{proof}
Fix $a\in(0,1)$; we take $a=1/2$ at the end.
For $r$ near $r_0$, put $s=\sm(r)$ and let $V_{r,c,q}$ have block size $c$
and edge densities $a,s,q$ on the small block, cross edges, and large block,
respectively.  To solve the two constraints, define
\[
  F(r,\delta,c,q)
  :=\bigl(e(V_{r,c,q})-(r-\delta),\,t(H,V_{r,c,q})-r^m\bigr).
\]
The density formulas are polynomial in $c,a,s,q$, so they define $F$ also
for real $c$ near $0$.  Since $\sm$ is analytic near $r_0$, this extension
is analytic and satisfies $F(r,0,0,r)=0$.

Writing $n=|V(H)|=2m/d$, the assignments with zero or one vertex in the
small block give
\begin{align*}
  e(V_{r,c,q})&=c^2a+2c(1-c)s+(1-c)^2q,\\
  t(H,V_{r,c,q})&=(1-c)^nq^m
  +nc(1-c)^{n-1}s^dq^{m-d}+O_H(c^2).
\end{align*}
Thus, with
\[
  D_d(r,s):=\frac{s^d-r^d}{d r^{d-1}}-(s-r)>0,
\]
where positivity follows from strict convexity and $s\ne r$, the Jacobian is
\begin{gather*}
  \partial_{(c,q)}F(r,0,0,r)=
  \begin{pmatrix}
    2(s-r) & 1 \\
    \frac{2m}{d}r^{m-d}(s^d-r^d) & m r^{m-1}
  \end{pmatrix},\\
  \det\partial_{(c,q)}F(r,0,0,r)=-2m r^{m-1}D_d(r,s)<0.
\end{gather*}
The analytic implicit function theorem at $(r_0,0,0,r_0)$ therefore gives
jointly analytic functions $c(r,\delta)$ and $q(r,\delta)$ near $(r_0,0)$.
Local uniqueness and $F(r,0,0,r)=0$ give $c(r,0)=0$ and $q(r,0)=r$, while
\begin{equation}\label{eq:comparison-graphon-constraints}
  e(V_{r,c(r,\delta),q(r,\delta)})=r-\delta,
  \qquad
  t(H,V_{r,c(r,\delta),q(r,\delta)})=r^m.
\end{equation}
Shrink $I$ and choose $\delta_0>0$ so that
$\overline I\times[-\delta_0,\delta_0]$ lies in this analytic domain.
Differentiating the constraints at $\delta=0$ and using the Jacobian gives,
with uniform Taylor remainders,
\begin{equation}\label{eq:comparison-block-size}
  c(r,\delta)=\frac{\delta}{2D_d(r,s)}+O_{H,I,a}(\delta^2),
\end{equation}
\begin{equation}\label{eq:comparison-block-density}
  q(r,\delta)=r-\frac{2(s^d-r^d)}{d r^{d-1}}c(r,\delta)
  +O_{H,I,a}(c(r,\delta)^2).
\end{equation}
Since $D_d(r,\sm(r))$ is bounded above and away from zero on $\overline I$,
we may decrease $\delta_0$ so that $0<c(r,\delta)<1/2$ and $0<q(r,\delta)<1$
for every $r\in I$ and $0<\delta<\delta_0$.
Hence $V_\delta:=V_{r,c(r,\delta),q(r,\delta)}$ is a graphon with the required
densities.

It remains to estimate its cost.  Abbreviate
$c=c(r,\delta)$, $q=q(r,\delta)$.  Since $q-r=O_{H,I,a}(c)$ and all edge
densities and $\pc(r)$ stay away from $0$ and $1$, Taylor's theorem gives
\begin{align*}
  I_{\pc(r)}(V_\delta)-J_{\pc(r)}(r)
  &=J_{\pc(r)}'(r)(q-r)+2c\{J_{\pc(r)}(s)-J_{\pc(r)}(r)\}
    +O_{H,I,a}(c^2)\\
  &=2c\left\{J_{\pc(r)}(s)-J_{\pc(r)}(r)
    -\frac{J_{\pc(r)}'(r)}{d r^{d-1}}(s^d-r^d)\right\}
    +O_{H,I,a}(c^2).
\end{align*}
The bracket vanishes because $\ell_r$ touches the scalar curve at both
$r^d$ and $s^d$; see (M3)--(M4) of \Cref{thm:scalar-lz-boundary}.
Thus the cost difference is $O_{H,I,a}(c^2)=O_{H,I,a}(\delta^2)$ by
\eqref{eq:comparison-block-size}.  Taking $a=1/2$ proves
\eqref{eq:bipodal-quadratic-bound}.
\end{proof}

Recall that the boundary excess \eqref{eq:boundary-excess} is defined only for
$\delta>0$, because the Kenyon--Radin--Ren--Sadun regime of \Cref{thm:krrs-analytic-extension}
requires a strictly positive $H$-density excess $\tau-\eps^m$.  The theorem
below therefore produces, in addition to the quadratic expansion, the
real-analytic \emph{extension} of $G_r$ to a two-sided neighborhood of
$\delta=0$; it is that extension whose $\delta$-derivatives appear in
\Cref{sec:nonexceptional-proof}.

\begin{theorem}[Universal positivity of the scalar second variation]\label{thm:positive-second-variation}
After shrinking $I$ around $r_0$, there exist $\overline\delta>0$, an open
set $\mathcal N$ containing $I\times(-\overline\delta,\overline\delta)$,
and a real-analytic
function $G:\mathcal N\to\RR$ such that
\begin{equation}\label{eq:boundary-excess-extension}
  G(r,\delta)=G_r(\delta)
  \qquad(r\in I,\ 0<\delta<\overline\delta).
\end{equation}
Setting $A_H(r):=\partial_\delta^2G(r,0)$, the following hold.
\begin{enumerate}[label=(\roman*)]
  \item $G(r,0)=0$ and $\partial_\delta G(r,0)=0$ for every $r\in I$.
  \item $A_H$ is real-analytic on $I$ and satisfies $A_H(r)\ge a_0>0$
  there for some constant $a_0$.
  \item There is $C_2\ge1$, depending only on $H,I$, such that
  \begin{equation}\label{eq:boundary-excess-curvature}
    \bigl|\partial_\delta^2G(r,\delta)-A_H(r)\bigr|\le C_2|\delta|
    \qquad(r\in I,\ |\delta|<\overline\delta).
  \end{equation}
  \item Consequently the boundary excess has the expansion
  \begin{equation}\label{eq:boundary-excess-expansion}
    G_r(\delta)=\frac12A_H(r)\delta^2+O_{H,I}(\delta^3)
    \qquad(r\in I,\ 0<\delta<\overline\delta).
  \end{equation}
\end{enumerate}
\end{theorem}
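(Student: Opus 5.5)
The plan is to build $G$ explicitly from the analytic KRR--S parameter map and then read off (i)--(iv) from the two-sided bounds already proved. Put $\eps=r-\delta$ and $\vartheta(r,\delta)=r^m-(r-\delta)^m$, which is analytic and has the sign of $\delta$ for small $|\delta|$. By \Cref{thm:krrs-analytic-extension}, applied at $\eps_0=r_0$ after shrinking $I$, the parameters $(q_{11},q_{12},q_{22},c)(\eps,\vartheta)$ are real-analytic on a two-sided neighborhood of $\vartheta=0$, with values in $(0,1)$ near $\vartheta=0$ for $q$'s (since $q_{22}(\eps,0)=\eps$, $q_{12}(\eps,0)=\zeta_d(\eps)$, $q_{11}(\eps,0)\in(0,1)$). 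Define
\[
  G(r,\delta):=c^2J_{\pc(r)}(q_{11})+2c(1-c)J_{\pc(r)}(q_{12})+(1-c)^2J_{\pc(r)}(q_{22})-J_{\pc(r)}(r),
\]
with the parameters evaluated at $(r-\delta,\vartheta(r,\delta))$. This is a polynomial in $c$ and analytic functions of $q_{ij}$ bounded away from $0,1$, with $\pc$ analytic by \Cref{thm:scalar-lz-boundary}. It is therefore real-analytic on an open set $\mathcal N\supset I\times(-\overline\delta,\overline\delta)$. For $\delta>0$ it equals $I_{\pc(r)}(B_{r-\delta,r^m})-J_{\pc(r)}(r)=G_r(\delta)$ by \eqref{eq:reduced-objective-attainment}, which gives \eqref{eq:boundary-excess-extension}.

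For (i), at $\delta=0$ we have $c=0$ and $q_{22}=r$, so $G(r,0)=0$. For the derivative, \Cref{lem:bipodal-quadratic-bound} gives $0\le G_r(\delta)\le I_{\pc(r)}(V_\delta)-J_{\pc(r)}(r)\le C_{H,I}\delta^2$ for small $\delta>0$. Here the lower bound uses \Cref{prop:graphon-quadratic-bound}: in fact $G_r(\delta)\ge C_{d,I}\delta^2$. Since $G(r,\cdot)$ is smooth, the bound $|G(r,\delta)|=O(\delta^2)$ as $\delta\downarrow0$ forces $\partial_\delta G(r,0)=0$. Taylor expansion then gives $G(r,\delta)=\tfrac12\partial_\delta^2G(r,0)\delta^2+O(\delta^3)$. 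Comparing with $C_{d,I}\delta^2\le G_r(\delta)\le C_{H,I}\delta^2$ as $\delta\downarrow0$ yields $2C_{d,I}\le A_H(r)\le 2C_{H,I}$. This proves (ii) with $a_0=2C_{d,I}$, and analyticity of $A_H$ is inherited from $G$.

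For (iii), I would shrink $I$ so that $\overline I\times[-\overline\delta,\overline\delta]$ is a compact subset of $\mathcal N$. Then $\partial_\delta^3G$ is bounded there, and the mean value theorem gives \eqref{eq:boundary-excess-curvature} with $C_2=\max(1,\sup|\partial_\delta^3G|)$. Item (iv) follows by integrating (iii) twice from $\delta=0$ using (i), with remainder $\tfrac16C_2\delta^3$.

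I expect the main obstacle to be the uniformity bookkeeping. Three requirements must hold simultaneously after shrinking $I$: the $\delta_0$'s from \Cref{lem:scalar-quadratic-bound} and \Cref{lem:bipodal-quadratic-bound}, the KRR--S width $\Delta$ (so that $m\overline\delta<\Delta$), and the condition that the continued $q_{ij}$ stay in a compact subset of $(0,1)$ (so that $J_{\pc(r)}(q_{ij})$ is analytic). Each is handled by compactness of $\overline I$ and continuity of the analytic extension at $\vartheta=0$. One further subtlety is that KRR--S orders the smaller block first, and this ordering is consistent with $c\to0$, so no relabeling ambiguity arises near $\delta=0$.
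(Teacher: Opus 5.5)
Your proposal is correct and takes essentially the same route as the paper. You define $G$ from the two-sided KRR--S parameter map evaluated at $(r-\delta,\,r^m-(r-\delta)^m)$. You then sandwich $G_r(\delta)$ between the lower bound of \Cref{prop:graphon-quadratic-bound} and the comparison graphon of \Cref{lem:bipodal-quadratic-bound}, which gives $\partial_\delta G(r,0)=0$ and $A_H>0$. Finally, you bound $\partial_\delta^3G$ on a compact rectangle to get (iii)--(iv).

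The only minor difference is how you get the uniform constant $a_0$. You read off $a_0=2C_{d,I}$ directly from the uniform lower constant. The paper instead takes $a_0=A_H(r_0)/2$ by continuity after shrinking $I$. Both are valid.
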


\begin{proof}
Use the single analytic parameter map from
\Cref{thm:krrs-analytic-extension} chosen in the proof of
\Cref{lem:fixed-density-bipodality}.  Evaluating it at
\[
  (\eps,\vartheta)=(r-\delta,\,r^m-(r-\delta)^m)
\]
gives jointly analytic parameters $(q_{11},q_{12},q_{22},c)$ near $(r_0,0)$,
including small negative $\delta$.  Define
\[
  G(r,\delta):=
  c^2J_{\pc(r)}(q_{11})+2c(1-c)J_{\pc(r)}(q_{12})
  +(1-c)^2J_{\pc(r)}(q_{22})-J_{\pc(r)}(r).
\]
The three edge-density parameters and $\pc(r)$ stay in $(0,1)$ near
$(r_0,0)$, so $G$ is analytic there.  For $\delta>0$, this is the cost of
the fixed-density optimizer minus the constant-graphon cost, by
\eqref{eq:reduced-objective-attainment}.  At $\delta=0$, the parameters have
$c=0$ and $q_{22}=r$, giving $G(r,0)=0$.
Shrink $I$ and choose $\overline\delta>0$ so that
$\overline I\times[-\overline\delta,\overline\delta]$ lies in the analytic
domain and the preceding upper and lower bounds apply throughout the
positive half of this rectangle.

For $r\in I$ and $0<\delta<\overline\delta$, the fixed-density optimizer
satisfies \Cref{prop:graphon-quadratic-bound} and has no greater cost than
the graphon in \Cref{lem:bipodal-quadratic-bound}.  Consequently,
\[
  C_{d,I}\delta^2\le G_r(\delta)\le C_{H,I}\delta^2.
\]
Together with analyticity and $G(r,0)=0$, this gives
$\partial_\delta G(r,0)=0$ and
\[
  A_H(r)=\partial_\delta^2G(r,0)
  =\lim_{\delta\downarrow0}\frac{2G_r(\delta)}{\delta^2}>0.
\]
Thus $A_H$ is positive and analytic near $r_0$; after shrinking $I$,
continuity gives $A_H(r)\ge a_0:=A_H(r_0)/2>0$ for all $r\in I$.

The continuous function $|\partial_\delta^3G|$ is bounded on the closed
rectangle above.  Taking $C_2\ge1$ larger than this bound, the mean value
theorem gives \eqref{eq:boundary-excess-curvature}.  Taylor's theorem then
gives \eqref{eq:boundary-excess-expansion}, with remainder bounded by
$C_2|\delta|^3/6$.  All bounds are uniform for $r\in I$.
\end{proof}

The function $A_H$ does not depend on the chosen neighborhood or analytic
extension: any two extensions agree for small positive $\delta$, where
both equal \eqref{eq:boundary-excess}, and hence their second derivatives
at $\delta=0$ agree.  Since $r_0$ was arbitrary, these local functions
define a positive real-analytic function
\begin{equation}\label{eq:second-variation-domain}
  A_H:(0,1)\setminus\{r_*\}\longrightarrow(0,\infty).
\end{equation}

%% file: sections/nonexceptional_proof.tex
\subsection{Proof of the nonexceptional-endpoint theorem}\label{sec:nonexceptional-proof}

\begin{proof}[Proof of \Cref{thm:nonexceptional-endpoint}]
Let $I$ and $\rho_0$ be as in
\Cref{cor:scalar-reduction}.  After shrinking $I$ around $r_0$, let
$G$, $\overline\delta$, $a_0$, and $C_2$ be supplied by
\Cref{thm:positive-second-variation}.  Choose
\[
  0<\delta_0<
  \min\left\{\rho_0,\overline\delta,\frac{a_0}{2C_2},\inf_{r\in I}r\right\}.
\]
Then, uniformly for $r\in I$ and $|\delta|<\delta_0$,
\[
  \partial_\delta^2G(r,\delta)
  \ge A_H(r)-C_2|\delta|\ge\frac{a_0}{2}>0.
\]
Apply \Cref{cor:scalar-reduction} with $\rho=\delta_0$ to choose $\eta>0$.
For $r\in I$ and $\pc(r)-\eta<p<\pc(r)$,
it reduces the graphon problem to choosing
$\delta=r-e(W)\in(0,\delta_0)$.
The relative entropy identity \eqref{eq:relative-entropy-identity},
applied to $B_{r-\delta,r^m}$ and the constant graphon $r$, gives
\[
  I_{p,r}(r-\delta)-J_p(r)
  =G(r,\delta)-\lambda(p,r)\delta.
\]
Indeed, changing $p$ from $\pc(r)$ adds an affine function of the edge
density, and the difference of the two edge densities is $-\delta$.
Consequently,
\begin{equation}\label{eq:edge-deficit-minimization}
  \Phi_H(p,r)
  =J_p(r)+\min_{0<\delta<\delta_0}
    \{G(r,\delta)-\lambda(p,r)\delta\}.
\end{equation}
The correspondence in \Cref{cor:scalar-reduction} identifies its minimizers
with graphon optimizers up to relabeling.

To find the minimizing deficit, consider the critical-point equation
\[
  \Psi(p,r,\delta):=\partial_\delta G(r,\delta)-\lambda(p,r)=0.
\]
This function is analytic near $(\pc(r_0),r_0,0)$, where
\[
  \Psi(\pc(r_0),r_0,0)=0,
  \qquad
  \partial_\delta\Psi(\pc(r_0),r_0,0)=A_H(r_0)>0.
\]
The analytic implicit function theorem gives an analytic
solution $\delta_*(p,r)\in(-\delta_0,\delta_0)$ near $(\pc(r_0),r_0)$.
Shrink $I$ to $(r_0-\rho,r_0+\rho)$ and decrease $\eta>0$ if necessary
so that
\[
  U=\{(p,r):|p-\pc(r)|<\eta,\ |r-r_0|<\rho\}
\]
has compact closure in this analytic domain and satisfies $0<p<r<1$.
Since $\partial_\delta G(r,0)=\lambda(\pc(r),r)=0$, uniqueness gives
\[
  \delta_*(\pc(r),r)=0,
  \qquad
  \partial_\delta G(r,\delta_*(p,r))=\lambda(p,r).
\]

For $(p,r)\in U$ with $p<\pc(r)$, the mean value theorem yields
\[
  \lambda(p,r)
  =\partial_\delta^2G(r,\theta\delta_*)\delta_*
  \qquad (\text{for some }\theta\in(0,1)).
\]
The positive lower bound on the second derivative shows that
\[
  0<\delta_*(p,r)<\delta_0,
  \qquad
  \delta_*(p,r)\le\frac{2}{a_0}\lambda(p,r).
\]
Strict convexity now makes $\delta_*$ the unique minimizer in
\eqref{eq:edge-deficit-minimization}.  Moreover,
\eqref{eq:boundary-excess-curvature} gives
$\lambda(p,r)=A_H(r)\delta_*+O_{H,U}(\delta_*^2)$, uniformly on this region.
Since $A_H(r)\ge a_0$, it follows that
\begin{equation}\label{eq:optimal-deficit-expansion}
  \delta_*(p,r)
  =\frac{\lambda(p,r)}{A_H(r)}+O_{H,U}(\lambda(p,r)^2).
\end{equation}

By \Cref{cor:scalar-reduction}, the unique graphon optimizer is
$W_{p,r}:=B_{r-\delta_*,r^m}$ up to relabeling.  It is bipodal and
satisfies
\[
  e(W_{p,r})=r-\delta_*,
  \qquad
  t(H,W_{p,r})=r^m.
\]
It is nonconstant, since a constant graphon with $H$-density $r^m$ has
edge density $r$, whereas $\delta_*>0$.
Write $\widetilde c$ and $\widetilde q_{ij}$ for the single analytic
KRR--S parameter map chosen in \Cref{lem:fixed-density-bipodality}, and set
\[
  \eps(p,r):=r-\delta_*(p,r),
  \qquad
  \vartheta(p,r):=r^m-\eps(p,r)^m.
\]
The bipodal parameters are
\[
  c(p,r):=\widetilde c(\eps(p,r),\vartheta(p,r)),
  \qquad
  q_{ij}(p,r):=\widetilde q_{ij}(\eps(p,r),\vartheta(p,r))
  \quad((i,j)\in\{(1,1),(1,2),(2,2)\}).
\]
They are analytic in $(p,r)$, as are the edge density $r-\delta_*$ and
the minimum cost $J_p(r)+G(r,\delta_*)-\lambda(p,r)\delta_*$.

After decreasing $\eta$ if necessary, the uniform block-size bound in
\Cref{thm:krrs-analytic-extension} gives
\[
  c(p,r)=O_{H,U}(\vartheta(p,r))
  =O_{H,U}(\delta_*(p,r))
  =O_{H,U}(\lambda(p,r)).
\]
Here $c(p,r)>0$: otherwise the graphon would be constant with value
$\eps(p,r)<r$, contradicting its $H$-density $r^m$.
The function $\lambda$ vanishes at $p=\pc(r)$ and has bounded $p$-derivative
on $\overline U$, so decreasing $\eta$ makes $c(p,r)<1/2$ uniformly.
Thus the first block $A_{p,r}:=[0,c(p,r)]$ is the smaller one, and $c(p,r)\to0$ as
$p\uparrow\pc(r)$ for every fixed $r\in I$.

Finally, \eqref{eq:optimal-deficit-expansion} gives the edge-density
expansion in \eqref{eq:optimizer-expansion}.  For the cost, substitute it
into \eqref{eq:edge-deficit-minimization} and use
\eqref{eq:boundary-excess-expansion}:
\[
  \Phi_H(p,r)
  =J_p(r)+\frac12A_H(r)\delta_*^2-\lambda(p,r)\delta_*
    +O_{H,U}(\delta_*^3)
  =J_p(r)-\frac{\lambda(p,r)^2}{2A_H(r)}
    +O_{H,U}(\lambda(p,r)^3).
\]
The remainders are uniform on the symmetry-breaking part of $U$.
\end{proof}

%% file: sections/singular.tex
\section{Optimizers near the singular endpoint}
\label{sec:singular-endpoint}

In this section, we prove \Cref{thm:endpoint-optimizers} in a more precise form.
Along the Lubetzky--Zhao boundary, the supporting line to the graph of
$x\mapsto J_{\pc(r)}(x^{1/d})$ touches it at $x=r^d$ and
$x=\sm(r)^d$.  At the exceptional density $r_*=(d-1)/d$, these two
points coincide at $x=r_*^d$, so the nonexceptional reduction no longer
applies.  Following the strategy in
\Cref{sec:proof-strategy}, we construct an analytic rank-one bipodal family
and establish its global optimality and uniqueness along the associated
parameter curve approaching $(\pc(r_*),r_*)$.

Fix a finite simple $d$-regular graph $H$ with $d\ge2$,
and write $m:=|E(H)|$ and $v:=|V(H)|=2m/d$.  Set
\begin{equation*}
  r_*=\frac{d-1}{d},
  \qquad
  p_*=\pc(r_*)
  =\frac{d-1}{(d-1)+\exp(d/(d-1))},
  \qquad
  u_*:=\sqrt{r_*},
  \qquad
  P_*:=(p_*,r_*).
\end{equation*}
We write $\lambda$ for Lebesgue measure on $[0,1]$, and also for its
product measure on $[0,1]^2$.
In this section, a kernel means a measurable real-valued function of two
variables; unlike a graphon, it need not be symmetric or take values in
$[0,1]$.
For a measurable function $g:[0,1]\to\mathbb R$, write $g\otimes g$ for
the rank-one kernel $(g\otimes g)(x,y):=g(x)g(y)$.
The following theorem is the main result of this section.
The remainder of this section is devoted to proving this theorem.

\begin{theorem}[Optimizers near the singular endpoint]
\label{thm:singular-endpoint}
There exist $h_0>0$ and real-analytic functions
$p_h,r_h,u_h,\alpha_h$, defined for $|h|<h_0$, such that
\[
  (p_0,r_0)=P_*,
  \qquad
  u_0=u_*,
  \qquad
  \alpha_0=\frac12.
\]
For every $h\in(0,h_0)$, we have $p_h<\pc(r_h)$,
and for any measurable set $A_h\subset[0,1]$
with $\lambda(A_h)=\alpha_h$, set
\[
  s_h:=u_h-h,
  \qquad
  t_h:=u_h+h,
  \qquad
  f_h:=s_h\1_{A_h}+t_h\1_{A_h^c},
  \qquad
  W_h:=f_h\otimes f_h.
\]
Then $W_h$ is a nonconstant rank-one bipodal graphon satisfying $t(H,W_h)=r_h^m$
and is the unique minimizer of \eqref{eq:graphon-variational-problem}
at $(p_h,r_h)$, up to relabeling.  Moreover,
\begin{equation*}
  e(W_h)=r_h-(d-1)h^2+O_d(h^4),
  \qquad
  \Phi_H(p_h,r_h)
  =J_{p_h}(r_h)-\frac{d^3}{3}h^4+O_d(h^6).
\end{equation*}
\end{theorem}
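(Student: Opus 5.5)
The plan follows the four steps sketched in \Cref{sec:proof-strategy}(ii): construct, compare with the constant, localize, then compare with every competitor.

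\emph{Step 1: the analytic family.} For a two-valued factor $f=s\1_A+t\1_{A^c}$ with $\lambda(A)=\alpha$, $s=u-h$, $t=u+h$, set $W=f\otimes f$. Then
\[
  t(H,W)=\bigl(\alpha s^d+(1-\alpha)t^d\bigr)^v,
\]
and $I_p(W)$ is an explicit sum of three terms $\alpha^2J_p(s^2)$, $2\alpha(1-\alpha)J_p(st)$, $(1-\alpha)^2J_p(t^2)$. We impose stationarity of the Lagrangian $I_p(W)-\mu\,t(H,W)$ in three directions: the pointwise first-variation condition at the value $s$, the same condition at $t$, and the variation in $\alpha$. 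Together with the definition of $r$ through $t(H,W)=r^m$, these give analytic equations in $(u,\alpha,p,\mu)$ with parameter $h$.

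At $h=0$ the $s$- and $t$-equations coincide. We therefore replace them by their sum and by their difference divided by $2h$ (and divide the $\alpha$-equation by the appropriate power of $h$). This yields a desingularized system, analytic at $h=0$. Its solution at $h=0$ is $u_*$, $\alpha=1/2$, $p_*$; the value $p_*$ is forced by the vanishing of the curvature defect $h_{p,d}(r_*)$ from \Cref{lem:convexity-defect}. We check that the Jacobian is nonsingular and apply the analytic implicit function theorem.

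We then Taylor-expand to get $e(W_h)=r_h-(d-1)h^2+O(h^4)$ and
\[
  I_{p_h}(W_h)-J_{p_h}(r_h)=-\tfrac{d^3}{3}h^4+O(h^6).
\]
The latter gives the comparison with the constant graphon. By (M2) of \Cref{thm:scalar-lz-boundary}, the constant graphon being beaten implies $p_h<\pc(r_h)$. To confirm this directly, we also expand $\pc(r_h)-p_h$ and check that its sign is positive.

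\emph{Step 2: localization.} Let $W$ satisfy $t(H,W)\ge r_h^m$ and $I_{p_h}(W)\le I_{p_h}(W_h)$. Apply H\"older's inequality \eqref{eq:generalized-holder} to get $\EE W^d\ge r_h^d$. Use the supporting line of $\varphi_{p_*,d}$ at $r_*^d$, perturbed to $p_h$, whose gap is of quartic order $\gtrsim(z-r_*)^4$ near $r_*$. Combined with the cost bound of order $h^4$, this gives \eqref{eq:intro-quartic-localization}.

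Next we choose $f$. Take $f$ to be the top eigenfunction of $W$, suitably normalized, or alternatively the solution of a fixed-point equation making the first-order density variation in $E=W-f\otimes f$ vanish. Then $t(H,W)$ differs from $(\int f^d)^v$ only by terms at least quadratic in $E$. A further orthogonality choice kills the quadratic term up to a controlled sign, so that \eqref{eq:intro-density-error} holds. We expect to obtain $\|E\|_2,\ \|f-u_*\|_4=O(h)$.

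\emph{Step 3: global comparison — the main obstacle.} We expand
\[
  I_{p_h}(W)-\mu_h\,t(H,W)
\]
around $f_h\otimes f_h$. The quadratic form in $E$ has coefficient $J''_{p_h}/2>0$ minus the Lagrange-multiplier contribution. We must show this form stays positive definite on the complement of rank-one directions, uniformly as $h\to0$.

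For the factor $f$, the one-variable function
\[
  z\mapsto \int J_{p_h}(z f(y))\,dy-\mu_h z^d
\]
is a double well with minima near $s_h$ and $t_h$. We need a quantitative lower bound of order $\dist(f(x),\{s_h,t_h\})^2 h^2$. This double-well bound, with errors $O(h\|E\|_2^2+\|E\|_2^3)$ absorbed, is the delicate point. It relies on the bounds from Step 2 and on an $L^\infty$ truncation argument for regions where $W$ is far from $r_*$. These regions are handled crudely with the quartic gap from Step 2.

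Equality then forces $E=0$, $f\in\{s_h,t_h\}$ a.e., and $\int f^d=\int f_h^d$. The last condition fixes $\lambda(A)=\alpha_h$, so $W$ agrees with $W_h$ up to relabeling. This proves uniqueness, and $\Phi_H(p_h,r_h)=I_{p_h}(W_h)$ yields the stated expansion.
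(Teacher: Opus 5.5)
\textbf{Gap in Step 1.} The gap is in Step 1, and it breaks Step 3. Your three conditions (the variations of the factor on $\{f=s\}$ and on $\{f=t\}$, and the $\alpha$-variation) express stationarity only inside the rank-one bipodal class. Write $G(z):=J_p'(z)-\gamma z^{d-1}$ with $\gamma:=\mu m q^{v-2}$. Then the two factor equations read $\alpha s\,G(s^2)+(1-\alpha)t\,G(st)=0$ and $\alpha s\,G(st)+(1-\alpha)t\,G(t^2)=0$. These are only two combinations of the three numbers $G(s^2),G(st),G(t^2)$.

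So for each $h$ you have three equations in the four unknowns $(u,\alpha,p,\mu)$. There is no square Jacobian to invert, and the solution set is two-dimensional rather than a curve. Concretely:
\begin{itemize}
\item At $h=0$, your sum and difference equations give $G(u^2)=G'(u^2)=0$. This only says $h_{p,d}(u^2)=0$, which holds at the inflection values for every $p<p_*$.
\item The $\alpha$-equation, after using the other two, is $h^3$ times a nonzero, $\alpha$-independent multiple of $G''(u^2)$, plus higher-order terms.
\end{itemize}
This pins down $(u_*,p_*,\gamma_*)$ but leaves $\alpha$ free, so $\alpha_0=1/2$ is not forced.

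Worse, consider a member of this family with $G\not\equiv0$ on $\{s^2,st,t^2\}$. It is not critical for $I_p-\mu\,t(H,\cdot)$ under all bounded symmetric perturbations: criticality with any multiplier $\tilde\gamma$ would turn your $s$-equation into $(\tilde\gamma-\gamma)s^{d-1}q=0$. Hence it cannot be an optimizer, and no Step 3 argument can prove $I_{p_h}(W)\ge I_{p_h}(W_h)$ for it. The term $\iint G(f\otimes f)E$ is linear in $E$ and is not absorbed by the quadratic terms.

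\textbf{The missing ingredient.} What you need is full graphon stationarity. For interior $W=f\otimes f$, testing every bounded symmetric $U$ gives $J_p'(f(x)f(y))=\gamma\,(f(x)f(y))^{d-1}$ a.e. Thus $s^2$, $st$, $t^2$ must each be critical points of $\mathcal M(z)=J_p(z)-\frac{\gamma}{d}z^d$. These three equations do not involve $\alpha$.
\begin{itemize}
\item Subtracting them yields $A(u,h)=hB(u,h)=0$, where $B(u,0)=-2u\,g_d'(u^2)$ has the nondegenerate zero $u_*$. This determines $u_h,\gamma_h,p_h$.
\item The block-proportion equation is then linear in $\alpha$. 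Both of its coefficients equal $-\frac{2d^3}{3}h^4+O_d(h^5)$, which gives $\alpha_0=1/2$.
\end{itemize}
The same three equations drive the later comparison. They make $\mathcal M_h'(xy)$ vanish on $\{s_h,t_h\}^2$, so that $\iint\mathcal M_h'(f\otimes f)E$ is controlled by $A_{h,\rho}(\nu)$ and $\|E\|_2^2$. They are also why the result holds only along a curve.

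\textbf{Two smaller points.}
\begin{itemize}
\item The top eigenfunction gives $T_Ef=0$, which removes the linear density term only when $d=2$. With the normalization $T_W(f^{d-1})=q(f)f$, no further orthogonality is needed. Every term with a vertex of $E$-degree one vanishes, so the quadratic terms in $E$ drop out for simple $H$.
\item Besides the pointwise double-well bound, the factor comparison contains the interaction $\iint K_h\,d\sigma\,d\sigma$ with $\sigma=\nu-\nu_h$, which has no sign a priori. The paper controls it by interpolating in $x^d,y^d$ at $s_h^d,t_h^d$ and using $\Theta_0(u_*,u_*)=d^3/4>0$.
\end{itemize}
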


\input{sections/singular_stationary_family}
\input{sections/singular_constant_comparison}

\input{sections/singular_localization}
\input{sections/singular_auxiliary_lagrangian}
\input{sections/singular_graphon_comparison}
\input{sections/singular_proof}

%% file: sections/singular_stationary_family.tex
\subsection{The rank-one stationary family near the exceptional endpoint}
\label{sec:rank-one-stationary-family}

We now construct an analytic family of rank-one bipodal
candidates.  The numerical exploration in \Cref{fig:bipodal-parameter-maps}
suggests that rank-one graphons describe the optimizers along a curve of
parameter pairs $(p,r)$ approaching $P_*$.  Motivated by this observation,
we seek rank-one solutions of the full graphon stationarity equations,
together with the block-proportion stationarity condition.
At this stage, the resulting family consists only of stationary
rank-one bipodal candidates; the subsequent arguments establish their optimality and
uniqueness along the corresponding parameter curve.

Throughout this subsection, graphons are assumed to take values in a compact
subinterval of $(0,1)$.  For a rank-one graphon $W=f\otimes f$ with
measurable $f:[0,1]\to[0,1]$, this requires that, for some $\eta>0$,
\[
  \eta\le f\le1-\eta
  \qquad\text{a.e.}
\]
This interiority condition is used only to construct the candidate family in
this subsection.  It ensures that $W+\varepsilon U$ remains a graphon for every
bounded symmetric function $U$ and all sufficiently small $|\varepsilon|$, so
that stationarity can be tested in every bounded direction.  The subsequent
optimality and uniqueness arguments impose no such restriction on competing
graphons.  We say that $W$ is \emph{stationary} for
$I_p-\mu t(H,\,\cdot\,)$ if
\begin{equation}\label{eq:graphon-stationarity}
  \left.\frac{d}{d\varepsilon}\right|_{\varepsilon=0}
  \left[
    I_p(W+\varepsilon U)
    -\mu t(H,W+\varepsilon U)
  \right]=0
\end{equation}
for every such $U$.
For a rank-one graphon $W=f\otimes f$, we say that $W$ \emph{satisfies the
KKT conditions} at $(p,r)$ with multiplier $\mu$, if \eqref{eq:graphon-stationarity}
holds with $\mu\ge0$ and $t(H,W)=r^m$.  This is the graphon
analog of the usual first-order Karush--Kuhn--Tucker condition for
constrained finite-dimensional optimization.

To formulate the separate first-order condition for block proportion,
for $\alpha\in(0,1)$ and $s,t\in(0,1)$, set
\[
  f_{\alpha,s,t}
  :=s\1_{[0,\alpha]}+t\1_{(\alpha,1]},
  \qquad
  W_{\alpha,s,t}:=f_{\alpha,s,t}\otimes f_{\alpha,s,t}.
\]
Changing $\alpha$ moves the boundary between the two vertex classes.  Unlike
the perturbations $W_{\alpha,s,t}+\varepsilon U$ used in graphon stationarity,
replacing $\alpha$ by $\alpha+\varepsilon$, when $s\ne t$, changes the graphon
by an amount bounded away from zero on a set whose measure tends to zero.
Hence this variation is not small in $L^\infty$ and is not covered by graphon
stationarity.  We therefore impose the separate condition
\begin{equation}\label{eq:block-stationarity}
  \left.\frac{d}{d\beta}\right|_{\beta=\alpha}
  \left[
    I_p(W_{\beta,s,t})-\mu t(H,W_{\beta,s,t})
  \right]=0,
\end{equation}
obtained by varying the block proportion while keeping $s$ and $t$ fixed.

The following lemma makes a structural reduction: it shows that bipodality is
forced by the KKT conditions within the interior rank-one class.  The
construction lemma later begins by seeking a bipodal family; this structural
result justifies that choice and removes the need to assume bipodality in
the accompanying local uniqueness assertion.  The proof reduces the stationarity condition
\eqref{eq:graphon-stationarity} to a scalar equation and bounds the
number of its roots.  For this reduction, define the \emph{$d$-th moment} of
$f$ by
\[
  q(f):=\int_0^1 f(x)^d\,dx.
\]
Note that any rank-one graphon $W=f\otimes f$ satisfies
$t(H,W)=q(f)^v$ depending only on $H$ through $d$ and $v$:
\[
  t(H,f\otimes f)
  =\int_{[0,1]^v}\prod_{(i,j)\in E(H)}f(x_i)f(x_j)\,dx_1\cdots dx_v
  =\int_{[0,1]^v}\prod_{i=1}^v f(x_i)^{d}\,dx_1\cdots dx_v
  =q(f)^v.
\]

\begin{lemma}[Bipodality of stationary rank-one graphons]
\label{lem:stationary-rank-one-bipodality}
Let $p,r\in(0,1)$, let $\mu\ge0$, and let
$f:[0,1]\to[0,1]$ be measurable.  Suppose that, for some $\eta>0$,
\[
  \eta\le f\le1-\eta
  \qquad\text{a.e.},
\]
and that $f\otimes f$ satisfies the KKT conditions at $(p,r)$ with multiplier
$\mu$.  If $f$ is not a.e.\ constant, then there exist $\alpha\in(0,1)$ and
$0<s<t<1$ such that, after a measure-preserving relabeling,
\[
  f=f_{\alpha,s,t}
  \qquad\text{a.e.}
\]
\end{lemma}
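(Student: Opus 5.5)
Since $\eta\le f\le 1-\eta$ and $W=f\otimes f$, the graphon $W$ takes values in $[\eta^2,(1-\eta)^2]\subset(0,1)$. Hence $J_p$ is smooth on the range of $W$, and we may differentiate under the integral in every bounded symmetric direction $U$. The first variation of $I_p$ is $\int J_p'(W)U$. For $t(H,\cdot)$, the first variation is $\int \sum_{e\in E(H)} t_{x,y}(H\setminus e, W)\,U(x,y)$, where $t_{x,y}(H\setminus e,W)$ is the homomorphism density of $H$ minus the edge $e$, with the endpoints of $e$ pinned at $x$ and $y$. For a rank-one $W=f\otimes f$ in a $d$-regular $H$, this pinned density factorizes. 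Every vertex other than the endpoints of $e$ contributes $\int f^d=q(f)$. The two endpoints each contribute $f^{d-1}$, evaluated at $x$ and at $y$. Summing over the $m$ edges gives
\[
  \frac{\delta t(H,W)}{\delta W}(x,y)=m\,q(f)^{v-2}f(x)^{d-1}f(y)^{d-1}.
\]
Since $U$ is arbitrary (bounded symmetric), stationarity \eqref{eq:graphon-stationarity} is equivalent to the pointwise identity
\[
  J_p'\bigl(f(x)f(y)\bigr)=\kappa\,f(x)^{d-1}f(y)^{d-1}\qquad\text{for a.e. }(x,y),\qquad \kappa:=\mu m q(f)^{v-2}\ge0.
\]
Explicitly, $J_p'(z)=\log\frac{z}{1-z}-\log\frac{p}{1-p}$. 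Set $g(z):=J_p'(z)-\kappa z^{d-1}$. The identity then says exactly that $g(f(x)f(y))=0$ a.e.

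Next I show that $f$ takes at most two values. Let $S$ be the essential range of $f$. If $a,b\in S$, then every neighbourhood of $(a,b)$ meets the image of a positive-measure set of pairs $(x,y)$. Since $g$ is continuous, $g(ab)=0$ for all $a,b\in S$, so the closed set $S\cdot S$ lies in the zero set $Z$ of $g$ on $[\eta^2,(1-\eta)^2]$.

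The key scalar fact concerns the number of zeros of $g$. We have $g'(z)=\frac{1}{z(1-z)}-\kappa(d-1)z^{d-2}$, so $g'(z)=0$ iff $\kappa(d-1)z^{d-1}(1-z)=1$. The left-hand side is unimodal on $(0,1)$: it increases up to $z=(d-1)/d$ and then decreases. Hence $g'$ has at most two zeros, $g$ has at most three monotone pieces, and therefore $g$ has at most three zeros.

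Suppose $S$ contains three points $a<b<c$. The products $a^2<ab<b^2<bc<c^2$ are five distinct zeros of $g$, since each is strictly less than the next. This is a contradiction, so $|S|\le2$.

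Since $f$ is not a.e.\ constant, $S=\{s,t\}$ with $s<t$, and both level sets have positive measure. Put $\alpha:=\lambda\{f=s\}\in(0,1)$. A measure-preserving rearrangement of $[0,1]$ sending $\{f=s\}$ to $[0,\alpha]$ gives $f=f_{\alpha,s,t}$ a.e. Finally, $0<\eta\le s<t\le1-\eta<1$.

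I expect the main obstacle to be the rigorous derivation of the pointwise Euler--Lagrange identity from \eqref{eq:graphon-stationarity}. This requires justifying differentiation of $t(H,W+\varepsilon U)$ (a polynomial in $\varepsilon$, so this is routine), and passing from $\int(\cdot)U=0$ for all bounded symmetric $U$ to an a.e. identity, using that the integrand is symmetric. The zero-counting of $g$ is elementary. I also note that the three-point argument in fact needs only that $g$ has at most four zeros, which leaves slack if the unimodality computation needs care at the endpoints of the interval.
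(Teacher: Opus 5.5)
Your proposal is correct and follows essentially the same route as the paper: you derive the pointwise identity $J_p'(f(x)f(y))=\gamma\,(f(x)f(y))^{d-1}$ a.e., bound the zeros of $J_p'(z)-\gamma z^{d-1}$ by three using the unimodality of $z^{d-1}(1-z)$ and Rolle's theorem, and then rule out three values of $f$ through five distinct products. The only difference is that you obtain $S\cdot S\subset Z$ directly from the essential range and the continuity of $g$, whereas the paper first fixes a Fubini slice $y_0$ and uses injectivity of multiplication by $f(y_0)$ to reduce to at most three values; both arguments are valid.
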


\begin{proof}
Set $\gamma:=\mu m q(f)^{v-2}$ and $F_{p,\gamma}(z):=J_p'(z)-\gamma z^{d-1}$.
We first claim that
\begin{equation}
\label{eq:rank-one-kkt}
  F_{p,\gamma}(f(x)f(y))=0
  \quad\text{for a.e. }(x,y)\in[0,1]^2.
\end{equation}
Let $U$ be any bounded symmetric function on $[0,1]^2$ and set
$W_\varepsilon=f\otimes f+\varepsilon U$.  The variation of $I_p$ is
\[
  \left.\frac{d}{d\varepsilon}I_p(W_\varepsilon)\right|_{\varepsilon=0}
  =\iint J_p'(f(x)f(y))U(x,y)\,dx\,dy.
\]
Since $H$ is $d$-regular, differentiating one of the $m$ edge factors in
$t(H,W_\varepsilon)$ gives
\[
  \left.\frac{d}{d\varepsilon}t(H,W_\varepsilon)\right|_{\varepsilon=0}
  =\iint m q(f)^{v-2}f(x)^{d-1}f(y)^{d-1}
  U(x,y)\,dx\,dy.
\]
Stationarity for every $U$ therefore yields
\[
  J_p'(f(x)f(y))
  =\mu m q(f)^{v-2}f(x)^{d-1}f(y)^{d-1}
  =\gamma\bigl(f(x)f(y)\bigr)^{d-1}
\]
for almost every $(x,y)$, proving
\eqref{eq:rank-one-kkt}.

We next show that $F_{p,\gamma}(z)=0$ has at most three
solutions for $z\in(0,1)$.  Since $J_p''(z)=1/(z(1-z))$,
\[
  F_{p,\gamma}'(z)
  =\frac1{z(1-z)}-\gamma(d-1)z^{d-2}.
\]
If $\gamma\le0$, then $F_{p,\gamma}'>0$ on $(0,1)$, so
$F_{p,\gamma}$ has at most one zero.  If $\gamma>0$, the critical-point
equation is
\[
  z^{d-1}(1-z)=\frac1{(d-1)\gamma}.
\]
The function $z\mapsto z^{d-1}(1-z)$ is strictly increasing on
$(0,(d-1)/d)$ and strictly decreasing on $((d-1)/d,1)$.  Hence
$F_{p,\gamma}'$ has at most two distinct zeros,
and by Rolle's theorem, $F_{p,\gamma}$ has at most three distinct zeros.

By Fubini's theorem, one can choose $y_0\in[0,1]$ such that
\[
  F_{p,\gamma}(f(x)f(y_0))=0
  \qquad\text{for a.e. }x.
\]
The lower bound on $f$ gives $f(y_0)>0$.  Hence multiplication by
$f(y_0)$ is injective, and the three-root bound for $F_{p,\gamma}$ implies
that $f$ takes at most three values up to null sets.

Suppose that it takes three distinct values $0<a<b<c<1$ up to null sets.
Since \eqref{eq:rank-one-kkt} holds outside a null subset
of $[0,1]^2$, each of
\[
  a^2,\qquad ab,\qquad ac,\qquad bc,\qquad c^2
\]
is a zero of $F_{p,\gamma}$.  They satisfy
$a^2<ab<ac<bc<c^2$, contradicting the three-root bound.  Hence $f$ takes at
most two values.
Therefore, if $f$ is not a.e. constant, it has the form
\[
  f=s\1_A+t\1_{A^c}
\]
for some $0<s<t<1$ and measurable $A$ with $0<\lambda(A)<1$.  Setting
$\alpha:=\lambda(A)$ and relabeling $A$ as $[0,\alpha]$ gives
$f=f_{\alpha,s,t}$ almost everywhere.
\end{proof}

\Cref{lem:stationary-rank-one-bipodality} reduces the problem to a finite-dimensional
one: after a measure-preserving relabeling, the factor of every nonconstant
rank-one graphon satisfying the KKT conditions has the form
$f_{\alpha,s,t}$ for some $\alpha\in(0,1)$ and $0<s<t<1$.  It does not,
however, construct such graphons near $P_*$ or determine their parameters.
The next lemma constructs this analytic family.
Reparametrize $s$ and $t$ by
\[
  u:=\frac{s+t}{2},
  \qquad
  h:=\frac{t-s}{2},
  \qquad
  \gamma:=\mu m q(f_{\alpha,s,t})^{v-2}.
\]
Thus, $u$ is the midpoint of the two factor values, $|h|$ is half their
separation, and $\gamma$ is the coefficient appearing in the scalar
stationarity equation \eqref{eq:rank-one-kkt}.
The three stationarity equations, evaluated at the three graphon values,
determine $u,p,\gamma$; the block-proportion equation then determines
$\alpha$, and the defining relation for $\gamma$ determines the multiplier $\mu$.
Changing $h$ to $-h$ exchanges the two factor values and their proportions,
while remaining quantities are unchanged.

We denote the value of the coefficient $\gamma$ at the singular endpoint by
\[
  \gamma_*:=\left(\frac{d}{d-1}\right)^d=r_*^{-d}.
\]

\begin{lemma}[Analytic rank-one KKT family]
\label{lem:rank-one-kkt-family}
There exist $h_0>0$ and real-analytic functions
\[
  h\longmapsto(u_h,p_h,\gamma_h,\alpha_h)
  \qquad (|h|<h_0)
\]
such that
\[
  (u_0,p_0,\gamma_0,\alpha_0)
  =\left(u_*,p_*,\gamma_*,\frac12\right),
\]
and the following properties hold.  First, for all $|h|<h_0$,
\[
  u_{-h}=u_h,\qquad p_{-h}=p_h,\qquad
  \gamma_{-h}=\gamma_h,\qquad \alpha_{-h}=1-\alpha_h.
\]
Next, for $|h|<h_0$, define the two-valued factor and its rank-one graphon by
\[
  s_h:=u_h-h,
  \qquad 
  t_h:=u_h+h,
  \qquad
  f_h:=f_{\alpha_h,s_h,t_h}, 
  \qquad
  W_h:=f_h\otimes f_h,
\]
and define the $d$-th moment of $f_h$, the associated target density, and the
Lagrange multiplier by
\[
  q_h:=q(f_h)=\alpha_hs_h^d+(1-\alpha_h)t_h^d,
  \qquad
  r_h:=q_h^{2/d}, 
  \qquad
  \mu_h:=\frac{\gamma_h}{m q_h^{v-2}}.
\]
The resulting functions $q_h,r_h,\mu_h$ are real analytic, and for all
$|h|<h_0$, these quantities satisfy
\[
  0<p_h<r_h<1,
  \qquad s_h,t_h\in(0,1),
  \qquad \alpha_h\in(0,1),
  \qquad \mu_h>0,
  \qquad\text{and}\qquad
  t(H,W_h)=r_h^m.
\]
Finally, for $0<|h|<h_0$, the nonconstant rank-one graphon $W_h$ satisfies
the KKT conditions at $(p_h,r_h)$ with multiplier $\mu_h$, and
\eqref{eq:block-stationarity} holds.  Conversely, in the precise
sense stated below, let $W=f\otimes f$ be a nearby nonconstant rank-one
graphon whose factor $f$ is bounded away from $0$ and $1$.  If $W$ satisfies
the KKT conditions with multiplier $\mu$ and
\eqref{eq:block-stationarity} holds, then $W$ agrees, up to
measure-preserving relabeling, with a unique member $W_h$ indexed by
$h\in(0,h_0)$, and $\mu=\mu_h$.
\end{lemma}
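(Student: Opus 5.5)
The plan is to write the stationarity conditions for $W_{\alpha,s,t}$ explicitly as a finite system, desingularize it at $h=0$, and apply the analytic implicit function theorem.

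\textbf{Graphon stationarity.} By the proof of \Cref{lem:stationary-rank-one-bipodality}, stationarity for $f_{\alpha,s,t}\otimes f_{\alpha,s,t}$ with coefficient $\gamma$ is equivalent to requiring that the three graphon values $s^2$, $st$, $t^2$ be zeros of
\[
  F_{p,\gamma}(z)=J_p'(z)-\gamma z^{d-1}.
\]
In the variables $(u,h)$ these values are
\[
  (u-h)^2,\qquad u^2-h^2,\qquad (u+h)^2 .
\]
They coalesce at $h=0$, so the three equations must be replaced by divided differences.

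Set $G(z):=J_p'(z)-\gamma z^{d-1}$ and use the three equations
\[
  G(u^2-h^2)=0,\qquad
  \frac{G((u+h)^2)-G((u-h)^2)}{4uh}=0,
\]
\[
  \frac{G((u+h)^2)+G((u-h)^2)-2G(u^2-h^2)}{h^2}=0.
\]
These are analytic and even in $h$, and equivalent to the original system for $h\neq0$. At $h=0$ they reduce to
\[
  G(u^2)=0,\qquad G'(u^2)=0,\qquad \text{(a combination of } G'' \text{ and } G')=0,
\]
which amounts to a triple root of $F_{p,\gamma}$ at $z=u^2$.

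The triple-root condition picks out $u_*^2=r_*$, $\gamma_*=r_*^{-d}$ and $p_*$. To see this:
\begin{itemize}
  \item $F'=0$ gives $z^{d-1}(1-z)=1/((d-1)\gamma)$.
  \item A double critical point of that equation forces $z=(d-1)/d$.
  \item $F=0$ then fixes $p$.
\end{itemize}
This should match $h_{p_*,d}(r_*)=0$ from \Cref{lem:convexity-defect}.

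Next I compute the Jacobian of the three divided-difference equations with respect to $(u,p,\gamma)$ at $(u_*,p_*,\gamma_*,h=0)$.
\begin{itemize}
  \item $\partial_p$ enters only through the constant shift $-\partial_p\log(p/(1-p))$ in $J_p'$.
  \item $\partial_\gamma$ contributes $-z^{d-1}$ and its derivatives.
  \item $\partial_u$ acts through $z=u^2$ and hits $G''(r_*)$ and $G'''(r_*)$; the former vanishes, so only $G'''$ survives.
\end{itemize}
The matrix should be triangular-like with nonzero diagonal once $G'''(r_*)\ne0$. I would verify this by direct computation, since the fourth-order gap in the exceptional case suggests the third derivative is nondegenerate. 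The implicit function theorem then gives analytic, even functions $u_h,p_h,\gamma_h$.

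\textbf{Block-proportion equation.} Since $s,t$ are fixed in this variation, \eqref{eq:block-stationarity} reads
\[
  \partial_\alpha\Bigl[\alpha^2J_p(s^2)+2\alpha(1-\alpha)J_p(st)+(1-\alpha)^2J_p(t^2)-\mu\,q^v\Bigr]=0,
  \qquad q=\alpha s^d+(1-\alpha)t^d .
\]
This is affine in $\alpha$ up to the $q^v$ term. Expanding in $h$, its leading nontrivial term, after dividing by a suitable power of $h$, should be a nondegenerate linear equation in $\alpha$ with root $\alpha=1/2$ at $h=0$. A second application of the implicit function theorem then gives analytic $\alpha_h$. The symmetry $h\mapsto-h$, which swaps $s$ and $t$, yields $\alpha_{-h}=1-\alpha_h$ by uniqueness.

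\textbf{Main obstacle.} I expect this step to be the hardest: the $h$-expansion of the $\alpha$-equation has to be carried far enough to desingularize, likely to order $h^3$ after using the graphon stationarity relations. The remaining steps are routine:
\begin{itemize}
  \item defining $q_h$, $r_h$ and $\mu_h$;
  \item checking $0<p_h<r_h<1$, $\mu_h>0$ and $t(H,W_h)=r_h^m$ by continuity from $h=0$ and the identity $t(H,f\otimes f)=q(f)^v$;
  \item the KKT property for $h\ne0$, by construction.
\end{itemize}

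\textbf{Converse.} \Cref{lem:stationary-rank-one-bipodality} reduces a nearby stationary $W$ to some $f_{\alpha,s,t}$. Its $(u,p,\gamma,\alpha)$ then solve the same desingularized system near the base point. Local uniqueness in the implicit function theorem identifies it with $W_{\pm h}$, and relabeling $h\to-h$ reduces to $h>0$.
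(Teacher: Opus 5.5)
Your treatment of graphon stationarity is correct. It is a legitimate variant of the paper's route: the paper subtracts consecutive equations to eliminate $\ell(p)$, writes the odd difference of the two divided slopes as $hB(u,h)$, solves the scalar equation $B(u,h)=0$, and then reads off $\gamma_h$ and $p_h$. Your three even, desingularized equations reduce at $h=0$ to $G=G'=G''=0$ at $u^2$. Their Jacobian in $(u,p,\gamma)$ at $(u_*,p_*,\gamma_*)$ has determinant a nonzero multiple of $\ell'(p_*)\,G'''(r_*)$, and $G'''(r_*)=d^5/(d-1)^2$, so the implicit function theorem applies. The gap is in the block-proportion step, which is the main new content of the lemma and which you only assert.

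First, you must take $\mu=\gamma_h/(mq^{v-2})$ with $q=\alpha s_h^d+(1-\alpha)t_h^d$, frozen while differentiating in $\beta$. Then $\frac12\mu vq^{v-1}=\gamma_hq/d$ and $q(s_h^d-t_h^d)=\alpha\{s_h^{2d}-(s_ht_h)^d\}-(1-\alpha)\{t_h^{2d}-(s_ht_h)^d\}$. So the condition is exactly linear in $\alpha$, not merely ``affine up to the $q^v$ term'':
\[
  \alpha X_h=(1-\alpha)Y_h,
  \qquad
  X_h:=\mathcal M_h(s_h^2)-\mathcal M_h(s_ht_h),
  \quad
  Y_h:=\mathcal M_h(t_h^2)-\mathcal M_h(s_ht_h),
\]
where $\mathcal M_h(z)=J_{p_h}(z)-\frac{\gamma_h}{d}z^d$ is a primitive of your $G$ at $(p_h,\gamma_h)$.

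Second, and this is the essential point, you need the exact order of vanishing of $X_h$ and $Y_h$. Evenness from your first step gives $Y_h=X_{-h}$. Suppose $X_h=ch^k+O(h^{k+1})$ with $c\ne0$ and $k$ odd. Then the leading balance reads $\alpha c=-(1-\alpha)c$, i.e.\ $c=0$, and $\alpha_h=Y_h/(X_h+Y_h)$ has a pole at $h=0$. So your guess that the expansion must be carried ``likely to order $h^3$'' would, if correct, make the construction fail. The parity of the order is exactly what has to be proved, and your proposal does not prove it.

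The paper proves it by dividing $\mathcal M_h'$ by the monic cubic with roots $s_h^2,s_ht_h,t_h^2$. Since $\mathcal M_h'$ vanishes at these three distinct points for $h\ne0$, the quadratic remainder vanishes identically. This gives $\mathcal M_h'(z)=(z-s_h^2)(z-s_ht_h)(z-t_h^2)R(h,z)$ with $R$ jointly analytic. Moreover $R(0,r_*)=G'''(r_*)/6=k_3=d^5/(6(d-1)^2)>0$, hence $R=k_3+O(h)$ on $[s_h^2,t_h^2]$. Integrate over $[s_h^2,s_ht_h]$ and $[s_ht_h,t_h^2]$, and set $a=2hs_h$, $b=2ht_h$. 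This gives $X_h=-\frac{k_3}{12}a^3(a+2b)+O(h^5)$ and $Y_h=-\frac{k_3}{12}b^3(b+2a)+O(h^5)$, both equal to $-\frac{2d^3}{3}h^4+O(h^5)$.

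Only after dividing by $h^4$ do you get a nondegenerate linear equation: its root at $h=0$ is $1/2$ and its $\alpha$-coefficient is $-\frac{4d^3}{3}\ne0$. This yields analytic $\alpha_h$ with $\alpha_0=1/2$. Because $X_h+Y_h\ne0$ for small $h\ne0$, the same linear equation also gives the uniqueness of $\alpha$ that your converse argument needs.
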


The graphons $W_h$ supplied by \Cref{lem:rank-one-kkt-family},
with their measure-preserving relabelings, form the family used in
\Cref{thm:singular-endpoint}.  Before proving the lemma, we clarify the
family at the singular endpoint and the local uniqueness assertion.
At $h=0$, the definitions in the lemma give
\[
  s_0=t_0=u_*,
  \qquad
  q_0=u_*^d,
  \qquad
  r_0=u_*^2=r_*,
  \qquad
  \mu_0=\frac1{m r_*^m},
  \qquad
  W_0\equiv r_*.
\]
Thus, $(p_0,r_0)=P_*$, so the family passes through the constant graphon
$W_0\equiv r_*$ at the singular endpoint.  Also, the two factor values $s_0$ and
$t_0$ coincide, as do the three values $s_0^2,s_0t_0,t_0^2$ taken by the
graphon $W_0$.
For the local uniqueness assertion, consider a graphon $W=f\otimes f$ and a
multiplier $\mu$ appearing in the converse assertion of the lemma.  By
\Cref{lem:stationary-rank-one-bipodality}, after a measure-preserving relabeling its
factor has the form $f_{\alpha,s,t}$ with
$\alpha\in(0,1)$ and $0<s<t<1$.  If
$(s,t,p,r,\mu,\alpha)$ is sufficiently close to
\[
  \left(u_*,u_*,p_*,r_*,\frac1{m r_*^m},\frac12\right),
\]
then $h:=(t-s)/2$ belongs to $(0,h_0)$ and
\[
  (s,t,p,r,\mu,\alpha)
  =(s_h,t_h,p_h,r_h,\mu_h,\alpha_h).
\]

\begin{proof}[Proof outline]
Write
\[
  \ell(z):=\log\frac{1-z}{z},
  \qquad
  \ell_*:=\ell(p_*)=\frac{d}{d-1}-\log(d-1).
\]
By \Cref{lem:stationary-rank-one-bipodality}, every nonconstant rank-one graphon
satisfying the KKT conditions has, after a measure-preserving relabeling, a
two-valued factor.  Motivated by this reduction, we seek a family of the form
\[
  s:=u-h,
  \qquad
  t:=u+h,
  \qquad
  f:=f_{\alpha,s,t},
  \qquad
  W:=f\otimes f.
\]
Since $f\otimes f$ takes the three values $s^2,st,t^2$, the scalar
stationarity equation \eqref{eq:rank-one-kkt} requires
\begin{equation}\label{eq:three-value-kkt}
  \ell(p)-\ell(s^2)=\gamma s^{2d-2},\qquad
  \ell(p)-\ell(st)=\gamma(st)^{d-1},\qquad
  \ell(p)-\ell(t^2)=\gamma t^{2d-2}.
\end{equation}
We therefore begin by solving this necessary system for $u,p,\gamma$ as
functions of $h$; the block proportion $\alpha$ does not occur and will be
determined later.
At $h=0$, these equations have the solution $(u,p,\gamma)=(u_*,p_*,\gamma_*)$.
For $h\ne0$, subtracting consecutive equations in \eqref{eq:three-value-kkt} eliminates
$\ell(p)$ and expresses $\gamma$ as each of the two divided slopes across
$s^2,st,t^2$.  Although each slope appears as $0/0$ at $h=0$, both extend
analytically.  Let $A(u,h)$ denote the difference of the two divided slopes,
so the system requires $A(u,h)=0$.  The symmetry $h\mapsto-h$ makes $A$ odd in $h$,
and hence $A(u,h)=hB(u,h)$ for some analytic $B$.  A direct calculation shows that
$B(u,0)$ has the unique nondegenerate zero $u=u_*$.  The analytic implicit
function theorem therefore gives $h_0>0$ and a unique even analytic solution
$u=u_h$ for $|h|<h_0$ satisfying $B(u_h,h)=A(u_h,h)=0$.  The common divided slope then defines the even function
$\gamma=\gamma_h$, and the middle equation in
\eqref{eq:three-value-kkt} determines the even function $p=p_h$.

We now choose the block proportion $\alpha$ as a function of $h$.
Once $\alpha$ is fixed, set
\[
  q:=\alpha s_h^d+(1-\alpha)t_h^d,
  \qquad
  r:=q^{2/d},
  \qquad
  \mu:=\frac{\gamma_h}{m q^{v-2}},
  \qquad
  f:=f_{\alpha,s_h,t_h},
  \qquad
  W:=f\otimes f.
\]
Also define the function
\[
  \mathcal M_h(z):=J_{p_h}(z)-\frac{\gamma_h}{d}z^d
  \qquad (0 < z < 1).
\]
Evaluating the derivative in \eqref{eq:block-stationarity} and
solving the resulting linear equation gives
\begin{equation}\label{eq:block-proportion-formula}
  \alpha_h:=
  \frac{\mathcal M_h(t_h^2)-\mathcal M_h(s_ht_h)}
  {\left\{\mathcal M_h(t_h^2)-\mathcal M_h(s_ht_h)\right\}+
  \left\{\mathcal M_h(s_h^2)-\mathcal M_h(s_ht_h)\right\}}.
\end{equation}
To analyze this quotient as $h\to0$, we factor $\mathcal M_h'$.  By
\eqref{eq:three-value-kkt}, the three values
$s_h^2,s_ht_h,t_h^2$ are zeros of $\mathcal M_h'$, and
\begin{equation}\label{eq:mh-derivative-factorization}
  \mathcal M_h'(z)
  =(z-s_h^2)(z-s_ht_h)(z-t_h^2)R(h,z),
\end{equation}
where $R$ is jointly analytic near $(0,r_*)$.
At $h=0$, the three zeros coincide at $r_*$, and direct calculation gives
\[
  \mathcal M_0'(z)
  =k_3(z-r_*)^3+O_d((z-r_*)^4),
  \qquad
  k_3:=\frac{d^5}{6(d-1)^2}>0.
\]
Thus $R(0,r_*)=k_3$.  Since every $z\in[s_h^2,t_h^2]$ satisfies
$|z-r_*|=O_d(|h|)$, joint analyticity of $R$ gives
\[
  R(h,z)=k_3+O_d(|z-r_*|+|h|)=k_3+O_d(|h|).
\]
Substituting this into \eqref{eq:mh-derivative-factorization}
and integrating over $[s_h^2,s_ht_h]$ and $[s_ht_h,t_h^2]$,
and using the expansions of $s_h$ and $t_h$ in powers of $h$,
show that the two differences in \eqref{eq:block-proportion-formula} vanish to order exactly four
and have the same leading term.  Canceling their common factor $h^4$ extends
the quotient analytically to $h=0$, where it equals $1/2$.  Finally,
$h\mapsto-h$ exchanges the two differences, so
$\alpha_{-h}=1-\alpha_h$.  The resulting analytic function $\alpha_h$ then
determines the analytic functions $q_h,r_h,\mu_h$, and the rank-one
factors $f_h$ and the bipodal graphons $W_h$.

The definitions of $q_h,r_h,\mu_h$ now give
$t(H,W_h)=q_h^v=r_h^m$.  The three equations
\eqref{eq:three-value-kkt} verify the stationarity condition
\eqref{eq:graphon-stationarity}, while
\eqref{eq:block-proportion-formula} verifies
\eqref{eq:block-stationarity}.
After decreasing $h_0$, continuity gives $0<p_h<r_h<1$,
$s_h,t_h,\alpha_h\in(0,1)$, and $\gamma_h,q_h>0$, hence $\mu_h>0$.
Thus, $(p_h,r_h)$ is admissible and $W_h$ is an interior bipodal graphon.

Finally, \Cref{lem:stationary-rank-one-bipodality} reduces any nearby rank-one graphon family in the
converse assertion to the same bipodal form that we selected in the beginning of the proof.
Repeating the construction, uniqueness in the implicit function theorem and in the linear
block-proportion equation forces the graphon, up to relabeling, to be
$W_h$ for a unique $h>0$, with multiplier $\mu_h$.
Full details are given in
\Cref{app:rank-one-kkt-family}.
\end{proof}

\begin{remark}[Dependence on $d$ and $H$]\label{rmk:rank-one-family-universality}
The equations \eqref{eq:three-value-kkt} and \eqref{eq:block-proportion-formula} depend only on $d$.
Thus, for fixed $d$, the local parameter curve $(p_h,r_h)$ and graphon
family $W_h$, up to relabeling, are independent of the particular
$d$-regular graph $H$.  The multiplier $\mu_h=\gamma_h/(m q_h^{v-2})$
and the interval on which we will establish global optimality and uniqueness
in the subsequent sections may depend on $H$.
\end{remark}

%% file: sections/singular_constant_comparison.tex
\subsection{Comparison with the constant graphon}
\label{sec:constant-graphon-comparison}

The constant graphon $W\equiv r_h$ has $H$-density $r_h^m$ and is therefore
feasible for the same constraint as $W_h$, where $W_h$ is the member of the
analytic rank-one KKT family constructed in
\Cref{lem:rank-one-kkt-family}.  We will prove that
\[
  I_{p_h}(W_h)<I_{p_h}(W\equiv r_h)=J_{p_h}(r_h),
\]
and compute the asymptotic expansion of the
relative entropy difference for all sufficiently small $h>0$.  At $h=0$, the
two graphons coincide, so their $I_{p_h}$-values are equal.  The symmetries in
\Cref{lem:rank-one-kkt-family} make the relative entropy difference
an even analytic function of $h$, and the calculation below shows that its
leading term is of order $h^4$.  The sign of the difference for small $h>0$ is therefore
determined by the coefficient of $h^4$.  To compute that coefficient, we
first record the expansions of the parameters that will be used below.
Write
\[
  \ell(z):=\log\frac{1-z}{z}, \qquad
  \ell_*:=\ell(p_*)=\frac d{d-1}-\log(d-1), \qquad
  \Lambda_h:=\ell(p_h)-\ell_*.
\]
Here, $\Lambda_h$ represents the slope of the affine difference
\begin{equation}\label{eq:entropy-parameter-shift}
  J_{p_h}(z)-J_{p_*}(z)=\Lambda_h z+J_{p_h}(0)-J_{p_*}(0).
\end{equation}

\begin{lemma}[Expansions along the rank-one KKT family]
\label{lem:rank-one-parameter-expansions}
The analytic family constructed in
\Cref{lem:rank-one-kkt-family} satisfies
\begin{equation}\label{eq:rank-one-parameter-expansions}
\begin{aligned}
  u_h&=u_*+\frac{5-3d}{6u_*}h^2+O_d(h^4),
  &&&
  \Lambda_h&=\frac{2d^3}{3(d-1)}h^2+O_d(h^4),\\
  \alpha_h&=\frac12+\frac{2d^2u_*}{3(d-1)}h+O_d(h^3),
  &&&
  r_h&=r_*-\frac{2(4d-1)}3h^2+O_d(h^4).
\end{aligned}
\end{equation}
\end{lemma}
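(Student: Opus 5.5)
We expand the defining equations of the family in \Cref{lem:rank-one-kkt-family} in powers of $h$, using the parity of each quantity: $u_h,p_h,\gamma_h$ (hence $\Lambda_h$) are even, and $\alpha_h-\tfrac12$ is odd. So it suffices to compute one Taylor coefficient for each, namely $u_h''(0)$, $\Lambda_h''(0)$, and $\alpha_h'(0)$. Then $r_h$ follows from $r_h=q_h^{2/d}$ with $q_h=\alpha_hs_h^d+(1-\alpha_h)t_h^d$.

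\textbf{Step 1: $u_h$ and $\gamma_h$.} Write $\mathcal M_h'(z)=-\ell(z)-\ell(p_h)\cdot(-1)\ldots$, or more cleanly, work with $F(z):=\ell(p)-\ell(z)-\gamma z^{d-1}$. Equations \eqref{eq:three-value-kkt} say that $F$ vanishes at $s^2,st,t^2$. Set $z=r_*+w$ and Taylor expand $\ell(z)$ and $z^{d-1}$ around $r_*$ up to fourth order in $w$.

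Vanishing at three points means that the second divided difference of $\ell(z)+\gamma z^{d-1}$ over the nodes $\{s^2,st,t^2\}$ is zero. This divided difference equals $\tfrac12(\ell''+\gamma(z^{d-1})'')$ evaluated at the node mean, plus third-derivative terms times a node spread of order $h^2$. The nodes are $u^2-2uh+h^2$, $u^2-h^2$, and $u^2+2uh+h^2$. Their mean is $u^2+h^2/3$, and their pairwise second moments are $O(h^2)$.

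The first divided difference gives $\gamma$. Setting the second divided difference to zero at order $h^0$ recovers $u_*$ and $\gamma_*$, since $\ell''$ and $\gamma_*(z^{d-1})''$ cancel exactly at $r_*$. This is the degenerate point where $\mathcal M_0'$ has a triple zero. The order-$h^2$ term then gives $u_h''(0)$. Concretely, we expand the divided-difference identity in $h$ and in $\delta u=u-u_*$, using the derivative values of $\ell$ at $r_*$ (these are explicit rational functions of $d$). We then solve the resulting linear equation for the $h^2$ coefficient of $u$ and check that it equals $(5-3d)/(6u_*)$.

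\textbf{Step 2: $\Lambda_h$.} We use the middle equation $\ell(p_h)=\ell(s_ht_h)+\gamma_h(s_ht_h)^{d-1}$, expand it to order $h^2$, and subtract $\ell_*=\ell(r_*)+\gamma_*r_*^{d-1}$. Because $z\mapsto\ell(z)+\gamma_*z^{d-1}$ is flat to second order at $r_*$ (a triple root of $\mathcal M_0'$), the shift of the argument $s_ht_h-r_*=O(h^2)$ contributes nothing at order $h^2$. Hence only $(\gamma_h-\gamma_*)r_*^{d-1}$ survives. So we need the $h^2$ coefficient of $\gamma_h$ from Step 1.

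\textbf{Step 3: $\alpha_h$ and $r_h$.} Here we use the factorization \eqref{eq:mh-derivative-factorization} together with $R(h,z)=k_3+R_z(z-r_*)+R_h h+\dots$. Integrating the quartic-type integrand over $[s_h^2,s_ht_h]$ and over $[s_ht_h,t_h^2]$ gives both differences in \eqref{eq:block-proportion-formula}. Their leading $h^4$ terms are equal, so the term linear in $h$ of $\alpha_h$ comes from the $h^5$ corrections. These corrections come from the asymmetry of the node gaps ($s_ht_h-s_h^2=2u h-2h^2$ versus $t_h^2-s_ht_h=2uh+2h^2$) and from the $R_z$ term; the $R_h$ term is symmetric and cancels in the odd part. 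Writing $D_\pm=c_4h^4(1\pm\kappa h)+O(h^6)$, we get $\alpha_h=\tfrac12+\tfrac{\kappa}{2}h+O(h^3)$, and we compute $\kappa$. This step is the main obstacle: it needs $R_z(0,r_*)$, which is the fourth-order coefficient of $\mathcal M_0'$ at $r_*$, and careful bookkeeping of $h^5$ terms. I would first redo it by substituting $z=s_ht_h+y$ and expanding the integrand directly, as a check.

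Finally, $q_h=u^d-d u^{d-1}(2\alpha_h-1)h+\binom d2u^{d-2}h^2+O(h^4)$. Inserting $u_h$ and $\alpha_h$ and taking the $2/d$ power yields the stated $r_h$. The cross term $(2\alpha_h-1)h$ is of order $h^2$, which is why the $\alpha_h$ coefficient feeds into $r_h$.
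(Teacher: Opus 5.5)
Your proposal is correct and follows essentially the paper's route: your first and second divided differences of $\ell+\gamma z^{d-1}$ are normalized forms of the paper's order-$h^3$ outer-root equation (which gives the $h^2$ coefficient of $\gamma_h$) and its order-$h^4$ outer-minus-middle comparison (which gives the $h^2$ coefficient $U_2$ of $u_h$); your Step~2 is its order-$h^2$ middle-root comparison, and your Step~3 is exactly its integration of the factored $\mathcal M_h'$ with $R=k_3+k_4(z-r_*)+O(h^2)$, followed by the binomial expansion of $q_h$. One correction to the heuristic in your Step~1: centered at the node mean, the spread correction to the second divided difference is the fourth-derivative term $\tfrac16u_*^2(\ell+\gamma_*z^{d-1})^{(4)}(r_*)\,h^2$ (a multiple of $k_4$), not a third-derivative term, and it must be kept, since dropping it gives $u_*U_2=(2d-5)/6$ instead of $(5-3d)/6$ for $d\ge3$.
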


These expansions immediately locate the family relative to
$P_*=(p_*,r_*)$.  For all sufficiently small $h>0$, the expansion of $r_h$
gives $r_h<r_*$.  Moreover, since $\ell$ is strictly decreasing,
$\Lambda_h>0$ implies $p_h<p_*$.

\begin{proof}[Proof outline]
The symmetries in \Cref{lem:rank-one-kkt-family} show that
$u_h,\gamma_h$, and $\Lambda_h$ have expansions in even powers of $h$, while
$\alpha_h-1/2$ has an expansion in odd powers.  Let $U_2, G_2, L_2$
denote the quadratic coefficients of $u_h,\gamma_h$, and $\Lambda_h$,
respectively, and substitute the expansions of $s_h^2,s_ht_h,t_h^2$ into
\eqref{eq:three-value-kkt}.
The middle equation at order $h^2$ relates $G_2$ and $L_2$,
and either outer equation at order $h^3$ determines $G_2$,
and the comparison at order $h^4$ relates $U_2$ and $G_2$.
Solving these three equations gives the first two expansions in
\eqref{eq:rank-one-parameter-expansions}.

Now we determine the expansions of $\alpha_h$ and $r_h$.
Since $R(h,z)$ is even in $h$, its Taylor expansion gives
\[
  R(h,z)=k_3+k_4(z-r_*)+O_d(h^2)
  \quad (z\in[s_h^2,t_h^2]),
  \qquad
  k_4:=\frac{5d^6(d-2)}{24(d-1)^3}.
\]
Using this expansion and integrating the factorization
\eqref{eq:mh-derivative-factorization} over $[s_h^2,s_ht_h]$ and
$[s_ht_h,t_h^2]$ gives the two differences in \eqref{eq:block-proportion-formula}
through order $h^5$.  Their asymmetry at order $h^5$ determines the linear
term of $\alpha_h$.  The expansions of $q_h$ and $r_h$ then follow from
their definitions.
Full details are given in \Cref{app:rank-one-parameter-expansions}.
\end{proof}

We now turn to the comparison stated at the beginning of this subsection.

\begin{lemma}[Comparison with the constant graphon]
\label{lem:constant-graphon-comparison}
For sufficiently small $h>0$,
\[
  e(W_h)=r_h-(d-1)h^2+O_d(h^4),
  \qquad
  I_{p_h}(W_h)=J_{p_h}(r_h)-\frac{d^3}{3}h^4+O_d(h^6).
\]
In particular, the constant graphon
$W\equiv r_h$ is feasible but has strictly larger $I_{p_h}$-value,
and consequently $p_h<\pc(r_h)$ for all sufficiently small $h>0$.
\end{lemma}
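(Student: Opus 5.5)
The plan is to compute both expansions directly from the explicit bipodal form of $W_h$ and the parameter expansions of \Cref{lem:rank-one-parameter-expansions}. Then I deduce the location $p_h<\pc(r_h)$ from the Lubetzky--Zhao criterion.

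\emph{Edge density.} Since $W_h=f_h\otimes f_h$, we have $e(W_h)=\bigl(\int f_h\bigr)^2$ and $\int f_h=\alpha_hs_h+(1-\alpha_h)t_h=u_h+(1-2\alpha_h)h$. By the symmetries $u_{-h}=u_h$ and $\alpha_{-h}=1-\alpha_h$ of \Cref{lem:rank-one-kkt-family}, this is even in $h$, so $e(W_h)$ is even and analytic. Write $\alpha_h=\tfrac12+a_1h+O_d(h^3)$ with $a_1=2d^2u_*/(3(d-1))$. Then
\[
  \int f_h=u_h-2a_1h^2+O_d(h^4).
\]
Squaring, and using $u_h^2=r_*+2u_*U_2h^2+O_d(h^4)$ with $U_2=(5-3d)/(6u_*)$, gives
\[
  e(W_h)=r_*+\tfrac{5-11d}{3}h^2+O_d(h^4).
\]
Subtracting the expansion of $r_h$ from \eqref{eq:rank-one-parameter-expansions} leaves exactly $-(d-1)h^2+O_d(h^4)$.

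\emph{Relative entropy.} By \eqref{eq:entropy-parameter-shift}, changing $p_*$ to $p_h$ adds an affine function of the graphon value. Hence
\[
  I_{p_h}(W_h)-J_{p_h}(r_h)
  =\bigl[I_{p_*}(W_h)-J_{p_*}(r_h)\bigr]+\Lambda_h\bigl(e(W_h)-r_h\bigr).
\]
The second term is $-\tfrac{2d^3}{3}h^4+O_d(h^6)$ by the two expansions already obtained.

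For the bracket, I use the supporting line at $r_*^d$. By \Cref{lem:rank-one-kkt-family} at $h=0$, $\mathcal M_0(z)=J_{p_*}(z)-\tfrac{\gamma_*}{d}z^d$ satisfies $\mathcal M_0'(z)=k_3(z-r_*)^3+O((z-r_*)^4)$. Therefore
\[
  J_{p_*}(z)=a+\tfrac{\gamma_*}{d}z^d+\tfrac{k_3}{4}(z-r_*)^4+c_5(z-r_*)^5+O((z-r_*)^6).
\]
The key identity is $\int W_h^d=q(f_h)^2=q_h^2=r_h^d$. Thus the $z^d$ terms of $I_{p_*}(W_h)$ and $J_{p_*}(r_h)$ cancel exactly, and the bracket reduces to
\[
  \tfrac{k_3}{4}\int(W_h-r_*)^4+c_5\int(W_h-r_*)^5+O(h^6)
  -\tfrac{k_3}{4}(r_h-r_*)^4+\cdots .
\]
Here the last term is $O(h^8)$.

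To evaluate these integrals, note that $W_h-r_*$ takes the values $s_h^2-r_*$, $s_ht_h-r_*$, $t_h^2-r_*$. These equal $\mp2u_*h+O(h^2)$ and $O(h^2)$, with weights $\alpha_h^2$, $2\alpha_h(1-\alpha_h)$, $(1-\alpha_h)^2$. The quintic integral is $O(h^6)$ because the weight asymmetry is $O(h)$. The quartic integral is $8r_*h^4$ plus even corrections of order $h^6$, since the whole expression is even in $h$. Adding the two contributions gives the $h^4$ coefficient, which the lemma asserts equals $-d^3/3$.

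\emph{Main obstacle.} I expect the hardest step to be this exact bookkeeping of the $h^4$ coefficient. The leading moment-based estimate must be cross-checked against the integrated factorization \eqref{eq:mh-derivative-factorization}, including the $k_4$ correction and the $O(h^2)$ shifts of the three values. If the naive Taylor computation disagrees, I would instead compute
\[
  I_{p_h}(W_h)=\alpha_h^2\mathcal M_h(s_h^2)+2\alpha_h(1-\alpha_h)\mathcal M_h(s_ht_h)+(1-\alpha_h)^2\mathcal M_h(t_h^2)+\tfrac{\gamma_h}{d}r_h^d,
\]
and express each $\mathcal M_h$ difference as an integral of $\mathcal M_h'$, exactly as in the proof of \Cref{lem:rank-one-kkt-family}. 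In either route, evenness in $h$ upgrades the remainders from $O(h^5)$ to $O(h^6)$ and from $O(h^3)$ to $O(h^4)$.

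\emph{Location of the family.} Since $t(H,W\equiv r_h)=r_h^m$, the constant graphon is feasible, and for small $h>0$ it has strictly larger cost than $W_h$. Hence $W\equiv r_h$ is not the unique minimizer. By \Cref{thm:lz-criterion}, the point $(r_h^d,J_{p_h}(r_h))$ is then not on the convex minorant of $\varphi_{p_h,d}$. Condition (M2) of \Cref{thm:scalar-lz-boundary}, which holds for every $r$ including the continuous extension, therefore gives $p_h<\pc(r_h)$.
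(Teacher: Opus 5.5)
\textbf{Verdict.} Your route is the paper's route, and most of it is correct; the gap is the decisive $h^4$ coefficient, which you never evaluate.

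\textbf{What matches the paper.} You use the affine shift from $p_*$ to $p_h$, cancel the $z^d$ term via $\int W_h^d=q_h^2=r_h^d$, and expand the supporting gap $\Gamma_d=\mathcal M_0-\mathcal M_0(r_*)$ to quartic order with coefficient $k_3/4=d^5/(24(d-1)^2)$. Evenness in $h$ then upgrades the remainders. Your edge-density computation is correct and more explicit than the paper's: $e(W_h)=r_*+\frac{5-11d}{3}h^2+O_d(h^4)$, hence $e(W_h)-r_h=-(d-1)h^2+O_d(h^4)$ and $\Lambda_h\{e(W_h)-r_h\}=-\frac{2d^3}{3}h^4+O_d(h^6)$. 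Your deduction of $p_h<\pc(r_h)$ from \Cref{thm:lz-criterion} and (M2) is also correct.

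\textbf{The gap.} You defer to ``what the lemma asserts'' instead of computing the $h^4$ coefficient, and the one intermediate value you record is wrong. Since $(\pm2u_*h)^4=16u_*^4h^4=16r_*^2h^4$, and each diagonal block has weight $\tfrac14+O_d(h)$, the quartic moment is $\int(W_h-r_*)^4=8r_*^2h^4+O_d(h^6)$, not $8r_*h^4$. This is not cosmetic. With $8r_*$ you would get $\frac{k_3}{4}\cdot8r_*-\frac{2d^3}{3}=\frac{d^3(2-d)}{3(d-1)}$, which vanishes for $d=2$ and would leave the strict inequality unproved for triangles. With the correct value, $\frac{k_3}{4}\cdot8r_*^2=\frac{d^5}{24(d-1)^2}\cdot\frac{8(d-1)^2}{d^2}=\frac{d^3}{3}$. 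This is exactly the paper's $\int\Gamma_d(W_h)=\frac{d^3}{3}h^4+O_d(h^6)$, and adding $-\frac{2d^3}{3}h^4$ gives the claimed $-\frac{d^3}{3}h^4$.

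\textbf{Your ``main obstacle'' worries are unnecessary.} The $O_d(h^2)$ shifts of $s_h^2,t_h^2$, the $O_d(h)$ asymmetry of the weights, and the $k_4$ term touch the quartic and quintic moments only at order $h^5$, which evenness kills, and at order $h^6$. So the naive Taylor computation already suffices.

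\textbf{Your fallback is a valid, genuinely different route.} Write
\[
  I_{p_h}(W_h)-J_{p_h}(r_h)=\textstyle\sum_i w_i\mathcal M_h(z_i)-\mathcal M_h(r_h),
\]
and insert the two differences $\mathcal M_h(s_h^2)-\mathcal M_h(s_ht_h)$ and $\mathcal M_h(t_h^2)-\mathcal M_h(s_ht_h)$, each $-\frac{2d^3}{3}h^4+O_d(h^5)$, already computed in the proof of \Cref{lem:rank-one-kkt-family}. Also note $\mathcal M_h(r_h)-\mathcal M_h(s_ht_h)=O_d(h^6)$, because $r_h-s_ht_h=O_d(h^2)$ and $\mathcal M_h'=O_d(h^4)$ on that interval. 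This yields $-\frac{d^3}{3}h^4$ directly, with no need for the separate $\Lambda_h\{e(W_h)-r_h\}$ bookkeeping. The paper's decomposition, by contrast, isolates the supporting gap $\Gamma_d$, which it reuses in the localization step.
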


The affine identity \eqref{eq:entropy-parameter-shift} separates the change in $p$
from the comparison at $p_*$:
\[
  I_{p_h}(W_h)-J_{p_h}(r_h)
  =\{I_{p_*}(W_h)-J_{p_*}(r_h)\}+\Lambda_h\{e(W_h)-r_h\}.
\]
The second term is evaluated using \Cref{lem:rank-one-parameter-expansions}.
For the first difference, we return to the supporting-line geometry of
\Cref{thm:scalar-lz-boundary}.  Its proof obtains the quadratic separation
in \eqref{eq:contact-quadratic-separation} by Taylor-expanding the difference between
$\varphi_{\pc(r),d}(x)=J_{\pc(r)}(x^{1/d})$ and its tangent line $\ell_r(x)$
near the two contact points.  We use the same approach at the exceptional
endpoint, where these contacts coincide.

By the continuous extensions in \Cref{thm:scalar-lz-boundary},
$\pc(r)\to p_*$ and $\sm(r)\to r_*$ as $r\to r_*$.  The tangent lines
$\ell_r$ therefore extend continuously to the tangent of
$\varphi_{p_*,d}$ at $x=r_*^d$:
\[
  \ell_{r_*}(x)=J_{p_*}(r_*)+\beta_d(x-r_*^d).
\]
Here $\beta_d$ denotes the slope of this tangent, which the chain rule gives as
\[
  \beta_d:=\varphi_{p_*,d}'(r_*^d)
  =\frac{J_{p_*}'(r_*)}{d r_*^{d-1}}
  =\frac{\gamma_*}{d}
  =\frac{d^{d-1}}{(d-1)^d}.
\]
Following \eqref{eq:supporting-cost-gap}, define the \emph{supporting cost gap} by
\begin{equation}\label{eq:endpoint-supporting-gap}
  \Gamma_d(z)
  :=\varphi_{p_*,d}(z^d)-\ell_{r_*}(z^d)
  =J_{p_*}(z)-J_{p_*}(r_*)-\beta_d\{z^d-r_*^d\}
  \qquad(0\le z\le1).
\end{equation}
Thus, $\Gamma_d$ is the continuous extension to $r=r_*$ of the function
$g_r$ used there.  Its Taylor expansion describes the separation from
the tangent at their common contact point; as shown below, the leading term is quartic.

By \eqref{eq:endpoint-supporting-gap}, we can express the cost difference
at $p_*$ in terms of $\Gamma_d$ as
\[
  I_{p_*}(W_h)-J_{p_*}(r_h)
  =\int\Gamma_d(W_h)-\Gamma_d(r_h)+\beta_d\left(\int W_h^d-r_h^d\right).
\]
Adding the affine correction from $p_*$ to $p_h$ gives
\begin{equation}\label{eq:constant-comparison-decomposition}
  I_{p_h}(W_h)-J_{p_h}(r_h)
  =\int\Gamma_d(W_h)-\Gamma_d(r_h)
  +\beta_d\left(\int W_h^d-r_h^d\right)
  +\Lambda_h\{e(W_h)-r_h\}.
\end{equation}
Since $\int W_h^d=(\int f_h^d)^2=q_h^2=r_h^d$, the $d$-th moment term vanishes, and
\begin{equation}\label{eq:constant-comparison-identity}
  I_{p_h}(W_h)-J_{p_h}(r_h)
  =\int\Gamma_d(W_h)-\Gamma_d(r_h) +\Lambda_h\{e(W_h)-r_h\}.
\end{equation}
The proof first evaluates $\Gamma_d$ by its quartic expansion,
then expands all the parameters using \Cref{lem:rank-one-parameter-expansions}.

\begin{proof}
Tangency gives $\Gamma_d(r_*)=\Gamma_d'(r_*)=0$.  At the exceptional
endpoint, direct differentiation further gives
$\Gamma_d''(r_*)=\Gamma_d'''(r_*)=0$ and
$\Gamma_d^{(4)}(r_*)=d^5/(d-1)^2>0$.
Taylor's theorem therefore yields, as $z\to r_*$,
\begin{equation}\label{eq:endpoint-gap-expansion}
  \Gamma_d(z)
  =\frac{d^5}{24(d-1)^2}(z-r_*)^4+O_d(|z-r_*|^5).
\end{equation}
By the parameter expansions in \eqref{eq:rank-one-parameter-expansions},
\[
  s_h^2-r_*=-2u_*h+O_d(h^2),
  \qquad
  t_h^2-r_*=2u_*h+O_d(h^2),
  \qquad
  s_ht_h-r_*=O_d(h^2),
  \qquad
  r_h-r_*=O_d(h^2).
\]
Each of the four rectangles determined by $A_h$ and $A_h^c$ in
$[0,1]^2$ has area $1/4+O_d(h)$.
Thus the quartic expansion and $u_*^2=(d-1)/d$ yield
\begin{equation}\label{eq:family-gap-expansion}
  \int\Gamma_d(W_h)=\frac{d^3}{3}h^4+O_d(h^6),
  \qquad
  \Gamma_d(r_h)=O_d(h^8).
\end{equation}
The first remainder improves from $O_d(h^5)$ to $O_d(h^6)$
because $\int\Gamma_d(W_h)$ is an even analytic function of $h$.
Finally, $e(W_h)=\{u_h+h(1-2\alpha_h)\}^2$ and the parameter expansions give
\[
  e(W_h)-r_h=-(d-1)h^2+O_d(h^4),
  \qquad
  \Lambda_h\{e(W_h)-r_h\}=-\frac{2d^3}{3}h^4+O_d(h^6).
\]
Substituting these expansions into \eqref{eq:constant-comparison-identity}
proves the lemma.
\end{proof}

%% file: sections/singular_localization.tex
\subsection{Localization and rank-one reduction}
\label{sec:localization-rank-one}

We now take any feasible graphon $W$ satisfying
$I_{p_h}(W)\le I_{p_h}(W_h)$ and prepare it for comparison with $W_h$.
This has two stages.  First, the $I_{p_h}$-comparison forces $W$ to be close
in $L^4$ to the constant graphon $W\equiv r_*$; this is the localization
step.  Second, we choose a rank-one part $f\otimes f$ and write
$W=f\otimes f+E$.  The first-stage $L^4$-localization then shows that the factor
$f$ is close to $u_*$ and that the error $E$ is small.  The lower bound for
the auxiliary Lagrangian difference obtained in
\Cref{sec:auxiliary-lagrangian} will control the distributions of
the values of $f$, while the arbitrary-graphon comparison in
\Cref{sec:graphon-comparison} will control $E$.
Thus, the decomposition is the form in which the localization is used in the
remainder of the proof.

For the first stage, we use the supporting cost gap $\Gamma_d$ from
\eqref{eq:endpoint-supporting-gap}.  The mechanism is to bound
$\int\Gamma_d(W)$ using the $I_{p_h}$-comparison and then convert that bound
into an $L^4$-bound on $W-r_*$.  Its quartic expansion gives the required
control near $r_*$; to extend this control to all of $[0,1]$, we also need
strict positivity away from $r_*$.  This follows from the supporting-line
interpretation above: by \Cref{lem:convexity-defect}, $\varphi_{p_*,d}$ is
convex on $[0,1]$ and its tangent $\ell_{r_*}$ has no other contact.
Therefore,
\begin{equation}\label{eq:endpoint-gap-positivity}
  \Gamma_d(z)\ge0,
  \qquad
  \Gamma_d(z)=0\iff z=r_*.
\end{equation}
This makes $\int\Gamma_d(W)$ a nonnegative measure of how far $W$ is from
the constant graphon $W\equiv r_*$.  For $L^4$-localization, we use the
stronger pointwise bound
\begin{equation}\label{eq:endpoint-gap-quartic-bound}
  \Gamma_d(z)\ge c_d|z-r_*|^4
  \qquad(0\le z\le1)
\end{equation}
for some $c_d>0$.  Indeed, the ratio $\Gamma_d(z)/|z-r_*|^4$ has a positive
limit at $r_*$ by \eqref{eq:endpoint-gap-expansion}, hence a
positive lower bound in a neighborhood of $r_*$.  On the compact complement
of this neighborhood, it has a positive minimum by continuity and
\eqref{eq:endpoint-gap-positivity}.  This proves
\eqref{eq:endpoint-gap-quartic-bound}, whose integral form is
\[
  \int\Gamma_d(W)\ge c_d\|W-r_*\|_4^4.
\]
Thus, $\int\Gamma_d(W)=O_d(h^4)$ implies the desired localization
$\|W-r_*\|_4=O_d(h)$.

To derive this integral bound from the $I_{p_h}$-comparison, we relate
$I_{p_h}$ to the support cost gap $\Gamma_d$.  Repeating the calculation leading to
\eqref{eq:constant-comparison-decomposition}, instead using $W$ and $W_h$, gives
\begin{equation}\label{eq:graphon-cost-decomposition}
  I_{p_h}(W)-I_{p_h}(W_h)
  =\int\Gamma_d(W)-\int\Gamma_d(W_h)
  +\beta_d\left(\int W^d-r_h^d\right)
  +\Lambda_h\{e(W)-e(W_h)\}.
\end{equation}
The proof of the next lemma uses this decomposition to obtain the required
bound on $\int\Gamma_d(W)$.
Briefly, the $I_{p_h}$-comparison makes the right-hand side nonpositive.
Using \eqref{eq:endpoint-gap-quartic-bound}, we absorb the edge-density
correction into half of $\int\Gamma_d(W)$ and an $O_d(h^4)$ remainder.
The remaining quantities $\int\Gamma_d(W)$ and $\int W^d-r_h^d$ are
nonnegative, the latter by feasibility.  Since $\int\Gamma_d(W_h)=O_d(h^4)$,
both must be $O_d(h^4)$, yielding the desired localization.

The preceding discussion gives the first stage of localization, but this
alone does not suffice for the later comparison with $W_h$, since $W$ need
not be rank-one.  The lemma below therefore extracts a
rank-one part $f\otimes f$ and writes the remainder as $E$.  For a symmetric
kernel $K$ and a function $g$, write
\[
  (T_Kg)(x):=\int_0^1K(x,y)g(y)\,dy.
\]
The factor is chosen so that $T_E(f^{d-1})=0$.  This identity eliminates the
terms linear in $E$ from the later moment and homomorphism-density expansions.
The following lemma records both the localization and the rank-one reduction.

\begin{lemma}[Localization and rank-one reduction]
\label{lem:localization-rank-one}
There exist $C_d,C_{d,m}>1$ and $h_0>0$ such that, for every
$0<h<h_0$, every graphon $W$ satisfying $t(H,W)\ge r_h^m$ and
$I_{p_h}(W)\le I_{p_h}(W_h)$ satisfies
\begin{equation}\label{eq:graphon-quartic-localization}
  \|W-r_*\|_4\le C_dh.
\end{equation}
Moreover, $W$ admits a decomposition $W=f\otimes f+E$ for some
$f\in L^d([0,1])$ and $E\in L^2([0,1]^2)$ satisfying
\begin{equation}\label{eq:rank-one-orthogonality}
  T_W(f^{d-1})=q(f)f,
  \qquad
  T_E(f^{d-1})=0,
  \qquad
  q(f)>0,
  \qquad
  0\le f(x) \le 2
  \quad\text{for a.e. }x.
\end{equation}
Also, the function $f$ is close to the constant function $u_*$, the error
$E$ is small, the $d$-th moment $q(f)$ is close to $q_h$, and the
homomorphism-density error is bounded by a cubic term in $\|E\|_2$:
\begin{equation}\label{eq:rank-one-reduction-bounds}
  \begin{gathered}
  \|f-u_*\|_4\le C_dh,
  \qquad
  \|E\|_2 \le C_dh,
  \qquad
  \left|q(f)-q_h\right|\le C_dh^2,
  \qquad
  |t(H,W)-q(f)^v| \le C_{d,m} \|E\|_2^3.
  \end{gathered}
\end{equation}
\end{lemma}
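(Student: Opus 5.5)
The proof has two stages. First we localize $W$ in $L^4$ using the decomposition \eqref{eq:graphon-cost-decomposition}. Then we extract the rank-one factor $f$ as a positive eigenfunction of a nonlinear eigenproblem, obtained by perturbation from the constant factor $u_*$.

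\emph{Step 1 (localization).} Let $W$ be feasible with $I_{p_h}(W)\le I_{p_h}(W_h)$. Then the right-hand side of \eqref{eq:graphon-cost-decomposition} is $\le0$. The generalized H\"older inequality \eqref{eq:generalized-holder} gives $\int W^d\ge t(H,W)^{d/m}\ge r_h^d$, so the $\beta_d$-term is nonnegative. For the edge term, write $e(W)-e(W_h)=\int(W-r_*)+(r_*-e(W_h))$. \Cref{lem:constant-graphon-comparison} and \Cref{lem:rank-one-parameter-expansions} give $r_*-e(W_h)=O_d(h^2)$ and $\Lambda_h=O_d(h^2)$. Hence
\[
\Lambda_h|e(W)-e(W_h)|\le C h^2\|W-r_*\|_4+Ch^4\le \tfrac{c_d}{2}\|W-r_*\|_4^4+C'h^4,
\]
by Young's inequality with exponents $4$ and $4/3$. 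Combining this with \eqref{eq:endpoint-gap-quartic-bound} and \eqref{eq:family-gap-expansion} gives $\tfrac{c_d}{2}\|W-r_*\|_4^4\le \int\Gamma_d(W_h)+C'h^4=O_d(h^4)$, which is \eqref{eq:graphon-quartic-localization}. As a byproduct, $0\le\int W^d-r_h^d=O_d(h^4)$.

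\emph{Step 2 (choice of $f$).} We solve the fixed-point equation $T_W(f^{d-1})=q(f)f$ with $f$ near $u_*$. Writing $W=r_*+K$ with $\|K\|_4\le C_dh$, we look for $f=u_*+g$. At $K=0$, the constant $u_*$ is a solution, since $T_{r_*}(u_*^{d-1})=r_*u_*^{d-1}=u_*^{d+1}=q(u_*)u_*$.

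The equation is invariant under no scaling: both sides are homogeneous in $f$, but of degrees $d-1$ and $d+1$. So the solution is locally isolated, and we apply the implicit function theorem (or a contraction argument) in $L^4$. The linearization at $(K,g)=(0,0)$ is
\[
g\mapsto r_*(d-1)u_*^{d-2}\textstyle\int g-u_*^{d}g-u_*\,d u_*^{d-1}\textstyle\int g .
\]
On mean-zero $g$ this acts as $-u_*^dg$. On constants it acts as $(d-1)u_*^d-u_*^d-du_*^d=-2u_*^d$. Both are invertible, so the linearization is an isomorphism.

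The map $K\mapsto T_K$ is bounded from $L^4$ into operators $L^{4/3}\to L^{4}$, which controls the nonlinear terms. This yields $\|f-u_*\|_4\le C_dh$. Then $E:=W-f\otimes f$ satisfies $\|E\|_2\le\|K\|_2+\|r_*-f\otimes f\|_2=O_d(h)$. The identity $T_E(f^{d-1})=T_W(f^{d-1})-f\,q(f)=0$ is immediate from the fixed-point equation, and $q(f)>0$ holds by closeness to $u_*$.

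The pointwise bound $0\le f\le2$ does not follow from $L^4$ closeness. Instead we use the equation itself: $f=T_W(f^{d-1})/q(f)$ with $0\le W\le1$ and $f\ge0$ (take the fixed point in the cone of nonnegative functions, or replace $f$ by $|f|$ and iterate). This gives $0\le f\le \int f^{d-1}/q(f)$, which is close to $u_*^{d-1}/u_*^d=1/u_*<2$ for small $h$.

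\emph{Step 3 (density bounds).} Expand $t(H,W)$ multilinearly in $W=f\otimes f+E$. The term with no $E$ factors is $q(f)^v$. A term with exactly one $E$-edge $ij$ integrates, after performing the integrals over all other vertices, to $q(f)^{v-2}\iint f^{d-1}(x)E(x,y)f^{d-1}(y)$. This vanishes by $T_E(f^{d-1})=0$.

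Terms with exactly two $E$-edges are the real obstacle, because they are only $O(\|E\|_2^2)$ in general. I would handle them as follows.
\begin{itemize}
\item If the two $E$-edges share no vertex, some endpoint vertex $x$ has all its other edges rank-one. Integrating that vertex out gives $T_E(f^{d-1})=0$, so the term vanishes.
\item If the two $E$-edges share the vertex $j$, with other endpoints $i,k$, then $i$ has degree $d$ and only one $E$-edge. Integrating $x_i$ produces $T_E(f^{d-1})(x_j)=0$ again, so the term vanishes.
\end{itemize}
Hence every term with at most two $E$-factors vanishes except the main term. The remaining terms have at least three $E$-edges. Using $|f|\le2$, $|E|\le 5$, and the standard bound of a homomorphism-type integral by $\|E\|_2^3$ (Cauchy--Schwarz on three edges), they total at most $C_{d,m}\|E\|_2^3$.

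Finally, $|q(f)-q_h|$ follows by combining $t(H,W)=q(f)^v+O(h^3)$ with the sandwich bound
\[
r_h^m\le t(H,W)\le\Big(\textstyle\int W^d\Big)^{m/d}\le r_h^m+O(h^4).
\]
This gives $q(f)^v=r_h^m+O(h^3)=q_h^v+O(h^3)$, so $|q(f)-q_h|=O(h^3)\le C_dh^2$.

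The main obstacle I expect is Step 2. The difficulty is justifying solvability of the nonlinear eigenproblem with only $L^4$-smallness of $K$, while simultaneously securing nonnegativity and the sup bound on $f$. I would first try a contraction on a closed ball in the nonnegative cone of $L^4$, then bootstrap to $L^\infty$ via the equation itself.
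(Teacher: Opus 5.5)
\textbf{Step 1 does not prove the stated rate, and everything after it inherits the loss.} Young's inequality with exponents $4$ and $4/3$ gives $Ch^2x\le\tfrac{c_d}{2}x^4+C'h^{8/3}$, not $C'h^4$. This is not a bookkeeping slip: the inequality $c_dx^4\le Ch^4+Ch^2x$ that your argument produces is satisfied by every $x\le(C/c_d)^{1/3}h^{2/3}$. So you only get $\|W-r_*\|_4=O_d(h^{2/3})$ and $\int W^d-r_h^d=O_d(h^{8/3})$, hence $\|f-u_*\|_4,\|E\|_2=O_d(h^{2/3})$ rather than $O_d(h)$.

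The loss comes from estimating $|e(W)-e(W_h)|$ through $\|W-r_*\|_1$. That bound is linear in the deviation and ignores both the sign of the term and the moment constraint. The missing idea is to keep the sign: since $\Lambda_h>0$, only a lower bound on $e(W)-e(W_h)$ is needed. Then use feasibility, via the $d$-th moment, to make the edge deficit quadratic. By convexity of $z\mapsto z^d$ and $\Gamma_d(z)\ge c_d|z-r_*|^4$,
\[
  0\le z^d-r_*^d-dr_*^{d-1}(z-r_*)\le C_d|z-r_*|^2\le C_d\Gamma_d(z)^{1/2}.
\]
Apply the upper bound to $W$, the lower bound to $W_h$, and use $\int W^d\ge r_h^d=\int W_h^d$. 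This gives $e(W)-e(W_h)\ge-C_d\int\Gamma_d(W)^{1/2}$. The edge term then costs at most $C_dh^2(\int\Gamma_d(W))^{1/2}\le\tfrac12\int\Gamma_d(W)+C_dh^4$, which can be absorbed. This yields $\int\Gamma_d(W)=O_d(h^4)$ and $\int W^d-r_h^d=O_d(h^4)$, exactly as in the paper.

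\textbf{Step 2 is left as a plan, and the plan as stated does not run.}
\begin{itemize}
\item For $d\ge5$, neither $f\mapsto f^{d-1}\in L^{4/3}$ nor $f\mapsto q(f)$ is defined on $L^4$, so an implicit function theorem in $L^4$ is unavailable.
\item The natural map $\Phi(f)=T_W(f^{d-1})/q(f)$ has linearization $g\mapsto-\int g$ at $(W,f)=(r_*,u_*)$, i.e.\ eigenvalue $-1$ on constants. So Picard iteration in the nonnegative cone is not a contraction. A damped map such as $f\mapsto\frac12(f+\Phi(f))$ on $\{0\le f\le2,\ \|f-u_*\|_4\le\delta\}$ with the $L^4$ metric would have to be used and verified.
\end{itemize}
The paper avoids perturbation entirely. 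Since $T_W\colon L^{d/(d-1)}\to L^d$ is compact, $\langle g,T_Wg\rangle$ attains its maximum $\theta\ge e(W)\ge r_*/2$ on the unit ball at some $g_0\ge0$. The Lagrange condition gives $T_Wg_0=\theta g_0^{1/(d-1)}$. Then $f:=\theta^{1/2}g_0^{1/(d-1)}$ solves $T_W(f^{d-1})=q(f)f$, with $f\ge0$ and $q(f)=\theta^{d/2}$ bounded below for free, and $f\le q^{-1/d}\le2$ follows from H\"older. Closeness to $u_*$ is proved afterwards from $f-b=q^{-1}T_{W-r_*}(f^{d-1})$, where $b=r_*\int f^{d-1}/q$ is a constant.

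\textbf{Step 3 has a smaller gap.} Your claim that any term with at least three $E$-edges is $O(\|E\|_2^3)$ by Cauchy--Schwarz fails when those edges form a star $K_{1,3}$; there the integral can be of order $\|E\|_2^2$. Such terms do vanish, but only because your degree-one argument applies to every term, not just those with at most two $E$-edges. The surviving $E$-subgraphs then have minimum degree at least two and contain a cycle of length at least three, which is what gives the cubic bound.

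Your sandwich argument for $|q(f)-q_h|$, which even gives $O_d(h^3)$, is a valid alternative to the paper's moment expansion once Step 1 is repaired.
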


\begin{proof}
We first localize $W$ in $L^4$ around the constant graphon $W\equiv r_*$,
then construct its rank-one part $f\otimes f$ and an error $E$ satisfying
$T_E(f^{d-1})=0$, and finally bound the moment and homomorphism-density
errors.

\medskip
\noindent\emph{Localization of the graphon.}
Combining the $I_{p_h}$-comparison with \eqref{eq:graphon-cost-decomposition} gives
\[
  0\ge I_{p_h}(W)-I_{p_h}(W_h)
  =\int\Gamma_d(W)-\int\Gamma_d(W_h)
  +\beta_d\left(\int W^d-r_h^d\right)
  +\Lambda_h\{e(W)-e(W_h)\}.
\]
Generalized H\"older \eqref{eq:generalized-holder} and feasibility give $\int W^d\ge r_h^d$.
To absorb the edge-density correction, convexity, Taylor's theorem, and
\eqref{eq:endpoint-gap-quartic-bound} give
\[
  0\le z^d-r_*^d-d r_*^{d-1}(z-r_*)
  \le C_d|z-r_*|^2
  \le C_d\Gamma_d(z)^{1/2}
  \qquad(0\le z\le1).
\]
Applying the upper bound to $W$ and the lower bound to $W_h$, then using
$\int W_h^d=r_h^d$, yields
\[
  e(W)-e(W_h)
  \ge\frac{\int W^d-r_h^d}{d r_*^{d-1}}
   -C_d\int\Gamma_d(W)^{1/2}
  \ge-C_d\int\Gamma_d(W)^{1/2}.
\]
By \eqref{eq:rank-one-parameter-expansions} and \eqref{eq:family-gap-expansion},
we have $\Lambda_h=O_d(h^2)$ and $\int\Gamma_d(W_h)=O_d(h^4)$.
Substitution into the cost comparison gives
\begin{align*}
  \beta_d \left(\int W^d-r_h^d\right)+\int\Gamma_d(W)
  &\le C_dh^4+C_dh^2\int\Gamma_d(W)^{1/2}\\
  &\le C_dh^4+C_dh^2\left(\int\Gamma_d(W)\right)^{1/2}\\
  &\le \left(C_d+\frac{C_d^2}{2}\right)h^4+\frac12\int\Gamma_d(W).
\end{align*}
Absorbing the last term and using $\beta_d>0$ give
\begin{equation}\label{eq:moment-and-cost-gap-bounds}
  0\le\int W^d-r_h^d\le C_dh^4,
  \qquad
  0\le\int\Gamma_d(W)\le C_dh^4.
\end{equation}
Thus, as discussed above, using \eqref{eq:endpoint-gap-quartic-bound} and $\int\Gamma_d(W)=O_d(h^4)$
gives the desired localization \eqref{eq:graphon-quartic-localization}.

\medskip
\noindent\emph{Construction of the decomposition.}
We first construct $f$ and $E$ satisfying \eqref{eq:rank-one-orthogonality}.
Put $d':=d/(d-1)$ so that $(1/d)+(1/d')=1$.
To see the integral operator $T_W:L^{d'}\longrightarrow L^d$ is compact,
choose finite-step graphons $S_n$ satisfying
$\|W-S_n\|_d\to0$.  For each $n$, choose a finite partition
$A_1^{(n)},\ldots,A_{N_n}^{(n)}$ of $[0,1]$ on whose rectangles $S_n$ is
constant, and write
\[
  S_n(x,y)=\sum_{i,j=1}^{N_n}c_{ij}^{(n)}
  \1_{A_i^{(n)}}(x)\1_{A_j^{(n)}}(y),
  \qquad
  (T_{S_n}g)(x)=\sum_{i=1}^{N_n} \left(\sum_{j=1}^{N_n}c_{ij}^{(n)}\int_{A_j^{(n)}}g\right) \1_{A_i^{(n)}}(x).
\]
Thus, the range of $T_{S_n}$ is contained in the span of the finitely many
functions $\1_{A_1^{(n)}},\ldots,\1_{A_{N_n}^{(n)}}$, so $T_{S_n}$ is compact.
Moreover, H\"older's inequality gives
\[
  \|(T_W-T_{S_n})g\|_d
  \le\|W-S_n\|_d\|g\|_{d'}.
\]
Thus, $S_n\to W$ ensures that $T_{S_n}g\to T_Wg$ uniformly over
$\|g\|_{d'}\le1$.  In other words, $T_{S_n}\to T_W$ in operator norm.
Since the space of compact
operators is closed in the operator norm, $T_W$ is compact.

Now we consider the quadratic form
\[
  \mathcal Q_W(g):= \langle g,T_Wg\rangle =\iint W(x,y)g(x)g(y)\,dx\,dy
  \qquad
  (g \in L^{d'}([0,1]),\, \|g\|_{d'}\le1).
\]
Since $1<d'<\infty$, the closed unit ball of $L^{d'}$ is weakly compact,
and the compactness of $T_W$ makes $\mathcal Q_W$ weakly continuous on this ball.
Thus $\mathcal Q_W$ attains a maximum there at some $g_0$;
let $\theta:=\mathcal Q_W(g_0)$ be the maximum.
The localization bound on $W$ and H\"older's inequality give $\theta \in [r_*/2,1]$,
after decreasing $h_0$, since
\[
  \frac{r_*}{2} \le r_*-\|W-r_*\|_1 \le \int W=\mathcal Q_W(1)\le\theta
  = \mathcal Q_W(g_0)\le\|W\|_d\|g_0\|_{d'}^2\le 1.
\]
Since $W\ge0$, taking the absolute value cannot decrease $\mathcal Q_W$,
and positivity of $\theta$ and homogeneity force every maximizer to have norm
one.  We may therefore choose a maximizer $g_0\ge0$ with $\|g_0\|_{d'}=1$.

Since \(d'>1\), the \(L^{d'}\)-unit sphere is smooth.  Therefore, applying the
Lagrange-multiplier theorem to the maximization of $\mathcal Q_W$ on this sphere
gives \(T_Wg_0=\mu g_0^{d'-1}\) for some constant \(\mu\). This gives
\[
  \theta=\mathcal Q_W(g_0)=\int g_0\,T_Wg_0
  =\mu\int g_0^{d'}=\mu\|g_0\|_{d'}^{d'}=\mu.
\]
Hence $T_Wg_0=\theta g_0^{1/(d-1)}$.
Define $f:=\theta^{1/2}g_0^{1/(d-1)}$ and, for the remainder of the proof,
abbreviate $q:=q(f)=\int f^d=\theta^{d/2}>0$.  Then
\begin{align*}
  T_W(f^{d-1})
  &=T_W(\theta^{(d-1)/2}g_0)
  =\theta^{(d+1)/2}g_0^{1/(d-1)}
  =\theta^{d/2}f=qf,
  \\
  T_{f\otimes f}(f^{d-1})
  &=\left(\int f(y)^d\,dy\right)f
  =qf.
\end{align*}
Let $E:=W-f\otimes f$.  Subtracting these identities gives $T_E(f^{d-1})=0$.
Also, $\theta \in [r_*/2,1]$ implies $q = \theta^{d/2} \in [(r_*/2)^{d/2},1]$.
Consequently, since $f\ge0$,
\[
  qf(x)=\int W(x,y)f(y)^{d-1}\,dy
  \le\int f^{d-1}\le q^{(d-1)/d},
  \qquad
  0\le f(x)\le q^{-1/d}\le\sqrt{\frac{2}{r_*}} \le 2
  \quad\text{for a.e. }x.
\]

\medskip
\noindent\emph{Factor and error bounds.}
Substituting $W=r_*+(W-r_*)$ into the first identity of
\eqref{eq:rank-one-orthogonality} gives
\[
  qf=T_W(f^{d-1})
  =r_*\int f^{d-1}+T_{W-r_*}(f^{d-1}).
\]
The second term on the right-hand side is small, since
$\|T_{W-r_*}(f^{d-1})\|_4 \le\|W-r_*\|_4\|f^{d-1}\|_{4/3} \le C_d h$.
Thus, to show that $f$ is close to $u_*$, it remains to compare the
first term with $qu_*$.  Define $b$ by
\[
  b:=\frac{r_*\int f^{d-1}}q,
  \qquad
  f-b=q^{-1}T_{W-r_*}(f^{d-1}),
  \qquad
  \|f-b\|_4\le C_dh.
\]
The last inequality follows from the bound on the second term above and the
uniform lower bound on $q$.  The uniform bounds on $f$ and $q$ also give a
two-sided uniform bound on $b$:
\[
  \frac{q}{C_d}\le\int f^{d-1}\le q^{(d-1)/d},
  \qquad
  C_d^{-1}\le b\le C_d.
\]
Lipschitz continuity of $x\mapsto x^k$ for $k\in\{d-1,d\}$
on a bounded interval and H\"older's inequality give
\[
  \left|\int f^k-b^k\right|
  \le \|f^k-b^k\|_4
  \le C_d\|f-b\|_4
  \le C_dh
  \qquad(k=d-1,d).
\]
Taking $k=d$ and $k=d-1$, respectively, and using
$q=\int f^d$ and $qb=r_*\int f^{d-1}$ gives
\[
  |q-b^d|\le C_dh,
  \qquad
  |qb-r_*b^{d-1}|\le C_dh.
\]
Since $b$ is uniformly bounded away from zero and $r_*=u_*^2$, these bounds imply
\[
  |b^2-r_*| = \frac{\left| (qb-r_*b^{d-1})-b(q-b^d) \right|}{b^{d-1}} \le C_dh,
  \qquad
  |b-u_*| \le C_dh.
\]
Combining this with the bound on $f-b$ gives
\[
  \|f-u_*\|_4\le \|f-b\|_4+\|b-u_*\|_4\le C_dh.
\]
Finally, $E=W-(f\otimes f)$ is small since $W$ and $f \otimes f$ are both close to $r_*$ in $L^4$:
\begin{equation*}
  \|E\|_2\le\|E\|_4
  \le \|W-r_*\|_4+\|(f\otimes f)-r_*\|_4
  \le \|W-r_*\|_4+(\|f\|_4+u_*)\|f-u_*\|_4
  \le C_dh.
\end{equation*}

\medskip
\noindent\emph{Moment and homomorphism-density errors.}
The binomial expansion of $W^d=(f\otimes f+E)^d$ gives
\begin{equation*}
  \left| \int W^d-q^2 \right|
  = \left| \sum_{k=2}^d\binom dk\int(f\otimes f)^{d-k}E^k \right|
  \le C_d\sum_{k=2}^d\int|E|^k \le C_d\|E\|_2^2.
\end{equation*}
The $k=0$ term cancels with $q^2$, and the $k=1$ term vanishes by orthogonality $T_E(f^{d-1})=0$.
Together with the first bound in \eqref{eq:moment-and-cost-gap-bounds} and $q_h^2=r_h^d$,
\[
  |q-q_h| \le C_d\left|q^2-q_h^2\right|
  \le C_d \left( \left| \int W^d-q^2 \right| + \left| \int W^d - r_h^d \right| \right)
  \le C_dh^2.
\]
Also, expand $t(H,f\otimes f+E)$ according to the edges on which $E$ and $f\otimes f$ are used:
\[
  t(H,W)=\sum_{F\subseteq E(H)}\int
  \prod_{\{i,j\}\in F}E(x_i,x_j)
  \prod_{\{i,j\}\in E(H)\setminus F}f(x_i)f(x_j)\,d\mathbf x.
\]
The $F=\varnothing$ term corresponds to $t(H,f\otimes f)=q^v$, so it remains to bound
the terms with $F\ne\varnothing$.
If a vertex $i$ is incident to exactly one $E$-edge $\{i,j\}$, then integrating
first in $x_i$ gives
\[
  \int E(x_i,x_j)f(x_i)^{d-1}\,dx_i
  =T_E(f^{d-1})(x_j)=0,
\]
so that term vanishes.  Thus, every remaining nonzero term contains a cycle of
$E$-edges, of length $\ell\ge3$ because $H$ is simple.  Since the other
factors are uniformly bounded, we may bound them by a constant and integrate
out all variables outside a chosen cycle.  Thus, the absolute value of the term
is at most
\[
  C_{d,m}\int_{[0,1]^\ell}\prod_{k=1}^\ell|E(x_k,x_{k+1})|\,dx_1\cdots dx_\ell
  \le C_{d,m}\|E\|_2^\ell\le C_{d,m}\|E\|_2^3,
  \qquad x_{\ell+1}:=x_1,
\]
where repeated Cauchy--Schwarz gives the first norm bound, and the last one
uses $\ell\ge3$ and the uniform boundedness of $\|E\|_2$.  Summing the
absolute values of the finitely many terms gives
\begin{equation*}
  |t(H,W)-q^v|\le C_{d,m}\|E\|_2^3.
\end{equation*}
\end{proof}

%% file: sections/singular_auxiliary_lagrangian.tex
\subsection{A lower bound for the auxiliary Lagrangian difference}
\label{sec:auxiliary-lagrangian}

The aim of this subsection is to bound from below the difference between the
value of an auxiliary Lagrangian associated with the factor $f$ and its value
at the reference factor $f_h$.  Here $f$ comes from the
decomposition $W=f\otimes f+E$, while $f_h$ is the factor defining
$W_h=f_h\otimes f_h$.  By \Cref{lem:localization-rank-one}, every
feasible graphon satisfying $I_{p_h}(W)\le I_{p_h}(W_h)$ admits such a
decomposition, with $f$ close to $u_*$ and $E$ small.  Since $0\le f\le2$,
the kernel $f\otimes f$ may take values larger than $1$ and need not be a
graphon.  We therefore introduce an auxiliary Lagrangian that can be evaluated
at both $f$ and $f_h$, temporarily leaving $E$ aside.  In
\Cref{sec:graphon-comparison}, the resulting lower bound will be
combined with estimates for $E$ to compare the actual graphons $W$ and $W_h$.

The quantities in the auxiliary Lagrangian depend only on how frequently $f$
takes each of its possible values.  Let $\lambda$ denote the Lebesgue
probability measure on $[0,1]$, and define the distributions of the values of
$f$ and $f_h$ by pushing $\lambda$ forward under these two functions:
\begin{equation*}
  \nu:=f_\#\lambda,
  \qquad
  \nu_h:=(f_h)_\#\lambda.
\end{equation*}
Thus, for every Borel set $B\subseteq[0,2]$,
$\nu(B)=\lambda\{x:f(x)\in B\}$, and the analogous identity with $f_h$ holds
for $\nu_h$.  Because $f(x)f(y)$ may exceed $1$, we use a particular auxiliary
continuation $\widetilde J_{p_h}$ of $J_{p_h}$ from $[0,1]$ to $[0,4]$, defined
below.  The first integral term in the auxiliary Lagrangian is then
\[
 \iint \widetilde J_{p_h}(f(x)f(y))\,dx\,dy.
\]
By the definition of $\nu$, the integral and the moment
$q(f)$ can be written as
\begin{equation*}
  \iint \widetilde J_{p_h}(f(x)f(y))\,dx\,dy
  =\iint \widetilde J_{p_h}(st)\,d\nu(s)d\nu(t),
  \qquad
  q(f)=\int s^d\,d\nu(s),
\end{equation*}
and the same identities hold with $f_h$ and $\nu_h$, with
$q(f_h)=q_h$.  Although $f\otimes f$ need not be a graphon, the integral
defining $t(H,\,\cdot\,)$ is still well defined for this bounded kernel.  Since
$H$ is $d$-regular, the rank-one identity gives
\begin{equation*}
  t(H,f\otimes f)=q(f)^v,
  \qquad
  t(H,W_h)=q_h^v=r_h^m.
\end{equation*}
The auxiliary Lagrangian difference studied in this subsection is
\begin{align*}
  &\left[
    \iint \widetilde J_{p_h}(f(x)f(y))\,dx\,dy
    -\mu_h\{t(H,f\otimes f)-r_h^m\}
  \right]
  \\
  &\qquad-
  \left[
    \iint \widetilde J_{p_h}(f_h(x)f_h(y))\,dx\,dy
    -\mu_h\{t(H,f_h\otimes f_h)-r_h^m\}
  \right]
  \\
  & {} = \left[
    \iint \widetilde J_{p_h}(st)\,d\nu(s)d\nu(t)
    -\mu_h\{q(f)^v-r_h^m\}
  \right]
  \\
  &\qquad\quad-
  \left[
    \iint \widetilde J_{p_h}(st)\,d\nu_h(s)d\nu_h(t)
    -\mu_h\{q_h^v-r_h^m\}
  \right].
\end{align*}
The main conclusion, \Cref{lem:auxiliary-lagrangian-bound}, provides
constants $a>0$ and $C_{d,m}\ge1$, independent of $\rho$ and $h$, with the
following property.  For every sufficiently small $\rho>0$, one can choose
$b_\rho>0$ so that, for all sufficiently small $h>0$, the above difference is
bounded below by
\begin{equation*}
  \frac{a}{2}
  \int_{\{|s-u_*|<\rho\}}(s-s_h)^2(s-t_h)^2\,d\nu(s)
  +\frac{b_\rho}{2}\nu\{|s-u_*|\ge\rho\}
  -C_{d,m}\{q(f)-q_h\}^2.
\end{equation*}
The integral in this lower bound measures the deviation, within a
neighborhood of $u_*$, from the two values $s_h,t_h$ taken by $f_h$.  The
next term measures the mass outside that neighborhood, and the final,
possibly negative, term depends only on the difference between the $d$-th
moments of $f$ and $f_h$.  This lower bound for the auxiliary Lagrangian
difference is the conclusion sought in this subsection.  It does not yet
compare the graphon costs of $W$ and $W_h$ or identify $f$ with $f_h$.  In
\Cref{sec:graphon-comparison}, we restore the remainder $E$ and
convert this bound on the auxiliary Lagrangian difference into a lower bound
involving the actual cost and constraint value of $W$.  The resulting lower
bound for the full-graphon Lagrangian difference is then used in
\Cref{sec:singular-proof} to establish optimality and uniqueness.

We now define $\widetilde J_{p_h}$.  Recall that
$\Lambda_h=\ell(p_h)-\ell_*$.  First set
\[
  \widetilde\Gamma_d(z)=
  \begin{cases}
    \Gamma_d(z),&0\le z\le1,\\
    \Gamma_d(1)+(z-1)^2,&1<z\le4,
  \end{cases}
\]
and for $0 \le z \le 4$, define
\begin{equation}\label{eq:entropy-continuation}
  \widetilde J_{p_*}(z)
  :=J_{p_*}(r_*)+\beta_d(z^d-r_*^d)
  +\widetilde\Gamma_d(z),
  \qquad
  \widetilde J_{p_h}(z)
  :=\widetilde J_{p_*}(z)+\Lambda_h z+J_{p_h}(0)-J_{p_*}(0).
\end{equation}
On $[0,1]$, this definition agrees with $J_{p_h}$ by
\eqref{eq:entropy-parameter-shift} and \eqref{eq:endpoint-supporting-gap}.  For
$1<z\le4$, however, we do not use the relative-entropy formula; instead,
\eqref{eq:entropy-continuation} means explicitly that
\[
  \widetilde J_{p_h}(z)
  =J_{p_*}(r_*)+\beta_d(z^d-r_*^d)+\Gamma_d(1)+(z-1)^2
   +\Lambda_h z+J_{p_h}(0)-J_{p_*}(0).
\]
Thus $\widetilde J_{p_h}$ is a real-valued continuous function on $[0,4]$,
but its values above $1$ are only auxiliary and are not relative entropies.
Write
$K_h(x,y):=\widetilde J_{p_h}(xy)$ for the associated kernel on $[0,2]^2$.
The following lemma gives H\"older continuity and uniform bounds for
$\widetilde J_{p_h}$ and $K_h$.
\begin{lemma}[Uniform control of auxiliary continuation and kernel]
\label{lem:continuation-kernel-bounds}
There are $C_d\ge1$ and $h_0>0$ such that, whenever $0\le h<h_0$,
$z,z'\in[0,4]$, and $x,x',y\in[0,2]$,
\begin{equation}\label{eq:continuation-kernel-bounds}
  \begin{aligned}
    |\widetilde J_{p_h}(z)-\widetilde J_{p_h}(z')|&\le C_d|z-z'|^{1/2},
    & \|\widetilde J_{p_h}-\widetilde J_{p_*}\|_{L^\infty([0,4])}&\le C_dh^2,
    & \|\widetilde J_{p_h}\|_{L^\infty([0,4])}&\le C_d,
    \\
    |K_h(x,y)-K_h(x',y)| &\le C_d|x-x'|^{1/2},
    & \|K_h-K_0\|_{L^\infty([0,2]^2)}&\le C_dh^2,
    & \|K_h\|_{L^\infty([0,2]^2)}&\le C_d.
  \end{aligned}
\end{equation}
\end{lemma}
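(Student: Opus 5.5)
The plan is to reduce everything to scalar estimates on the auxiliary continuation $\widetilde J_{p_h}$ and then transfer them to the kernel $K_h(x,y)=\widetilde J_{p_h}(xy)$ through the product structure. By \eqref{eq:entropy-continuation},
\[
  \widetilde J_{p_h}=\widetilde J_{p_*}+\Lambda_h z+J_{p_h}(0)-J_{p_*}(0),
\]
and $\widetilde J_{p_*}$ is the sum of the polynomial $J_{p_*}(r_*)+\beta_d(z^d-r_*^d)$ and $\widetilde\Gamma_d$. It therefore suffices to prove each scalar bound for these three pieces separately.

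\emph{H\"older continuity.} The polynomial piece is Lipschitz on $[0,4]$ with constant $d\beta_d4^{d-1}$. Since $|z-z'|\le 4^{1/2}|z-z'|^{1/2}$ on $[0,4]$, a Lipschitz bound implies a $1/2$-H\"older bound with a $d$-dependent constant. For $\widetilde\Gamma_d$, the piece $(z-1)^2$ on $[1,4]$ is Lipschitz. On $[0,1]$, $\Gamma_d=J_{p_*}-(\text{polynomial})$, so the only non-Lipschitz contribution is the entropy part $z\log z+(1-z)\log(1-z)$. The map $z\mapsto z\log z$ is $1/2$-H\"older on $[0,1]$: for $0\le z<z'\le1$,
\[
  |z'\log z'-z\log z|\le \int_z^{z'}|1+\log w|\,dw\le C\sqrt{z'-z},
\]
by Cauchy--Schwarz and $\int_0^1(1+\log w)^2\,dw<\infty$. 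The same argument applies to $(1-z)\log(1-z)$. The two pieces of $\widetilde\Gamma_d$ agree at $z=1$, where $(z-1)^2$ vanishes, so $\widetilde\Gamma_d$ is continuous there. When $z\le1<z'$, split the increment at $1$ and use $a^{1/2}+b^{1/2}\le 2(a+b)^{1/2}$. The affine correction $\Lambda_h z$ is Lipschitz with constant $|\Lambda_h|=O_d(h^2)\le1$ once $h_0$ is small, and the additive constant does not affect differences.

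\emph{Sup-norm bounds.} From the formula above,
\[
  \|\widetilde J_{p_h}-\widetilde J_{p_*}\|_{L^\infty([0,4])}\le 4|\Lambda_h|+|J_{p_h}(0)-J_{p_*}(0)|.
\]
Lemma~\ref{lem:rank-one-parameter-expansions} gives $\Lambda_h=O_d(h^2)$. Since $J_p(0)=-\log(1-p)$ is smooth in $p$ near $p_*$, we have
\[
  |J_{p_h}(0)-J_{p_*}(0)|\le C_d|p_h-p_*|.
\]
Because $p\mapsto\ell(p)$ has nonzero derivative at $p_*$, $|p_h-p_*|\le C_d|\Lambda_h|=O_d(h^2)$. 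For the uniform bound, $\widetilde J_{p_*}$ is continuous on the compact interval $[0,4]$, hence bounded by a constant depending on $d$. Adding the $O_d(h^2)$ difference gives $\|\widetilde J_{p_h}\|_{L^\infty}\le C_d$.

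\emph{Kernel bounds.} For $x,x',y\in[0,2]$ we have $xy,x'y\in[0,4]$ and $|xy-x'y|\le 2|x-x'|$. The scalar H\"older bound therefore gives
\[
  |K_h(x,y)-K_h(x',y)|\le C_d\sqrt2\,|x-x'|^{1/2}.
\]
Since $K_0=\widetilde J_{p_0}=\widetilde J_{p_*}$ composed with multiplication, the sup-norm bounds for $K_h$ and for $K_h-K_0$ are the scalar ones evaluated at $z=xy$.

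The only genuinely delicate point is the H\"older regularity of the entropy part near the endpoints $0$ and $1$, where $J'$ blows up logarithmically; the integrability argument above handles this. Everything else is bookkeeping that uses only the expansions already established.
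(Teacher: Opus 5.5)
Your proposal is correct and follows essentially the same route as the paper: write $\widetilde J_{p_h}=\widetilde J_{p_*}+\Lambda_h z+J_{p_h}(0)-J_{p_*}(0)$, get $1/2$-H\"older continuity of $\widetilde J_{p_*}$ by isolating the entropy terms on $[0,1]$ and splitting at $z=1$, use $\Lambda_h=O_d(h^2)$ for the sup-norm bounds, and pass to $K_h$ via $z=xy$. The differences are only cosmetic: you use Cauchy--Schwarz with $\int_0^1(1+\log w)^2\,dw<\infty$ where the paper uses the pointwise bound $|1+\log t|\le C_d t^{-1/2}$, and you bound $|p_h-p_*|\le C_d|\Lambda_h|$ by an inverse-function argument where the paper uses the explicit formula $p_h=(1+e^{\ell_*+\Lambda_h})^{-1}$.
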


\begin{proof}
On $[0,1]$, the only non-Lipschitz terms in
$\widetilde J_{p_*}=J_{p_*}$ are $z\log z$ and
$(1-z)\log(1-z)$.  If $0\le z'\le z\le1$,
\[
  |z\log z-z'\log z'|
  \le\int_{z'}^z|1+\log t|\,dt
  \le C_d\int_{z'}^z t^{-1/2}\,dt
  \le C_d|z-z'|^{1/2},
\]
with the same bound for $(1-z)\log(1-z)$.
On $[1,4]$, $\widetilde J_{p_*}$ is a polynomial and hence Lipschitz.
If $z\le1\le z'$, 
\[
  |\widetilde J_{p_*}(z)-\widetilde J_{p_*}(z')|
  \le |\widetilde J_{p_*}(z)-\widetilde J_{p_*}(1)|
    +|\widetilde J_{p_*}(1)-\widetilde J_{p_*}(z')|
  \le C_d\{|1-z|^{1/2}+|z'-1|^{1/2}\}
  \le C_d|z-z'|^{1/2},
\]
by continuity at $1$.
Thus $\widetilde J_{p_*}$ is $1/2$-H\"older continuous on $[0,4]$.
The definition \eqref{eq:entropy-continuation} gives
\[
  \widetilde J_{p_h}(z)-\widetilde J_{p_h}(z')
  =\widetilde J_{p_*}(z)-\widetilde J_{p_*}(z')
  +\Lambda_h(z-z'),
  \qquad
  \widetilde J_{p_h}(z)-\widetilde J_{p_*}(z)
  =\Lambda_h z+J_{p_h}(0)-J_{p_*}(0).
\]
By \eqref{eq:rank-one-parameter-expansions}, $\Lambda_h=O_d(h^2)$.
Since $p_h=(1+e^{\ell_*+\Lambda_h})^{-1}$ and
$J_p(0)=-\log(1-p)$, we also have
$J_{p_h}(0)-J_{p_*}(0)=O_d(h^2)$.
Together with $|z-z'|\le2|z-z'|^{1/2}$ on $[0,4]$, these estimates give the
desired bounds: the first identity proves the $1/2$-H\"older continuity of
$\widetilde J_{p_h}$, while the second proves the uniform bound on
$\widetilde J_{p_h}-\widetilde J_{p_*}$.
The uniform bound on $\widetilde J_{p_h}$ follows from the uniform bounds on
$\widetilde J_{p_*}$ and $\widetilde J_{p_h}-\widetilde J_{p_*}$.
The three bounds for $K_h$ follow from the corresponding bounds for
$\widetilde J_{p_h}$ by taking $z=xy$ and, for the H\"older estimate,
$z'=x'y$.
\end{proof}

The auxiliary Lagrangian comparison uses the following moment and deviation
quantities.  Since $f_h=f_{\alpha_h,s_h,t_h}$, the distribution of
its values has the explicit form
\begin{equation*}
  \nu_h=\alpha_h\delta_{s_h}+(1-\alpha_h)\delta_{t_h},
\end{equation*}
where $\delta_x$ denotes the Dirac mass at $x$.
For any probability measure $\xi$ on $[0,2]$, write $m_d(\xi)$ for its
$d$-th moment and $\Delta_h(\xi)$ for the defect of this moment relative to
$q_h$:
\begin{equation*}
  m_d(\xi):=\int x^d\,d\xi(x),
  \qquad
  \Delta_h(\xi):=m_d(\xi)-q_h.
\end{equation*}
For the two measures fixed above, $m_d(\nu)=q(f)$ and
$m_d(\nu_h)=q_h$.
In this notation, the rank-one identities above become
\begin{equation*}
  t(H,f\otimes f)=m_d(\nu)^v,
  \qquad
  t(H,W_h)=m_d(\nu_h)^v=q_h^v.
\end{equation*}
In addition to the moment defect, the lower bound controls two further ways in
which $\nu$ may differ from $\nu_h$.  Within a neighborhood of $u_*$, $\nu$
may assign mass to values other than $s_h$ and $t_h$; outside that
neighborhood, it may have nonzero mass.  To record these two deviations, for
$\rho>0$, define the central neighborhood and the tail region in $[0,2]$ by
\[
  \mathcal N_\rho:=(u_*-\rho,u_*+\rho),
  \qquad
  \mathcal T_\rho:=[0,2]\setminus\mathcal N_\rho.
\]
We always choose $\rho_0>0$ below small enough that
$\overline{\mathcal N_{\rho_0}}\subset(0,1)$.
For $h>0$, set $Q_h(x):=(x-s_h)(x-t_h)$, and for any probability measure
$\xi$ on $[0,2]$ define
\[
  A_{h,\rho}(\xi)
  :=\int_{\mathcal N_\rho}Q_h(x)^2\,d\xi(x).
\]
Here $A_{h,\rho}(\xi)$ measures deviation from the two factor values $s_h$
and $t_h$ within the central neighborhood, whereas $\xi(\mathcal T_\rho)$ is
the mass outside that neighborhood.  For the
decomposition furnished by \Cref{lem:localization-rank-one},
Chebyshev's inequality gives, for every fixed $\rho>0$, a constant
$C_{d,\rho}\ge1$ such that
\begin{equation}\label{eq:factor-tail-mass}
  \nu(\mathcal T_\rho)
  \le \rho^{-4}\|f-u_*\|_4^4
  \le C_{d,\rho}h^4.
\end{equation}

We now turn to the proof of the lower bound for the auxiliary Lagrangian
difference.  For any probability measure $\xi$ on $[0,2]$, define the
auxiliary factor functional
$\mathcal J_h$ and, using $q_h^v=r_h^m$, the auxiliary Lagrangian
$\mathcal L_h$ by
\[
  \mathcal J_h(\xi)
  :=\iint K_h(x,y)\,d\xi(x)d\xi(y),
  \qquad
  \mathcal L_h(\xi)
  :=\mathcal J_h(\xi)-\mu_h\{m_d(\xi)^v-q_h^v\}.
\]
Put $\sigma:=\xi-\nu_h$.  Expanding $\mathcal L_h(\xi)$ around $\nu_h$
produces a term linear in $\sigma$, a term quadratic in $\sigma$, and a Taylor
remainder depending on $\Delta_h(\xi)$.  The definition of $\mu_h$ and the
identity $m=dv/2$ give
$\mu_hvq_h^{v-1}=2\gamma_hq_h/d$.  To express the linear term, define the
first-variation potential $\Psi_h$ by
\begin{equation}\label{eq:first-variation}
  \Psi_h(x):=2\int K_h(x,y)\,d\nu_h(y)
  -\frac{2\gamma_hq_h}{d}x^d-\kappa_h,
\end{equation}
where $\kappa_h$ is chosen so that $\Psi_h(s_h)=0$.
Since $\int d\sigma=0$ and $\int x^d\,d\sigma=\Delta_h(\xi)$, we have
\begin{equation*}
  \mathcal J_h(\xi)-\mathcal J_h(\nu_h)
    -\mu_h\{m_d(\xi)^v-q_h^v\}
  =\int\Psi_h\,d\sigma+\iint K_h\,d\sigma d\sigma
    -\Bigl[\mu_h\{(q_h+\Delta_h(\xi))^v-q_h^v\}
      -\frac{2\gamma_hq_h}{d}\Delta_h(\xi)\Bigr].
\end{equation*}
Since $m_d(\nu_h)=q_h$, the left-hand side is precisely
$\mathcal L_h(\xi)-\mathcal L_h(\nu_h)$.  Using
$\mu_hvq_h^{v-1}=2\gamma_hq_h/d$, the bracket can be rewritten as
\[
  \mu_h\Bigl\{(q_h+\Delta_h(\xi))^v-q_h^v
  -vq_h^{v-1}\Delta_h(\xi)\Bigr\}.
\]
This is the Taylor remainder after the constant and linear terms have been
removed, so its absolute value is at most
$C_{d,m}\Delta_h(\xi)^2$.  The other two terms in the expansion are
$\int\Psi_h\,d\sigma$, which is linear in $\sigma$ and is therefore called the
first-variation term, and $\iint K_h\,d\sigma d\sigma$, which is quadratic in
$\sigma$.  We next obtain a positive lower bound for the first-variation term
and control how negative the quadratic kernel term can be.

The reference measure $\nu_h$ is supported on $\{s_h,t_h\}$.  Our choice of
$\kappa_h$ gives $\Psi_h(s_h)=0$, while the block-proportion condition
\eqref{eq:block-stationarity}, satisfied by $W_h$ by
\Cref{lem:rank-one-kkt-family}, gives $\Psi_h(t_h)=0$.  For $x$ in a
neighborhood of $u_*$, all products in the following formula lie in $(0,1)$,
so differentiating \eqref{eq:first-variation} yields
\[
  \Psi_h'(x)
  =2\left[
    \alpha_hs_hJ_{p_h}'(xs_h)
    +(1-\alpha_h)t_hJ_{p_h}'(xt_h)
  \right]-2\gamma_hq_hx^{d-1}.
\]
At $x=s_h$ and $x=t_h$, the rank-one KKT equation and
$m_d(\nu_h)=q_h$ give
\[
  \Psi_h'(x)
  =2\gamma_hq_hx^{d-1}-2\gamma_hq_hx^{d-1}
  =0 \qquad (x\in\{s_h,t_h\}).
\]
Hence $\Psi_h$ has double zeros at $s_h$ and $t_h$.
The following lemma turns these zeros into the two bounds needed below: near
$u_*$, the first variation controls the distance from $s_h$ and $t_h$, while
outside that neighborhood it imposes a fixed positive penalty.

\begin{lemma}[First variation lower bound]
\label{lem:first-variation-bound}
There exist $\rho_0>0$ and $a>0$ with the following property.  For every
$0<\rho<\rho_0$, one can choose $b_\rho,h_\rho>0$ so that, for all
$0<h<h_\rho$,
\begin{equation*}
  \Psi_h(x)\ge a Q_h(x)^2
  \quad(x\in\mathcal N_\rho),
  \qquad
  \Psi_h(x)\ge b_\rho
  \quad(x\in\mathcal T_\rho).
\end{equation*}
\end{lemma}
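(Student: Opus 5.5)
The plan is to first understand $\Psi_0$ at the singular endpoint and then treat $h>0$ as an analytic perturbation, splitting $[0,2]$ into the central neighborhood, where a factorization argument works, and the tail, where a compactness argument works.

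\textbf{Step 1: the limiting potential.} At $h=0$ we have $\nu_0=\delta_{u_*}$, so
\[
  \Psi_0(x)=2\widetilde J_{p_*}(xu_*)-\tfrac{2\gamma_*u_*^d}{d}x^d-\kappa_0 .
\]
Using $\beta_d=\gamma_*/d$ and \eqref{eq:entropy-continuation}, the $x^d$ terms cancel: $\beta_d(xu_*)^d=\gamma_*u_*^dx^d/d$. This leaves
\[
  \Psi_0(x)=2\widetilde\Gamma_d(xu_*)+\text{const}.
\]
The constant is $0$ because $\Psi_0(u_*)=0$ and $\Gamma_d(r_*)=0$. By \eqref{eq:endpoint-gap-positivity}, \eqref{eq:endpoint-gap-quartic-bound} and the definition of $\widetilde\Gamma_d$ on $(1,4]$, we get
\[
  \Psi_0(x)\ge c\,|x-u_*|^4 \quad\text{on }[0,2],
\]
with $\Psi_0(x)=0$ only at $x=u_*$. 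Near $u_*$, \eqref{eq:endpoint-gap-expansion} gives $\Psi_0(x)=\tfrac{d^5u_*^4}{12(d-1)^2}(x-u_*)^4+O(|x-u_*|^5)$.

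\textbf{Step 2: the tail region $\mathcal T_\rho$.} By \Cref{lem:continuation-kernel-bounds}, $\|K_h-K_0\|_\infty\le C_dh^2$, and $\nu_h\to\delta_{u_*}$ weakly with support within $O(h)$ of $u_*$. The $1/2$-H\"older bound in $x$ for $K_h$ then gives
\[
  2\int K_h(x,y)\,d\nu_h(y)\to 2K_0(x,u_*) \quad\text{uniformly on }[0,2],
\]
with error $O(h^{1/2})$. The coefficients $\gamma_hq_h$ and $\kappa_h$ also converge analytically. Hence $\Psi_h\to\Psi_0$ uniformly on $[0,2]$. Set $b_\rho:=\tfrac12 c\rho^4$ and take $h_\rho$ small enough that $\|\Psi_h-\Psi_0\|_\infty<b_\rho$. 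This yields $\Psi_h\ge b_\rho$ on $\mathcal T_\rho$.

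\textbf{Step 3: the central region $\mathcal N_\rho$, the main obstacle.} Uniform convergence alone cannot give $\Psi_h\ge aQ_h^2$, because both sides vanish at $s_h$ and $t_h$. Instead I would use analyticity. For $x$ in a fixed complex neighborhood of $u_*$ with $\overline{\mathcal N_{\rho_0}}\subset(0,1)$, all products $xs_h$ and $xt_h$ lie in $(0,1)$. There $\Psi_h$ is the explicit function
\[
  2\alpha_hJ_{p_h}(xs_h)+2(1-\alpha_h)J_{p_h}(xt_h)-\tfrac{2\gamma_hq_h}{d}x^d-\kappa_h,
\]
which is jointly real-analytic in $(h,x)$. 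Since $\Psi_h$ has double zeros at $s_h$ and $t_h$, as shown just before the lemma, division (Weierstrass/Taylor with analytic remainder, done in two stages) gives
\[
  \Psi_h(x)=Q_h(x)^2\,R_h(x),
\]
with $R$ jointly analytic near $(0,u_*)$. The double roots are distinct for $h\ne0$, and the quotient extends continuously to $h=0$ because $(x-s_h)^2(x-t_h)^2$ is analytic in $(h,x)$ and equals $(x-u_*)^4$ at $h=0$. Comparing with Step 1 gives
\[
  R_0(x)=\Psi_0(x)/(x-u_*)^4,
\]
which is continuous and positive on $\overline{\mathcal N_{\rho_0}}$, with $R_0(u_*)=\tfrac{d^5u_*^4}{12(d-1)^2}>0$.

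Choose $\rho_0$ so that $R_0\ge 2a>0$ on $\overline{\mathcal N_{\rho_0}}$. Joint continuity then gives $R_h\ge a$ there for all small $h$, uniformly in $\rho<\rho_0$; shrink $h_\rho$ accordingly. The delicate point is justifying the division uniformly in $h$, including at $h=0$ where the four roots merge. I would handle it concretely via the divided-difference integral representation
\[
  g(x)=g(s)+g'(s)(x-s)+(x-s)^2\int_0^1(1-\theta)\,g''(s+\theta(x-s))\,d\theta ,
\]
applied first at $s_h$ and then, to the resulting analytic quotient, at $t_h$. Each step is manifestly jointly analytic and continuous, which gives the factorization without appealing to complex division theorems.
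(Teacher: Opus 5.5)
Your proposal is correct and follows the paper's proof essentially step for step. You identify $\Psi_0(x)=2\widetilde\Gamma_d(u_*x)$ and factor $\Psi_h=Q_h^2R_h$ with $R$ jointly continuous and $R_0(u_*)=d^3/12>0$, which gives the central bound by continuity; you then get the tail bound from $\|\Psi_h-\Psi_0\|_\infty\to0$ and the strict positivity of $\Psi_0$ away from $u_*$.

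The differences are cosmetic. You replace the paper's analytic division by the monic polynomial $Q_h^2$ with an iterated Taylor-remainder division. For the second stage you should state why the first quotient has a double zero at $t_h$: this holds for $h\ne0$ because $t_h\ne s_h$, and at $h=0$ by continuity. You also take the explicit $b_\rho=\tfrac12c\rho^4$ from a global quartic lower bound, where the paper uses half the minimum of $\Psi_0$ on $\mathcal T_\rho$.
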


\begin{proof}
The central bound comes from the four equalities
$\Psi_h(s_h)=\Psi_h'(s_h)=\Psi_h(t_h)=\Psi_h'(t_h)=0$: they force
$\Psi_h$ to contain the factor $Q_h^2$, with a positive quotient near $u_*$.
The tail bound then follows from the strict positivity of the limiting
potential away from $u_*$ and uniform convergence as $h\to0$.

At $h=0$ one has $\nu_0=\delta_{u_*}$, $\gamma_0=d\beta_d$, and
$q_0=u_*^d$.  Since $\kappa_0$ is chosen so that
$\Psi_0(u_*)=0$, substituting these values into
\eqref{eq:first-variation} and using
\eqref{eq:entropy-continuation} gives
\begin{equation}\label{eq:endpoint-first-variation}
  \Psi_0(x)=2\widetilde\Gamma_d(u_*x).
\end{equation}
Near $x=u_*$, $\widetilde\Gamma_d=\Gamma_d$, and using
\eqref{eq:endpoint-gap-expansion} and
$u_*=((d-1)/d)^{1/2}$ gives
\begin{equation*}
  \Psi_0(x)
  =2\Gamma_d(r_*+u_*(x-u_*))
  =\frac{d^3}{12}(x-u_*)^4+O_d((x-u_*)^5).
\end{equation*}
To obtain a lower bound that remains uniform as the two zeros approach one
another, apply analytic division, on a neighborhood of $u_*$, to $\Psi_h(x)$
by the monic polynomial $Q_h(x)^2$.  This gives
\[
  \Psi_h(x)=Q_h(x)^2D_h(x)+S_h(x),
  \qquad \deg_xS_h\le3,
\]
where $D_h(x)$ is jointly analytic in $(h,x)$ and the coefficients of
$S_h(x)$ are analytic in $h$.  For every $h\ne0$, the four equalities above
make this remainder vanish at $s_h,t_h$, with multiplicity two;
hence $S_h(x)$ is identically zero.  By analyticity, it also vanishes at
$h=0$.  Thus
\[
  \Psi_h(x)=Q_h(x)^2D_h(x).
\]
At $h=0$ and $x=u_*$, the Taylor expansion of $\Psi_0$ gives
\[
  D_0(u_*)=\frac{d^3}{12}.
\]
By continuity, after decreasing $\rho_0$ and $h_\rho$, we therefore have
$D_h(x)\ge d^3/24$ for $0<h<h_\rho$ and $x\in\mathcal N_\rho$.  Thus the
lower bound holds with $a:=d^3/24$.

It remains to bound the tail.
By \eqref{eq:endpoint-first-variation}, $\widetilde\Gamma_d\ge0$, with equality
only at $r_*$.  Therefore, $\Psi_0\ge0$, with equality only at $u_*$.
Since $\mathcal T_\rho$ is compact and $u_* \notin \mathcal T_\rho$,
the minimum of $\Psi_0$ on $\mathcal T_\rho$ is positive.  Let $b_\rho$ be
half of this minimum:
\[
  b_\rho:=\frac12\min_{x\in\mathcal T_\rho}\Psi_0(x)>0.
\]
Writing out $\nu_h=\alpha_h\delta_{s_h}+(1-\alpha_h)\delta_{t_h}$ in
\eqref{eq:first-variation} gives
\[
  \Psi_h(x)-\Psi_0(x)
  =2\alpha_h\{K_h(x,s_h)-K_0(x,u_*)\}
  +2(1-\alpha_h)\{K_h(x,t_h)-K_0(x,u_*)\}
  -\frac{2}{d}(\gamma_hq_h-\gamma_0q_0)x^d-(\kappa_h-\kappa_0).
\]
By \Cref{lem:continuation-kernel-bounds} and the convergence
$s_h,t_h\to u_*$, we have
\[
  \|K_h(\,\cdot\,,s_h)-K_0(\,\cdot\,,u_*)\|_\infty
  +\|K_h(\,\cdot\,,t_h)-K_0(\,\cdot\,,u_*)\|_\infty
  \longrightarrow0.
\]
Also, $\alpha_h\to1/2$, $\gamma_hq_h\to\gamma_0q_0$, and
$\kappa_h\to\kappa_0$ imply $\|\Psi_h-\Psi_0\|_\infty\to0$.
Decreasing $h_\rho$ so that $\|\Psi_h-\Psi_0\|_\infty\le b_\rho$,
we get $\Psi_h\ge b_\rho$ on $\mathcal T_\rho$.
\end{proof}

Because $\Psi_h$ vanishes at both points in the support of $\nu_h$, we have
$\int\Psi_h\,d\sigma=\int\Psi_h\,d\xi$.  The preceding lemma therefore makes
the first-variation term nonnegative and quantifies its size.  The quadratic
term $\iint K_h\,d\sigma d\sigma$, however, need not be nonnegative.  The
following lemma controls its possible negative contribution when both
variables lie in $\mathcal N_\rho$.

\begin{lemma}[Central kernel bound]
\label{lem:central-kernel-bound}
Let $a>0$ be the constant from
\Cref{lem:first-variation-bound}.
There exist $\rho_0>0$ and $C_d\ge1$ with the following property.  For every
$0<\rho<\rho_0$, one can choose $h_\rho>0$ so that the following holds.  Let
$0<h<h_\rho$, let $\xi$ be any probability measure on $[0,2]$, and put
$\sigma:=\xi-\nu_h$.  Then
\begin{equation}\label{eq:central-kernel-bound}
  \iint_{\mathcal N_\rho^2}K_h\,d\sigma d\sigma
  \ge{}-\frac a2A_{h,\rho}(\xi)
  -C_d\left\{\Delta_h(\xi)^2+\xi(\mathcal T_\rho)^2\right\}.
\end{equation}
\end{lemma}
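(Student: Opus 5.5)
The plan is to Taylor-expand the kernel $K_h(x,y)=\widetilde J_{p_h}(xy)$ on $\mathcal N_\rho^2$ around $(u_*,u_*)$. For $\rho_0$ small, every product $xy$ with $x,y\in\mathcal N_\rho$ lies in a compact subinterval of $(0,1)$, so there $K_h=J_{p_h}(xy)$ is jointly analytic in $(h,x,y)$. Write $\sigma_c:=\sigma|_{\mathcal N_\rho}$. Because $\nu_h$ is supported in $\mathcal N_\rho$ for small $h$, we have $\sigma_c(\mathcal N_\rho)=-\xi(\mathcal T_\rho)$, and $\int_{\mathcal N_\rho}x^d\,d\sigma=\Delta_h(\xi)-\int_{\mathcal T_\rho}x^d\,d\xi$. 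Hence both the total mass and the $d$-th moment of $\sigma_c$ are $O(|\Delta_h(\xi)|+\xi(\mathcal T_\rho))$.

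The key step is to subtract a low-degree part that $\sigma_c$ almost annihilates. Specifically, write
\[
  K_h(x,y)=a_0(y)+a_1(y)x^d+a_0(x)+a_1(x)y^d+R_h(x,y)\bigl(\text{``two-sided remainder''}\bigr).
\]
Integrating the first four terms against $\sigma_c\otimes\sigma_c$ gives products of a mass or moment of $\sigma_c$ with a bounded integral. These contribute at most $C_d\{\Delta_h^2+\xi(\mathcal T_\rho)^2\}$ plus cross terms of the form (mass or moment)$\times\int g\,d\sigma_c$.

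In place of $\{1,x^d\}$ we may instead use $\{1,Q_h(x)\}$ or $\{1,x-u_*\}$, modulo the basis change, which costs an extra $O(|\int Q_h\,d\sigma_c|)$. Doing so reduces the main quantity to $\iint R_h\,d\sigma_c\,d\sigma_c$, where $R_h$ vanishes to second order in each variable along the ``directions'' killed by $\sigma_c$. The natural choice is the divided-difference expansion relative to the nodes $s_h,t_h$. Let $\Pi_h g$ denote the Hermite-type interpolation of $g$ at the nodes $s_h,t_h$, and write
\[
  K_h(x,\cdot)=\Pi_hK_h(x,\cdot)+Q_h(y)\,M_h(x,y),
\]
and likewise in $x$. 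This gives $K_h=(\text{terms linear in }1,x)+Q_h(x)Q_h(y)N_h(x,y)$, with $N_h$ analytic and uniformly bounded on $\mathcal N_{\rho_0}^2$ for small $h$.

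The quadratic part is then $\iint Q_h(x)Q_h(y)N_h\,d\sigma_c\,d\sigma_c$. Since $\nu_h$ charges only the zeros of $Q_h$, this equals $\iint Q_hQ_hN_h\,d\xi_c\,d\xi_c$ with $\xi_c:=\xi|_{\mathcal N_\rho}$. By Cauchy--Schwarz it has absolute value at most
\[
  \|N_h\|_\infty\Bigl(\int_{\mathcal N_\rho}|Q_h|\,d\xi\Bigr)^2\le\|N_h\|_\infty\,A_{h,\rho}(\xi).
\]
The terms linear in $(1,x)$ produce only products of $\int_{\mathcal N_\rho}(1,x)\,d\sigma$ with bounded integrals. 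The mass factor is $O(\xi(\mathcal T_\rho))$. The first moment $\int_{\mathcal N_\rho}(x-u_*)\,d\sigma$ is controlled via the $d$-th moment: near $u_*$ we have $x^d=u_*^d+du_*^{d-1}(x-u_*)+O((x-u_*)^2)$, and $(x-u_*)^2\le 2Q_h(x)+O(h^2)$-type corrections. Absorbing the resulting $\int Q_h\,d\xi_c\le A^{1/2}$ contributions with weighted AM--GM should then give $-\tfrac a2A_{h,\rho}-C_d\{\Delta_h^2+\xi(\mathcal T_\rho)^2\}$.

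The main obstacle is the constant in front of $A_{h,\rho}$. The crude bound above gives $-\|N_h\|_\infty A_{h,\rho}$, and there is no a priori reason that $\|N_h\|_\infty\le a/2=d^3/48$. To overcome this I would exploit that $N_h(x,y)$ at $h=0$, $x=y=u_*$ equals a mixed derivative $\partial_x^2\partial_y^2$-type coefficient of $J_{p_*}(xy)$ divided by $4$, and compute it explicitly. I would then check that its negative part is smaller than $a/2$, using that $\rho$ and $h$ are small so $N_h\approx N_0(u_*,u_*)$ uniformly. If that explicit constant is too large, the fallback is to split the quadratic form using the sign: $\iint Q_hQ_h N\,d\xi\,d\xi\ge N_{\min}(\int Q_h\,d\xi_c)^2$ when $N$ is nearly constant. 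In that case one needs only $(\int Q_h\,d\xi_c)^2$, which can be related to the moment constraints. Indeed, $\int Q_h\,d\xi_c$ is an affine combination of the mass and first moment of $\sigma_c$ plus $\int(x-u_*)^2\,d\sigma_c$, and these are in turn controlled by $\Delta_h$, $\xi(\mathcal T_\rho)$, and a small multiple of $A_{h,\rho}$. Verifying that this bookkeeping closes with the factor $\tfrac12$ is where I expect the real work to lie.
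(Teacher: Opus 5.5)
There is a genuine gap, and it sits exactly where you expected the real work to be. You interpolate in the variable $x$ (basis $\{1,x\}$). Then the bilinear part and the one-sided parts $Q_h(x)\{u_0(x)+u_1(x)y\}$ and $Q_h(y)\{v_0(y)+v_1(y)x\}$ all involve the first moment $m_1:=\int_{\mathcal N_\rho}x\,d\sigma$. Your double interpolation produces these one-sided parts too, although they are missing from your formula $K_h=(\text{linear})+Q_hQ_hN_h$.

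The trouble is that $m_1$ is \emph{not} controlled by $\Delta_h(\xi)$, $\xi(\mathcal T_\rho)$ and a small multiple of $A_{h,\rho}(\xi)^{1/2}$. Expanding $x^d$ about $u_*$, $m_1$ differs from a controlled quantity by about $-\frac{d-1}{2u_*}S$, where $S:=\int_{\mathcal N_\rho}Q_h\,d\xi$ satisfies $S^2\le A_{h,\rho}(\xi)$, with equality possible. (Also, a pointwise bound such as $(x-u_*)^2\le 2Q_h+O(h^2)$ cannot be integrated against the signed measure $\sigma$, and an additive $O(h^2)$ has no place in the target inequality.)

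Concretely, take $\xi=\delta_{x_0}$ with $x_0^d=q_h$. Then $\Delta_h(\xi)=\xi(\mathcal T_\rho)=0$, $S\approx-h^2$, $A_{h,\rho}(\xi)=S^2$, and $m_1\approx\frac{d-1}{2u_*}h^2$ by Jensen. Write $c_{ij}:=\frac{1}{i!\,j!}\partial_x^i\partial_y^jJ_{p_*}(xy)$ at $(u_*,u_*)$.
\begin{itemize}
\item The cross terms contribute about $2c_{12}m_1S\approx-\frac{d^3}{2}A_{h,\rho}(\xi)$. This is twenty-four times the admissible loss $\frac a2A_{h,\rho}(\xi)=\frac{d^3}{48}A_{h,\rho}(\xi)$.
\item They are offset only by $c_{11}m_1^2\approx\frac{d^3}{4}A_{h,\rho}(\xi)$ and $c_{22}S^2\approx\frac{d^3}{2}A_{h,\rho}(\xi)$.
\end{itemize}
So no term-by-term AM--GM absorption can close the argument. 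Hoping for $\|N_h\|_\infty\le a/2$ is also out of the question, since $N_0(u_*,u_*)=c_{22}=d^3/2$.

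Your fallback does not repair this. The inequality $\iint Q_hQ_hN\,d\xi\,d\xi\ge N_{\min}S^2$ is false for nonconstant $N$, because $Q_h$ changes sign. And the same example shows that $S$ is not bounded by $\Delta_h$, $\xi(\mathcal T_\rho)$ and a small multiple of $A_{h,\rho}^{1/2}$.

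The missing idea is that $S^2$ should not be made small. Instead, one shows it carries a \emph{positive} net coefficient and discards it. The paper does this by interpolating in $w=x^d$, $z=y^d$ at $s_h^d,t_h^d$, which are the coordinates in which the constraints live.
\begin{itemize}
\item The fully interpolated part then sees only the mass and the $d$-th moment of $\sigma$ on $\mathcal N_\rho$. Both are $O(|\Delta_h(\xi)|+\xi(\mathcal T_\rho))$.
\item Each one-sided part has one such factor and one factor $\int Q_h u\,d\xi$. AM--GM bounds it by $\frac a8A_{h,\rho}(\xi)+C_d\{\Delta_h(\xi)^2+\xi(\mathcal T_\rho)^2\}$.
\item The remainder $Q_h(x)Q_h(y)\Theta_h(x,y)$ has
\[ \Theta_0(u_*,u_*)=\frac{(du_*^{d-1})^4}{4}\,\partial_w^2\partial_z^2\widehat K_0(u_*^d,u_*^d)=\frac{u_*^4}{4}\,\Gamma_d^{(4)}(r_*)=\frac{d^3}{4}>0. \]
This holds because $\beta_dwz$ has no $(2,2)$ component and $\Gamma_d$ vanishes to fourth order at $r_*$.
\end{itemize}
Hence the integral of the remainder term is at least $\frac{d^3}{4}S^2-\|\Theta_h-\frac{d^3}{4}\|_{L^\infty(\mathcal N_\rho^2)}A_{h,\rho}(\xi)\ge-\frac a4A_{h,\rho}(\xi)$ once $\rho$ and $h$ are small.

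Your $x$-basis could be rescued only by treating $m_1$ as a free variable and proving that $c_{11}m_1^2+2c_{12}m_1S+c_{22}S^2$ is positive definite. Since $J_{p_*}(xy)=J_{p_*}(r_*)+\beta_d(x^dy^d-r_*^d)+\Gamma_d(xy)$, this form equals $\beta_d\bigl(du_*^{d-1}m_1+\binom d2u_*^{d-2}S\bigr)^2+\frac{d^3}{4}S^2$. So the rescue rests on the same fourth-order contact fact, which your proposal never identifies.
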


\begin{proof}
We interpolate $K_h$ in the variables $x^d$ and $y^d$ at the two reference
values $s_h^d,t_h^d$.  This separates the kernel into a part controlled by
the total mass and the $d$-th moment of $\sigma$, two parts containing one
factor of $Q_h$, and a remainder containing $Q_h(x)Q_h(y)$.  The first part
is controlled by $\Delta_h(\xi)$ and $\xi(\mathcal T_\rho)$.  The factors of
$Q_h$ allow the other three parts to be controlled by $A_{h,\rho}(\xi)$
together with these same two quantities.

To carry out this decomposition, for a function $F$ of $(x,y)$ define the
interpolation and remainder operators in the first variable by
\begin{equation}\label{eq:moment-interpolation}
  (I_h^xF)(x,y)
  :=\frac{t_h^d-x^d}{t_h^d-s_h^d}F(s_h,y)
  +\frac{x^d-s_h^d}{t_h^d-s_h^d}F(t_h,y),
  \quad
  (R_h^xF)(x,y):=F(x,y)-(I_h^xF)(x,y),
\end{equation}
and define $I_h^y,R_h^y$ analogously.  The two interpolation operators
commute.  We handle their limiting behavior at $h=0$ below using divided
differences.
Apply the exact identity
\begin{equation*}
  K_h=I_h^xI_h^yK_h+R_h^xI_h^yK_h
   +I_h^xR_h^yK_h+R_h^xR_h^yK_h.
\end{equation*}
Denote the first term by $P_h(x,y)$.  Since it is affine in each of $x^d$
and $y^d$, it can be written as
\[
  P_h(x,y)=p_{00,h}+p_{d0,h}x^d+p_{0d,h}y^d+p_{dd,h}x^dy^d.
\]
The second term vanishes at $x=s_h,t_h$ and, after
division by $Q_h(x)$, is affine in $y^d$.
The third term is the symmetric counterpart.
The last term vanishes at both $x=s_h,t_h$ and $y=s_h,t_h$.
Thus, for suitable functions $u_{0,h},u_{d,h},v_{0,h},v_{d,h}$ and
$\Theta_h$, the identity becomes
\begin{equation*}
  K_h(x,y)
  =P_h(x,y)+Q_h(x)\{u_{0,h}(x)+u_{d,h}(x)y^d\}
  +Q_h(y)\{v_{0,h}(y)+v_{d,h}(y)x^d\}
  +Q_h(x)Q_h(y)\Theta_h(x,y).
\end{equation*}

We next show that all the terms in this decomposition remain uniformly
controlled as $s_h$ and $t_h$ approach one another.  Pass to the coordinates
$w=x^d$ and $z=y^d$.
Choose $\rho_0$ and $h_\rho$ so small that
$\mathcal N_{\rho_0}$ lies
in a fixed compact interval $\mathcal U\subset(0,1)$
and $\{s_h,t_h\} \subset \mathcal N_{\rho}$.
For a function $F$ analytic on a neighborhood of $\mathcal U^2$, define
\[
  \widehat F(w,z):=F(w^{1/d},z^{1/d}),
  \qquad
  \widehat{\mathcal U}:=\{w:w^{1/d}\in\mathcal U\}.
\]
Then, $\widehat F$ is also analytic on a neighborhood of
$\widehat{\mathcal U}^2$.  In particular, its derivatives are uniformly
bounded on $\widehat{\mathcal U}^2$.
In these coordinates, $I_h^x$ and $I_h^y$ are ordinary linear interpolants
at $s_h^d,t_h^d$.  At $h=0$, these interpolation points both equal $u_*^d$, and
the interpolants converge to the corresponding tangent lines.  Thus,
although the individual coefficients in \eqref{eq:moment-interpolation}
contain $(t_h^d-s_h^d)^{-1}$, the combined interpolants and remainders have
convergent limits.  The divided-difference formulas below make this precise.

For fixed $y\in\mathcal U$, formula \eqref{eq:moment-interpolation} gives
\begin{equation*}
  \frac{R_h^xF(x,y)}{Q_h(x)}
  =\frac{x^d-s_h^d}{x-s_h}\frac{x^d-t_h^d}{x-t_h}
    \widehat F[s_h^d,x^d,t_h^d;y^d],
\end{equation*}
where the bracket denotes the second divided difference in the first
variable, with the second variable fixed at $y^d$.
The first two factors are polynomial sums and extend continuously to $x=s_h$
and $x=t_h$; for example,
\[
  \frac{x^d-s_h^d}{x-s_h}
  =\sum_{j=0}^{d-1}x^{d-1-j}s_h^j
  \le d.
\]
The divided difference has the integral representation and uniform bound:
\[
\begin{aligned}
  \widehat F[s_h^d,x^d,t_h^d;y^d]
  &=\int_0^1\int_0^{1-\theta}
    \partial_w^2\widehat F
    \!\left((1-\theta-\tau)s_h^d+\theta x^d+\tau t_h^d,y^d\right)\,d\tau\,d\theta,\\
  \left|\widehat F[s_h^d,x^d,t_h^d;y^d]\right|
  &\le \frac12\sup_{(w,z)\in\widehat{\mathcal U}^2}
    |\partial_w^2\widehat F(w,z)|.
\end{aligned}
\]
Consequently, the quotient $R_h^xF/Q_h$ extends continuously whenever any of
$\{s_h,x,t_h\}$ coincide.

We now apply the formula in both variables, taking $F=K_h$.  On
$\mathcal U^2$, all products $xy$ lie in $(0,1)$, so $K_h(x,y)=J_{p_h}(xy)$
and $K_h$ depends analytically on $(h,x,y)$.  Using brackets for the
successive second divided differences in the two variables, the last term in
the decomposition above is given exactly by
\[
  \Theta_h(x,y)
  =\prod_{c\in\{s_h,t_h\}}
    \frac{x^d-c^d}{x-c}\frac{y^d-c^d}{y-c}
    \widehat K_h[s_h^d,x^d,t_h^d;\,s_h^d,y^d,t_h^d].
\]
The successive integral representations show that this expression extends
jointly continuously when interpolation points coincide, including at
$h=0$.  The analogous lower-order divided-difference formulas and the uniform
derivative bounds on $\widehat K_h$ give a single $C_d\ge1$ such that
\[
  \max\bigl\{|p_{ij,h}|,|u_{i,h}(x)|,|v_{i,h}(y)|\bigr\}\le C_d
  \qquad  (i,j\in\{0,d\},\,0<h<h_0,\,x,y\in\mathcal N_{\rho_0}).
\]
The bounds for the one-variable functions remain valid when $x$ or $y$ equals
$s_h$ or $t_h$, by continuous extension.

The sign of the last term in the kernel decomposition is controlled by the
limiting value of $\Theta_h$.  Evaluating the jointly continuous extension at
$h=0$ and $x=y=u_*$ gives
\[
  \Theta_0(u_*,u_*)
  =\frac{(d u_*^{d-1})^4}{4}
    \left.\partial_w^2\partial_z^2 \widehat K_0(w,z)\right|_{(w,z)=(u_*^d,u_*^d)}.
\]
By \eqref{eq:entropy-continuation},
\[
  \widehat K_0(w,z)
  =\widetilde J_{p_*}((wz)^{1/d})
  =J_{p_*}(r_*)+\beta_d(wz-r_*^d)
    +\widetilde\Gamma_d\bigl((wz)^{1/d}\bigr).
\]
Since $\widetilde\Gamma_d=\Gamma_d$ near $r_*$ and
$\Gamma_d^{(j)}(r_*)=0$ for $1\le j\le3$, the chain rule leaves only the
$\Gamma_d^{(4)}(r_*)$ term, giving
\[
  \Theta_0(u_*,u_*)
  =\frac{(d u_*^{d-1})^4}{4} \left(\frac{u_*^{2-d}}{d}\right)^4 \Gamma_d^{(4)}(r_*)
  =\frac{d^3}{4}.
\]
By the joint continuity of $\Theta_h(x,y)$ at $(h,x,y)=(0,u_*,u_*)$,
after decreasing $\rho_0$ and $h_\rho$, we also have
\[
  \sup_{0<h<h_\rho}
  \left\|\Theta_h-\frac{d^3}{4}\right\|_{L^\infty(\mathcal N_\rho^2)}
  \le\frac a4.
\]

We now integrate the kernel decomposition against $\sigma\otimes\sigma$.
Since the support of $\nu_h$ lies in $\mathcal N_\rho$, we have
$\sigma(\mathcal N_\rho)=-\xi(\mathcal T_\rho)$,
$\int |Q_h|\,d\nu_h=0$, and
\begin{gather*}
  |\sigma(\mathcal N_\rho)|^2
  +\left|\int_{\mathcal N_\rho}x^d\,d\sigma(x)\right|^2
  =\xi(\mathcal T_\rho)^2
   +\left|\Delta_h(\xi)-\int_{\mathcal T_\rho}x^d\,d\xi(x)\right|^2
  \le C_d\bigl\{\Delta_h(\xi)^2+\xi(\mathcal T_\rho)^2\bigr\}, \\
  \left(\int_{\mathcal N_\rho}|Q_h|\,d|\sigma|\right)^2
  =\left(\int_{\mathcal N_\rho}|Q_h|\,d\xi\right)^2
  \le\int_{\mathcal N_\rho}Q_h^2\,d\xi
  =A_{h,\rho}(\xi).
\end{gather*}
Here the first estimate uses $x^d\le2^d$ on $[0,2]$, while the second uses
Cauchy--Schwarz.
Using these estimates together with the uniform bounds on the coefficients
and one-variable functions gives
\begin{gather*}
  \left|\iint_{\mathcal N_\rho^2}P_h\,d\sigma d\sigma\right|
  \le C_d\bigl(\Delta_h(\xi)^2+\xi(\mathcal T_\rho)^2\bigr), \\
  \left|\iint_{\mathcal N_\rho^2}Q_h(x)\{u_{0,h}(x)+u_{d,h}(x)y^d\}\,d\sigma(x)d\sigma(y)\right|
  \le \frac a8A_{h,\rho}(\xi)+C_d\bigl\{\Delta_h(\xi)^2+\xi(\mathcal T_\rho)^2\bigr\}, \\
  \left|\iint_{\mathcal N_\rho^2}Q_h(y)\{v_{0,h}(y)+v_{d,h}(y)x^d\}\,d\sigma(x)d\sigma(y)\right|
  \le \frac a8A_{h,\rho}(\xi)+C_d\bigl\{\Delta_h(\xi)^2+\xi(\mathcal T_\rho)^2\bigr\}, \\
  \iint_{\mathcal N_\rho^2}Q_h(x)Q_h(y)\Theta_h(x,y) \,d\sigma(x)d\sigma(y)
  \ge \iint_{\mathcal N_\rho^2}Q_h(x)Q_h(y) \left(\Theta_h(x,y)-\frac{d^3}{4}\right) \,d\sigma(x)d\sigma(y)
  \ge -\frac{a}{4} A_{h,\rho}(\xi).
\end{gather*}
In the last line, the omitted constant part is the nonnegative square
\[
  \frac{d^3}{4}
  \left(\int_{\mathcal N_\rho}Q_h(x)\,d\sigma(x)\right)^2.
\]
The second and third bounds use
$uv \le (a/8)u^2+(2/a)v^2$.
Combining these bounds gives \eqref{eq:central-kernel-bound}.
\end{proof}

We now combine the preceding estimates to obtain a lower bound for the
auxiliary Lagrangian difference $\mathcal L_h(\nu)-\mathcal L_h(\nu_h)$.
This is the conclusion announced at the beginning of this subsection and is
recorded in the following lemma.

\begin{lemma}[Lower bound for the auxiliary Lagrangian difference]
\label{lem:auxiliary-lagrangian-bound}
There exist $\rho_0>0$, $a>0$, and $C_{d,m}\ge1$ such that
for every $0<\rho<\rho_0$, one can choose $b_\rho,h_\rho>0$ with the following property.
Let $0<h<h_\rho$, and let $W$ satisfy the hypotheses of
\Cref{lem:localization-rank-one}.  Write $W=f\otimes f+E$ for the
decomposition furnished there, and let $\nu$ be the distribution of the
values of $f$.  Then
\begin{equation}\label{eq:auxiliary-lagrangian-bound}
\begin{aligned}
  \mathcal L_h(\nu)-\mathcal L_h(\nu_h)
  &=\mathcal J_h(\nu)-\mathcal J_h(\nu_h)
    -\mu_h\{m_d(\nu)^v-q_h^v\}\\
  &\ge \frac{a}{2} A_{h,\rho}(\nu)
    +\frac{b_\rho}{2} \nu(\mathcal T_\rho)
    -C_{d,m}\Delta_h(\nu)^2.
\end{aligned}
\end{equation}
\end{lemma}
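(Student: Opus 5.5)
We apply the expansion of $\mathcal L_h(\xi)-\mathcal L_h(\nu_h)$ derived just before \Cref{lem:first-variation-bound} with $\xi=\nu$ and $\sigma=\nu-\nu_h$:
\[
  \mathcal L_h(\nu)-\mathcal L_h(\nu_h)
  =\int\Psi_h\,d\nu+\iint K_h\,d\sigma d\sigma
  -\mu_h\Bigl\{(q_h+\Delta_h(\nu))^v-q_h^v-vq_h^{v-1}\Delta_h(\nu)\Bigr\}.
\]
Here we use $\int\Psi_h\,d\sigma=\int\Psi_h\,d\nu$, which holds because $\Psi_h$ vanishes on $\{s_h,t_h\}$. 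The Taylor remainder is bounded in absolute value by $C_{d,m}\Delta_h(\nu)^2$, since $\mu_h$ and $q_h$ are bounded uniformly in $h$ and $|\Delta_h(\nu)|\le 2^d$.

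For the first-variation term we split the integral over $\mathcal N_\rho$ and $\mathcal T_\rho$ and apply \Cref{lem:first-variation-bound}. This gives
\[
  \int\Psi_h\,d\nu\ge aA_{h,\rho}(\nu)+b_\rho\,\nu(\mathcal T_\rho).
\]

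For the quadratic term we decompose $[0,2]^2$ into $\mathcal N_\rho^2$ and the region where at least one variable lies in $\mathcal T_\rho$. On $\mathcal N_\rho^2$ we apply \Cref{lem:central-kernel-bound} with $\xi=\nu$, which yields the lower bound $-\frac a2A_{h,\rho}(\nu)-C_d\{\Delta_h(\nu)^2+\nu(\mathcal T_\rho)^2\}$. On the remaining region we use $\|K_h\|_\infty\le C_d$ from \Cref{lem:continuation-kernel-bounds}. Because $\nu_h$ is supported in $\mathcal N_\rho$ once $h<h_\rho$, the measure $\sigma$ restricted to $\mathcal T_\rho$ equals $\nu$ restricted there, and $|\sigma|$ has total mass at most $2$. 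Hence the off-central contribution has absolute value at most $4C_d\,\nu(\mathcal T_\rho)$.

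This linear tail term is the main obstacle: it is of the same order as the good term $b_\rho\nu(\mathcal T_\rho)$, and the constant $C_d$ does not improve as $\rho$ shrinks. My first attempt would exploit the symmetry of $K_h$ and write the off-central part as
\[
  2\int_{\mathcal T_\rho}\int_{\mathcal N_\rho}K_h(x,y)\,d\sigma(y)\,d\nu(x)
  +\iint_{\mathcal T_\rho^2}K_h\,d\nu\,d\nu .
\]
The second piece is $O(\nu(\mathcal T_\rho)^2)$. For the first piece, the inner integral over $\mathcal N_\rho$ is handled with the same interpolation used in the proof of \Cref{lem:central-kernel-bound}, applied in the $y$ variable only. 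That interpolation bounds the inner integral by
\[
  C_d\Bigl(|\Delta_h(\nu)|+\nu(\mathcal T_\rho)+A_{h,\rho}(\nu)^{1/2}\Bigr)
\]
uniformly in $x\in[0,2]$, using that $K_h(x,\cdot)$ is smooth on $\mathcal N_\rho$ for $x\le2$ once $\rho_0$ is small. The first piece is therefore at most
\[
  C_d\,\nu(\mathcal T_\rho)\Bigl(|\Delta_h(\nu)|+\nu(\mathcal T_\rho)+A_{h,\rho}(\nu)^{1/2}\Bigr).
\]
By AM--GM,
\[
  C_d\,\nu(\mathcal T_\rho)A_{h,\rho}(\nu)^{1/2}\le\frac a4A_{h,\rho}(\nu)+C\,\nu(\mathcal T_\rho)^2,
  \qquad
  C_d\,\nu(\mathcal T_\rho)|\Delta_h(\nu)|\le C\,\nu(\mathcal T_\rho)^2+C\,\Delta_h(\nu)^2 .
\]
If the uniform-in-$x$ smoothness fails, I would fall back on the crude bound $C_d\nu(\mathcal T_\rho)$ together with the tail estimate \eqref{eq:factor-tail-mass}, but that does not close on its own. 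So the refined bound is the one to pursue.

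Collecting everything, all quadratic tail terms $C\nu(\mathcal T_\rho)^2$ are absorbed using $\nu(\mathcal T_\rho)\le C_{d,\rho}h^4$ from \eqref{eq:factor-tail-mass}. After decreasing $h_\rho$, we have $C\nu(\mathcal T_\rho)^2\le\frac{b_\rho}{4}\nu(\mathcal T_\rho)$. This leaves at least $(a-\frac a2-\frac a4)A_{h,\rho}(\nu)$, which should be rearranged to reach $\frac a2 A_{h,\rho}(\nu)$ by rebalancing the AM--GM constant (for instance $\frac a8$ in place of $\frac a4$, after first taking $\frac a4$ in the central lemma). It also leaves $\frac{b_\rho}{2}\nu(\mathcal T_\rho)$ and $-C_{d,m}\Delta_h(\nu)^2$, where $C_{d,m}$ is independent of $\rho$ and $h$. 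That is \eqref{eq:auxiliary-lagrangian-bound}.
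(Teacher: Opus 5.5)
\paragraph{The gap is in the mixed term.} Your skeleton matches the paper: the expansion around $\nu_h$, the first-variation bound, the central kernel bound, and absorbing $\nu(\mathcal T_\rho)^2$ via \eqref{eq:factor-tail-mass}. You also correctly identify the mixed $\mathcal N_\rho\times\mathcal T_\rho$ term as the crux. However, your refined treatment of that term rests on a false claim: $K_h(x,\cdot)$ is \emph{not} uniformly smooth on $\mathcal N_\rho$ for tail points $x$.

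Since $K_h(x,y)=\widetilde J_{p_h}(xy)$ and $J_{p_h}'(z)\to+\infty$ as $z\uparrow1$, while the continuation has a finite right derivative at $1$, the function $y\mapsto K_h(x,y)$ is not even $C^1$ at $y=1/x$. For every $x\in[1/(u_*+\rho),\,1/(u_*-\rho)]$ this point lies in $\overline{\mathcal N_\rho}$. This band is contained in $\mathcal T_\rho\cap[0,2]$, because $1<1/u_*\le\sqrt2$. Shrinking $\rho_0$ does not remove it.

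Nothing in \Cref{lem:localization-rank-one} prevents $f$ from taking such values: you only know $0\le f\le2$ and $\|f-u_*\|_4\le C_dh$. So $\nu$ may charge this band. There the second divided differences of $K_h(x,\cdot)$ in $y^d$ are unbounded. Consequently the interpolation bound $C_d(|\Delta_h(\nu)|+\nu(\mathcal T_\rho)+A_{h,\rho}(\nu)^{1/2})$ for the inner integral is not available with a constant independent of $h$. Your fallback, as you note yourself, leaves a term $C_d\,\nu(\mathcal T_\rho)$ whose constant need not beat $b_\rho$. So the argument does not close.

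\paragraph{The missing idea.} You do not need an estimate of the inner integral in terms of $A_{h,\rho}$ and $\Delta_h$ at all. It suffices that the inner integral be $o(1)$ uniformly in the tail variable, and this follows from cancellation together with the localization you are given. Since $\sigma(\mathcal N_\rho)=-\nu(\mathcal T_\rho)$, for $y\in\mathcal T_\rho$ one has
\[
  \int_{\mathcal N_\rho}K_h(x,y)\,d\sigma(x)
  =\int_{\mathcal N_\rho}\{K_h(x,y)-K_h(u_h,y)\}\,d\sigma(x)
  -\nu(\mathcal T_\rho)K_h(u_h,y).
\]
Bound the first term on the right using the uniform $1/2$-H\"older estimate in the first variable from \Cref{lem:continuation-kernel-bounds}, which holds for every $y\in[0,2]$, including across $xy=1$. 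Combine it with $|\sigma|\le\nu+\nu_h$, $\|f-u_*\|_4\le C_dh$, $u_h-u_*=O_d(h^2)$, and $|s_h-u_h|=|t_h-u_h|=h$. This gives a bound of $C_dh^{1/2}$ for the first term.

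Hence the mixed term is at most $C_dh^{1/2}\nu(\mathcal T_\rho)+C_d\nu(\mathcal T_\rho)^2$. Decreasing $h_\rho$ so that $C_dh^{1/2}\le b_\rho/4$ absorbs it into the good term $b_\rho\nu(\mathcal T_\rho)$. This is how the paper proceeds. It uses only H\"older regularity of $\widetilde J_{p_h}$, which is exactly what the continuation was designed to provide.
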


\begin{proof}
We combine the positive first-variation bound with the lower bound for the
quadratic kernel term.  The central part follows from
\Cref{lem:central-kernel-bound} with $\xi=\nu$; the parts meeting the
tail are controlled by the uniform bounds on $K_h$ and then absorbed using the
small tail mass from \eqref{eq:factor-tail-mass}.

Put $\sigma:=\nu-\nu_h$.  Applying the expansion following
\eqref{eq:first-variation} with $\xi=\nu$, and using
$\int \Psi_h\,d\nu_h=0$ and Taylor's theorem, gives
\begin{equation*}
  \mathcal L_h(\nu)-\mathcal L_h(\nu_h)
  =\mathcal J_h(\nu)-\mathcal J_h(\nu_h)
    -\mu_h\{m_d(\nu)^v-q_h^v\}
  \ge\int\Psi_h\,d\nu+\iint K_h\,d\sigma d\sigma
    -C_{d,m}\Delta_h(\nu)^2.
\end{equation*}
The central and tail bounds in \Cref{lem:first-variation-bound}
give $\rho_0$, $a$, $b_\rho$, and $h_\rho$ such that
\[
  \int\Psi_h\,d\nu
  \ge a A_{h,\rho}(\nu)+b_\rho\nu(\mathcal T_\rho)
  \qquad
  (0<h<h_\rho,\, 0<\rho<\rho_0).
\]
Now we bound the double integral.  Split the quadratic term according to
$\mathcal N_\rho$ and $\mathcal T_\rho$:
\[
  \iint K_h\,d\sigma d\sigma
  =\iint_{\mathcal N_\rho^2}K_h\,d\sigma d\sigma
   +2\iint_{\mathcal N_\rho\times\mathcal T_\rho}K_h\,d\sigma d\sigma
   +\iint_{\mathcal T_\rho^2}K_h\,d\sigma d\sigma .
\]
The mixed rectangles have equal integrals by the symmetry of $K_h$ and
Fubini's theorem, since the continued kernel is bounded on $[0,2]^2$.
The central-kernel bound \eqref{eq:central-kernel-bound} controls the
first integral.  It remains to control the two parts meeting
$\mathcal T_\rho$.
After decreasing $h_\rho$, we have
$\{s_h,t_h\}\subset\mathcal N_\rho$, and hence
$\sigma(\mathcal N_\rho)=-\nu(\mathcal T_\rho)$ and $\sigma=\nu$ on
$\mathcal T_\rho$.  Thus, for $y\in\mathcal T_\rho$,
\[
\begin{aligned}
  \int_{\mathcal N_\rho}K_h(x,y)\,d\sigma(x)
  &=\int_{\mathcal N_\rho}
    \{K_h(x,y)-K_h(u_h,y)\}\,d\sigma(x)
    -\nu(\mathcal T_\rho) K_h(u_h,y).
\end{aligned}
\]
Since $|\sigma|\le\nu+\nu_h$, the H\"older bound in
\Cref{lem:continuation-kernel-bounds}, together with
\eqref{eq:rank-one-reduction-bounds} and
\eqref{eq:rank-one-parameter-expansions}, gives
\begin{equation*}
\begin{aligned}
  \left|\int_{\mathcal N_\rho}
    \{K_h(x,y)-K_h(u_h,y)\}\,d\sigma(x)\right|
  \le C_d\int_{\mathcal N_\rho}|x-u_h|^{1/2}\,d(\nu+\nu_h)(x)
  \le C_d\left\{\|f-u_h\|_4^{1/2}+h^{1/2}\right\}
  \le C_dh^{1/2}.
\end{aligned}
\end{equation*}
The uniform bound on $K_h$ from
\Cref{lem:continuation-kernel-bounds} therefore gives
\[
  \left|\int_{\mathcal N_\rho}K_h(x,y)\,d\sigma(x)\right|
  \le C_dh^{1/2}+C_d\nu(\mathcal T_\rho).
\]
Integrating over $y\in\mathcal T_\rho$ and using the same uniform bound on
$K_h$ gives the two remaining estimates:
\[
  \left|2\iint_{\mathcal N_\rho\times\mathcal T_\rho} K_h\,d\sigma d\sigma\right|
  \le C_dh^{1/2}\nu(\mathcal T_\rho)+C_d\nu(\mathcal T_\rho)^2,
  \qquad
  \left|\iint_{\mathcal T_\rho^2}K_h\,d\sigma d\sigma\right|
  \le C_d\nu(\mathcal T_\rho)^2.
\]
Combining the first-variation, central, mixed, and tail bounds gives
\begin{equation*}
  \mathcal J_h(\nu)-\mathcal J_h(\nu_h)
    -\mu_h\{m_d(\nu)^v-q_h^v\}
  \ge\frac{a}{2}A_{h,\rho}(\nu)+b_\rho\nu(\mathcal T_\rho)
  -C_{d,m}\Delta_h(\nu)^2-C_d\nu(\mathcal T_\rho)^2
    -C_dh^{1/2}\nu(\mathcal T_\rho).
\end{equation*}
Using the tail bound \eqref{eq:factor-tail-mass}, decrease $h_\rho$ so that
$C_dh^{1/2}\le b_\rho/4$ and $C_d\nu(\mathcal T_\rho)\le b_\rho/4$.
Then, \eqref{eq:auxiliary-lagrangian-bound} follows.
\end{proof}

%% file: sections/singular_graphon_comparison.tex
\subsection{The arbitrary-graphon comparison}
\label{sec:graphon-comparison}

We now restore the remainder $E$ and convert the lower bound for the auxiliary
Lagrangian difference into the corresponding bound for the Lagrangian
difference between the actual graphons $W$ and $W_h$.
By \Cref{lem:localization-rank-one}, every feasible graphon satisfying
$I_{p_h}(W)\le I_{p_h}(W_h)$ can be written as $W=f\otimes f+E$, while
\Cref{lem:auxiliary-lagrangian-bound} controls the auxiliary Lagrangian
difference between $f$ and $f_h$.  The next lemma estimates the contribution
of $E$ and gives the required full-graphon bound.

\begin{lemma}[Lower bound for the full-graphon Lagrangian difference]
\label{lem:graphon-lagrangian-bound}
There exist $\rho_0>0$ and $C_{d,m}\ge1$ such that for
every $0<\rho<\rho_0$, one can choose $C_{d,\rho}\ge1$ and $h_\rho>0$ with
the following property.  Let $0<h<h_\rho$, and let $W$ satisfy the hypotheses of
\Cref{lem:localization-rank-one}.  Write $W=f\otimes f+E$ for the
decomposition provided there, and let $\nu$ be the distribution of the
values of $f$.  Then,
\begin{equation}\label{eq:graphon-lagrangian-bound}
\begin{aligned}
  &\left[I_{p_h}(W)-\mu_h\{t(H,W)-r_h^m\}\right]
   -\left[I_{p_h}(W_h)-\mu_h\{t(H,W_h)-r_h^m\}\right]\\
  &\qquad\qquad\qquad\qquad\qquad\qquad\qquad=I_{p_h}(W)-I_{p_h}(W_h)-\mu_h\{t(H,W)-r_h^m\}\\
  &\qquad\qquad\qquad\qquad\qquad\qquad\qquad
  \ge C_{d,\rho}^{-1}\!\left[A_{h,\rho}(\nu)+\nu(\mathcal T_\rho)+\|E\|_2^2\right]
  -C_{d,m}\Delta_h(\nu)^2.
\end{aligned}
\end{equation}
\end{lemma}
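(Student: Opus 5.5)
The plan is to split the full Lagrangian difference into an auxiliary rank-one part and an $E$-part. The auxiliary part is bounded by \Cref{lem:auxiliary-lagrangian-bound}. The $E$-part is bounded using the strict convexity of $\widetilde J_{p_h}$ in its argument, the orthogonality $T_E(f^{d-1})=0$, and the cubic density bound \eqref{eq:rank-one-reduction-bounds}.

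\textbf{Step 1: splitting.} Since $W$ and $W_h$ take values in $[0,1]$, we have $I_{p_h}(W)=\iint\widetilde J_{p_h}(W)$ and $I_{p_h}(W_h)=\mathcal J_h(\nu_h)$. I would write
\[
  I_{p_h}(W)-I_{p_h}(W_h)-\mu_h\{t(H,W)-r_h^m\}
  =\bigl[\mathcal L_h(\nu)-\mathcal L_h(\nu_h)\bigr]
  +\iint\bigl\{\widetilde J_{p_h}(f\otimes f+E)-\widetilde J_{p_h}(f\otimes f)\bigr\}
  -\mu_h\{t(H,W)-q(f)^v\}.
\]
The last term is at most $C_{d,m}\|E\|_2^3$ in absolute value by \Cref{lem:localization-rank-one}, and $\|E\|_2\le C_dh$, so it is at most $C_{d,m}h\|E\|_2^2$. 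This can be absorbed once the middle term is shown to be $\gtrsim\|E\|_2^2$.

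\textbf{Step 2: the $E$-integral.} Write $\widetilde J_{p_h}(z)=\widetilde J_{p_*}(z)+\Lambda_hz+\text{const}$, with $\widetilde J_{p_*}(z)=\text{const}+\beta_dz^d+\widetilde\Gamma_d(z)$.

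The linear $\Lambda_h$ part contributes $\Lambda_h\int E$. Since $\Lambda_h=O(h^2)$, this is $O(h^2\|E\|_2)$, which is not small relative to $\|E\|_2^2$. I would handle it using orthogonality: $\int E\cdot f^{d-1}\otimes f^{d-1}=0$, and $f$ is close to $u_*$, so $\int E=\int E(1-u_*^{-2(d-1)}f^{d-1}\otimes f^{d-1})$. This gives $|\int E|\le C\|E\|_2\|f-u_*\|_2\le Ch\|E\|_2$, hence $|\Lambda_h\int E|\le Ch^3\|E\|_2$.

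The $\beta_d z^d$ part is $\beta_d\int[(f\otimes f+E)^d-(f\otimes f)^d]$. The linear term in $E$ vanishes by orthogonality, and the quadratic term is $\beta_d\binom d2\int(f\otimes f)^{d-2}E^2\ge0$. Higher terms are $O(\int|E|^3)$. These need care because $\|E\|_\infty$ is only bounded. I would bound them by $\|E\|_4\|E\|_2^2$-type estimates, or compare $E^3$ against $E^2$ using $|E|\le|W-r_*|+|f\otimes f-r_*|$, which is small in $L^4$.

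The $\widetilde\Gamma_d$ part is the delicate one: $\widetilde\Gamma_d$ is only quartically convex at $r_*$, so its increment is not coercive. However, $\widetilde\Gamma_d\ge0$, and its first variation at $f\otimes f$ is $O(|f\otimes f-r_*|^3)$. By H\"older this is $O(h^3\|E\|_{4})$, which I would show is negligible after combining with the $L^4$ localization.

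The crucial positive term is therefore $\beta_d\binom d2\int(f\otimes f)^{d-2}E^2\ge c_d\|E\|_2^2$ when $d\ge2$. This uses $f\approx u_*$ off a set of small measure. On that small set, $|E|$ is bounded and its contribution is controlled by the tail mass \eqref{eq:factor-tail-mass}. Alternatively, the whole quadratic coercivity can be read off from the convexity of $\widetilde J_{p_*}$ itself, since $\widetilde J_{p_*}''\ge c>0$ away from $0$.

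\textbf{Step 3: collecting terms.} Combining the pieces gives
\[
  \text{LHS}\ge\tfrac a2A_{h,\rho}(\nu)+\tfrac{b_\rho}2\nu(\mathcal T_\rho)-C\Delta_h(\nu)^2+c_d\|E\|_2^2-Ch\|E\|_2^2-Ch^3\|E\|_2-\dots
\]
The stray linear-in-$\|E\|_2$ errors are the main obstacle. The term $h^3\|E\|_2$ is not dominated by $\|E\|_2^2$ when $\|E\|_2\ll h^3$. I expect to fix this by sharpening the estimate: $|\int E|\le C\|E\|_2\|f-u_*\|$ restricted appropriately, with $\|f-u_*\|$ replaced by its deviation from the two-valued factor. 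That deviation is controlled by $A_{h,\rho}(\nu)^{1/2}+\nu(\mathcal T_\rho)^{1/2}$ plus a genuinely $h$-sized term that cancels exactly for $f=f_h$ because $W_h$ is stationary. Then Young's inequality absorbs these errors into $A_{h,\rho}+\nu(\mathcal T_\rho)+\|E\|_2^2$.

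Finally, choose $h_\rho$ small and take $C_{d,\rho}^{-1}=\min\{a/4,b_\rho/4,c_d/4\}$.
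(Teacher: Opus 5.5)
Your Step~1 splitting is the same as the paper's. You also correctly identify the terms linear in $E$ as the crux. However, two essential steps are missing.

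\emph{The linear terms.} Bounding $\Lambda_h\iint E$ and $\iint\widetilde\Gamma_d'(f\otimes f)E$ separately cannot work. When $\nu$ is close to $\nu_h$, the right-hand side of \eqref{eq:graphon-lagrangian-bound} is essentially $C_{d,\rho}^{-1}\|E\|_2^2$, so no error linear in $\|E\|_2$ can be tolerated. Yet at $f=f_h$ each of these two terms is generically nonzero. Only their sum vanishes there, because the three KKT equations make $\mathcal M_h'(z)=\Lambda_h+(\gamma_*-\gamma_h)z^{d-1}+\Gamma_d'(z)$ vanish at $s_h^2,s_ht_h,t_h^2$. So the $\widetilde\Gamma_d$ first variation is not negligible, and the cancellation you hope for in Step~3 must come from a pointwise algebraic identity, not an $L^2$ estimate on $f-u_*$.

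The paper works on $\Omega_\rho^2$, where $\Omega_\rho:=\{x:f(x)\in\mathcal N_\rho\}$.
\begin{itemize}
\item Strong convexity $J_{p_h}''\ge4$ gives a gain of $2\iint_{\Omega_\rho^2}E^2$.
\item The whole linear term is split as $J_{p_h}'(f\otimes f)E=\mathcal M_h'(f\otimes f)E+\gamma_h(f\otimes f)^{d-1}E$.
\item Orthogonality turns the $\Omega_\rho^2$-integral of the second piece into the same integral over $(\Omega_\rho^c)^2$, which is $O(\nu(\mathcal T_\rho)^2)$.
\item For the first piece, dividing by $Q_h$ in each variable gives $\mathcal M_h'(xy)=Q_h(x)U_h(x,y)+Q_h(y)V_h(x,y)$. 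The bilinear remainder vanishes on $\{s_h,t_h\}^2$, hence identically.
\item Since $\mathcal M_0'$ vanishes to third order at $r_*$, one may take $|U_h|,|V_h|\le\sqrt{a/4}$ on $\mathcal N_\rho^2$. Young's inequality then gives $|\iint_{\Omega_\rho^2}\mathcal M_h'(f\otimes f)E|\le\frac a4A_{h,\rho}(\nu)+\iint_{\Omega_\rho^2}E^2$.
\end{itemize}
This smallness relative to the constant $a$ of \Cref{lem:auxiliary-lagrangian-bound} is exactly what permits the absorption.

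\emph{The mixed rectangles} $\Omega_\rho^c\times\Omega_\rho$, which you do not treat. There $f(x)f(y)$ may be near $0$ or above $1$. In that range $\widetilde\Gamma_d'$ is unbounded and $\widetilde J_{p_h}$ is not convex, so your handling of the $\widetilde\Gamma_d$ part breaks down. A loss ``controlled by the tail mass'' with an unspecified constant is fatal, because only $\frac{b_\rho}2\nu(\mathcal T_\rho)$ is available and $b_\rho$ is small. For example, using just $\widetilde\Gamma_d(W)\ge0$ there loses about $2\int_{\Omega_\rho^c}\widetilde\Gamma_d(u_*f(x))\,dx=\int_{\Omega_\rho^c}\Psi_0(f(x))\,dx\ge2b_\rho\nu(\mathcal T_\rho)$. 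To leading order this cancels the entire tail part $\int_{\mathcal T_\rho}\Psi_h\,d\nu$ of the first variation.

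The missing idea is a row-wise Jensen argument.
\begin{itemize}
\item For $x\in\Omega_\rho^c$, orthogonality gives $|\int_{\Omega_\rho}E(x,y)\,dy|\le C_{d,\rho}h$. Hence the average of $W(x,\cdot)$ over $\Omega_\rho$ lies within $C_{d,\rho}h$ of $u_*f(x)$.
\item Jensen for the convex $J_{p_h}$ and the $\frac12$-H\"older bound for $\widetilde J_{p_h}$ then give $\int_{\Omega_\rho}\{J_{p_h}(W(x,y))-\widetilde J_{p_h}(f(x)f(y))\}\,dy\ge-C_{d,\rho}h^{1/2}$.
\item So the mixed rectangles cost at most $\frac{b_\rho}8\nu(\mathcal T_\rho)$ for small $h$.
\end{itemize}
The tail--tail square costs $O(\nu(\mathcal T_\rho)^2)$. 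Passing from $\iint_{\Omega_\rho^2}E^2$ to $\|E\|_2^2$ uses $\|E\|_\infty\le5$ at a cost of $50\,\nu(\mathcal T_\rho)$, which must be scaled below $b_\rho$; this is why the constant depends on $\rho$.

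Finally, the estimate $\int|E|^3\le\|E\|_4\|E\|_2^2$ is false, since $E$ is only $L^4$-small. Use convexity instead, as in your alternative.
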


\begin{proof}
Put $q:=q(f)=m_d(\nu)$ and
$\Omega_\rho:=\{x:f(x)\in\mathcal N_\rho\}$.  Then
$\lambda(\Omega_\rho^c)=\nu(\mathcal T_\rho)$.
Combining the localization results in \Cref{lem:localization-rank-one}
and \eqref{eq:factor-tail-mass}, we may use throughout the proof
\begin{equation}\label{eq:graphon-comparison-estimates}
  \begin{gathered}
    \nu(\mathcal T_\rho)\le C_{d,\rho}h^4,
    \qquad |\Delta_h(\nu)|\le C_dh^2,
    \qquad \|E\|_2\le C_dh,
    \qquad \|f-u_*\|_4\le C_dh,\\
    |t(H,W)-q^v|\le C_{d,m}\|E\|_2^3,
    \qquad \lambda(\Omega_\rho)\ge\frac12,
    \qquad 0\le f\le2,
    \qquad \|E\|_\infty\le5.
  \end{gathered}
\end{equation}
The bound on $\lambda(\Omega_\rho)$ follows from
$\lambda(\Omega_\rho^c)=\nu(\mathcal T_\rho)$, while the last bound uses
$0\le W\le1$ and $0\le f\le2$.
Since
\[
  \mathcal J_h(\nu)
  =\iint\widetilde J_{p_h}(f(x)f(y))\,dx\,dy,
  \qquad
  \mathcal J_h(\nu_h)=I_{p_h}(W_h),
\]
and $t(H,W_h)=q_h^v=r_h^m$, the full-graphon Lagrangian difference has the
exact decomposition
\begin{equation*}
\begin{aligned}
  &I_{p_h}(W)-I_{p_h}(W_h)-\mu_h\{t(H,W)-r_h^m\}\\
  &\quad=\bigl[\mathcal L_h(\nu)-\mathcal L_h(\nu_h)\bigr]
  +\iint\{J_{p_h}(W(x,y))-\widetilde J_{p_h}(f(x)f(y))\}\,dx\,dy\\
  &\qquad-\mu_h\{t(H,W)-q^v\}.
\end{aligned}
\end{equation*}
The first term is controlled by \Cref{lem:auxiliary-lagrangian-bound}, and
the last by the homomorphism-density estimate in
\eqref{eq:rank-one-reduction-bounds}.  It remains to bound the middle
double integral over its central, mixed, and tail--tail parts.

We first consider the central square $\Omega_\rho^2$.  Decrease $\rho_0$ so
that $f(x)f(y)$ lies in a fixed closed subinterval of $(0,1)$ whenever
$x,y\in\Omega_\rho$.  Put
\[
  \|E\|_{2,G}
  :=\left( \iint_G E(x,y)^2\,dx\,dy \right)^{1/2}
  \qquad (G \subset[0,1]^2).
\]
On $\Omega_\rho^2$, the identity $\mathcal M_h'(xy)=J_{p_h}'(xy)-\gamma_h(xy)^{d-1}$ yields
the decomposition of the integrand:
\begin{equation*}
  J_{p_h}(W)-J_{p_h}(f\otimes f)
  = \bigl[ J_{p_h}(W)-J_{p_h}(f\otimes f) -(J_{p_h})'(f\otimes f)E \bigr]
  +\mathcal M_h'(f\otimes f)E
      +\gamma_h(f\otimes f)^{d-1}E.
\end{equation*}
In this step, we bound the integrals of these three terms on $\Omega_\rho^2$ separately.
Since $J_{p_h}''(z)=1/(z(1-z))\ge4$, strong convexity gives the first bound
\[
  \iint_{\Omega_\rho^2}
  \{J_{p_h}(W)-J_{p_h}(f\otimes f)-(J_{p_h})'(f\otimes f)E\}
  \ge2\|E\|_{2,\Omega_\rho^2}^2.
\]
For the second bound, we divide $\mathcal M_h'$ by $Q_h$ in each variable and write
\[
  \mathcal M_h'(xy)=Q_h(x)U_h(x,y)+Q_h(y)V_h(x,y)+C_h(x,y),
\]
where $C_h$ has degree at most one in each variable.  The factorization
\eqref{eq:mh-derivative-factorization} then gives
$\mathcal M_h'(xy)=0$ for every $(x,y)\in\{s_h,t_h\}^2$.  Since
$Q_h(s_h)=Q_h(t_h)=0$, the bilinear remainder $C_h$ must vanish identically.
At $h=0$, $\mathcal M_0'(xy)$ is an analytic multiple of $(xy-r_*)^3$ and
$Q_0(z)=(z-u_*)^2$, so $U_0$ and $V_0$ vanish at $(u_*,u_*)$.  By
joint continuity, after decreasing $\rho_0$ and $h_\rho$, we may ensure that
$|U_h|$ and $|V_h|$ are at most $\sqrt{a/4}$ on $\mathcal N_\rho^2$, and
\[
  |\mathcal M_h'(xy)|
  \le\sqrt{\frac a4}
    \{|Q_h(x)|+|Q_h(y)|\}
  \qquad(x,y\in\mathcal N_\rho,\ 0<h<h_\rho).
\]
Then, $uv \le (1/4)u^2+v^2$ and
$(u+v)^2\le 2(u^2+v^2)$ give
\begin{align*}
  \left|\iint_{\Omega_\rho^2} \{\mathcal M_h'(f(x)f(y))\} E(x,y)\,dx\,dy\right|
  &\le \frac{1}{4} \iint_{\Omega_\rho^2} \frac{a}{4}\{|Q_h(f(x))|+|Q_h(f(y))|\}^2 \,dx\,dy + \|E\|_{2,\Omega_\rho^2}^2 \\
  &\le \frac{a}{16} \iint_{\Omega_\rho^2} 2|Q_h(f(x))|^2+2|Q_h(f(y))|^2 \,dx\,dy + \|E\|_{2,\Omega_\rho^2}^2 \\
  &\le \frac a4 A_{h,\rho}(\nu) + \|E\|_{2,\Omega_\rho^2}^2.
\end{align*}
For the third bound, the orthogonality in \eqref{eq:rank-one-orthogonality} moves
the remaining linear term to the tail--tail square:
\[
  \left| \iint_{\Omega_\rho^2}\gamma_h(f(x)f(y))^{d-1}E(x,y)\,dx\,dy \right|
  = \left| \iint_{(\Omega_\rho^c)^2}\gamma_h(f(x)f(y))^{d-1}E(x,y)\,dx\,dy \right|
  \le C_d \nu(\mathcal T_\rho)^2.
\]
Combining the three bounds gives
\begin{equation*}
  \iint_{\Omega_\rho^2} \{J_{p_h}(W)-J_{p_h}(f\otimes f)\}
  \ge \|E\|_{2,\Omega_\rho^2}^2 -\frac a4 A_{h,\rho}(\nu)-C_d\nu(\mathcal T_\rho)^2.
\end{equation*}

We next consider the mixed rectangles.  For almost every $x$, the
orthogonality in
\eqref{eq:rank-one-orthogonality}, together with
\eqref{eq:graphon-comparison-estimates}, gives
\begin{equation*}
  u_*^{d-1}\left|\int_{\Omega_\rho}E(x,y)\,dy\right|
  \le\int_{\Omega_\rho}
    |f(y)^{d-1}-u_*^{d-1}|\,|E(x,y)|\,dy
  +\int_{\Omega_\rho^c}f(y)^{d-1}|E(x,y)|\,dy
  \le C_{d,\rho}h.
\end{equation*}
Consequently, if $\overline W_x:= (\int_{\Omega_\rho}W(x,y)\,dy)/\lambda(\Omega_\rho)$,
then
\begin{equation*}
  |\overline W_x-u_*f(x)|
  \le |f(x)|\left|
    \frac1{\lambda(\Omega_\rho)}\int_{\Omega_\rho} (f(y)-u_*)\,dy\right|
    +\frac1{\lambda(\Omega_\rho)}\left|\int_{\Omega_\rho}E(x,y)\,dy\right|
  \le C_{d,\rho}h.
\end{equation*}
Since $\overline W_x\in[0,1]$, we have
$J_{p_h}(\overline W_x)=\widetilde J_{p_h}(\overline W_x)$.
By Jensen's and H\"older's inequalities, with
\eqref{eq:continuation-kernel-bounds} and
\eqref{eq:graphon-comparison-estimates},
\begin{equation*}
\begin{aligned}
&\int_{\Omega_\rho}
  \{J_{p_h}(W(x,y))-\widetilde J_{p_h}(f(x)f(y))\}\,dy\\
&\quad\ge \lambda(\Omega_\rho) \left\{J_{p_h}(\overline W_x)-\widetilde J_{p_h}(u_*f(x))\right\}
-\int_{\Omega_\rho} \left\{\widetilde J_{p_h}(f(x)f(y))-\widetilde J_{p_h}(u_*f(x))\right\}\,dy\\
&\quad\ge-C_d |\overline W_x - u_*f(x)|^{1/2} -C_d\int_{\Omega_\rho}|f(y)-u_*|^{1/2}\,dy \ge-C_{d,\rho}h^{1/2} \ge -\frac{b_\rho}{16}.
\end{aligned}
\end{equation*}
Integrating this bound over $x\in\Omega_\rho^c$ yields
\begin{equation*}
\begin{aligned}
  2\iint_{\Omega_\rho^c\times \Omega_\rho}
  \{J_{p_h}(W)-\widetilde J_{p_h}(f\otimes f)\}
  \ge -\frac{b_\rho}{8}\nu(\mathcal T_\rho).
\end{aligned}
\end{equation*}

It remains to consider the tail--tail square.  The uniform bound in
\eqref{eq:continuation-kernel-bounds} gives
\[
  \iint_{(\Omega_\rho^c)^2}
  \{J_{p_h}(W)-\widetilde J_{p_h}(f\otimes f)\}
  \ge-C_d\nu(\mathcal T_\rho)^2.
\]

Combining all bounds with \eqref{eq:auxiliary-lagrangian-bound}
and $|t(H,W)-q^v|\le C_{d,m}\|E\|_2^3$ gives
\begin{equation*}
\begin{aligned}
  I_{p_h}(W)-I_{p_h}(W_h)-\mu_h\{t(H,W)-r_h^m\}
  &\ge\frac a4 A_{h,\rho}(\nu)
    +\frac{3b_\rho}{8}\nu(\mathcal T_\rho)
    +\|E\|_{2,\Omega_\rho^2}^2\\
  &\quad-C_{d,m}\Delta_h(\nu)^2
    -C_d\nu(\mathcal T_\rho)^2
    -C_{d,m}\|E\|_2^3.
\end{aligned}
\end{equation*}
Since $\|E\|_\infty\le5$ and
$\lambda([0,1]^2\setminus\Omega_\rho^2)\le2\nu(\mathcal T_\rho)$, choose
$C_{d,\rho}\ge1$ so that $50C_{d,\rho}^{-1}\le b_\rho/16$.  Then
\begin{equation*}
  \|E\|_{2,\Omega_\rho^2}^2
  \ge C_{d,\rho}^{-1}\|E\|_{2,\Omega_\rho^2}^2
  =C_{d,\rho}^{-1}\left(
    \|E\|_2^2-\|E\|_{2,[0,1]^2\setminus\Omega_\rho^2}^2\right)
  \ge C_{d,\rho}^{-1}\|E\|_2^2
    -50C_{d,\rho}^{-1}\nu(\mathcal T_\rho)
  \ge C_{d,\rho}^{-1}\|E\|_2^2
    -\frac{b_\rho}{16}\nu(\mathcal T_\rho).
\end{equation*}
By \eqref{eq:graphon-comparison-estimates}, we may also decrease $h_\rho$ so that
$C_d\nu(\mathcal T_\rho)\le b_\rho/16$ and $C_{d,m}\|E\|_2\le C_{d,\rho}^{-1}/2$.
Consequently,
\begin{equation*}
  I_{p_h}(W)-I_{p_h}(W_h)-\mu_h\{t(H,W)-r_h^m\}
  \ge\frac a4 A_{h,\rho}(\nu)
  +\frac{b_\rho}{4}\nu(\mathcal T_\rho)
  +\frac{1}{2}C_{d,\rho}^{-1}\|E\|_2^2
  -C_{d,m}\Delta_h(\nu)^2.
\end{equation*}
Increasing $C_{d,\rho}$ if necessary proves \eqref{eq:graphon-lagrangian-bound}.
\end{proof}

%% file: sections/singular_proof.tex
\subsection{Proof of the singular-endpoint theorem}
\label{sec:singular-proof}

We finish by applying the lower bound for the full-graphon Lagrangian
difference to the decomposition supplied by the localization and rank-one
reduction.

\begin{proof}[Proof of \Cref{thm:singular-endpoint}]
\Cref{lem:rank-one-kkt-family} provides the real-analytic map
$h\mapsto(p_h,r_h,u_h,\alpha_h)$ for $|h|<h_0$, with the desired
values at $h=0$.  For every measurable $A_h\subset[0,1]$ of measure
$\alpha_h$, the corresponding factor is $f_{\alpha_h,s_h,t_h}$, up to measure-preserving relabeling.
Since $t_h-s_h=2h\ne0$ for $h>0$, the resulting graphon $W_h$ is
nonconstant, rank-one, and bipodal, and the same lemma gives
$t(H,W_h)=r_h^m$.  Its comparison with the constant graphon and
the inequality $p_h<\pc(r_h)$ follow from \Cref{lem:constant-graphon-comparison}.

It remains to prove optimality and uniqueness.
Fix $0<\rho<\rho_0$ and $0<h<h_\rho$ as in \Cref{lem:graphon-lagrangian-bound}.
Assume $I_{p_h}(W)\le I_{p_h}(W_h)$ and $W$ is feasible.
By \Cref{lem:localization-rank-one}, write
\[
  W=f\otimes f+E,
  \qquad
  T_E(f^{d-1})=0,
  \qquad
  q:=q(f).
\]
Let $\nu$ be the distribution of the values of $f$, so that $q=m_d(\nu)$ and
$\Delta_h(\nu)=q-q_h$.  Applying \Cref{lem:graphon-lagrangian-bound} gives
\eqref{eq:graphon-lagrangian-bound}.  Its left-hand side is nonpositive:
\[
  I_{p_h}(W)-I_{p_h}(W_h)-\mu_h\{t(H,W)-r_h^m\}
  \le I_{p_h}(W)-I_{p_h}(W_h)\le0,
\]
because $W$ is feasible and $\mu_h>0$.  Comparing this upper bound with the
lower bound in \eqref{eq:graphon-lagrangian-bound}, and increasing the
constant if necessary, gives $C_{d,m,\rho}\ge1$ such that
\[
  A_{h,\rho}(\nu)+\nu(\mathcal T_\rho)+\|E\|_2^2
  \le C_{d,m,\rho}^2\Delta_h(\nu)^2,
  \qquad
  \|E\|_2\le C_{d,m,\rho}|\Delta_h(\nu)|.
\]
Combining the homomorphism-density bound in
\eqref{eq:rank-one-reduction-bounds} with Taylor's theorem gives
\begin{equation*}
\begin{aligned}
  \left|t(H,W)-r_h^m-vq_h^{v-1}\Delta_h(\nu)\right|
  \le 
  |t(H,W)-q^v|
    +\left|(q_h+\Delta_h(\nu))^v-q_h^v
      -vq_h^{v-1}\Delta_h(\nu)\right|
  \le C_{d,m,\rho}\Delta_h(\nu)^2.
\end{aligned}
\end{equation*}
Since $q_h$ stays bounded away from zero, the linear term in the preceding
estimate dominates the quadratic error when $\Delta_h(\nu)<0$.  Feasibility
would then be violated, so, after decreasing $h_\rho$, we have
$\Delta_h(\nu)\ge0$.  The localization estimate
\eqref{eq:rank-one-reduction-bounds}, together with the positive lower
bounds for $q_h$ and $\mu_h$, allows us to decrease $h_\rho$ further so that
\[
  C_{d,m,\rho}\Delta_h(\nu)\le\frac14vq_h^{v-1},
  \qquad
  C_{d,m,\rho}\Delta_h(\nu)\le\frac14\mu_hvq_h^{v-1}.
\]
This yields
\begin{equation*}
  \mu_h\{t(H,W)-r_h^m\}-C_{d,m,\rho}\Delta_h(\nu)^2
  \ge\frac12\mu_hvq_h^{v-1}\Delta_h(\nu).
\end{equation*}
Substituting this into \eqref{eq:graphon-lagrangian-bound} gives
\begin{equation*}
  I_{p_h}(W)-I_{p_h}(W_h)
  \ge
  \left(\frac12\mu_hvq_h^{v-1}\Delta_h(\nu)
    +C_{d,m,\rho}^{-1}
    \left[A_{h,\rho}(\nu)+\nu(\mathcal T_\rho)+\|E\|_2^2\right]\right)
  \ge0.
\end{equation*}
The left-hand side is nonpositive by the upper bound above, so equality holds
throughout.  Consequently,
\begin{equation*}
  \Delta_h(\nu)=0,
  \qquad A_{h,\rho}(\nu)=0,
  \qquad \nu(\mathcal T_\rho)=0,
  \qquad E=0\quad\text{a.e.}
\end{equation*}
Since $A_{h,\rho}(\nu)=0$ and $\nu(\mathcal T_\rho)=0$,
$\nu$ is supported on $\{s_h,t_h\}$, and $\Delta_h(\nu)=0$ implies
$\nu=\alpha_h\delta_{s_h}+(1-\alpha_h)\delta_{t_h}$.
Moreover, $E\equiv0$ implies that $W=f\otimes f$ and $W$ is a measure-preserving relabeling of $W_h$.
\end{proof}

%% file: sections/lean.tex
\section{Lean formalization}\label{sec:lean-formalization}

A substantial part of the paper has been formalized in Lean (v4.33.1) using
Mathlib (v4.33.1).  The code is available in the
\href{https://github.com/Lim-Sangho/upper-tail-optimizers-lean}{online
repository}.\footnote{https://github.com/Lim-Sangho/upper-tail-optimizers-lean}
The formalization machine-checks the deterministic graphon arguments
underlying our results and contains no \texttt{sorry} placeholders.

\paragraph{Nonexceptional theory.}
\Cref{thm:nonexceptional-optimizers}, our main result away from the
exceptional density, has been formalized.
The development includes the construction and analysis of the
Lubetzky--Zhao boundary in
\Cref{sec:lz-boundary}, the reduction to a one-dimensional problem in
\Cref{sec:local-reduction}, the positivity of $A_H$ in
\Cref{sec:nonexceptional-quadratic-growth}, the uniqueness and analyticity results and the
expansions of the edge density and rate function in
\Cref{sec:nonexceptional-proof}, and the parameter expansions derived in
\Cref{app:bipodal-parameter-expansions}.
All results of Kenyon, Radin, Ren, and Sadun~\cite{kenyon2014entropy}
used in this paper, as stated in \Cref{thm:krrs-bipodality,thm:krrs-cross-density},
have also been formalized in Lean in the $d$-regular setting used here.
The formalized proofs use different
arguments from those in the original paper; these alternative proofs are
not described here.  The proof of \Cref{thm:krrs-analytic-extension} in
Appendix~\ref{app:krrs-analytic-extension} has also been formalized.
These results are combined into a single formal statement of
\Cref{thm:nonexceptional-optimizers}.

\paragraph{Exceptional endpoint.}
\Cref{thm:endpoint-optimizers}, our exceptional-endpoint result, has also
been formalized.
The analytic construction of the rank-one family converging to the
constant graphon at the singular endpoint, its parameter expansions and its strict
improvement in relative entropy over the corresponding constant graphon,
the localization and rank-one reduction, the rank-one
factor comparison, and the arbitrary-graphon comparison are
machine-checked; not every auxiliary lemma is formalized in the exact
form stated in the paper.  The formalization of \Cref{thm:singular-endpoint}
proves both global optimality and uniqueness up to measure-preserving relabeling.
These results are combined into a single formal statement of
\Cref{thm:endpoint-optimizers}.

\paragraph{External results.}
The only results assumed as axioms are the standard graphon
compactness, continuity, and lower-semicontinuity properties used in the paper,
the generalized H\"older inequality \eqref{eq:generalized-holder}, and
the Lubetzky--Zhao criterion \Cref{thm:lz-criterion}.
Each is assumed under the hypotheses stated in its source; all other results
described above are derived from these axioms in Lean.
The formalization does not include the Chatterjee--Varadhan large deviation
principle and therefore does not cover the probabilistic consequences stated in
the introduction.

%% file: sections/conclusion.tex
\section{Concluding remarks}
\label{sec:concluding-remarks}

In this paper, we analyze the optimizers of
the Chatterjee--Varadhan variational problem for regular graphs.
More precisely, \Cref{thm:nonexceptional-optimizers} establishes the uniqueness
and bipodal structure of the nonconstant optimizers in a small neighborhood of
the Lubetzky--Zhao boundary except at the exceptional density $r_*=(d-1)/d$.
\Cref{thm:endpoint-optimizers} constructs an analytic curve approaching
the boundary point $(\pc(r_*),r_*)$ at this exceptional density.
Along this curve, the unique optimizers are rank-one and bipodal, and
both block sizes tend to $1/2$.  This differs from the vanishing-block
behavior as $p\uparrow\pc(r)$ at a fixed nonexceptional density~$r$.

A natural question is whether the bipodality and uniqueness established in
\Cref{thm:nonexceptional-optimizers,thm:endpoint-optimizers} persist throughout
the symmetry-breaking region.
For $H=K_3$, the experiments in \Cref{fig:neural-bipodal-patterns}, based on
the neural representation of Kim, Baek, Lee, and Yang~\cite{KimBaekLeeYangGraphons},
support bipodality throughout this region.
Restricting to bipodal graphons, the experiments in \Cref{fig:bipodal-parameter-maps}
use the Sequential Least Squares Programming algorithm~\cite{Kraft1988SLSQP} to track the two within-block
edge densities, the cross-block edge density, and the size of the denser block as $(p,r)$ varies.
These computations suggest a curve of rank-one optimizers approaching the
phase boundary at $P_*=(\pc(r_*),r_*)$,
which motivated the rank-one bipodal family constructed in \Cref{sec:rank-one-stationary-family}.
We extract this curve from sign changes of
$D:=q_{11}q_{22}-q_{12}^2$ along the $r$-grid at fixed $p$.
The experiments also suggest another curve along which uniqueness fails and across
which some of the bipodal parameters change discontinuously.
Such discontinuities would prevent an analytic continuation of the bipodal
parameters throughout the region, even if the optimizers remain bipodal.
Understanding the optimizers throughout this region and resolving the
bipodality question raised by Lubetzky and
Zhao~\cite[Section~7]{lubetzky2015large} would therefore require methods that can
accommodate possible nonuniqueness and discontinuous changes in the
parameters.

\begin{figure}[t]
\centering
\begin{tikzpicture}
\node[anchor=south west,inner sep=0] (parameterplots) at (0,0)
  {\includegraphics[width=\textwidth]{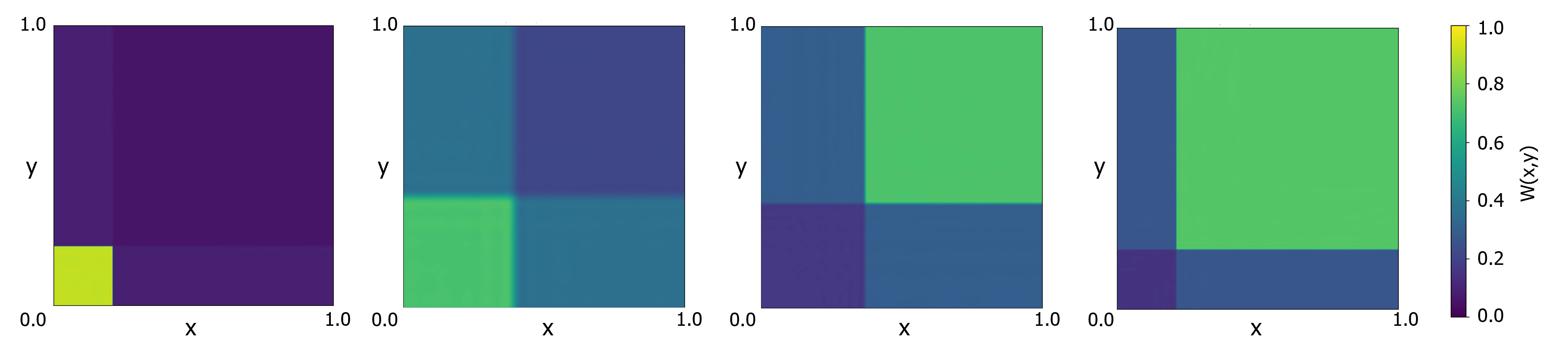}};
  \begin{scope}[x={(parameterplots.south east)},y={(parameterplots.north west)}]
    \node[font=\sffamily\fontsize{7}{7}\selectfont] at (0.122,0.98)
      {(a) $(p,r)=(0.05,0.2)$};
    \node[font=\sffamily\fontsize{7}{7}\selectfont] at (0.349,0.98)
      {(b) $(p,r)=(0.1,0.4)$};
    \node[font=\sffamily\fontsize{7}{7}\selectfont] at (0.575,0.98)
      {(c) $(p,r)=(0.1,0.5)$};
    \node[font=\sffamily\fontsize{7}{7}\selectfont] at (0.802,0.98)
      {(d) $(p,r)=(0.1,0.6)$};
  \end{scope}
\end{tikzpicture}
\caption{Graphons learned for $H=K_3$ by optimizing
\eqref{eq:graphon-variational-problem} using the neural representation~\cite{KimBaekLeeYangGraphons}.
Each heatmap shows $W(x,y)$ for the indicated parameter pair $(p,r)$ in the symmetry-breaking region;
color denotes edge probability.  The two-block patterns support an
affirmative answer to the bipodality question.}
\label{fig:neural-bipodal-patterns}
\end{figure}

\begin{figure}[t]
\centering
\begin{tikzpicture}
\node[anchor=south west,inner sep=0] (parameterplots) at (0,0)
  {\includegraphics[width=\textwidth]{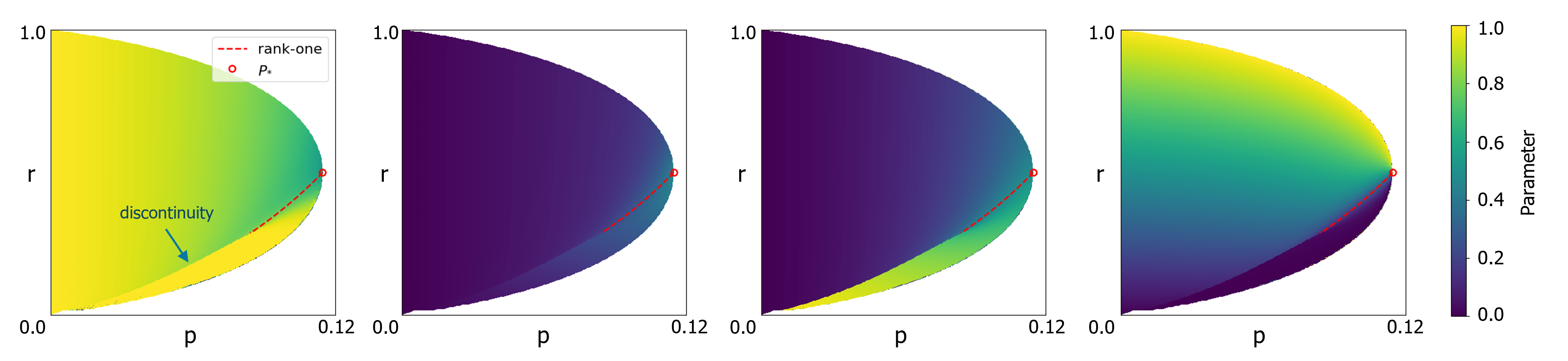}};
  \begin{scope}[x={(parameterplots.south east)},y={(parameterplots.north west)}]
    \node[font=\sffamily\fontsize{7}{7}\selectfont] at (0.122,0.98)
      {(a) Higher within-block density};
    \node[font=\sffamily\fontsize{7}{7}\selectfont] at (0.349,0.98)
      {(b) Lower within-block density};
    \node[font=\sffamily\fontsize{7}{7}\selectfont] at (0.575,0.98)
      {(c) Cross-block density};
    \node[font=\sffamily\fontsize{7}{7}\selectfont] at (0.802,0.98)
      {(d) Size of the denser block};
  \end{scope}
\end{tikzpicture}
\caption{Numerically optimized bipodal parameters for $H=K_3$ in the
symmetry-breaking region.  Panels (a)--(d) show the higher and lower
within-block edge densities, the cross-block density, and the size of the
block with higher within-block density.
The dashed red curve is the numerical near-rank-one curve approaching $P_*$ (red circle),
motivating the rank-one bipodal family constructed in
\Cref{sec:rank-one-stationary-family}.  The blue arrows indicate another
curve across which the bipodal parameters change discontinuously.}
\label{fig:bipodal-parameter-maps}
\end{figure}

Our numerical observations for $H=K_3$ suggest the following conjecture for
general regular graphs.  Its final assertion extends the $\alpha_*=1/2$ case proved
in \Cref{thm:endpoint-optimizers} to arbitrary limiting smaller-block
sizes $\alpha_*\in(0,1/2]$.
\begin{conj}
\label{conj:symmetry-breaking-optimizers}
Let $H$ be a $d$-regular graph with $d\ge2$, and write
$\mathcal B_H:=\{(p,r)\in(0,1)^2:p<\pc(r)\}$ for the symmetry-breaking region.
The optimizers of \eqref{eq:graphon-variational-problem} have the following properties.
\begin{itemize}
  \item Every optimizer in $\mathcal B_H$ is bipodal.

  \item The set of points in $\mathcal B_H$ with nonunique optimizers,
  up to relabeling, is the curve
  \[
    \mathcal C_H=\{(\pi_H(r),r):r\in D_H\}\subseteq\mathcal B_H,
  \]
  where $D_H\subseteq(0,1)$ is a nonempty interval and
  $\pi_H:D_H\to(0,1)$ is strictly increasing and analytic.

  \item On $\mathcal B_H\setminus\mathcal C_H$, the bipodal parameters
  of the unique optimizer are locally analytic in $(p,r)$.
  For each $r\in D_H$, their left and right limits as $p\to\pi_H(r)^{\pm}$ exist and are
  distinct even after relabeling.

  \item For every $\alpha_*\in(0,1/2]$, there is a continuous path
  $\eta_{\alpha_*}:(0,1)\to\mathcal B_H\setminus\mathcal C_H$ such that, as
  $\varepsilon\downarrow0$, $\eta_{\alpha_*}(\varepsilon)\to P_*$ and the smaller
  block of its optimizer has size tending to $\alpha_*$.
\end{itemize}
\end{conj}

%% file: sections/appendix_preliminaries.tex
\section{Proof of the extended KRR--S theorem}\label{app:krrs-analytic-extension}

We first record the results from~\cite{kenyon2014entropy} needed for the
proof of \Cref{thm:krrs-analytic-extension}, restricting throughout to
$d$-regular graphs.
Their Theorem~1.1 describes the optimizer parameters only for
\(\vartheta:=\tau-\eps^m>0\).  The rest of this section constructs their
two-sided analytic continuation and verifies that it agrees with those
optimizer parameters where they are defined.

\begin{theorem}[{KRR--S bipodality for regular graphs, \cite[Theorem~1.1]{kenyon2014entropy}}]
\label{thm:krrs-bipodality}
Let \(H\) be a \(d\)-regular graph with \(d\ge2\) and \(m\ge2\) edges, and let
\(\eps\in(0,1)\setminus\{(d-1)/d\}\).  There exists
\(\tau_0(\eps)>\eps^m\) such that, whenever
\(\eps^m<\tau<\tau_0(\eps)\), the entropy maximizer subject to
\[
  e(W)=\eps,
  \qquad
  t(H,W)=\tau
\]
is unique up to relabeling and is bipodal.  Let $c$ be the size of the smaller block,
and let $q_{11},q_{12},q_{22}$ denote the edge probabilities within the smaller block (of size $c$),
between the two blocks, and within the larger block (of size $1-c$), respectively.
Then, its parameters \((c, q_{11},q_{12},q_{22})\) are jointly
real-analytic in \((\eps,\tau)\) on the region
\[
  \left\{(\eps,\tau):
    \eps\ne\frac{d-1}{d},\quad
    \eps^m<\tau<\tau_0(\eps)\right\}.
\]
For each fixed nonexceptional \(\eps\), as \(\tau\downarrow\eps^m\),
\[
  q_{11}\longrightarrow a_d(\eps),
  \qquad
  q_{12}\longrightarrow\zeta_d(\eps),
  \qquad
  q_{22}\longrightarrow\eps,
  \qquad
  c=O_{H,\eps}(\tau-\eps^m),
\]
where \(a_d(\eps)\in(0,1)\) and \(\zeta_d(\eps)\ne\eps\).
\end{theorem}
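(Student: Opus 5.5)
This statement is a specialization of \cite[Theorem~1.1]{kenyon2014entropy}, so I will not reprove it from scratch. The plan is to verify that the $d$-regular setting fits the hypotheses of that theorem and to read off the stated conclusions, filling in the pieces that KRR--S state only implicitly. First, I will check that for a $d$-regular $H$ with $m\ge2$ edges, the KRR--S scalar data specialize to the function $\psi_d(\eps,\cdot)$ defined before \Cref{thm:krrs-analytic-extension}. The relevant data are the leading behaviour of $t(H,W)-\eps^m$ under small perturbations of the constant graphon $\eps$; for the first-order term this behaviour is governed by the degree sequence, here constantly $d$. Likewise, their exceptional set specializes to the single point $\eps=(d-1)/d$. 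The reason is that the convexity defect $z\mapsto z^d$ versus $S_0$ degenerates exactly where the analogue of $h_{p,d}$ in \Cref{lem:convexity-defect} has its double root. I will use Theorem~3.3 and Section~4 of \cite{kenyon2014entropy} to get uniqueness of the maximizer $\zeta_d(\eps)$ of $\psi_d(\eps,\cdot)$ and $\zeta_d(\eps)\ne\eps$ for nonexceptional $\eps$.

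Second, I will recall the structure of the KRR--S argument, so that the conclusions can be quoted precisely. Fix $\eps$ and let $\tau\downarrow\eps^m$. By compactness and continuity, every entropy maximizer converges in cut distance to the constant $\eps$. The Euler--Lagrange equations with two multipliers, one for $e(\cdot)$ and one for $t(H,\cdot)$, force $W$ to take values among the finitely many roots of a scalar equation of the type appearing in the proof of \Cref{lem:stationary-rank-one-bipodality}. A second-order perturbative analysis around the constant graphon then reduces the problem to a finite-dimensional optimization over multipodal graphons. In the $\psi_d$-regime, the unique optimum of that problem is bipodal, with one pode of size $O(\vartheta)$ carrying cross density near $\zeta_d(\eps)$ and the other pode at density near $\eps$. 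Nondegeneracy of the reduced Hessian, which is where $\eps\ne(d-1)/d$ enters, gives local uniqueness. The analytic implicit function theorem applied to the finite-dimensional stationarity system then gives joint real-analyticity of $(c,q_{11},q_{12},q_{22})$ in $(\eps,\tau)$ on the stated region.

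Third, I will extract the limits. I obtain $q_{22}\to\eps$ and $c=O_{H,\eps}(\tau-\eps^m)$ from the first-order expansion of the two constraints in $c$; this is the same computation as in the Jacobian of \Cref{lem:bipodal-quadratic-bound}, with $s$ replaced by $q_{12}$. Next, $q_{12}\to\zeta_d(\eps)$ because the leading entropy cost per unit of $H$-density excess is $c$ times the ratio in $\psi_d$, which is maximized at $\zeta_d$. Finally, $a_d(\eps)\in(0,1)$ is the limit of $q_{11}$ from the block-size stationarity condition. The main obstacle is matching conventions: KRR--S phrase their result for general $H$ and with their own labelling of the podes, so the real work is confirming that their exceptional set and limiting cross density coincide exactly with $(d-1)/d$ and $\zeta_d$ as defined here, and that their smaller pode is our first block.
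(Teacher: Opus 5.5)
Your plan matches the paper: the paper also does not reprove this result, but records \cite[Theorem~1.1]{kenyon2014entropy} specialized to $d$-regular $H$, quoting the limits directly and replacing the KRR--S identity for $q_{11}$ by the bare fact $a_d(\eps)\in(0,1)$ (see \Cref{rmk:krrs-omitted-identity}). One caution about your informal recap: for a general graphon the Euler--Lagrange equation does not by itself force finitely many values, because the first variation of $t(H,\cdot)$ at $(x,y)$ is a rooted homomorphism density rather than a function of $W(x,y)$ alone, and the finite-root reduction in \Cref{lem:stationary-rank-one-bipodality} depends on the rank-one form, so that sentence should not be presented as the KRR--S mechanism.
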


\begin{theorem}[{KRR--S cross density for regular graphs, \cite[Theorem~3.3]{kenyon2014entropy}}]
\label{thm:krrs-cross-density}
For every \(d\ge2\) and \(\eps\in(0,1)\), the function
\(z\mapsto\psi_d(\eps,z)\) has a unique critical point, denoted
\(\zeta_d(\eps)\), at which it attains its maximum on \((0,1)\).
The map \(\zeta_d:(0,1)\to(0,1)\) is strictly decreasing, and is an involution, i.e.,
$\eps=\zeta_d(\zeta_d(\eps))$.  Its unique fixed point is \((d-1)/d\).
\end{theorem}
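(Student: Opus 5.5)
The plan is to rewrite $\psi_d$ as a negative ratio of Bregman divergences and read everything off from common tangent lines, in the same spirit as the scalar analysis of \Cref{sec:lz-boundary}. Put $\phi(z):=z\log z+(1-z)\log(1-z)=-2S_0(z)$ and $g(z):=z^d$. For a function $F$, write $D_F(z,\eps):=F(z)-F(\eps)-F'(\eps)(z-\eps)$. Then
\[
  \psi_d(\eps,z)=-\frac{D_\phi(z,\eps)}{D_g(z,\eps)} .
\]
Both divergences are nonnegative and vanish only at $z=\eps$. So maximizing $\psi_d(\eps,\cdot)$ is the same as minimizing $R_\eps(z):=D_\phi(z,\eps)/D_g(z,\eps)$. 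At $z=\eps$ the quotient extends continuously with value $\phi''(\eps)/g''(\eps)$.

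The basic local-curvature ratio is
\[
  \frac{\phi''(z)}{g''(z)}=\frac{1}{d(d-1)\,z^{d-1}(1-z)} .
\]
Since $z^{d-1}(1-z)$ increases on $(0,r_*)$ and decreases on $(r_*,1)$, this ratio is strictly decreasing then strictly increasing, with its unique minimum at $r_*=(d-1)/d$.

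For a level $\kappa$, set $F_\kappa:=\phi-\kappa g$. Then $F_\kappa''=g''\bigl(\phi''/g''-\kappa\bigr)$, so $F_\kappa$ is convex on $[0,1]$ when $\kappa\le\kappa_0:=\phi''(r_*)/g''(r_*)$. When $\kappa>\kappa_0$, $F_\kappa$ is convex–concave–convex, exactly as in \Cref{lem:convexity-defect}.

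Let $\kappa(\eps):=\inf_z R_\eps(z)$. Then $D_{F_{\kappa(\eps)}}(\cdot,\eps)\ge0$, so the tangent line to $F_{\kappa(\eps)}$ at $\eps$ supports its graph. Conversely, $z\ne\eps$ is a critical point of $R_\eps$ with value $c$ precisely when $D_{F_c}(z,\eps)=0$ and $F_c'(z)=F_c'(\eps)$. In words, the tangent to $F_c$ at $\eps$ is also tangent at $z$.

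\emph{Step 1 (existence).} At the endpoints $z\to0,1$ we have $D_g$ bounded and $D_\phi$ finite, so the infimum of $R_\eps$ is attained on the closed interval.

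\emph{Step 2 (the infimum is off the diagonal for $\eps\ne r_*$).} For $\eps\ne r_*$ we have $\kappa(\eps)<\phi''(\eps)/g''(\eps)$. Otherwise $F_\kappa$ with $\kappa=\phi''(\eps)/g''(\eps)$ satisfies $F_\kappa''(\eps)=0$ and $F_\kappa'''(\eps)\ne0$, since $(\phi''/g'')'(\eps)\ne0$. Then $F_\kappa''$ changes sign at $\eps$, and $F_\kappa$ dips below its tangent at $\eps$, contradicting that the tangent supports the graph. Hence the minimizer $\zeta_d(\eps)$ lies at some $z\ne\eps$, and it is a double contact of the supporting tangent. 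For $\eps=r_*$, $F_{\kappa_0}$ is convex with a fourth-order contact at $r_*$, so $\zeta_d(r_*)=r_*$. This also shows that $r_*$ is the only fixed point.

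\emph{Step 3 (uniqueness of the critical point).} This is the step I expect to be the main obstacle. One must rule out critical points of $R_\eps$ at levels $c>\kappa(\eps)$, as well as several minimizers. I would argue with the shape of $F_c$ as in \Cref{lem:contact-points}.
\begin{itemize}
  \item For $c\le\kappa_0$, $F_c$ is convex, so no line is tangent at two distinct points unless $F_c$ is affine between them. That is impossible because $F_c''>0$ off a single point.
  \item For $c>\kappa_0$, $F_c$ is convex–concave–convex. A line tangent at two points $z_1<z_2$ must have its two contacts in the two outer convex regions. By strict monotonicity of the contact points in $c$, as in \Cref{lem:contact-points}, there is exactly one such bitangent for each $c$. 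Hence the pairs $\{\eps,z\}$ arising for different $c$ are distinct, and a fixed $\eps$ occurs for at most one $c$ and one partner $z$.
\end{itemize}
For the monotonicity claim I would differentiate the two bitangent equations in $c$, using $F_c''>0$ at the contacts and $\partial_c F_c'=-g'$. One then checks the sign of the resulting linear system, as in the proof of $dx_a/dp>0$.

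\emph{Step 4 (involution and monotonicity).} The bitangent condition for $F_\kappa$ at $\{\eps,\zeta\}$ is symmetric in the two contacts. The same supporting line is tangent at $\zeta$, so $D_{F_\kappa}(\cdot,\zeta)\ge0$ with equality at $\eps$. Thus $\eps$ minimizes $R_\zeta$, and uniqueness from Step~3 gives $\zeta_d(\zeta_d(\eps))=\eps$. Continuity of $\zeta_d$ follows from uniqueness of the minimizer together with compactness, or from the implicit function theorem using nondegeneracy of the contacts.

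Finally, a continuous involution of $(0,1)$ that is not the identity and has exactly one fixed point must be strictly decreasing. Indeed, an involution is injective, hence strictly monotone. If it were increasing, then $\zeta(x)>x$ would give $x=\zeta(\zeta(x))>\zeta(x)$, a contradiction, and likewise for $\zeta(x)<x$; so an increasing involution is the identity. This yields all the claims of the theorem.
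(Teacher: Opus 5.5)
The paper gives no proof of this statement. It imports it from KRR--S \cite[Theorem~3.3]{kenyon2014entropy}, and the Lean development uses an alternative argument that the paper does not describe. The only related argument in the text is \Cref{rmk:bipodal-parameter-expansions}, which derives $\zeta_d=\sm$ from the Lubetzky--Zhao supporting line but takes the KRR--S uniqueness of the maximizer as an input.

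Your route reverses that logic and is sound. Since $J_p-\phi$ is affine in $z$, the supporting cost gap $g_r$ of \eqref{eq:supporting-cost-gap} is exactly your $D_{F_{\kappa(r)}}(\cdot,r)$, with $\kappa(r)$ equal to the slope $\beta$ in that remark. You simply parametrize the family of bitangents by the coefficient $c$ of $z^d$ instead of by $p$. This gives a self-contained, elementary proof of existence, the involution, monotonicity and the fixed point, and it explains $\zeta_d=\sm$ rather than deducing it from KRR--S; the paper's choice buys brevity. The sign check you sketch does go through: $da/dc=-D_g(b,a)/\bigl(F_c''(a)(b-a)\bigr)<0$ and $F_c''(b)\,db/dc=g'(b)-\frac{g(b)-g(a)}{b-a}>0$.

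Three points need to be added.

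\emph{Interior minimizer.} Step~1 only produces a minimizer on $[0,1]$. Both the statement and the double-contact reading in Step~2 need it in $(0,1)$. This holds because $\partial_zR_\eps\to-\infty$ as $z\downarrow0$ and $\partial_zR_\eps\to+\infty$ as $z\uparrow1$: there $\phi'(z)=\log\frac{z}{1-z}$ diverges while $D_g(\cdot,\eps)$ stays positive and bounded.

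\emph{The diagonal point.} Your bitangent characterization covers only critical points $z\ne\eps$, so $z=\eps$ must be checked separately. Expanding gives $R_\eps(z)=\frac{\phi''}{g''}(\eps)+\frac13\bigl(\frac{\phi''}{g''}\bigr)'(\eps)(z-\eps)+O((z-\eps)^2)$. Hence $z=\eps$ is critical exactly when $\eps=r_*$.

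\emph{Location of the contacts.} You claim that a line tangent to $F_c$ at two points has its contacts in the two outer convex regions. This is not contained in \Cref{lem:contact-points}: its uniqueness argument presupposes it, because there the line supports the graph. At a non-minimal critical level, the bitangent need not be supporting. The claim does hold. Set $G:=F_c'-m$, which is increasing, then decreasing, then increasing, with $G(z_1)=G(z_2)=0$. Any other placement of $z_1<z_2$ either puts two zeros in one strictly monotone piece, or gives $G$ a strict constant sign on $(z_1,z_2)$, contradicting $\int_{z_1}^{z_2}G=0$.

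With this, and uniqueness of the outer bitangent for each $c$, that bitangent is the affine face of the convex minorant of $F_c$. This face exists by the argument of \Cref{lem:contact-points}, since $F_c'\to\mp\infty$ at the endpoints, and it supports the graph. So every off-diagonal critical point of $R_\eps$ is already a global minimizer at level $\kappa(\eps)$, and the monotonicity in $c$ is not even needed.
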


\begin{remark}\label{rmk:krrs-omitted-identity}
We omit the identity \(S_0'(q_{11})=2S_0'(q_{12})-S_0'(q_{22})\)
asserted in \cite[Theorem~1.1]{kenyon2014entropy} and the claim in
\cite[Theorem~3.3]{kenyon2014entropy} that $\zeta_d'$ is nowhere zero.
Neither assertion is used in our subsequent arguments.
In place of the identity for $q_{11}$, we use only that its limiting value
$a_d(\eps)$ lies in $(0,1)$.
\end{remark}

Before proving \Cref{thm:krrs-analytic-extension}, we clarify what must be added to
\Cref{thm:krrs-bipodality}.  Although the latter states the threshold
\(\tau_0(\eps)\) pointwise, its joint-analyticity assertion implies that, for
every nonexceptional \(\eps_0\), there are a neighborhood \(U\) of \(\eps_0\)
and \(\Delta>0\) such that the theorem statement holds uniformly for
\(U\times(0,\Delta)\).  It does not, however, provide an analytic extension
through \(\vartheta=0\).  The main purpose of the following proof is to construct this
two-sided jointly analytic extension, which is needed for the boundary
analysis in
\Cref{thm:positive-second-variation,thm:nonexceptional-endpoint}.

\begin{proof}[Proof of \Cref{thm:krrs-analytic-extension}]
Let $\eps_*:=r_*=(d-1)/d$
and fix
\[
  \eps_0\in(0,1)\setminus\{\eps_*\},
  \qquad
  a_0:=a_d(\eps_0),
  \qquad
  z_0:=\zeta_d(\eps_0).
\]
By \Cref{thm:krrs-bipodality}, \(z_0\ne\eps_0\) and the parameters converge to
\begin{equation}\label{eq:krrs-parameter-limits}
  (q_{11},q_{12},q_{22},c)\longrightarrow(a_0,z_0,\eps_0,0).
\end{equation}
The local Taylor remainders below are uniform on sufficiently small fixed
neighborhoods of this base point, chosen in terms of $H$ and $\eps_0$.

\paragraph{1. Nondegeneracy of the scalar maximizer.}
For fixed \(\eps\), put
\begin{align}
  \mathcal N_\eps(z)
  &:=2\bigl[S_0(z)-S_0(\eps)-S_0'(\eps)(z-\eps)\bigr],\label{eq:krrs-entropy-remainder}\\
  \mathcal D_\eps(z)
  &:=z^d-\eps^d-d\eps^{d-1}(z-\eps).\label{eq:krrs-moment-remainder}
\end{align}
Thus, \(\psi_d(\eps,z)=\mathcal N_\eps(z)/\mathcal D_\eps(z)\) when
\(z\ne\eps\).  We claim that
\begin{equation}\label{eq:krrs-scalar-nondegeneracy}
  \partial_z^2\psi_d(\eps_0,z_0)<0.
\end{equation}
To see this, \(z_0\) is an interior maximizer, so its second derivative is
nonpositive.  Suppose for contradiction that it vanishes.  The third
derivative must then vanish as well, since an interior local maximum cannot
have a first nonzero derivative of odd order.  At \((\eps_0,z_0)\) we have
\(z_0\ne\eps_0\), and hence the identity
\(\mathcal N_{\eps_0}=\psi_d(\eps_0,\cdot)\mathcal D_{\eps_0}\) holds.  Since the first three derivatives of \(\psi_d\)
at \(z_0\) would vanish, by differentiating this identity we would obtain
\begin{equation}\label{eq:krrs-ratio-derivatives}
  \mathcal N_{\eps_0}''(z_0)
    =\psi_d(\eps_0,z_0)\mathcal D_{\eps_0}''(z_0),
  \qquad
  \mathcal N_{\eps_0}'''(z_0)
    =\psi_d(\eps_0,z_0)\mathcal D_{\eps_0}'''(z_0).
\end{equation}
The relevant derivatives are
\begin{align*}
  \mathcal N_\eps''(z)&=-\frac1{z(1-z)},
  &
  \mathcal N_\eps'''(z)&=\frac{1-2z}{z^2(1-z)^2},\\
  \mathcal D_\eps''(z)&=d(d-1)z^{d-2},
  &
  \mathcal D_\eps'''(z)&=d(d-1)(d-2)z^{d-3}.
\end{align*}
If \(d=2\), then \(\mathcal D_{\eps_0}'''\equiv0\), so the second identity
in \eqref{eq:krrs-ratio-derivatives} gives
\(\mathcal N_{\eps_0}'''(z_0)=0\), hence \(z_0=1/2=\eps_*\).  If
\(d\ge3\), divide the second identity in
\eqref{eq:krrs-ratio-derivatives} by the first; the division is legitimate
because both sides of the first equal
\(\mathcal N_{\eps_0}''(z_0)=-1/(z_0(1-z_0))\ne0\), and it yields
\[
  \frac{2z_0-1}{z_0(1-z_0)}=\frac{d-2}{z_0},
\]
hence again \(z_0=(d-1)/d=\eps_*\).  In either case, this implies $\eps_0 = (d-1)/d$,
contrary to the choice of \(\eps_0\).  This proves
\eqref{eq:krrs-scalar-nondegeneracy}.

\paragraph{2. An analytic parametrization of the two constraints.}
Write a bipodal graphon, with the first pode of size \(c\), as
\[
  W(a,b,q,c),
  \qquad
  (a,b,q)=(q_{11},q_{12},q_{22}).
\]
Its edge density is
\[
  \mathcal E(a,b,q,c)
  =c^2a+2c(1-c)b+(1-c)^2q.
\]
For prescribed edge density \(\eps\), solve this equation exactly for \(q\):
\begin{equation}\label{eq:krrs-edge-constraint}
  q=Q(\eps,a,b,c)
  :=\frac{\eps-c^2a-2c(1-c)b}{(1-c)^2}.
\end{equation}
The function \(Q\) is real-analytic near
\((\eps_0,a_0,z_0,0)\), and
\begin{equation}\label{eq:krrs-large-block-expansion}
  Q(\eps,a,b,c)
  =\eps+2(\eps-b)c+(3\eps-2b-a)c^2+O(c^3).
\end{equation}
Here the remainder has an absolute constant, uniformly for
$(\eps,a,b)\in[0,1]^3$ and $|c|\le1/2$.
Define the edge-constrained \(H\)-density and entropy by
\begin{align*}
  \widehat{\mathcal T}(\eps,a,b,c)
    &:=t\bigl(H,W(a,b,Q(\eps,a,b,c),c)\bigr),\\
  \widehat{\mathcal S}(\eps,a,b,c)
    &:=c^2S_0(a)+2c(1-c)S_0(b)
       +(1-c)^2S_0\bigl(Q(\eps,a,b,c)\bigr).
\end{align*}

Let \(n_1\) and \(n_d\) be the numbers of vertices of \(H\) of degrees
\(1\) and \(d\), respectively, and put \(v=n_1+n_d\).  Then
\(n_1+dn_d=2m\), and \(n_d>0\).  Expanding the homomorphism-density sum
according to the set of vertices assigned to the first pode gives
\begin{align}
  \left.\partial_c\widehat{\mathcal T}(\eps,a,b,c)\right|_{c=0}
  &=\sum_{x\in V(H)}b^{\deg(x)}\eps^{m-\deg(x)}
    -v\eps^m+2m\eps^{m-1}(\eps-b)\notag\\
  &=n_d\eps^{m-d}
      \bigl[b^d-\eps^d-d\eps^{d-1}(b-\eps)\bigr].
      \label{eq:krrs-density-derivative}
\end{align}
To verify the first equality, consider first an assignment in which \(x\) is
the unique vertex in the first pode.  Its contribution to
\(\widehat{\mathcal T}\) is
\[
  c(1-c)^{v-1}b^{\deg(x)}
  Q(\eps,a,b,c)^{m-\deg(x)},
\]
whose derivative at \(c=0\) is
\(b^{\deg(x)}\eps^{m-\deg(x)}\).  Summing over \(x\in V(H)\) gives the first
term.  If at least two vertices are assigned to the first pode, the
corresponding summand contains a factor \(c^k\) for some \(k\ge2\), while
all its other factors are analytic near \(c=0\); hence its derivative at
\(c=0\) vanishes.  Finally, the all-second-pode assignment contributes
\((1-c)^vQ(\eps,a,b,c)^m\).  Since \(Q(\eps,a,b,0)=\eps\) and
\(\partial_cQ|_{c=0}=2(\eps-b)\), its derivative at \(c=0\) is
\[
  -v\eps^m+m\eps^{m-1}\,2(\eps-b).
\]
This verifies the first equality in \eqref{eq:krrs-density-derivative}.
The second equality follows from the two identities for \(n_1,n_d\) above.

Set
\begin{equation}\label{eq:krrs-density-coefficient}
  \mathcal A(\eps,b)
  :=n_d\eps^{m-d}\mathcal D_\eps(b),
\end{equation}
which is equal to $\left.\partial_c\widehat{\mathcal T}(\eps,a,b,c)\right|_{c=0}$ by \eqref{eq:krrs-density-derivative}.
Strict convexity of \(z\mapsto z^d\) gives
\(\mathcal D_\eps(b)>0\) whenever \(b\ne\eps\).  In particular,
\(\mathcal A(\eps_0,z_0)>0\).  The analytic implicit function theorem,
applied to
\[
  \widehat{\mathcal T}(\eps,a,b,c)=\eps^m+\vartheta,
\]
gives neighborhoods of \((\eps_0,a_0,z_0,0)\) and a unique
real-analytic function
\begin{equation}\label{eq:krrs-density-constraint}
  c=C(\eps,a,b,\vartheta),
  \qquad
  C(\eps,a,b,0)=0,
\end{equation}
that solves the \(H\)-density constraint.  After shrinking the
neighborhood, \(Q\) and \(C\) take values in \((0,1)\) whenever
\(\vartheta>0\) and \((\eps,a,b)\) is sufficiently close to the base point:
for \(Q\), this follows by continuity and
\(Q(\eps_0,a_0,z_0,0)=\eps_0\in(0,1)\), while for \(C\) it follows from
\(C(\eps,a,b,0)=0\) together with
\(\partial_\vartheta C(\eps,a,b,0)=1/\mathcal A(\eps,b)>0\), obtained by
differentiating the constraint in \(\vartheta\).

Substitute \eqref{eq:krrs-density-constraint} into the entropy and write
\begin{equation}\label{eq:krrs-constrained-entropy}
  \Sigma(\eps,a,b,\vartheta)
  :=\widehat{\mathcal S}
       \bigl(\eps,a,b,C(\eps,a,b,\vartheta)\bigr).
\end{equation}
Since \(\Sigma(\eps,a,b,0)=S_0(\eps)\), there exists a real-analytic function \(L\) such that
\begin{equation}\label{eq:krrs-entropy-quotient}
  \Sigma(\eps,a,b,\vartheta)
  =S_0(\eps)+\vartheta L(\eps,a,b,\vartheta),
  \qquad
  L(\eps,a,b,\vartheta)
  :=\int_0^1
     \partial_\vartheta\Sigma(\eps,a,b,t\vartheta)\,dt.
\end{equation}
For \(\vartheta\ne0\), \(L\) agrees with
\((\Sigma-S_0(\eps))/\vartheta\), while the integral formula defines its
real-analytic extension to \(\vartheta=0\).

Using \(Q(\eps,a,b,0)=\eps\) and
\(\partial_cQ(\eps,a,b,0)=2(\eps-b)\), since the term \(c^2S_0(a)\) makes no contribution to the first derivative, direct differentiation gives
\begin{equation}\label{eq:krrs-entropy-derivative}
  \begin{aligned}
  \left.\partial_c\widehat{\mathcal S}(\eps,a,b,c)\right|_{c=0}
  &=2S_0(b)-2S_0(\eps)+2S_0'(\eps)(\eps-b)\\
  &=2\bigl[S_0(b)-S_0(\eps)-S_0'(\eps)(b-\eps)\bigr]\\
  &=\mathcal N_\eps(b).
  \end{aligned}
\end{equation}

Differentiating the \(H\)-density constraint with respect to \(\vartheta\) at
\(\vartheta=0\), and using
\eqref{eq:krrs-density-derivative} and \eqref{eq:krrs-density-coefficient}, gives
\[
  \mathcal A(\eps,b)\,\partial_\vartheta C(\eps,a,b,0)=1,
  \qquad
  \partial_\vartheta C(\eps,a,b,0)=\frac{1}{\mathcal A(\eps,b)}.
\]
Moreover, \eqref{eq:krrs-entropy-quotient} gives
\(L(\eps,a,b,0)=\partial_\vartheta\Sigma(\eps,a,b,0)\).  Hence, the chain rule
and \eqref{eq:krrs-entropy-derivative} yield the boundary identity
\begin{equation}\label{eq:krrs-quotient-boundary}
  \begin{aligned}
  L(\eps,a,b,0)
  &=\left.\partial_c\widehat{\mathcal S}(\eps,a,b,c)\right|_{c=0}
    \partial_\vartheta C(\eps,a,b,0)\\
  &=\frac{\mathcal N_\eps(b)}{\mathcal A(\eps,b)}
   =\frac{\psi_d(\eps,b)}{n_d\eps^{m-d}}.
  \end{aligned}
\end{equation}
Thus, the coefficient \(L(\eps,a,b,0)\) of \(\vartheta\) in the Taylor
expansion of the entropy \(\Sigma(\eps,a,b,\vartheta)\) about \(\vartheta=0\) is
independent of \(a=q_{11}\).

\paragraph{3. Construction of a two-sided analytic solution of the
stationarity equations.}
We next retain exactly the second-order information needed to recover the
otherwise invisible parameter \(a=q_{11}\).

We use the following elementary analytic factorization fact.  Let \(f\) be
real-analytic near a point of the hyperplane \(\{x_j=0\}\) and suppose
\(\partial_{x_j}^{\,i}f=0\) on that hyperplane for \(0\le i<k\).  Then
\(f=x_j^{k}g\) with \(g\) real-analytic.  For \(k=1\), one may take
\[
  g(x)=\int_0^1
    (\partial_jf)(x_1,\dots,tx_j,\dots,x_n)\,dt.
\]
The general case follows by iteration.  We apply this factorization in the
\(c\)-variable with \(k=3\).  For each of \(\widehat{\mathcal T}\) and
\(\widehat{\mathcal S}\), the difference from its second-order Taylor
polynomial at \(c=0\) has vanishing \(c\)-derivatives of orders \(0,1,2\) on
\(\{c=0\}\).  Hence, these differences can be written as
\(c^3R_{\mathcal T}\) and \(c^3R_{\mathcal S}\), respectively, with
\(R_{\mathcal T}\) and \(R_{\mathcal S}\) real-analytic.  Writing the
second-order Taylor coefficients in the form justified below gives
real-analytic functions \(B_0,B_1,R_0,R_1\), depending on \((\eps,b)\), such
that
\begin{align}
  \widehat{\mathcal T}(\eps,a,b,c)
  &=\eps^m+\mathcal A(\eps,b)c
    +\bigl[B_0(\eps,b)+B_1(\eps,b)a\bigr]c^2
    +c^3R_{\mathcal T}(\eps,a,b,c),
    \label{eq:krrs-density-expansion}\\
  \widehat{\mathcal S}(\eps,a,b,c)
  &=S_0(\eps)+\mathcal N_\eps(b)c
    +\bigl[S_0(a)+R_0(\eps,b)+R_1(\eps,b)a\bigr]c^2
    +c^3R_{\mathcal S}(\eps,a,b,c).
    \label{eq:krrs-entropy-expansion}
\end{align}
We now verify the stated form of the second-order coefficients in
\eqref{eq:krrs-density-expansion} and \eqref{eq:krrs-entropy-expansion}.  By
\eqref{eq:krrs-large-block-expansion}, write
\[
  Q(\eps,a,b,c)=\eps+q_1c+q_2c^2+O(c^3),
  \qquad q_1=2(\eps-b),\quad q_2=3\eps-2b-a.
\]
Thus, \(q_1\) is independent of \(a\), whereas \(q_2\) is affine in \(a\).
In the homomorphism-density expansion, the contribution at order \(c^2\)
from assignments with no vertex in the first pode depends linearly on
\(q_2\), while all its remaining terms depend only on \(q_1\).  An assignment
with one vertex in the first pode already carries a factor \(c\), so its
coefficient at order \(c^2\) depends only on \(q_1\).  An assignment with
exactly two vertices in the first pode contains at most one factor \(a\),
because \(H\) is simple and those two vertices support at most one edge.
Finally, assignments with at least three vertices in the first pode are
divisible by \(c^3\).  Hence, the coefficient of \(c^2\) in
\(\widehat{\mathcal T}\) is affine in \(a\), which proves
\eqref{eq:krrs-density-expansion}.

For \(\widehat{\mathcal S}\), the term \(c^2S_0(a)\) is the only possible
nonlinear dependence on \(a\).  To see this, \(2c(1-c)S_0(b)\) is independent of
\(a\), and the coefficient of \(c^2\) in the remaining term is
\[
  [c^2]\,(1-c)^2S_0(Q)
  =S_0(\eps)-2S_0'(\eps)q_1+S_0'(\eps)q_2
    +\frac12S_0''(\eps)q_1^2.
\]
Since \(q_1\) is independent of \(a\) and \(q_2\) is affine in \(a\), this
coefficient is affine in \(a\).  This proves
\eqref{eq:krrs-entropy-expansion}.

Under the constraint
\(\widehat{\mathcal T}(\eps,a,b,c)=\eps^m+\vartheta\), solving
\eqref{eq:krrs-density-expansion} for
\(c=C(\eps,a,b,\vartheta)\) as a power series in \(\vartheta\) gives
\begin{equation}\label{eq:krrs-block-size-expansion}
  \begin{aligned}
    C(\eps,a,b,\vartheta)
    &=\frac{\vartheta}{\mathcal A(\eps,b)}
      -\frac{B_0(\eps,b)+B_1(\eps,b)a}
             {\mathcal A(\eps,b)^3}\,\vartheta^2
      +O_{H,\eps_0}(\vartheta^3).
  \end{aligned}
\end{equation}
Substituting \eqref{eq:krrs-block-size-expansion} into
\eqref{eq:krrs-entropy-expansion} and recalling
\eqref{eq:krrs-entropy-quotient} gives
\begin{equation}\label{eq:krrs-quotient-expansion}
  L(\eps,a,b,\vartheta)
  =\frac{\mathcal N_\eps(b)}{\mathcal A(\eps,b)}
    +\vartheta M(\eps,a,b)+O_{H,\eps_0}(\vartheta^2),
\end{equation}
where
\begin{equation}\label{eq:krrs-second-coefficient}
  M(\eps,a,b)
  =\frac{S_0(a)+R_0(\eps,b)+R_1(\eps,b)a}
         {\mathcal A(\eps,b)^2}
   -\frac{\mathcal N_\eps(b)
          [B_0(\eps,b)+B_1(\eps,b)a]}
         {\mathcal A(\eps,b)^3}.
\end{equation}
Consequently,
\begin{equation}\label{eq:krrs-coefficient-concavity}
  \partial_a^2M(\eps,a,b)
  =\frac{S_0''(a)}{\mathcal A(\eps,b)^2}<0.
\end{equation}

By \eqref{eq:krrs-quotient-boundary}, \(\partial_aL(\eps,a,b,0)=0\); hence, by the
analytic factorization fact above, there is a unique analytic function
\(\mathcal F_1\) satisfying
\begin{equation}\label{eq:krrs-small-block-stationarity}
  \partial_aL(\eps,a,b,\vartheta)
  =\vartheta\mathcal F_1(\eps,a,b,\vartheta).
\end{equation}
Also put
\begin{equation}\label{eq:krrs-cross-block-stationarity}
  \mathcal F_2(\eps,a,b,\vartheta)
  :=\partial_bL(\eps,a,b,\vartheta).
\end{equation}
For fixed \((\eps,\vartheta)\) with \(\vartheta>0\), every interior local
maximizer \((a,b)\) of
\[
  (a,b)\longmapsto\Sigma(\eps,a,b,\vartheta)
\]
satisfies \(\mathcal F_1=\mathcal F_2=0\).  To verify this,
\eqref{eq:krrs-entropy-quotient} gives
\(\partial_a\Sigma=\vartheta^2\mathcal F_1\) and
\(\partial_b\Sigma=\vartheta\mathcal F_2\).

We next show that \((\mathcal F_1,\mathcal F_2)\) vanishes at
\((\eps_0,a_0,z_0,0)\) and that its Jacobian with respect to \((a,b)\) is
invertible there.  These are the hypotheses needed to solve the system
\(\mathcal F_1=\mathcal F_2=0\) locally for \((a,b)\) as analytic functions
of \((\eps,\vartheta)\).  First,
\(\mathcal F_2(\eps_0,a_0,z_0,0)=0\) by
\eqref{eq:krrs-quotient-boundary} and the critical-point property of \(z_0\).  For
fixed \(\eps=\eps_0\) and sufficiently small \(\vartheta>0\), let
\(W_\vartheta\) be the entropy maximizer supplied by
\Cref{thm:krrs-bipodality} for \(\tau=\eps_0^m+\vartheta\).  Denote its bipodal
parameters by \((a,b,q,c)(\vartheta)\), after relabeling so that the first
pode is the one whose size \(c(\vartheta)\) tends to zero as
\(\vartheta\downarrow0\).  By \eqref{eq:krrs-parameter-limits}, they converge to
\((a_0,z_0,\eps_0,0)\), so they eventually lie in the neighborhood
parametrized above.  Since \(W_\vartheta\) has edge density \(\eps_0\),
solving the edge constraint for \(q\) gives
\(q=Q(\eps_0,a,b,c)\).  Its \(H\)-density is
\(\eps_0^m+\vartheta\); hence the local uniqueness assertion in
\eqref{eq:krrs-density-constraint} gives
\(c=C(\eps_0,a,b,\vartheta)\).  Therefore
\(s(W_\vartheta)=\Sigma(\eps_0,a,b,\vartheta)\).  For sufficiently small
\(\vartheta>0\), the three edge-probability parameters lie in \((0,1)\) by
\eqref{eq:krrs-parameter-limits}, while \(c\in(0,1)\) because a null pode would give
zero excess.  Hence, by continuity of \(C\) and \(Q\), every sufficiently
nearby pair \((\tilde a,\tilde b)\), with \(c\) and \(q\) supplied by \(C\)
and \(Q\), yields an admissible bipodal graphon with the same two constraints
and entropy \(\Sigma(\eps_0,\tilde a,\tilde b,\vartheta)\).  Since
\(W_\vartheta\) maximizes entropy among all graphons with these constraints,
\((a,b)(\vartheta)\) is an interior local maximizer of
\(\Sigma(\eps_0,\cdot,\cdot,\vartheta)\) and therefore satisfies
\(\mathcal F_1=\mathcal F_2=0\).  Passage to the limit
\(\vartheta\downarrow0\) gives
\begin{equation}\label{eq:krrs-boundary-stationarity}
  \mathcal F_1(\eps_0,a_0,z_0,0)
  =\mathcal F_2(\eps_0,a_0,z_0,0)=0.
\end{equation}
Equations \eqref{eq:krrs-small-block-stationarity} and \eqref{eq:krrs-quotient-expansion} give
\[
  \mathcal F_1(\eps,a,b,0)=\partial_aM(\eps,a,b),
\]
while \eqref{eq:krrs-cross-block-stationarity} and \eqref{eq:krrs-quotient-boundary} give
\[
  \mathcal F_2(\eps,a,b,0)
  =\frac{\partial_z\psi_d(\eps,b)}{n_d\eps^{m-d}}.
\]
Differentiating these identities at \((\eps_0,a_0,z_0)\), and using
\eqref{eq:krrs-coefficient-concavity} and \eqref{eq:krrs-scalar-nondegeneracy}, gives
\begin{align}
  \partial_a\mathcal F_1(\eps_0,a_0,z_0,0)
  &=\frac{S_0''(a_0)}{\mathcal A(\eps_0,z_0)^2}\ne0,
    \label{eq:krrs-jacobian-aa}\\
  \partial_a\mathcal F_2(\eps_0,a_0,z_0,0)
  &=0,
    \label{eq:krrs-jacobian-ba}\\
  \partial_b\mathcal F_2(\eps_0,a_0,z_0,0)
  &=\frac{\partial_z^2\psi_d(\eps_0,z_0)}
          {n_d\eps_0^{m-d}}\ne0.
    \label{eq:krrs-jacobian-bb}
\end{align}
The Jacobian of \((\mathcal F_1,\mathcal F_2)\) with respect to \((a,b)\)
is therefore triangular with nonzero diagonal.  The analytic implicit
function theorem now gives an open interval \(U_{\mathrm{IFT}}\ni\eps_0\),
a number \(\Delta_{\mathrm{IFT}}>0\), an open neighborhood
\(V_{\mathrm{IFT}}\) of \((a_0,z_0)\), and real-analytic functions
\begin{equation}\label{eq:krrs-stationary-densities}
  a_*(\eps,\vartheta),\qquad b_*(\eps,\vartheta)
  \quad
  ((\eps,\vartheta)\in
    U_{\mathrm{IFT}}\times(-\Delta_{\mathrm{IFT}},
                             \Delta_{\mathrm{IFT}}))
\end{equation}
taking values in \(V_{\mathrm{IFT}}\), with
\[
  (a_*(\eps_0,0),b_*(\eps_0,0))=(a_0,z_0).
\]
For each \((\eps,\vartheta)\) in this rectangle,
\[
  \mathcal F_i\bigl(\eps,a_*(\eps,\vartheta),
    b_*(\eps,\vartheta),\vartheta\bigr)=0
  \qquad(i=1,2),
\]
and \((a_*(\eps,\vartheta),b_*(\eps,\vartheta))\) is the unique solution in
\(V_{\mathrm{IFT}}\) of this system.  Define
\begin{align}
  c_*(\eps,\vartheta)
  &:=C\bigl(\eps,a_*(\eps,\vartheta),
                 b_*(\eps,\vartheta),\vartheta\bigr),\label{eq:krrs-block-size-map}\\
  q_*(\eps,\vartheta)
  &:=Q\bigl(\eps,a_*(\eps,\vartheta),
                 b_*(\eps,\vartheta),c_*(\eps,\vartheta)\bigr).
                 \label{eq:krrs-large-block-map}
\end{align}
After shrinking \(U_{\mathrm{IFT}}\) and \(\Delta_{\mathrm{IFT}}\) so that
these compositions are defined, all four functions are real-analytic on the
two-sided rectangle
\(U_{\mathrm{IFT}}\times(-\Delta_{\mathrm{IFT}},\Delta_{\mathrm{IFT}})\).
At \(\vartheta=0\), equations \eqref{eq:krrs-density-constraint} and
\eqref{eq:krrs-edge-constraint} give
\[
  c_*(\eps,0)=0,
  \qquad
  q_*(\eps,0)=\eps.
\]
Moreover, \(\mathcal F_2(\eps,a_*(\eps,0),b_*(\eps,0),0)=0\) and
\eqref{eq:krrs-quotient-boundary} show that
\(b_*(\eps,0)\) is a critical point of
\(z\mapsto\psi_d(\eps,z)\).  The uniqueness assertion in
\Cref{thm:krrs-cross-density} shows that this critical point is
unique, so, after shrinking \(U_{\mathrm{IFT}}\),
\begin{equation}\label{eq:krrs-cross-block-limit}
  b_*(\eps,0)=\zeta_d(\eps)
  \qquad(\eps\in U_{\mathrm{IFT}}).
\end{equation}
In particular, this also proves directly that \(\zeta_d\) is real-analytic
locally away from \(\eps_*\).

\paragraph{4. Identification with the entropy-maximizer parameters.}
Let \(\Omega^+\) denote the domain on which
\Cref{thm:krrs-bipodality} supplies jointly real-analytic bipodal optimizer
parameters, expressed in the coordinates \((\eps,\vartheta)\) with
\(\tau=\eps^m+\vartheta\).  For each nonexceptional \(\eps\), this domain is
specified by
\(\eps^m<\tau<\tau_0(\eps)\) for a threshold \(\tau_0(\eps)>\eps^m\)
depending on \(\eps\), so the fiber of
\(\Omega^+\) over each nonexceptional \(\eps\) is an interval
\begin{equation}\label{eq:krrs-domain-fibres}
  \{\vartheta:(\eps,\vartheta)\in\Omega^+\}
  =(0,h(\eps)),
  \qquad
  h(\eps):=\tau_0(\eps)-\eps^m>0.
\end{equation}
Since the theorem states that the parameter map is
real-analytic on \(\Omega^+\), this domain is open (real-analytic maps are,
by definition, defined on open sets).

Choose \(\vartheta^\sharp\in(0,h(\eps_0))\).  Openness at
\((\eps_0,\vartheta^\sharp)\) gives an open interval
\(U_{\mathrm{KRR}}\ni\eps_0\) such that
\((\eps,\vartheta^\sharp)\in\Omega^+\) for every
\(\eps\in U_{\mathrm{KRR}}\).  The interval property
\eqref{eq:krrs-domain-fibres} then implies
\begin{equation}\label{eq:krrs-uniform-domain}
  U_{\mathrm{KRR}}\times(0,\vartheta^\sharp)
  \subseteq\Omega^+.
\end{equation}
Thus, the same interval \(0<\vartheta<\vartheta^\sharp\) lies in the domain of
\Cref{thm:krrs-bipodality} for every \(\eps\in U_{\mathrm{KRR}}\).  This
conclusion uses only the theorem statement and not any localization argument
from its proof.

Shrink to an open interval \(U\ni\eps_0\) whose closure is contained in
\[
  U_{\mathrm{IFT}}\cap U_{\mathrm{KRR}}
  \cap\bigl((0,1)\setminus\{\eps_*\}\bigr),
\]
and choose
\[
  0<\Delta<\min\{\Delta_{\mathrm{IFT}},\vartheta^\sharp\}.
\]
By \Cref{thm:krrs-bipodality}, throughout
\(U\times(0,\Delta)\) the entropy maximizer is unique up to relabeling and
bipodal.  For each \((\eps,\vartheta)\) in this set, let
\(W^+_{\eps,\vartheta}\) be a representative of this maximizer, with its
podes labeled so that the first pode has size \(c^+\) and
\(c^+\to0\) as \(\vartheta\downarrow0\).  The corresponding parameter map
\begin{equation}\label{eq:krrs-optimizer-map}
  (a^+,b^+,q^+,c^+)(\eps,\vartheta)
\end{equation}
is real-analytic.

It remains to identify the optimizer parameter map
\eqref{eq:krrs-optimizer-map} with the two-sided analytic solution of the
stationarity equations constructed in
\eqref{eq:krrs-stationary-densities}--\eqref{eq:krrs-large-block-map}.  At \(\eps=\eps_0\), the
one-sided limits in \Cref{thm:krrs-bipodality} and
\eqref{eq:krrs-parameter-limits} show that the parameters in
\eqref{eq:krrs-optimizer-map} lie in the local solution neighborhoods
for all sufficiently small \(\vartheta>0\).  Choose one such
\(\vartheta_1\in(0,\Delta)\).  The three edge-probability parameters have
limits \(a_0,z_0,\eps_0\in(0,1)\), and \(c^+(\eps_0,\vartheta_1)\in(0,1)\)
because a bipodal graphon with a null pode is constant and has excess
\(0\); after taking \(\vartheta_1\) sufficiently small, all four parameters
therefore lie in \((0,1)\).  By continuity, the same is true on a nonempty
open neighborhood \(\mathcal O\subset U\times(0,\Delta)\) of
\((\eps_0,\vartheta_1)\), and all parameters there remain in those
neighborhoods.

For every \((\eps,\vartheta)\in\mathcal O\), we repeat the constraint and
local-maximality argument used to prove \eqref{eq:krrs-boundary-stationarity}.  The edge
constraint gives \(q^+=Q(\eps,a^+,b^+,c^+)\), and the local uniqueness in
\eqref{eq:krrs-density-constraint} gives
\(c^+=C(\eps,a^+,b^+,\vartheta)\).  Since
\(W^+_{\eps,\vartheta}\) maximizes entropy, it is locally maximal among the
nearby bipodal competitors supplied by \(C\) and \(Q\).  Hence
\(\mathcal F_1=\mathcal F_2=0\) at
\((\eps,a^+,b^+,\vartheta)\), and the local uniqueness obtained above gives
\begin{equation}\label{eq:krrs-map-agreement}
  (a^+,b^+,q^+,c^+)
  =(a_*,b_*,q_*,c_*)
  \qquad\text{on }\mathcal O.
\end{equation}
Both sides of \eqref{eq:krrs-map-agreement} are real-analytic on
\(U\times(0,\Delta)\), which is a product of intervals and hence connected.
The one-sided limits in \Cref{thm:krrs-bipodality} hold as
\(\vartheta\downarrow0\) for each fixed \(\eps\), but are not asserted to be
uniform in \(\eps\).  Thus, the argument based on these limits initially
identifies the two parameter maps only on \(\mathcal O\).  The identity
theorem for real-analytic functions extends this equality to all of the
connected set \(U\times(0,\Delta)\).  Consequently,
\((a_*,b_*,q_*,c_*)\) is the desired two-sided real-analytic extension of the
optimizer parameter map \eqref{eq:krrs-optimizer-map}.

Since \(\overline U\) avoids \(\eps_*\), the fixed-point assertion in
\Cref{thm:krrs-cross-density} gives
\(\zeta_d(\eps)\ne\eps\), and hence
\(\mathcal D_\eps(\zeta_d(\eps))>0\), for every \(\eps\in\overline U\).
Finally, since \(C(\eps,a,b,0)\equiv0\), the partial derivatives
\(\partial_aC\) and \(\partial_bC\) vanish at \(\vartheta=0\).
Differentiating \eqref{eq:krrs-block-size-map} with respect to \(\vartheta\) at
\(\vartheta=0\) therefore yields
\[
  \partial_\vartheta c_*(\eps,0)
  =\partial_\vartheta C\bigl(
     \eps,a_*(\eps,0),b_*(\eps,0),0\bigr).
\]
Equations \eqref{eq:krrs-block-size-expansion} and \eqref{eq:krrs-cross-block-limit} now give
\begin{equation}\label{eq:krrs-block-size-derivative}
  \partial_\vartheta c_*(\eps,0)
  =\frac{1}{n_d\eps^{m-d}
       \mathcal D_\eps(\zeta_d(\eps))}>0.
\end{equation}
Moreover, \(c_*\) is analytic on an open rectangle containing the compact
set \(\overline U\times[-\Delta,\Delta]\) and vanishes on
\(\{\vartheta=0\}\), so \(\partial_\vartheta c_*\) is bounded there, and
the mean value theorem gives, uniformly for \(\eps\in U\),
\[
  c_*(\eps,\vartheta)=O_{H,U}(\vartheta)
  \qquad(\vartheta\to0).
\]
Finally, \(c_*(\eps,\vartheta)\) takes values in \((0,1)\) when
\(\eps\in U\) and \(0<\vartheta<\Delta\): this follows, after shrinking
\(\Delta\) once more, from \eqref{eq:krrs-block-size-derivative} and
\(c_*(\eps,0)=0\).

Together with \eqref{eq:krrs-cross-block-limit} and the boundary identities for
\(q_*\) and \(c_*\), this proves every assertion of
\Cref{thm:krrs-analytic-extension}.
\end{proof}

%% file: sections/appendix_lz_boundary.tex
\section{Scalar geometry of the Lubetzky--Zhao boundary}
\label{app:lz-scalar-geometry}

This appendix gives the proofs of the three supporting lemmas used in
\Cref{sec:lz-boundary}, together with the detailed verifications of (M1), (M3),
and (M4) in \Cref{thm:scalar-lz-boundary}.  They are collected here to
separate the scalar convex-analysis details from the construction of the
boundary functions $\pc$ and $\sm$ in the main text.

We shall use the following standard two-point representation of a convex
minorant.

\begin{lemma}[Two-point representation of convex minorants]
\label{lem:two-point-convex-minorant}
Let $I\subset\mathbb R$ be a compact interval, let $f:I\to\mathbb R$ be
continuous, and let $\widehat f$ be the convex minorant of $f$.  Then, for
every $x\in I$,
\[
  \widehat f(x)
  =
  \min\left\{
    \Big(\lambda f(x_1)+(1-\lambda)f(x_2)\Big):
      x_1,x_2\in I,\  \lambda\in[0,1],\
      \lambda x_1+(1-\lambda)x_2=x
  \right\}.
\]
\end{lemma}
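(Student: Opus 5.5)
The plan is to prove the two inequalities $\widehat f\le F$ and $\widehat f\ge F$ separately, where
\[
  F(x):=\inf\bigl\{\lambda f(x_1)+(1-\lambda)f(x_2):\ x_1,x_2\in I,\ \lambda\in[0,1],\ \lambda x_1+(1-\lambda)x_2=x\bigr\}.
\]
Afterwards I will check that the infimum is attained.

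For $\widehat f\le F$, I use only that $\widehat f$ is convex and lies below $f$. For any admissible triple,
\[
  \widehat f(x)\le\lambda\widehat f(x_1)+(1-\lambda)\widehat f(x_2)\le\lambda f(x_1)+(1-\lambda)f(x_2).
\]
Taking the infimum gives $\widehat f\le F$. The trivial choice $x_1=x_2=x$ also shows $F\le f$.

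For $\widehat f\ge F$, I show that $F$ is itself a convex function below $f$. Then maximality of the convex minorant forces $F\le\widehat f$. The main obstacle is convexity of $F$, since a convex combination of two two-point representations is a priori a four-point representation. I would handle this in two steps.

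\emph{Carathéodory step.} In one dimension, any finite convex combination $\sum_i\lambda_i f(y_i)$ with barycenter $x$ can be replaced by a two-point combination with the same barycenter and no larger value. To see this, consider the finitely many points $(y_i,f(y_i))$. Their convex hull is a polygon, and the vertical line through $x$ meets it in a segment whose lower endpoint lies on an edge joining two of the points, one on each side of $x$. That lower endpoint lies weakly below $(x,\sum_i\lambda_i f(y_i))$. Hence the value of the $k$-point representation is at least the value of some two-point representation. Consequently, $F$ equals the infimum over all finite convex combinations.

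\emph{Convexity.} With $F$ expressed this way, convexity is immediate. Given $x=\theta x'+(1-\theta)x''$ and near-optimal finite representations of $x'$ and $x''$, mixing them with weights $\theta$ and $1-\theta$ gives a finite representation of $x$. Its value is within $\varepsilon$ of $\theta F(x')+(1-\theta)F(x'')$.

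Finally, the infimum is attained: the admissible set $\{(x_1,x_2,\lambda)\}$ is a closed subset of the compact set $I\times I\times[0,1]$, and the objective is continuous because $f$ is, so the infimum is a minimum. This also shows $F$ is finite and real-valued, so $F$ is a legitimate competitor in the definition of the convex minorant, and the two inequalities give $\widehat f=F$ with the minimum attained.
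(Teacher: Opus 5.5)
Your proof is correct and follows the same outline as the paper's: the minorant is the infimum over convex combinations, two points suffice in one dimension, and compactness of $I$ with continuity of $f$ gives attainment. The only difference is that the paper cites Rockafellar (Corollary~17.1.5) for the representation and the two-point reduction, whereas you prove both directly, using the maximality of $\widehat f$ and a planar Carath\'eodory argument, so your version is self-contained.
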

\begin{proof}
The general representation theorem expresses the convex minorant as an
infimum over convex combinations of values of $f$.  In one dimension, two
points suffice, and compactness of $I$ and continuity of $f$ ensure that the
infimum is attained.  See, for example,
\cite[Corollary~17.1.5]{RockafellarConvexAnalysis}.
\end{proof}

\begin{proof}[Proof of \Cref{lem:convexity-defect}]
Differentiating the definition of $h_{p,d}$ and using
\[
  J_p'(z)=\log\frac{z(1-p)}{(1-z)p},
  \qquad
  J_p''(z)=\frac1{z(1-z)},
  \qquad
  J_p'''(z)=\frac{2z-1}{z^2(1-z)^2}
\]
gives
\[
  h_{p,d}'(z)=(2-d)J_p''(z)+zJ_p'''(z)
  =\frac{d(z-r_*)}{z(1-z)^2}.
\]
The denominator is positive.  Hence, $h_{p,d}'$ is negative before $r_*$ and
positive after $r_*$, which proves that $h_{p,d}$ is strictly decreasing on
$(0,r_*)$, strictly increasing on $(r_*,1)$, and has its unique minimum at
$r_*$.

We next evaluate this minimum.  Since $r_*=(d-1)/d$,
\[
  r_*J_p''(r_*)=d,
  \qquad
  J_p'(r_*)=\log\frac{(d-1)(1-p)}{p},
\]
and therefore
\[
  h_{p,d}(r_*)
  =d-(d-1)\log\frac{(d-1)(1-p)}{p}.
\]
As a function of $p$, this value is strictly increasing because its derivative
is
\[
  (d-1)\left(\frac1{1-p}+\frac1p\right)>0.
\]
Solving $h_{p,d}(r_*)=0$ gives
\[
  p=\frac{d-1}{(d-1)+\exp(d/(d-1))}=p_*.
\]
It follows that $h_{p,d}(r_*)\ge0$ exactly when $p\ge p_*$.  Since $r_*$ is
the minimum point, $h_{p,d}\ge0$ throughout $(0,1)$ in this case.

Now suppose that $p<p_*$.  Then, $h_{p,d}(r_*)<0$.  The alternative expression
\[
  h_{p,d}(z)=\frac1{1-z}-(d-1)J_p'(z)
\]
shows that $h_{p,d}(z)\to+\infty$ as $z\downarrow0$ and as $z\uparrow1$.
At the left endpoint $J_p'(z)\to-\infty$, while at the right endpoint
$1/(1-z)$ dominates the logarithmic divergence of $J_p'(z)$.  The intermediate
value theorem and the strict monotonicity on either side of $r_*$ therefore
give unique zeros $u_-(p)<r_*<u_+(p)$.  The same monotonicity gives the stated
sign of $h_{p,d}$ on the three intervening intervals.

It remains to translate this sign information into curvature of
$\varphi_{p,d}$.  For $z=x^{1/d}$, the chain rule gives
\[
  \varphi_{p,d}'(x)=\frac{J_p'(z)}{d z^{d-1}},
  \qquad
  \varphi_{p,d}''(x)
  =\frac{zJ_p''(z)-(d-1)J_p'(z)}{d^2z^{2d-1}}
  =\frac{h_{p,d}(z)}{d^2z^{2d-1}}.
\]
The denominator is positive, so $\varphi_{p,d}''(x)$ has the same sign as
$h_{p,d}(z)$.  When $p\ge p_*$, this proves convexity on $(0,1)$, and
continuity extends it to $[0,1]$.  When $p<p_*$, the change of variables
$x=z^d$ converts the three sign intervals for $h_{p,d}$ into precisely the
convexity and concavity intervals stated in the lemma.
\end{proof}

\begin{proof}[Proof of \Cref{lem:contact-points}]
Write $\varphi=\varphi_{p,d}$ and $u_\pm=u_\pm(p)$, put
\[
  \alpha=u_-^d,
  \qquad
  \beta=u_+^d,
\]
and let $\widehat\varphi$ be the convex minorant of $\varphi$.  By
Lemma~\ref{lem:convexity-defect}, $\varphi$ is strictly convex on
$(0,\alpha)$ and $(\beta,1)$ and strictly concave on $(\alpha,\beta)$.
The proof has three parts.  First we use the convex minorant to produce the
two contact points and identify the shape of the minorant.  Then, we prove that
no other pair of contact points can satisfy the same common-tangent equations.
Finally we record the analytic dependence on $p$ and the monotonicity of the
two contacts.

We begin with one elementary tangency observation: if an affine
function $L$ lies below a differentiable function $\varphi$ in a neighborhood
of an interior point $t$, and if $L(t)=\varphi(t)$, then the slope of $L$ is
$\varphi'(t)$.

We first extract the two contacts from the convex minorant.  The convex
minorant cannot touch $\varphi$ at any point $c\in(\alpha,\beta)$.  To see
this, suppose $\widehat\varphi(c)=\varphi(c)$ and take a supporting line to the
convex function $\widehat\varphi$ at $c$.  This line lies below
$\widehat\varphi$, hence below
$\varphi$, and has equality with $\varphi$ at $c$.  By the tangency
observation, this line has slope $\varphi'(c)$.  But $\varphi$ is strictly
concave near $c$, and the tangent line to a strictly concave function lies
strictly above the graph on both sides of the tangency point.  This is a
contradiction.  Thus,
\[
  (\alpha,\beta)\subset\{x : \widehat\varphi(x)<\varphi(x)\}.
\]

Let $(a,b)$ be the connected component of
$\{x : \widehat\varphi(x)<\varphi(x)\}$ containing $(\alpha,\beta)$.  Fix
$x\in(a,b)$.  By Lemma~\ref{lem:two-point-convex-minorant}, there are
$s,t\in[0,1]$ and $\lambda\in[0,1]$ such that
\[
  x=\lambda s+(1-\lambda)t,
  \qquad
  \widehat\varphi(x)=\lambda\varphi(s)+(1-\lambda)\varphi(t).
\]
After interchanging $s$ and $t$ and replacing $\lambda$ by $1-\lambda$ if
necessary, we must have
$s<x<t$ and $0<\lambda<1$; otherwise the value in this representation would
be $\varphi(x)$.  Convexity of $\widehat\varphi$ and the inequality
$\widehat\varphi\le\varphi$ give
\[
  \widehat\varphi(x)
  \le \lambda\widehat\varphi(s)+(1-\lambda)\widehat\varphi(t)
  \le \lambda\varphi(s)+(1-\lambda)\varphi(t)
  =\widehat\varphi(x).
\]
Thus, both inequalities are equalities.  Since both weights are positive,
$\widehat\varphi(s)=\varphi(s)$ and
$\widehat\varphi(t)=\varphi(t)$.  For a convex function, equality at a
nontrivial convex combination forces equality with the chord on the whole
interval; hence $\widehat\varphi$ is affine on $[s,t]$.
Since $(a,b)$ contains no contact points, we must have $s\le a$ and $t\ge b$.
Hence
$\widehat\varphi=L$ on $(a,b)$ for some affine function $L$.

The endpoints of this component are contact points.  At an interior
endpoint this follows from the continuity of
$\varphi-\widehat\varphi$.  At the endpoints of the domain,
Lemma~\ref{lem:two-point-convex-minorant} gives
\[
  \widehat\varphi(0)=\varphi(0),
  \qquad
  \widehat\varphi(1)=\varphi(1),
\]
because the only convex combination of points of $[0,1]$ with barycenter
$0$ (respectively, $1$) is supported at $0$ (respectively, $1$).
Consequently, the affine function $L$ extends continuously to $[a,b]$, and
\[
  L(a)=\varphi(a),\qquad L(b)=\varphi(b),
\]
where the value of $L$ at a domain endpoint is its corresponding one-sided
limit from $(a,b)$.  We next rule out these endpoint cases.

First suppose $a=0$.  Since $L$ is affine, it has some finite slope $m$, and
the contact condition at the endpoint gives $L(0)=\varphi(0)$.  The derivative
of $\varphi$ satisfies $\varphi'(x)\to-\infty$ as $x\downarrow0$.  By the mean
value theorem, for all sufficiently small $x>0$,
\[
  \frac{\varphi(x)-\varphi(0)}{x}<m.
\]
Using $L(x)=L(0)+mx=\varphi(0)+mx$, this inequality becomes
\[
  \varphi(x)<\varphi(0)+mx=L(x),
\]
contradicting $L=\widehat\varphi\le\varphi$ on $(a,b)$.

Since $\varphi'(x)\to+\infty$ as $x\uparrow1$, the case $b=1$ is ruled out by
the same argument.  Therefore $0<a\le\alpha<\beta\le b<1$.
Since the contact points $a$ and $b$ are now interior points, the tangency
observation applies to $L$ at both endpoints.  Thus, the slope of $L$ is
$\varphi'(a)$ and also $\varphi'(b)$.

The inequalities $a\le\alpha$ and $\beta\le b$ are in fact strict.  Suppose
$a=\alpha$.  Then, the slope of $L$ is $\varphi'(\alpha)$.  Since $\varphi'$ is
strictly decreasing immediately to the right of $\alpha$, for small
$x>\alpha$ we have
\[
  \varphi(x)-L(x)
  =
  \int_\alpha^x(\varphi'(t)-\varphi'(\alpha))\,dt
  <0,
\]
again contradicting $L\le\varphi$.  The same argument at $\beta$ shows
$b>\beta$: if $b=\beta$, the tangent line at $\beta$ lies above $\varphi$ just
to the left of $\beta$.
Thus,
\[
  0<a<\alpha<\beta<b<1.
\]

The affine function $L$ is therefore the line through
$(a,\varphi(a))$ and $(b,\varphi(b))$, and the tangency observation gives
\[
  \varphi'(a)=\varphi'(b),
  \qquad
  \varphi(b)-\varphi(a)=\varphi'(a)(b-a),
\]
which are \eqref{eq:contact-equal-slopes}--\eqref{eq:contact-chord-identity}.

There are no other connected components of
\[
  \{x:\widehat\varphi(x)<\varphi(x)\}.
\]
To see this, any such component would be contained entirely in $(0,a)$ or in
$(b,1)$, hence in one of the two strictly convex outer regions.  Repeating
the two-point representation argument above on such a component would show
that $\widehat\varphi$ is affine there and joins its two endpoint contact
points.  But the chord between two points of a strictly convex graph lies
strictly above the graph between them, contradicting
$\widehat\varphi\le\varphi$.  Hence
$\widehat\varphi$ agrees with $\varphi$ outside $[a,b]$ and is the line segment
$L$ on $[a,b]$.  This proves the existence of a pair satisfying
\eqref{eq:contact-equal-slopes}--\eqref{eq:contact-chord-identity} and the asserted shape of the
convex minorant.

We now prove uniqueness of the contact pair.  More precisely, we must
show that there is at most one pair
\[
  a\in(0,\alpha),\qquad b\in(\beta,1)
\]
satisfying the two equations
\eqref{eq:contact-equal-slopes}--\eqref{eq:contact-chord-identity}.  Since the convex-minorant
construction above has already produced one such pair, this will prove
uniqueness of the contact pair in the lemma.

We first observe that on the right convex region $(\beta,1)$, the derivative
$\varphi'$ is strictly increasing.  Therefore, for a fixed
$a\in(0,\alpha)$, there is at most one $b\in(\beta,1)$ satisfying
\[
  \varphi'(b)=\varphi'(a).
\]
Let $G$ be the restriction of $\varphi'$ to the interval $(\beta,1)$.  Then
$G$ is a $C^1$ strictly increasing function with
\[
  G'(b)=\varphi''(b)>0\qquad(\beta<b<1).
\]
Hence, by the inverse function theorem, $G$ has a $C^1$ inverse on its range.
On the set of $a\in(0,\alpha)$ for which
$\varphi'(a)$ belongs to this range, define
\[
  b(a):=G^{-1}(\varphi'(a)).
\]
This is exactly the unique right contact with the same slope as the left
contact $a$ and is $C^1$ as it is a composition of $C^1$ functions.
The set on which $b(a)$ is defined is an interval, because
$\varphi'$ is strictly increasing on the left convex region as well.

By this uniqueness, every possible common-tangent pair must lie on the curve
$a\mapsto(a,b(a))$.
Along this curve, the remaining condition is that the
tangent line at $a$ actually pass through the point $(b(a),\varphi(b(a)))$.
Equivalently, define
\[
  D(a):=\varphi(b(a))-\varphi(a)-\varphi'(a)(b(a)-a).
\]
A zero of $D$ is exactly a pair satisfying the chord equation
\eqref{eq:contact-chord-identity}.  We now show that $D$ is strictly decreasing wherever
it is defined.  Differentiating both sides of the identity
$\varphi'(b(a))=\varphi'(a)$ with respect to $a$ gives
\[
  \varphi''(b(a))\,b'(a)=\varphi''(a).
\]
Differentiating $D$ therefore gives
\begin{align*}
  D'(a)
  &=
  \varphi'(b(a))b'(a)-\varphi'(a)
  -\varphi''(a)(b(a)-a)
  -\varphi'(a)(b'(a)-1)\\
  &=
  \bigl(\varphi'(b(a))-\varphi'(a)\bigr)b'(a)
  -\varphi''(a)(b(a)-a)\\
  &=
  -\varphi''(a)(b(a)-a)<0.
\end{align*}
In the last line we used the equal-slope relation
$\varphi'(b(a))=\varphi'(a)$, the inequality $b(a)>a$, and the strict
convexity on the left region, which gives $\varphi''(a)>0$.  Therefore $D$ can
vanish at most once.  The convex-minorant construction above gives one zero,
so the pair satisfying \eqref{eq:contact-equal-slopes}--\eqref{eq:contact-chord-identity} is
unique.

Finally we show that the contact points depend analytically on $p$, with
$dx_a/dp>0$ and $dx_b/dp<0$.
Define
\[
  F_1(a,b,p)=\varphi_{p,d}'(b)-\varphi_{p,d}'(a),
\]
\[
  F_2(a,b,p)=
  \varphi_{p,d}(b)-\varphi_{p,d}(a)-\varphi_{p,d}'(a)(b-a).
\]
Set $\mathbf F=(F_1,F_2)$.  This map is jointly analytic in $(a,b,p)$ on
$(0,1)^3$.  At a triple $(a,b,p)$ satisfying $F_1=F_2=0$, the derivatives
with respect to $(a,b)$ are
\[
  D_{(a,b)}\mathbf F=
  \begin{pmatrix}
    -\varphi''(a)&\varphi''(b)\\
    -\varphi''(a)(b-a)&0
  \end{pmatrix}.
\]
To obtain this matrix, note that the $b$-derivative of $F_2$ is
$\varphi'(b)-\varphi'(a)$, which
vanishes because $F_1=0$.  The determinant is positive:
\[
  \varphi''(a)\varphi''(b)(b-a)>0.
\]
By the analytic implicit function theorem, the contact points are locally
analytic functions of $p$.  Since the pair satisfying
\eqref{eq:contact-equal-slopes}--\eqref{eq:contact-chord-identity} is unique for each
$p\in(0,p_*)$, these local analytic solutions agree on overlaps.  Hence, they
define analytic functions $x_a(p)$ and $x_b(p)$ on $(0,p_*)$ such that
$F_1(x_a(p),x_b(p),p)=0$ and
$F_2(x_a(p),x_b(p),p)=0$.
By the chain rule, we have
\begin{align}
  D_{(a,b)}\mathbf F
  \binom{da/dp}{db/dp}
  +
  \binom{\partial_pF_1}{\partial_pF_2}
  =0. \label{eq:contact-derivative-system}
\end{align}

If $z=x^{1/d}$, then
\[
  \partial_p\varphi_{p,d}(x)=\frac{p-z}{p(1-p)},
  \qquad
  \partial_p\varphi_{p,d}'(x)=
  -\frac1{p(1-p)d\,z^{d-1}}.
\]
Writing $u_a=a^{1/d}$ and $u_b=b^{1/d}$, this gives
\[
  \partial_pF_1(a,b,p)
  =
  \frac1{p(1-p)d}\left(u_a^{1-d}-u_b^{1-d}\right)>0,
\]
and
\[
  \partial_pF_2(a,b,p)
  =
  \frac1{p(1-p)}
  \left(\frac{u_b^d-u_a^d}{d u_a^{d-1}}-(u_b-u_a)\right)>0.
\]
The last inequality comes from the strict convexity of $z\mapsto z^d$.
As the second row of \eqref{eq:contact-derivative-system} gives
\[
  -\varphi''(a)(b-a)\frac{da}{dp}+\partial_pF_2=0,
\]
by combining $\varphi''(a)>0$ and $b>a$, we obtain $da/dp>0$.

The same linear system also gives the sign of $db/dp$.  The first row gives
\[
  \varphi''(b)\frac{db}{dp}
  =
  \varphi''(a)\frac{da}{dp}-\partial_pF_1.
\]
Substituting the formula for $da/dp$ from the second row, this becomes
\[
  \varphi''(b)\frac{db}{dp}
  =
  \frac{\partial_pF_2}{b-a}-\partial_pF_1.
\]
Set $\Delta=b-a=u_b^d-u_a^d$.  Then
\[
  \frac{\partial_pF_2}{b-a}
  =
  \frac{\partial_pF_2}{\Delta}
  =
  \frac1{p(1-p)}
  \left(
    \frac1{d u_a^{d-1}}
    -\frac{u_b-u_a}{u_b^d-u_a^d}
  \right),
\]
whereas
\[
  \partial_pF_1
  =
  \frac1{p(1-p)}
  \left(
    \frac1{d u_a^{d-1}}
    -\frac1{d u_b^{d-1}}
  \right).
\]
Subtracting the first equation above from the second, the
$1/(d u_a^{d-1})$ terms cancel and we get
\[
  \partial_pF_1-\frac{\partial_pF_2}{b-a}
  =
  \frac1{p(1-p)}
  \left(
    \frac{u_b-u_a}{u_b^d-u_a^d}
    -\frac1{d u_b^{d-1}}
  \right).
\]
The secant slope of the strictly convex function $z\mapsto z^d$ from $u_a$ to
$u_b$ is strictly smaller than its derivative at the right endpoint $u_b$:
\[
  \frac{u_b^d-u_a^d}{u_b-u_a}<d u_b^{d-1}.
\]
Hence, $\partial_pF_2/(b-a)-\partial_pF_1<0$, and since $\varphi''(b)>0$, we get
$db/dp<0$.
\end{proof}

\begin{proof}[Proof of \Cref{lem:contact-point-limits}]
We first verify that the zeros $u_-(p)$ and $u_+(p)$ of $h_{p,d}$ collapse to
$r_*$ as $p\uparrow p_*$.  At $p=p_*$, Lemma~\ref{lem:convexity-defect} gives
$h_{p_*,d}(r_*)=0$, while $h_{p_*,d}$ is strictly decreasing on $(0,r_*)$ and
strictly increasing on $(r_*,1)$.  Hence
\[
  h_{p_*,d}(z)>0\qquad (z\ne r_*).
\]
Fix $\varepsilon>0$ small enough that $r_*-\varepsilon$ and
$r_*+\varepsilon$ lie in $(0,1)$.  By
continuity in $p$,
\[
  h_{p,d}(r_*-\varepsilon)>0,
  \qquad
  h_{p,d}(r_*+\varepsilon)>0
\]
for all $p<p_*$ sufficiently close to $p_*$.  Since $h_{p,d}(r_*)<0$ for such
$p$, the left zero lies in $(r_*-\varepsilon,r_*)$ and the right zero lies in
$(r_*,r_*+\varepsilon)$.  As $\varepsilon>0$ is arbitrary, we conclude that
$u_-(p)\to r_*$ and $u_+(p)\to r_*$ as $p\uparrow p_*$.

For the contact limit as $p\uparrow p_*$, by \Cref{lem:contact-points},
monotonicity gives $x_a(p)\to A$ and $x_b(p)\to B$ for some $A,B\in[0,1]$.
Since $x_a(p)<u_-(p)^d$ and $x_b(p)>u_+(p)^d$, while
$u_\pm(p)\to r_*$, we have $A\le r_*^d\le B$.
After fixing any $p_0\in(0,p_*)$, monotonicity also gives
\[
  0<x_a(p_0)\le x_a(p),
  \qquad
  x_b(p)\le x_b(p_0)<1
  \qquad (p_0<p<p_*),
\]
so $A>0$ and $B<1$.  The map $(p,x)\mapsto\varphi_{p,d}'(x)$ is continuous
at $(p_*,A)$ and $(p_*,B)$.  We may therefore pass to the limit in the
equal-slope equation~\eqref{eq:contact-equal-slopes} to obtain
\[
  \varphi_{p_*,d}'(A)=\varphi_{p_*,d}'(B).
\]
At $p=p_*$, $\varphi_{p_*,d}''\ge0$ and vanishes only at $r_*^d$, so
$\varphi_{p_*,d}'$ is strictly increasing.  Hence
$A=B=r_*^d$.
Taking positive $d$th roots gives
$u_a(p)\to r_*$ and $u_b(p)\to r_*$ as $p\uparrow p_*$.

It remains to consider $p\downarrow0$.  For every fixed $z\in(0,1)$,
\[
  h_{p,d}(z)
  =
  \frac1{1-z}-(d-1)\log\frac{z(1-p)}{(1-z)p}
  \longrightarrow -\infty.
\]
If $0<\eps<r_*$, then $h_{p,d}(\eps)<0$ for all sufficiently small $p$.  Since
$h_{p,d}$ is strictly decreasing on $(0,r_*)$, the left zero satisfies
$u_-(p)<\eps$.  Hence, $u_-(p)\to0$.  Similarly, if
$0<\eps<1-r_*$, then $h_{p,d}(1-\eps)<0$ for all sufficiently small $p$; since
$h_{p,d}$ is strictly increasing on $(r_*,1)$, the right zero satisfies
$u_+(p)>1-\eps$.  Hence, $u_+(p)\to1$.

The contact ordering
\[
  0<u_a(p)<u_-(p)<u_+(p)<u_b(p)<1
\]
then gives $u_a(p)\to0$ and $u_b(p)\to1$ as $p\downarrow0$.  Finally, the
monotonicity from Lemma~\ref{lem:contact-points} turns these limits into
$u_a(p)\downarrow0$ and $u_b(p)\uparrow1$.
\end{proof}

\subsection{Remaining details for the scalar Lubetzky--Zhao boundary theorem}
\label{app:lz-boundary-details}

\begin{proof}[Proof of \textup{(M1)}]
We first check that $\pc(r)<r$.  We shall use that
\[
  p_*=\frac{d-1}{(d-1)+\exp(d/(d-1))}
  <
  \frac{d-1}{d}
  =r_*.
\]
For $r\in(r_*,1)$, the inequality $\pc(r)<r$ is immediate, since
$\pc(r)<p_*<r_*<r$.  Now fix $r\in(0,r_*)$ and set
$p=\pc(r)$.  Then, $r=u_a(p)$ and $p<p_*<r_*<u_b(p)$.  If $u_a(p)\le p$, then
$u_a(p)^d\le p^d<u_b(p)^d$, so $p^d$ lies on the affine part of the convex
minorant joining the two contact points.  But $J_p(p)=0$ is the strict global
minimum of $J_p$.  Thus, unless one of the contact points has first coordinate
$p^d$, the affine part has positive value at $p^d$, contradicting the fact that
the convex minorant lies below $\varphi_{p,d}(p^d)=J_p(p)=0$.  If one contact
point had first coordinate $p^d$, then the common tangent slope would be
$\varphi_{p,d}'(p^d)=0$; the equal-slope condition would force the other contact
point also to have first coordinate $p^d$, since $\varphi_{p,d}'(x)=0$ only at
$x=p^d$.  This contradicts distinctness of the contacts.  Hence
$p<u_a(p)=r$.

At $p=\pc(r)$, the two contact points have first coordinates $r^d$ and
$\sm(r)^d$.  They are distinct by Lemma~\ref{lem:contact-points}, so
$\sm(r)\ne r$.  This proves (M1).
\end{proof}

\begin{proof}[Proof of \textup{(M3)} and \textup{(M4)}]
Fix $r\in(0,1)\setminus\{r_*\}$.  The affine part of the
convex minorant of $\varphi_{\pc(r),d}$ joins
\[
  \bigl(r^d,J_{\pc(r)}(r)\bigr)
  \quad\text{and}\quad
  \bigl(\sm(r)^d,J_{\pc(r)}(\sm(r))\bigr),
\]
and is tangent at both endpoints.  Hence, this affine line is precisely
$\ell_r$, the tangent line at $r^d$.

On the interval between the two contact points, $\ell_r$ is the affine part of
the convex minorant, so it lies below $\varphi_{\pc(r),d}$.  Outside that
interval, it is the tangent line at the adjacent contact point in a strictly
convex outer region, so it again lies below the graph, with equality only at
that contact.  On the open interval between the contact points, the graph is
strictly above the affine part.  Therefore $\ell_r\le\varphi_{\pc(r),d}$ on
$[0,1]$, and the zeros of
\[
  x\mapsto \varphi_{\pc(r),d}(x)-\ell_r(x)
\]
are exactly $r^d$ and $\sm(r)^d$.  This proves (M3) and (M4).
\end{proof}

%% file: sections/appendix_nonexceptional_endpoint.tex
\section{Technical proofs for nonexceptional endpoints}
\label{app:nonexceptional-proofs}

\subsection{Proofs supporting the one-dimensional reduction}
\label{app:local-reduction}

\begin{proof}[Proof of \Cref{lem:boundary-convergence}]
Put
\[
  q_n=\pc(r_n),
  \qquad
  p_0=\pc(r_0).
\]
By continuity of $\pc$ at $r_0$, we have $q_n\to p_0$, and
$|p_n-q_n|\to0$ gives $p_n\to p_0$ as well.
Choose a compact interval $P\subset(0,1)$ with $p_0$ in its interior.
After discarding finitely many terms, both $p_n$ and $q_n$ lie in $P$.
The role of $P$ is only to keep the logarithms in $I_p$ uniformly away from the
singular endpoints $p=0,1$.

Since $(\widetilde{\W}_0,\delta_\cut)$ is compact, every subsequence of
$(W_n)$ has a further cut-convergent subsequence; so it suffices to show that
every cut-convergent subsequence has limit $r_0$.
Passing to such a subsequence and relabeling it, we assume that $W_n\to W_*$ in cut distance.
We will prove that $W_*\equiv r_0$.
First note that the homomorphism densities
are continuous in the cut metric, so
\begin{equation}
  \label{eq:boundary-limit-feasibility}
  t(H,W_*)\ge r_0^m.
\end{equation}

We next compare the relative-entropy functionals with nearby values of the
parameter $p$.  For $p,q\in P$, the identity
\[
  I_p(W)
  =
  -2s(W)-\log(1-p)+e(W)\log\frac{1-p}{p}
\]
gives
\[
  I_p(W)-I_q(W)
  =
  \log\frac{1-q}{1-p}
  +e(W)\log\frac{(1-p)q}{p(1-q)}.
\]
Let
\[
  A(p')=-\log(1-p'),
  \qquad
  B(p')=\log\frac{1-p'}{p'}.
\]
Then, the preceding identity can be written as
\[
  I_p(W)-I_q(W)=A(p)-A(q)+e(W)\bigl(B(p)-B(q)\bigr).
\]
Note that as $P$ is compact, both $A$ and $B$
are uniformly continuous on $P$; the singularities at $0$ and $1$ are now a
positive distance away.
Since every graphon satisfies
$0\le e(W)\le1$, we have, for all $p,q\in P$,
\[
  |I_p(W)-I_q(W)| = |A(p)-A(q)+e(W)(B(p)-B(q))|
  \le |A(p)-A(q)| + |B(p)-B(q)|.
\]
Thus, by the uniform continuity of $A$ and $B$ on $P$,
\begin{equation}\label{eq:entropy-parameter-continuity}
\sup_{W\in\W_0}|I_p(W)-I_q(W)|\to0
\qquad (|p-q|\to0,\ p,q\in P).
\end{equation}

For fixed $p_0$, lower semicontinuity of $I_{p_0}$ in the cut metric gives
$I_{p_0}(W_*)\le\liminf_n I_{p_0}(W_n)$.  Since
\eqref{eq:entropy-parameter-continuity} and $q_n\to p_0$ give
$|I_{p_0}(W_n)-I_{q_n}(W_n)|\to0$, we obtain
\begin{equation}\label{eq:entropy-moving-parameter-lsc}
  I_{p_0}(W_*)
  \le
  \liminf_{n\to\infty} I_{q_n}(W_n).
\end{equation}

On the other hand, the constant graphon $W\equiv r_n$ is feasible for
$(p_n,r_n)$.  Since $W_n$ is an optimizer,
\[
  I_{p_n}(W_n)\le J_{p_n}(r_n).
\]
Using \eqref{eq:entropy-parameter-continuity} again, now with $p=p_n$ and $q=q_n$,
and noting that $A'$ and $B'$ are bounded on $P$, we obtain
\[
  I_{q_n}(W_n)
  \le
  J_{p_n}(r_n)+O_P(|p_n-q_n|).
\]
Applying the same bound to the constant graphon $r_n$, and using
$p_n-q_n\to0$, $q_n\to p_0$, $r_n\to r_0$, and continuity of
$(p,z)\mapsto J_p(z)$, gives
\[
  J_{p_n}(r_n)=J_{q_n}(r_n)+O_P(|p_n-q_n|)\to J_{p_0}(r_0).
\]
Therefore
\begin{equation}\label{eq:boundary-cost-limsup}
  \limsup_{n\to\infty} I_{q_n}(W_n)
  \le
  J_{p_0}(r_0).
\end{equation}

Equations \eqref{eq:boundary-limit-feasibility}, \eqref{eq:entropy-moving-parameter-lsc}, and \eqref{eq:boundary-cost-limsup} show
that $W_*$ is feasible at $(p_0,r_0)$ and has $I_{p_0}$-value no larger than the
constant graphon:
\[
  I_{p_0}(W_*)\le J_{p_0}(r_0).
\]
By condition (M2) of \Cref{thm:scalar-lz-boundary},
$(r_0^d,J_{p_0}(r_0))$ lies on the convex minorant of
$x\mapsto J_{p_0}(x^{1/d})$.  Hence
Theorem~\ref{thm:lz-criterion} gives that the constant graphon $r_0$
is the unique minimizer at $(p_0,r_0)$.  Thus,  $W_*\equiv r_0$.  Since every subsequence of $(W_n)$ has a further
cut-convergent subsequence, and every such limit equals $r_0$, the whole
sequence converges to $r_0$ in the quotient cut metric.  Finally, edge
density is continuous in the cut metric, so $e(W_n)\to r_0$.
\end{proof}

\begin{proof}[Proof of \Cref{cor:scalar-reduction}]
Let $I$ and $\rho_0$ be as in \Cref{lem:fixed-density-bipodality}.
Fix $0<\rho<\rho_0$, choose $\eta>0$ as in that lemma, and take
$r\in I$, $p\in(\pc(r)-\eta,\pc(r))$, and an optimizer $W^*$ attaining
$\Phi_H(p,r)$.  By \Cref{lem:active-constraint}, $t(H,W^*)=r^m$, and by
\Cref{lem:fixed-density-bipodality},
$\eps^*=e(W^*)\in(r-\rho,r)$.  Thus, $W^*$ is admissible in the variational
problem defining $S_H(\eps^*,r^m)$, and hence
\[
  s(W^*)\le S_H(\eps^*,r^m).
\]
Hence, by \eqref{eq:relative-entropy-identity},
\[
  I_{p,r}(\eps^*)\le I_p(W^*)=\Phi_H(p,r).
\]
On the other hand, $B_{\eps^*,r^m}$ has edge density $\eps^*$ and
$H$-density $r^m$, so it is feasible for the variational problem defining
$\Phi_H(p,r)$ in \eqref{eq:graphon-variational-problem}.  Using
\eqref{eq:reduced-objective-attainment}, we get
\[
  \Phi_H(p,r)\le I_p(B_{\eps^*,r^m})=I_{p,r}(\eps^*).
\]
Thus, the preceding inequalities are all equalities, and in particular,
\[
  I_{p,r}(\eps^*)=\Phi_H(p,r),
\]
and
\[
  s(W^*)=S_H(\eps^*,r^m).
\]
To see this, suppose that the entropy inequality
$s(W^*)\le S_H(\eps^*,r^m)$ were strict.  Then the inequality
$I_{p,r}(\eps^*)\le I_p(W^*)$ would also be strict.  Therefore,
$W^*$ is an entropy maximizer under the fixed constraints
$e(W)=\eps^*$ and $t(H,W)=r^m$.  By the uniqueness part of
\Cref{thm:krrs-analytic-extension}, $W^*$ agrees with $B_{\eps^*,r^m}$ up to relabeling.

For any $\eps\in(r-\rho,r)$, the graphon $B_{\eps,r^m}$ is feasible for the
variational problem defining $\Phi_H(p,r)$ in \eqref{eq:graphon-variational-problem}, so
\[
  I_{p,r}(\eps)=I_p(B_{\eps,r^m})\ge\Phi_H(p,r)
  =I_{p,r}(\eps^*).
\]
Therefore, $\eps^*$ minimizes $I_{p,r}$ on $(r-\rho,r)$, and
\eqref{eq:edge-density-minimization} follows.  The same inequality gives the converse
statement: if $\eps$ minimizes $I_{p,r}$ on $(r-\rho,r)$, then
$I_p(B_{\eps,r^m})=I_{p,r}(\eps)=I_{p,r}(\eps^*)
=\Phi_H(p,r)$, and $B_{\eps,r^m}$ is feasible, so it is an optimizer
attaining $\Phi_H(p,r)$.
Finally, distinct $\eps$ give graphons of distinct edge density, hence
inequivalent under relabeling, which gives the asserted bijection.
\end{proof}

\subsection{Parameter asymptotics near nonexceptional endpoints}
\label{app:bipodal-parameter-expansions}

\begin{remark}[First-order asymptotics of the bipodal parameters]
\label{rmk:bipodal-parameter-expansions}
\Cref{thm:nonexceptional-endpoint} gives the qualitative limit
\(c(p,r)\to0\).  The purpose of this remark is to quantify that limit and the
corresponding change in the block densities.  Work in the neighborhood $U$
of that theorem, shrinking it if necessary, and let \(p\uparrow\pc(r)\)
from the symmetry-breaking side.  Relabel the podes as in \Cref{thm:nonexceptional-endpoint}, so
that the first pode has size \(c=c(p,r)\), and set
\[
  \delta_*:=r-e(W_{p,r}),
  \qquad
  z:=\zeta_d(r).
\]
Denote by \(q_{11}^0(r)\) the value at
\((\eps,\vartheta)=(r,0)\) of the analytically extended KRR--S parameter
\(q_{11}\) from \Cref{thm:krrs-analytic-extension}.
All limits below refer to this regime, and the $O_{H,U}$-bounds are uniform
for $(p,r)\in U$ with $p<\pc(r)$.

For \(z\ne r\), define
\begin{equation}\label{eq:edge-deficit-coefficient}
  D_d(r,z):=\frac{z^d-r^d}{d r^{d-1}}-(z-r)>0,
\end{equation}
where positivity follows from the strict convexity of \(z\mapsto z^d\).
Since $\zeta_d(r_0)\ne r_0$, continuity lets us shrink $U$ so that
$D_d(r,\zeta_d(r))$ is uniformly bounded away from zero.
Then
\[
  \delta_*=2D_d(r,z)c+O_{H,U}(\delta_*^2).
\]
Thus, \(2D_d(r,z)\) is the first-order edge-density deficit per unit of
smaller-pode mass, after \(q_{22}\) is adjusted to preserve the
\(H\)-density.  Combining this relation with
\(\delta_*=\lambda(p,r)/A_H(r)+O_{H,U}(\lambda(p,r)^2)\) from
\Cref{thm:nonexceptional-endpoint} gives
\begin{equation}\label{eq:optimizer-block-size}
  c(p,r)=
  \frac{\lambda(p,r)}
  {2D_d(r,\zeta_d(r))A_H(r)}
  +O_{H,U}(\lambda(p,r)^2),
\end{equation}
\begin{equation}\label{eq:optimizer-block-density}
  q_{22}(p,r)
  =
  r-
  \frac{\zeta_d(r)^d-r^d}
       {d r^{d-1}D_d(r,\zeta_d(r))A_H(r)}
  \lambda(p,r)
  +O_{H,U}(\lambda(p,r)^2),
\end{equation}
and
\[
  q_{12}(p,r)=\zeta_d(r)+O_{H,U}(\lambda(p,r)),
  \qquad
  q_{11}(p,r)=q_{11}^0(r)+O_{H,U}(\lambda(p,r)).
\]
In particular, both the smaller-pode size and the displacement of the
large-pode density from \(r\) are of order \(\lambda(p,r)\).
The other two block densities satisfy
\(q_{12}\to\zeta_d(r)\ne r\) and
\(q_{11}\to q_{11}^0(r)\).  These relations quantify the vanishing-pode
mechanism described in the introduction.

Although they arise from different formulations, the KRR--S limiting cross
density and the second contact density of the Lubetzky--Zhao supporting line
coincide: \(\zeta_d(r)=\sm(r)\) for every \(r\ne r_*\).
Indeed, put \(\beta:=J_{\pc(r)}'(r)/(d r^{d-1})\).
Since \(J_{\pc(r)}+2S_0\) is affine,
\eqref{eq:supporting-cost-gap} and \eqref{eq:edge-deficit-coefficient} give
\[
  g_r(z)=-d r^{d-1}D_d(r,z)\bigl\{\psi_d(r,z)+\beta\bigr\}
  \qquad(z\in(0,1)\setminus\{r\}).
\]
The positivity of \(D_d(r,z)\) and \(g_r\ge0\) imply
\(\psi_d(r,z)\le-\beta\), also at \(z=r\) by continuity.
Equality holds at \(z=\sm(r)\) by \Cref{thm:scalar-lz-boundary}, so
the uniqueness of the maximizer in \Cref{thm:krrs-cross-density} proves the
identity.  Thus, the comparison graphons in \Cref{sec:nonexceptional-quadratic-growth} use
exactly the limiting cross density of the optimizer.

For completeness, we derive the two explicit expansions above.  Suppress the
arguments \((p,r)\) and evaluate the analytic KRR--S parameter map at
\[
  (\eps,\vartheta)
  =\bigl(r-\delta_*,\,r^m-(r-\delta_*)^m\bigr).
\]
After shrinking $U$, the segments from $(r,0)$ to these parameter pairs
lie in a compact subset of the KRR--S analytic domain, as in
\Cref{sec:nonexceptional-proof}.  Uniform Taylor estimates, the boundary
values in \Cref{thm:krrs-analytic-extension}, and
\(\vartheta=O_{H,U}(\delta_*)\) therefore give
\[
  c=O_{H,U}(\delta_*),\qquad
  q_{22}-r=O_{H,U}(\delta_*),\qquad
  q_{12}-z=O_{H,U}(\delta_*),\qquad
  q_{11}-q_{11}^0(r)=O_{H,U}(\delta_*).
\]
The exact edge-density identity
\[
  e(W_{p,r})
  =q_{22}+2c(q_{12}-q_{22})
   +c^2(q_{11}-2q_{12}+q_{22})
\]
therefore yields
\begin{equation}\label{eq:bipodal-edge-linearization}
  -\delta_*=(q_{22}-r)+2(z-r)c+O_{H,U}(\delta_*^2).
\end{equation}

For the \(H\)-density, separate the homomorphisms according to the number of
vertices mapped to the smaller pode.  Since \(H\) is \(d\)-regular and has
\(2m/d\) vertices, the contributions with zero and one such vertex give
\begin{align*}
  t(H,W_{p,r})
  &=(1-c)^{2m/d}q_{22}^m
    +\frac{2m}{d}c(1-c)^{2m/d-1}
      q_{12}^d q_{22}^{m-d}+O_{H,U}(c^2)\\
  &=r^m+m r^{m-1}(q_{22}-r)
    +\frac{2m}{d}r^{m-d}(z^d-r^d)c
    +O_{H,U}(\delta_*^2).
\end{align*}
Using \(t(H,W_{p,r})=r^m\), we obtain
\begin{equation}\label{eq:bipodal-density-linearization}
  0=m r^{m-1}(q_{22}-r)
    +\frac{2m}{d}r^{m-d}(z^d-r^d)c
    +O_{H,U}(\delta_*^2).
\end{equation}
Eliminating \(q_{22}-r\) between
\eqref{eq:bipodal-edge-linearization} and
\eqref{eq:bipodal-density-linearization} gives
\[
  \delta_*=2D_d(r,z)c+O_{H,U}(\delta_*^2),
  \qquad
  c=\frac{\delta_*}{2D_d(r,z)}+O_{H,U}(\delta_*^2),
\]
which proves \eqref{eq:optimizer-block-size}.  Finally,
\eqref{eq:bipodal-edge-linearization} gives
\[
  q_{22}
  =r-\frac{z^d-r^d}{d r^{d-1}D_d(r,z)}\delta_*
   +O_{H,U}(\delta_*^2),
\]
and \eqref{eq:optimizer-block-density} follows.

When $d=2$, a direct computation using $S_0(1-r)=S_0(r)$ and
$S_0'(1-r)=-S_0'(r)$ gives $\zeta_2(r)=1-r$, which yields the
triangle specialization of \eqref{eq:optimizer-block-size} used in the
introduction.
\end{remark}

%% file: sections/appendix_singular_endpoint.tex
\section{Technical proofs for the singular endpoint}
\label{app:endpoint-calculations}

This appendix contains the analytic and asymptotic calculations underlying
the rank-one KKT family constructed in
\Cref{sec:rank-one-stationary-family}.  We use the notation fixed there but
reproduce below all structural equations needed in the proofs.

\subsection{Proof of the analytic rank-one KKT family}
\label{app:rank-one-kkt-family}

\begin{proof}[Proof of \Cref{lem:rank-one-kkt-family}]
We first solve the graphon stationarity equation while leaving the block
proportion $\alpha_h$ unspecified.  For a rank-one graphon $W=f\otimes f$,
put $q:=\int_0^1 f^d$.  Since $H$ is $d$-regular, the first variations in a
bounded symmetric direction $U$ are
\[
  \left.\frac{d}{d\varepsilon}\right|_{\varepsilon=0}
  I_p(f\otimes f+\varepsilon U)
  =\iint J_p'(f(x)f(y))U(x,y)\,dx\,dy
\]
and
\[
  \left.\frac{d}{d\varepsilon}\right|_{\varepsilon=0}
  t(H,f\otimes f+\varepsilon U)
  =\iint m q^{v-2}f(x)^{d-1}f(y)^{d-1}U(x,y)\,dx\,dy.
\]
Thus, on writing $\gamma:=\mu m q^{v-2}$, stationarity for
$I_p-\mu t(H,\,\cdot\,)$ is equivalent to
\begin{equation}\label{eq:rank-one-kkt-proof}
  J_p'(f(x)f(y))
  =\mu m q^{v-2}\bigl(f(x)f(y)\bigr)^{d-1}
  =\gamma\bigl(f(x)f(y)\bigr)^{d-1}
  \quad\text{for a.e. }(x,y)\in[0,1]^2.
\end{equation}
For the family under construction, we first regard $\gamma_h$ as an
independent scalar coefficient, so the graphon stationarity equations do not
involve $q_h$ or $\alpha_h$.  Evaluating
\eqref{eq:rank-one-kkt-proof} on the two diagonal blocks and
the cross block, and using $J'_{p_h}(z)=\ell(p_h)-\ell(z)$, gives
\begin{equation}\label{eq:three-value-kkt-proof}
  \ell(p_h)-\ell(s_h^2)=\gamma_hs_h^{2d-2},\qquad
  \ell(p_h)-\ell(s_ht_h)=\gamma_h(s_ht_h)^{d-1},\qquad
  \ell(p_h)-\ell(t_h^2)=\gamma_ht_h^{2d-2}.
\end{equation}
We first solve \eqref{eq:three-value-kkt-proof} for
$u_h,\gamma_h,p_h$, and then impose stationarity under variation of the block
proportion to determine $\alpha_h$ and the derived quantities
$q_h,r_h,\mu_h$.
Recall that $s_h=u_h-h$ and $t_h=u_h+h$.  Subtracting consecutive equations
eliminates $\ell(p_h)$ and gives
\begin{equation}\label{eq:kkt-divided-slopes}
  \frac{\ell(s_ht_h)-\ell(s_h^2)}
       {s_h^{2d-2}-(s_ht_h)^{d-1}}
  =\frac{\ell(t_h^2)-\ell(s_ht_h)}
       {(s_ht_h)^{d-1}-t_h^{2d-2}}
  =\gamma_h
\end{equation}
for $h\ne0$.
At $h=0$, both quotients have the
indeterminate form $0/0$; the function $D$ introduced below shows that each
has a removable singularity and extends analytically to $h=0$.
Define the difference of the two quotients by
\begin{equation*}
\begin{aligned}
  A(u,h)
  &:=\frac{\ell(st)-\ell(s^2)}
            {s^{2d-2}-(st)^{d-1}}
    -\frac{\ell(t^2)-\ell(st)}
            {(st)^{d-1}-t^{2d-2}}
\end{aligned}
\end{equation*}
where $s=u-h$ and $t=u+h$.
Thus, the equation to solve is $A(u,h)=0$ with respect to $u$.
To analyze
the removable value at $h=0$, introduce
\[
  D(x,y):=\frac{\ell(y)-\ell(x)}{x^{d-1}-y^{d-1}}
\]
and extend it analytically across the diagonal.  The function $D(x,y)$ is
symmetric, and its diagonal value is
\[
  D(z,z)
  =\frac{-\ell'(z)}{(d-1)z^{d-2}}
  =\frac1{(d-1)z^{d-1}(1-z)}
  =:g_d(z).
\]
Differentiating $D(z,z)=g_d(z)$ and using symmetry gives
\[
  \partial_xD(z,z)=\partial_yD(z,z)=\frac12g_d'(z).
\]
Since
\[
  (s^2,st)\big|_{h=0}=(u^2,u^2),
  \qquad
  \left.\frac{d}{dh}(s^2,st)\right|_{h=0}=(-2u,0),
\]
we have
\[
  \left.D(s^2,st)\right|_{h=0}=g_d(u^2),
\]
and the chain rule gives
\[
  \left.\frac{d}{dh}D(s^2,st)\right|_{h=0}
  =\partial_xD(u^2,u^2)(-2u)+\partial_yD(u^2,u^2)\cdot0
  =-u g_d'(u^2).
\]
Thus, locally uniformly for $u$ near $u_*$, the Taylor expansion for the first
divided slope is
\[
  \frac{\ell(st)-\ell(s^2)}{s^{2d-2}-(st)^{d-1}}
  =g_d(u^2)-uh\,g_d'(u^2)+O_d(h^2).
\]
Replacing $h$ by $-h$ exchanges the two divided slopes.  Hence the second
Taylor expansion is
\[
  \frac{\ell(t^2)-\ell(st)}{(st)^{d-1}-t^{2d-2}}
  =g_d(u^2)+uh\,g_d'(u^2)+O_d(h^2).
\]
Subtracting the two expansions gives
\[
  A(u,h)=-2uh\,g_d'(u^2)+O_d(h^2).
\]
Interchanging $h$ and $-h$ exchanges $s$ and $t$, and hence changes the sign
of $A(u,h)$.  Thus, $A(u,h)$ is odd in $h$, so its expansion contains no
even powers of $h$.  Consequently, the last remainder improves to
\begin{equation*}
  A(u,h)=-2uh\,g_d'(u^2)+O_d(h^3).
\end{equation*}
We now observe that the quotient
\[
  B(u,h):=\frac{A(u,h)}{h}
\]
has a removable singularity at $h=0$.  Its analytic extension is even in
$h$ and, by the preceding expansion, satisfies
\[
  B(u,0)=-2u g_d'(u^2).
\]
For $h\ne0$, the equations $A(u,h)=0$ and $B(u,h)=0$ are equivalent.
Now fix a small interval $U\subset(0,1)$ containing $u_*$.  Since $u>0$ on
$U$, the explicit formula for $g_d$ gives
\[
  B(u,0)=0
  \quad\Longleftrightarrow\quad
  g_d'(u^2)=0
  \quad\Longleftrightarrow\quad
  \frac1{1-u^2}=\frac{d-1}{u^2}
  \quad\Longleftrightarrow\quad
  u=\sqrt{\frac{d-1}{d}}=u_*.
\]
Thus, $u_*$ is the unique zero of $B(u,0)$ in $U$.  This zero is
nondegenerate: since $g_d'(u_*^2)=0$ and $g_d''(u_*^2)\ne0$,
\[
  \partial_uB(u_*,0)
  =-2g_d'(u_*^2)-4u_*^2g_d''(u_*^2)
  =-4u_*^2g_d''(u_*^2)\ne0.
\]
The analytic implicit function theorem therefore gives a unique analytic
solution $u_h$ near $u_*$ satisfying $B(u_h,h)=0$ and $u_0=u_*$.  Since $B$
is even in $h$, uniqueness implies $u_{-h}=u_h$.  Hence $u_h$ is analytic
in $h^2$ and
\[
  u_h=u_*+O_d(h^2).
\]
Since $B(u_h,h)=0$, the first two expressions in
\eqref{eq:kkt-divided-slopes} agree.  We define $\gamma_h$ to be
their common value, using their analytic extensions at $h=0$.  The middle KKT
equation in \eqref{eq:three-value-kkt-proof} then gives
\[
  \ell(p_h)=\gamma_h(s_ht_h)^{d-1}+\ell(s_ht_h).
\]
Since $\ell$ is one-to-one, this identity determines a unique $p_h$.
Combining this definition of $\gamma_h$ with the middle equation recovers the
first and third equations in \eqref{eq:three-value-kkt-proof}, so all three
KKT equations are satisfied.
The analyticity of $D$ and the formulas above show that
$\gamma_h$ and $p_h$ are even and analytic in $h^2$.  At $h=0$,
the analytic extension of either quotient in
\eqref{eq:kkt-divided-slopes} gives
\[
  \gamma_0=g_d(u_*^2)=\gamma_*.
\]
The middle KKT equation in \eqref{eq:three-value-kkt-proof} then gives
\[
  \ell(p_0)=\gamma_*u_*^{2(d-1)}+\ell(u_*^2)=\ell_*,
\]
which implies $p_0=p_*$. 
We have therefore constructed the unique analytic family
$(u_h,p_h,\gamma_h)$ near $(u_*,p_*,\gamma_*)$.
Define
\[
  \mathcal M_h(z):=J_{p_h}(z)-\frac{\gamma_h}{d}z^d.
\]
At the singular endpoint, direct differentiation gives
\begin{equation}\label{eq:endpoint-entropy-derivatives}
  J_{p_*}'(r_*)=\gamma_*r_*^{d-1},\qquad
  J_{p_*}''(r_*)=\gamma_*(d-1)r_*^{d-2},\qquad
  J_{p_*}^{(3)}(r_*)=\gamma_*(d-1)(d-2)r_*^{d-3}.
\end{equation}
Equation \eqref{eq:three-value-kkt-proof} says precisely that
$s_h^2,s_ht_h,t_h^2$ are critical points of $\mathcal M_h$.
By \eqref{eq:endpoint-entropy-derivatives},
the first three derivatives of $\mathcal M_0$ vanish at $r_*$.
Direct differentiation gives its Taylor expansion
\begin{equation}\label{eq:mh-cubic-derivative}
  \mathcal M_0'(z)
  =k_3(z-r_*)^3+O_d((z-r_*)^4),
  \qquad
  k_3:=\frac{d^5}{6(d-1)^2}.
\end{equation}

We next determine the block proportion.  For fixed $p,s,t$, and $\mu$, its
stationarity condition at $\alpha$ is
\begin{equation}\label{eq:block-stationarity-proof}
  \left.\frac{d}{d\beta}\right|_{\beta=\alpha}
  \left\{I_p(W_{\beta,s,t})-\mu t(H,W_{\beta,s,t})\right\}=0.
\end{equation}
For a provisional $\alpha$ near $1/2$, apply this condition with
$p=p_h$, $s=s_h$, $t=t_h$, and set
\[
  q:=\alpha s_h^d+(1-\alpha)t_h^d,
  \qquad
  \mu:=\frac{\gamma_h}{m q^{v-2}}.
\]
Holding $\mu$ fixed, the expression differentiated in
\eqref{eq:block-stationarity-proof} is
\[
  \beta^2J_{p_h}(s_h^2)
  +2\beta(1-\beta)J_{p_h}(s_ht_h)
  +(1-\beta)^2J_{p_h}(t_h^2)
  -\mu\{\beta s_h^d+(1-\beta)t_h^d\}^v,
\]
where we used
\[
  t(H,W_{\beta,s_h,t_h})
  =\{\beta s_h^d+(1-\beta)t_h^d\}^v.
\]
Its derivative at $\beta=\alpha$ vanishes precisely when
\[
  0=2\alpha\{J_{p_h}(s_h^2)-J_{p_h}(s_ht_h)\}
  -2(1-\alpha)\{J_{p_h}(t_h^2)-J_{p_h}(s_ht_h)\}
  -\mu v q^{v-1}(s_h^d-t_h^d).
\]
Since $v=2m/d$, the definitions give
\[
  \frac{\mu v q^{v-1}}2
  =\frac{\gamma_h q}{d},
\]
while direct expansion gives
\begin{equation}\label{eq:block-moment-identity}
  q(s_h^d-t_h^d)
  =\alpha\{s_h^{2d}-(s_ht_h)^d\}
   -(1-\alpha)\{t_h^{2d}-(s_ht_h)^d\}.
\end{equation}
Using the preceding two identities, the block-proportion condition becomes
the linear equation
\begin{equation}\label{eq:block-proportion-balance}
  \alpha\{\mathcal M_h(s_h^2)-\mathcal M_h(s_ht_h)\}
  =(1-\alpha)\{\mathcal M_h(t_h^2)-\mathcal M_h(s_ht_h)\}.
\end{equation}
Whenever its denominator is nonzero, its solution is
\begin{equation}\label{eq:block-proportion-formula-proof}
  \alpha_h
  :=\frac{\left\{\mathcal M_h(t_h^2)-\mathcal M_h(s_ht_h)\right\}}
  {\left\{\mathcal M_h(s_h^2)-\mathcal M_h(s_ht_h)\right\}
   +\left\{\mathcal M_h(t_h^2)-\mathcal M_h(s_ht_h)\right\}}.
\end{equation}
Because $p_h,\gamma_h$, and $u_h$ are even in $h$, one has
\[
  \mathcal M_{-h}=\mathcal M_h,
  \qquad
  s_{-h}=t_h,
  \qquad
  t_{-h}=s_h.
\]
Thus, changing $h$ to $-h$ exchanges the two differences in
\eqref{eq:block-proportion-formula-proof}, and consequently
\[
  \alpha_{-h}=1-\alpha_h.
\]

It remains to verify the singular-endpoint value $\alpha_0$
by extending \eqref{eq:block-proportion-formula-proof} analytically through $h=0$.
To control the two differences in \eqref{eq:block-proportion-formula-proof} as
$h\to0$, we factor $\mathcal M_h'$ at its three critical points.  Let
\[
  D_h(z):=(z-s_h^2)(z-s_ht_h)(z-t_h^2)
\]
be the monic cubic polynomial with these roots.  Analytic Weierstrass division
with divisor $D_h$ gives
\[
  \mathcal M_h'(z)=D_h(z)R(h,z)+P_h(z),
  \qquad \deg_z P_h\le2,
\]
where $R$ is jointly analytic in $(h,z)$ near $(0,r_*)$, and the coefficients
of $P_h$ are analytic in $h$.  For $h\ne0$, the polynomial $P_h$ vanishes at
the three distinct
roots of $D_h$ and is therefore identically zero.  Its coefficients also
vanish at $h=0$ by analyticity.  Hence
\begin{equation}\label{eq:mh-factorization-proof}
  \mathcal M_h'(z)
  =(z-s_h^2)(z-s_ht_h)(z-t_h^2)R(h,z).
\end{equation}
By \eqref{eq:mh-cubic-derivative}, $R(0,r_*)=k_3$.  Writing
$z=s_ht_h+x$, set
\[
  a:=s_ht_h-s_h^2=2hs_h,
  \qquad
  b:=t_h^2-s_ht_h=2ht_h.
\]
The three critical points then correspond to $x=-a,0,b$.  Therefore,
uniformly for $|x|\le2|h|$,
\[
  \mathcal M_h'(s_ht_h+x)
  =x(x+a)(x-b)\{k_3+O_d(h)\}.
\]
Since $a=2u_*h+O_d(h^2)$ and $b=2u_*h+O_d(h^2)$, integration from $x=0$ to
$x=-a$ and to $x=b$ gives
\begin{align*}
  \mathcal M_h(s_h^2)-\mathcal M_h(s_ht_h)
  &=-\frac{k_3}{12}a^3(a+2b)+O_d(h^5)
    =-\frac{2d^3}{3}h^4+O_d(h^5),\\
  \mathcal M_h(t_h^2)-\mathcal M_h(s_ht_h)
  &=-\frac{k_3}{12}b^3(b+2a)+O_d(h^5)
    =-\frac{2d^3}{3}h^4+O_d(h^5).
\end{align*}
Both differences are analytic and vanish to order exactly four.  Dividing
the numerator and denominator in \eqref{eq:block-proportion-formula-proof} by $h^4$
therefore shows that $\alpha_h$ extends analytically through $h=0$, and
the limiting value is $\alpha_0=1/2$. 

Now define
\[
  q_h:=\alpha_hs_h^d+(1-\alpha_h)t_h^d,
  \qquad
  r_h:=q_h^{2/d},
  \qquad
  \mu_h:=\frac{\gamma_h}{m q_h^{v-2}}.
\]
The definitions and the analyticity proved above show that $q_h$ is analytic.
The symmetry identities give $q_{-h}=q_h$.  Since $q_0>0$, the derived
functions $r_h=q_h^{2/d}$ and $\mu_h=\gamma_h/(m q_h^{v-2})$ are also analytic
and even.  Evaluating at $h=0$ gives
\[
  q_0=u_*^d,
  \qquad
  r_0=r_*,
  \qquad
  \mu_0=\frac{\gamma_*}{m(u_*^d)^{v-2}}
  =\frac1{m r_*^m}.
\]
After decreasing $h_0$, continuity also ensures
\[
  0<p_h<r_h<1,\qquad 0<s_h,t_h<1,\qquad
  0<\alpha_h<1,\qquad \mu_h>0
  \quad (|h|<h_0),
\]
so all the candidates above are valid graphons at admissible parameter
values.
The rank-one identity also gives
\[
  t(H,W_{\alpha_h,s_h,t_h})=q_h^v=r_h^m.
\]
To verify stationarity, let $U$ be any bounded measurable symmetric function
on $[0,1]^2$.  The first-variation formulas above give
\[
\begin{aligned}
  &\left.\frac{d}{d\varepsilon}\right|_{\varepsilon=0}
  \left[I_{p_h}(W_h+\varepsilon U)
    -\mu_h t(H,W_h+\varepsilon U)\right]\\
  &\qquad=\iint\left[
    J_{p_h}'(f_h(x)f_h(y))
    -\mu_h m q_h^{v-2}f_h(x)^{d-1}f_h(y)^{d-1}
  \right]U(x,y)\,dx\,dy.
\end{aligned}
\]
The product $f_h(x)f_h(y)$ takes only the three values
$s_h^2,s_ht_h,t_h^2$.  For each such value $z$, the corresponding equation in
\eqref{eq:three-value-kkt-proof} gives
\[
  J_{p_h}'(z)=\gamma_hz^{d-1}
  =\mu_h m q_h^{v-2}z^{d-1}.
\]
Hence the integrand in the first-variation formula vanishes almost everywhere,
so $W_h$ is stationary under every such $U$.  Thus
$W_h$ satisfies the KKT conditions at $(p_h,r_h)$ with multiplier $\mu_h$, and
\eqref{eq:block-proportion-balance} is precisely
\eqref{eq:block-stationarity-proof}.

It remains to prove local uniqueness.
Suppose $W$ is a nonconstant rank-one graphon satisfying the KKT conditions at
$(p,r)$ with multiplier $\mu$.
By \Cref{lem:stationary-rank-one-bipodality}, write
\[
  W = W_{\alpha,s,t} = f_{\alpha,s,t}\otimes f_{\alpha,s,t},
  \qquad
  h:=\frac{t-s}{2},
  \qquad
  u:=\frac{s+t}{2},
  \qquad
  q:=\alpha s^d+(1-\alpha)t^d,
  \qquad
  \gamma:=\mu m q^{v-2}.
\]
Applying \eqref{eq:rank-one-kkt-proof} to the three values
$s^2,st,t^2$ gives \eqref{eq:three-value-kkt-proof} with
$(p_h,\gamma_h,s_h,t_h)$ replaced by $(p,\gamma,s,t)$.  Equality of the two
corresponding divided slopes then gives $B(u,h)=0$.  The local uniqueness
obtained above gives a neighborhood
of $(u_*,0)$ in which this equation has the unique solution $u=u_h$ for each
fixed $h$.  Hence $s=s_h$ and $t=t_h$, and either quotient in
\eqref{eq:kkt-divided-slopes} forces $\gamma=\gamma_h$.  The
middle KKT equation then forces $p=p_h$.  Finally,
condition \eqref{eq:block-stationarity-proof} is the linear equation
\eqref{eq:block-proportion-balance}; its denominator is
$-4d^3h^4/3+O_d(h^5)$ and is therefore nonzero for $h\ne0$ sufficiently small.
Thus, it has the unique solution \eqref{eq:block-proportion-formula-proof}, namely
$\alpha=\alpha_h$.  It follows that $q=q_h$ and hence that $r=r_h$ and
$\mu=\gamma_h/(m q_h^{v-2})=\mu_h$.  This proves the stated local uniqueness.
\end{proof}

\subsection{Proof of the parameter expansions}
\label{app:rank-one-parameter-expansions}

\begin{proof}[Proof of \Cref{lem:rank-one-parameter-expansions}]
Since $u_h$, $\gamma_h$, and $\ell(p_h)$ are analytic in $h^2$, write
\[
  u_h=u_*+U_2h^2+O_d(h^4),
  \qquad
  \gamma_h=\gamma_*+G_2h^2+O_d(h^4),
  \qquad
  \ell(p_h)=\ell_*+L_2h^2+O_d(h^4).
\]
Define
\[
  \mathcal L_h(z):=J_{p_h}'(z)-\gamma_hz^{d-1}.
\]
The triple-contact identities
\eqref{eq:endpoint-entropy-derivatives} show that $\mathcal L_0$ and
its first two derivatives vanish at $r_*$.  Direct differentiation gives
\[
  \mathcal L_0^{(3)}(r_*)=\frac{d^5}{(d-1)^2},
  \qquad
  \mathcal L_0^{(4)}(r_*)=\frac{5d^6(d-2)}{(d-1)^3},
\]
and hence
\begin{align}\label{eq:kkt-quartic-expansion}
  \mathcal L_0(z)
  &=\frac{d^5}{6(d-1)^2}(z-r_*)^3
    +\frac{5d^6(d-2)}{24(d-1)^3}(z-r_*)^4
    +O_d((z-r_*)^5).
\end{align}
Moreover,
\begin{align}\label{eq:kkt-parameter-shift}
  \mathcal L_h(z)
  &=\mathcal L_0(z)+\{\ell(p_h)-\ell_*\}
    -(\gamma_h-\gamma_*)z^{d-1}.
\end{align}

Since $r_*=u_*^2$, the definitions $s_h=u_h-h$ and $t_h=u_h+h$ give
\begin{equation*}
\begin{aligned}
  s_h^2&=r_*-2u_*h+(1+2u_*U_2)h^2+O_d(h^3),\\
  s_ht_h&=r_*+(2u_*U_2-1)h^2+O_d(h^4),\\
  t_h^2&=r_*+2u_*h+(1+2u_*U_2)h^2+O_d(h^3).
\end{aligned}
\end{equation*}
We substitute these three roots into \eqref{eq:kkt-parameter-shift} and
use the equations $\mathcal L_h(s_h^2)=\mathcal L_h(s_ht_h)
=\mathcal L_h(t_h^2)=0$.  For the middle root,
$s_ht_h-r_*=O_d(h^2)$, so comparison at order $h^2$ gives
\[
  L_2=G_2r_*^{d-1}.
\]
For either outer root, the terms of order $h^3$ then give
\[
  \frac{4}{3}u_*^3\frac{d^5}{(d-1)^2}
  -2u_*(d-1)r_*^{d-2}G_2=0.
\]
Therefore
\[
  G_2=\frac{2d^{d+2}}{3(d-1)^d},
  \qquad
  L_2=\frac{2d^3}{3(d-1)}.
\]
At order $h^4$, subtracting the middle-root equation from either outer-root
equation cancels the unknown fourth-order coefficients of $\ell(p_h)$ and
$\gamma_h$ and gives
\[
  \frac{d^5}{(d-1)^2}u_*^2(1+2u_*U_2)
  +\frac53\frac{d^6(d-2)}{(d-1)^3}u_*^4
  =G_2(d-1)r_*^{d-3}\{(d-2)u_*^2+r_*\}.
\]
Using $u_*^2=r_*=(d-1)/d$ and the value of $G_2$, we obtain
\[
  U_2=\frac{5-3d}{6u_*}.
\]
This proves the stated expansions of $u_h$ and $\ell(p_h)$; the calculation
also gives the intermediate expansion of $\gamma_h$ stated above.

We next compute the first nonzero term of $\alpha_h$.  Set
\[
  a:=s_ht_h-s_h^2=2hs_h,
  \qquad
  b:=t_h^2-s_ht_h=2ht_h.
\]
The factorization \eqref{eq:mh-factorization-proof} and the symmetries
$\mathcal M_{-h}=\mathcal M_h$ and $D_{-h}=D_h$ show that $R$ is even in
$h$.  At $h=0$, \eqref{eq:kkt-quartic-expansion} gives
\[
  R(0,z)=k_3+k_4(z-r_*)+O_d((z-r_*)^2),
  \qquad
  k_4:=\frac{5d^6(d-2)}{24(d-1)^3}.
\]
Consequently, uniformly for $|x|\le2|h|$,
\[
  \mathcal M_h'(s_ht_h+x)
  =x(x+a)(x-b)\{k_3+k_4x+O_d(h^2)\}.
\]
Integrating from $x=0$ to $x=-a$ and to $x=b$ gives
\begin{align*}
  \mathcal M_h(s_h^2)-\mathcal M_h(s_ht_h)
  &=-\frac{k_3}{12}a^3(a+2b)
    +\frac{k_4}{60}a^4(3a+5b)+O_d(h^6),\\
  \mathcal M_h(t_h^2)-\mathcal M_h(s_ht_h)
  &=-\frac{k_3}{12}b^3(b+2a)
    -\frac{k_4}{60}b^4(3b+5a)+O_d(h^6).
\end{align*}
By the expansion of $u_h$,
\[
  a=2u_*h-2h^2+O_d(h^3),
  \qquad
  b=2u_*h+2h^2+O_d(h^3).
\]
Substitution yields
\begin{align*}
  \mathcal M_h(s_h^2)-\mathcal M_h(s_ht_h)
  &=-\frac{2d^3}{3}h^4
    +\frac{8u_*d^5}{9(d-1)}h^5+O_d(h^6),\\
  \mathcal M_h(t_h^2)-\mathcal M_h(s_ht_h)
  &=-\frac{2d^3}{3}h^4
    -\frac{8u_*d^5}{9(d-1)}h^5+O_d(h^6).
\end{align*}
Substituting these expressions into \eqref{eq:block-proportion-formula-proof} and using
$\alpha_{-h}=1-\alpha_h$ gives
\[
  \alpha_h=\frac12+\frac{2d^2u_*}{3(d-1)}h+O_d(h^3).
\]

Finally, substituting the expansions of $u_h$ and $\alpha_h$ into
\[
  q_h=\alpha_h(u_h-h)^d+(1-\alpha_h)(u_h+h)^d
\]
and expanding by the binomial theorem gives
\[
  q_h=u_*^d-\frac{d(4d-1)}3u_*^{d-2}h^2+O_d(h^4).
\]
The stated expansion of $r_h=q_h^{2/d}$ now follows by Taylor expansion.
\end{proof}